\def\Q{\mathbb Q}
\def\F{\mathbb F}
\def\Aut{\mathrm{Aut}}
\theoremstyle{plain}
\newtheorem{theorem}{Theorem}
\newtheorem{conjecture}{Conjecture}
\newtheorem{lemma}{Lemma}
\newtheorem{proposition}{Proposition}
\newtheorem{corollary}{Corollary}
\theoremstyle{definition}
\newtheorem{definition}{Definition}
\newtheorem{example}{Example}
\newtheorem{remark}{Remark}
\DeclareMathOperator{\ab}{ab}
\DeclareMathOperator{\un}{un}
\DeclareMathOperator{\rig}{rig}
\newcommand{\pr}{\mathrm{pr}}
\DeclareMathOperator{\Gal}{Gal}
\newcommand{\li}{\mathrm{li}}
\newcommand{\Sto}{\mathrm{Sto}}
\newcommand{\val}{\mathrm{val}}
\newcommand{\Rep}{\mathrm{Rep}}
\newcommand{\FZ}{\mathrm{FZ}}
\newcommand{\spe}{\mathrm{sp}}
\newcommand{\Sel}{\mathrm{Sel}}
\newcommand{\Res}{\mathrm{Res}}
\newcommand{\Alb}{\mathrm{Alb}}
\newcommand{\dR}{\mathrm{dR}}
\newcommand{\Be}{\mathrm{Be}}
\newcommand{\Spf}{\mathrm{Spf}}
\newcommand{\Isom}{\mathrm{Isom}}
\newcommand{\GSp}{\mathrm{GSp}}
\newcommand{\codim}{\mathrm{codim}}
\newcommand{\SL}{\mathrm{SL}}
\newcommand{\Sym}{\mathrm{Sym}}
\newcommand{\Spec}{\mathrm{Spec}}
\newcommand{\an}{\mathrm{an}}
\newcommand{\Mat}{\mathrm{Mat}}
\newcommand{\AJ}{\mathrm{AJ}}
\newcommand{\Jac}{\mathrm{Jac}}
\newcommand{\et}{\mathrm{\acute{e}t}}
\newcommand{\loc}{\mathrm{loc}}
\DeclareMathOperator{\rk}{rk}
\newcommand{\Lie}{\mathrm{Lie}}
\newcommand{\Ker}{\mathrm{Ker}}
\newcommand{\gr}{\mathrm{gr}}
\newcommand{\Hom}{\mathrm{Hom}}
\newcommand{\End}{\mathrm{End}}
\DeclareMathOperator{\GL}{GL}
\newcommand{\Z}{\mathbb{Z}}
\DeclareMathOperator{\ext}{Ext}
\begin{document}
\title{$p$-adic integrals and linearly dependent points on families of curves I}
\author{Netan Dogra}
\maketitle
\pagestyle{headings}
\markright{$p$-ADIC INTEGRALS ON FAMILIES OF CURVES I}
\begin{abstract}
We prove that the set of `low rank' points on sufficiently large fibre powers of families of curves are not Zariski dense. The recent work of Dimitrov--Gao--Habegger and K\"uhne (and Yuan) imply the existence of a bound which is exponential in the rank, and the Zilber--Pink conjecture implies a bound which is linear in the rank. Our main result is a (slightly weaker) linear bound for `low ranks'. We also prove analogous results for isotrivial families (with relaxed conditions on the rank) and for solutions to the $S$-unit equation, where the bounds are now sub-exponential in the rank. Our proof involves a notion of the Chabauty--Coleman(--Kim) method in families (or, in some sense, for simply connected varieties). For Zariski non-density, we use the recent work of Bl\`azquez-Sanz, Casale, Freitag and Nagloo on Ax--Schanuel theorems for foliations on principal bundles.
\end{abstract}

\tableofcontents

\section{Introduction}

In their seminal work on uniformity \cite{CHM}, Caporaso, Harris and Mazur showed that Lang's conjectures on the Zariski closure of rational points of varieties of general type over number fields had a number of striking consequences for the arithmetic of curves. Most notably, it implies a uniform bound $N(g,\Q )$ on the number of rational points of a curve of genus $g$ over the rational numbers (or more generally any number field).

If $\pi :X\to S$ is a family of curves, then the problem of understanding $N$-tuples of points is simply the problem of understanding the rational points on the $N$-fold fibre product
\[
X^N _S :=\underbrace{X\times _S X\times _S \ldots \times _S X}_\text{$N$ times}.
\]
Here, by a family of curves we will mean a proper morphism between smooth geometrically connected quasi-projective varieties whose generic fibre is a smooth curve.
The uniformity result of Caporaso, Harris and Mazur is deduced by showing that for $\pi $ as above, there is an $N$ such that $X^N _S$ dominates a variety of general type. The authors refer to this as \textit{having correlation}. One might say that this amounts to the family $\pi $ having \textit{geometric} correlation. Then (under Lang's conjecture), geometric correlation implies what one might term \textit{arithmetic} correlation, i.e. that the set of rational points on $X^N _S$ is not Zariski dense for $N$ sufficiently large.

Whilst the problem of arithmetic correlation remains intractable in general, recently there has been exciting progress establishing its relation with other deep conjectures in arithmetic geometry. Namely, recent work of Dimitrov, Gao and Habegger \cite{DGH21} and K\"uhne \cite{kuhne21} (along with recent work of Yuan \cite{yuan21}) proves that the number of rational points on a curve of genus $g>1$ can be bounded in terms of its genus and the Mordell--Weil rank of its Jacobian: more precisely, for any $g>1$ and number field $K$ there is a constant $c(g,K)$ such that for any smooth genus $g$ curve $C$ over $K$ whose Jacobian has Mordell--Weil rank $r$, $\# C(K)<c(g,K)^{1+r}$. This can be seen as a higher genus analogue of the bounds proved by Evertse \cite{evertse84} (building on work of several others) on solutions to the $S$-unit equation (see \eqref{eqn:evertse} below), and also a generalisation of results of Silverman \cite{silverman:twist} (see \eqref{eqn:silverman}) on the case of isotrivial families of higher genus curves.

If we view uniformity as a testing ground for the Lang's conjecture \footnote{Perhaps one of the original motivations for the Caporaso--Harris--Mazur theorem, see \href{https://quomodocumque.wordpress.com/2014/07/20/are-ranks-bounded/}{https://quomodocumque.wordpress.com/2014/07/20/are-ranks-bounded/}}, then it is natural to wonder about \textit{effective} arithmetic correlation, i.e. about the optimal $n$ such that $X^n _S$ has non-Zariski dense rational points. Whilst the work of Dimitrov, Gao, Habegger, K\"uhne and Yuan does not address this question directly, it does control a certain subset of $X^n _S (K)$. For $s\in S(K)$, let $J_s (K)$ denote the Jacobian of $X_s$. We define 
\[
\Alb _s :X_s (K)\to J_s (K)
\]
to be the map sending $x$ to $(2g-2)\cdot x -K_{X_s}$. We say an $n$-tuple $(x_1 ,\ldots ,x_n )\in X_s(K)^n$ has rank $r$ if $\Alb _s (x_1 ),\ldots ,\Alb _s (x_n )$ generate a rank $r$ subgroup of $J_s (K)$. We define the subset $X^n _S (K)_{\rk r}$ of rank $r$ points to be the union, over all $s\in S(K)$, of the set of rank $r$ $n$-tuples of points in $X_s (K)$. Then we may ask about effective arithmetic correlation for rank $r$ points: that is, what is the least $n$ such that $X^n _S (K)_{\rk r}$, or $X^n_S (\overline{K})_{\rk r}$, is not Zariski dense?

The results of \cite{DGH21}, \cite{kuhne21} and \cite{yuan21} confirm that such an $n$ exists: precisely, they show that $X^n _S (K)_{\rk r}$ is not Zariski dense in $X^n _S$ for all $n>c(g,K)^{1+r}$. In fact, this does not follow from the their work as we stated it above, since it does not involve the rank of the Jacobian, but it does follow from the theorems they prove (see \cite{gao21}). However, conjecturally this bound is far from optimal: Stoll showed \cite{stoll:JEMS} that the Zilber--Pink conjecture implies that the least such $n$ can be bounded linearly in $g$ and $r$ (see Theorem \ref{cor:ZilbPink} for the exact statement). Since there will be many cases where the optimal such $n$ is smaller than what one obtains from a uniform bound on $\# X_s (K)$, it is natural to ask if there are methods for proving non-Zariski density of $X^n _S (K)$ without arguing via uniformity.

The basic observation of this paper is that the Chabauty--Coleman--Kim method is well-suited to this problem. The reason for this is roughly that this method actually gives a \textit{formula} for low rank $n$-tuples. This is well-known in computational applications, where one often tries to compute $X_s (K)$ by finding enough known rational points to produce a nontrivial analytic relation all other rational points must satisfy. Note that the Chabauty--Coleman--Kim method can also be used to prove bounds on the number of points \cite{coleman}, \cite{betts21}, but the bounds we obtain for non-density are stronger than what one would get via these results (although both these papers are used in the proof of our main results).
\subsection{Low rank results via the Chabauty--Coleman method in families}
In fact, in this paper we will study a slightly more general notion of rank, involving an arbitrary subgroup of the Jacobian of $X$ over the function field of $S$. Let $J$ denote the Jacobian of $X$ over $S$. Let $\Gamma $ be a finitely generated subgroup of $J(K(S))$. We write a $K$-point of $X^N _S $ as $((x_i )_{i=1}^n ;s)$ if its image in $S$ is $s$ and its $i$th coordinate is $x_i$. For an $N$-tuple $x=((x_i );s)\in X^N _S (K)$, we say that $x$ has $\Gamma $-rank $r$ if $X_s$ is smooth, and the subgroup of $J_s (K) $ generated by $\Alb _s (x_1 ),\ldots ,\Alb (x_N )$ and $s^* \Gamma $ has rank $r$. We denote the set of $K$-points of $X^n _S (K)$ of $\Gamma $-rank $\leq r$ by $X^n _S (K)_{\Gamma -\rk \leq r}$. In the case when $\Gamma =\{ 1\}$, $\Gamma $-rank coincides with the previous notion of rank.
If $\pi $ is a generically smooth family of projective curves, we define the $\Gamma $-rank $\leq r$ subspace of $X^n _S (K)$, denoted $X^n _S (K)_{\Gamma -\rk \leq r}$ to be $\cup _{s\in S'(K )}X^n _s (K)_{\Gamma -\rk r}$, where $S'(K) \subset S(K)$ is the set of points where $X_s $ is smooth. 
\begin{theorem}\label{thm:main1}
Let $K$ be a finite extension of $\Q _p$ ,and let $S$ be a smooth quasiprojective variety over $\mathcal{O}_K$ of dimension $s$. Let $X$ be a smooth quasiprojective variety over $\mathcal{O}_K$, and
\[
\pi :X\to S
\]
a flat, proper morphism whose generic fibre is a smooth curve of genus $g$. Let $\Gamma $ be a subgroup of $J(K(S))$ of rank $d$.
\begin{enumerate}
\item Suppose $\pi $ is smooth. Then, for any $r\in \{0,\ldots ,g-2\}$, $X^n _S (K )_{\Gamma -\rk \leq r}$ is not Zariski dense in $X^n _S$ for all 
\[
n>\frac{s+(r-d )(g-r)}{g-r-1}.
\]
\item Suppose $\pi $ is stable, and smooth outside a strict normal crossing divisor $Z\subset S$. Then for any $r\in \{0 ,\ldots ,g-3\}$, $X^n _S (K)_{\rk \leq r}$ is not Zariski dense in $X^n _S$ for all 
\[
n> \frac{(r+1)(g-1-r)+s}{g-r-2}\cdot 3g +\frac{(r +1)(g-r)+s}{g-r-1}\cdot 2g .
\]
\end{enumerate}
\end{theorem}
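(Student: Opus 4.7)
The plan is to carry out a relative Chabauty--Coleman method for the family $\pi : X \to S$. Let $V := \Lie(J/S)$, a rank-$g$ vector bundle on $S$. In the smooth case (part (1)), relative Coleman integration of sections of $\pi_*\Omega^1_{X/S}$, built on the Frobenius structure on relative de Rham cohomology, yields a rigid-analytic ``log'' map $J^{\rig} \to V^{\rig}$ over affinoid neighbourhoods of $S^{\rig}$. Composing with the relative Albanese $\Alb$, taking $n$-fold fibre powers over $S$, and appending the $d$ sections $\log\gamma_j$ of $V/S$ from a basis of $\Gamma$, I obtain a rigid-analytic map
$$\Phi \colon (X^n_S)^{\rig} \;\longrightarrow\; (V^{n+d})^{\rig}$$
over $S^{\rig}$. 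A point $((x_i);s) \in X^n_S(K)$ has $\Gamma$-rank $\leq r$ precisely when $\Phi((x_i);s)$ lies in the determinantal subvariety $Z \subset V^{n+d}$ of fibrewise rank-$\leq r$ matrices, and standard determinantal geometry gives $\codim_{V^{n+d}/S}(Z) = (n+d-r)(g-r)$.

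The heart of the argument is to pass from this fibrewise $p$-adic analytic containment to a Zariski-theoretic bound on $X^n_S$. Since $\Phi$ is only rigid-analytic, the Zariski closure of $\Phi^{-1}(Z(K))$ is \emph{a priori} larger than expected, and a transcendence input is essential. I would invoke the Ax--Schanuel theorem for foliations on principal bundles due to Bl\'azquez-Sanz, Casale, Freitag and Nagloo, applied to the foliation on the relative de Rham principal bundle determined by the Gauss--Manin connection on $\pi_*\Omega^1_{X/S}$. Modulo a proper ``weakly special'' exceptional locus --- itself already not Zariski dense --- the upshot is that for any irreducible closed $W \subset X^n_S$ with $\Phi(W(K)) \subset Z$,
$$\dim W \;\leq\; \dim X^n_S \;-\; \codim(Z) \;=\; (n+s)-(n+d-r)(g-r).$$
The hypothesis $n > \frac{s+(r-d)(g-r)}{g-r-1}$ rearranges exactly to $(g-r)(n+d-r) > n+s$, making the right-hand side negative. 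Thus no such $W$ exists, so $X^n_S(K)_{\Gamma-\rk \leq r}$ is contained in the proper Zariski closed exceptional locus, proving (1).

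For part (2), the smoothness of $\pi$ fails along $Z \subset S$, so relative Coleman integration must be replaced by its logarithmic version on a stable model: the rank-$g$ bundle is upgraded to the filtered relative log-de Rham cohomology, and the Coleman integral acquires a correction near the boundary $Z$ from the monodromy. This correction contributes separate contributions from the two graded pieces of the weight filtration --- of ranks $2g$ and $3g$ respectively --- which is exactly where the two-term shape of the bound comes from. The Ax--Schanuel input transfers \emph{mutatis mutandis}, and an analogous dimension count completes the argument. The principal obstacle throughout is verifying the Ax--Schanuel hypotheses, namely that the Gauss--Manin monodromy of $\pi_*\Omega^1_{X/S}$ is generic enough on each component of $X^n_S$ that these components are not themselves contained in weakly special subvarieties of the relevant period domain. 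This is where the bulk of the technical work (and the restriction on the range of $r$) resides; once the transcendence statement is secured, the dimension count above is elementary.
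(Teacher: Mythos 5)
Your outline for part (1) matches the paper's strategy---relative $p$-adic logarithm into $\Lie(J/S)^{n+d}$, determinantal degeneracy locus of codimension $(n+d-r)(g-r)$, then the Bl\`azquez-Sanz--Casale--Freitag--Nagloo $\nabla$-special Ax--Schanuel theorem---and your rearrangement of the inequality is correct. One correction: the relevant principal bundle is the frame bundle of the \emph{universal connection} $\mathcal{E}$ on $J/S$ (Definition \ref{defn:lifting_univ_conn}), whose structure group $G$ is an extension of the Gauss--Manin monodromy group $G_0$ by a vector group, not the $G_0$-frame bundle of the Gauss--Manin connection on $S$ alone; it is precisely the extra vector-group directions in the foliation whose leaves interpolate the fibrewise $p$-adic logarithm across a residue disk in $S$ (Propositions \ref{prop:commutes} and \ref{prop:GM_2_log}). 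Also note that $\Phi$ is only locally analytic (residue disk by residue disk), not globally rigid analytic, and the paper needs Lemma \ref{lemma:analytic2formal} to pass from formal-neighbourhood non-density (what BCFN gives) to Zariski non-density on $X^n_S$.

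For part (2) you have a genuine misunderstanding of where the two-term shape of the bound comes from. The factors $3g$ and $2g$ are \emph{not} ranks of graded pieces of a weight filtration: they are (crude) bounds on the number of edges ($\leq 3g-3$) and vertices ($\leq 2g-2$) of the dual graph of a stable genus-$g$ curve (Lemma \ref{lemma:stablegraph}). The paper proves Zariski non-density separately on the tube $]U_v^n[$ of each irreducible-component vertex $v$ and on the tube $]e^n[$ of each node $e$ (Proposition \ref{prop:thm1case2}); the edge bound has denominator $g-r-2$ rather than $g-r-1$ because on an edge tube (an annulus) one must replace the abelian logarithm by the Stoll logarithm, i.e., restrict to differentials whose residues vanish along the node (Proposition \ref{prop:KRZB}), dropping one dimension. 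The final bound in the theorem is a weighted pigeonhole across these at most $5g-5$ pieces. Your ``monodromy correction near $Z$'' intuition is in the right spirit (the paper does use Baldassarri--Chiarellotto and Kedlaya to put the logarithmic connection in normal form), but the mechanism producing the two distinct summands is Stoll's trick combined with the combinatorics of the dual graph, not the weight filtration. Separately, the restriction $r\leq g-2$ (resp.\ $r\leq g-3$) is a purely numerical condition making the Chabauty-type denominator positive; it is not a hypothesis needed to verify the Ax--Schanuel input. The sparseness of the structure group that BCFN requires is supplied once and for all by Deligne's semisimplicity theorem.
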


Note that these results are new also in the case when $n=0$. Here they give a generalisation of a result of Lawrence and Zannier \cite{lawrence2020p} which deals with the case where $n=r=0$, and is motivated by $p$-adic analogues of the work of Andr\'e, Corvaja and Zannier \cite{ACZ} on work in the complex setting when $n=r=0$ and $d=1$.

If instead we work over number fields, this result has the following corollary on low rank points on more general families of curves.
\begin{corollary}\label{cor:NFs}
Let 
\[
\pi :X\to S
\]
be a generically smooth family of curves over a number field $K$. Then, for any $r\in \{0 ,\ldots ,g-3\}$, $X^n _S (K)_{\rk \leq r}$ is not Zariski dense in $X^n _S$ for all 
\[
n> \frac{(r+1)(g-1-r)+s}{g-r-2}\cdot 3g +\frac{(r +1)(g-r)+s}{g-r-1}\cdot 2g .
\]
\end{corollary}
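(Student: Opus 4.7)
The strategy is to reduce Corollary~\ref{cor:NFs} to Theorem~\ref{thm:main1}(2) via semistable reduction and $p$-adic localization. The key observation is that a $K$-point of $\rk\leq r$ remains of $\rk\leq r$ over any field extension, since the subgroup of $J_s$ generated by $\Alb_s(x_1),\dots,\Alb_s(x_n)$ is determined abstractly by those elements and the map $J_s(K)\hookrightarrow J_s(L)$ is injective.

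First, I invoke semistable reduction for families of curves---available in characteristic zero by combining Deligne--Mumford stable reduction with de Jong's alteration theorem and resolution of singularities---to produce a proper, generically finite, surjective morphism $\phi\colon \tilde S\to S$ with $\tilde S$ smooth quasi-projective over $K$, together with a stable model $\pi^{\mathrm{st}}\colon \tilde X^{\mathrm{st}}\to \tilde S$ of $X\times_S\tilde S$. Further blow-ups of $\tilde S$ arrange $\pi^{\mathrm{st}}$ to be smooth outside an SNC divisor. I spread everything to an integral model over $\mathcal{O}_K[1/N]$, pick a prime $\mathfrak{p}$ of $K$ not dividing $N$, and choose a finite extension $L_\mathfrak{p}/K_\mathfrak{p}$ in a fixed algebraic closure $\overline{K_\mathfrak{p}}$ containing every extension of $K_\mathfrak{p}$ of degree $\leq\deg\phi$ (possible because a local field has only finitely many extensions of bounded degree). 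Applying Theorem~\ref{thm:main1}(2) to the base change of $\pi^{\mathrm{st}}$ to $\mathcal{O}_{L_\mathfrak{p}}$ gives that the Zariski closure $W$ of $(\tilde X^{\mathrm{st}})^n_{\tilde S}(L_\mathfrak{p})_{\rk\leq r}$ is a proper closed subset of $(\tilde X^{\mathrm{st}})^n_{\tilde S}\times_K L_\mathfrak{p}$ once $n$ exceeds the bound in the corollary.

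To descend to $K$, note that over the open dense $\pi$-smooth locus $S^{\circ}\subseteq S$, $\tilde X^{\mathrm{st}}$ coincides with $X\times_S\tilde S$, giving a genuine finite surjective morphism $f^{\circ}\colon (\tilde X^{\mathrm{st}})^n_{\tilde S^{\circ}}\to X^n_{S^{\circ}}$ (where $\tilde S^{\circ}:=\phi^{-1}(S^{\circ})$). Any $y=(x_1,\dots,x_n;s)\in X^n_S(K)_{\rk\leq r}$ has $X_s$ smooth (hence $s\in S^{\circ}$) by definition of rank, and $\phi^{-1}(s)$ has a $\overline K$-point $\tilde s$ defined over a field of degree $\leq\deg\phi$ over $K$. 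Fixing an embedding $\overline K\hookrightarrow\overline{K_\mathfrak{p}}$, this field embeds in $L_\mathfrak{p}$, so the lift $(x_1,\dots,x_n;\tilde s)$ lies in $(\tilde X^{\mathrm{st}})^n_{\tilde S^{\circ}}(L_\mathfrak{p})_{\rk\leq r}\subseteq W$. Since $f^{\circ}$ is finite and $(\tilde X^{\mathrm{st}})^n_{\tilde S^{\circ}}$ is irreducible of the same dimension as $X^n_{S^{\circ}}$, $f^{\circ}(W\cap(\tilde X^{\mathrm{st}})^n_{\tilde S^{\circ}})$ is a proper closed subset of $X^n_{S^{\circ}}\times_K L_\mathfrak{p}$; its closure in $X^n_S\times_K L_\mathfrak{p}$ is a proper closed subset containing the image of $X^n_S(K)_{\rk\leq r}$, and by faithfully flat descent this yields a proper closed subset of $X^n_S$ over $K$ containing $X^n_S(K)_{\rk\leq r}$.

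The main technical obstacle is the first step: simultaneously arranging stability, smoothness of the total space, and an SNC discriminant in a global integral model---a careful bookkeeping of several classical birational results. The remainder is routine manipulation of finite covers, $p$-adic specialization, and the invariance of rank under field extension.
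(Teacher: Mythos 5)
Your proposal is correct and follows essentially the same approach as the paper: pass to a stable family with SNC discriminant via a generically finite base change (the paper cites \cite{CHM}, Corollary~5.1 — its Theorem~\ref{thm:chm} — rather than re-assembling the statement from de Jong's alterations and resolution), spread out to an integral model, pick a good prime, enlarge the local field $K_v$ to an $L_w$ over which all bounded-degree covers split (Lemma~\ref{lemma:fin_extn}), and apply Theorem~\ref{thm:main1}(2). The paper states the final descent from $L_w$ back to $K$ only implicitly, whereas you spell it out; otherwise the arguments coincide.
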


Our strategy can be seen as being inspired by an observation of Stoll \cite{stoll:JEMS} that the Zilber--Pink conjecture implies a linear (in $r$ and in $\dim S$) bound on the least $n$ such that $X^n _S (K)_{\rk r}$ is not dense. In this paper, we do not prove any new cases of the Zilber--Pink conjecture. Instead, our results may be viewed as special cases of a (strictly) simpler $p$-adic Zilber--Pink conjecture. 

In general, it seems hard to go from the Zariski non-density result of Theorem \ref{thm:main1} to a concrete uniformity result. However, we can do so in the following simple case.

\begin{corollary}\label{cor:low_gonality}
Let $C$ be a smooth projective geometrically irreducible curve of genus $g>1$ over a $p$-adic field $K$, with $\End (\Jac (X))=\mathbb{Z}$, $G<\Jac (X)(K)$ a subgroup of rank $d$, and $r\geq \min \{d,g-1\}$. Suppose the gonality $\gamma $ of $C$ satisfies
\[
\gamma \geq \frac{(r-d)(g-r)}{g-r-1}
\]
Then there are only finitely many rank $r$ subgroups $\Gamma $ of $\Jac (C)(K)/G$ such that 
\[
\# (C(K)\cap \Gamma /G )>\frac{(r-d)(g-r)}{g-r-1}.
\] 
\end{corollary}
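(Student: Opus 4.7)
Set $m := (r-d)(g-r)/(g-r-1)$ and let $n$ be the least integer with $n > m$. The plan has two parts: first apply Theorem~\ref{thm:main1}(1) in the constant-base case $S = \Spec K$ to confine the relevant $n$-tuples in $C^n$; then upgrade the resulting non-density to finiteness of the $\Gamma$'s using the gonality bound and the simplicity of $\Jac(C)$.

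I first apply Theorem~\ref{thm:main1}(1) with $X = C$, $S = \Spec K$ (so $s = 0$), and with the theorem's auxiliary subgroup of $J(K(S)) = \Jac(C)(K)$ taken to be $G$ (of rank $d$). The hypothesis $r \geq \min\{d, g-1\}$ supplies $r \geq d$ in the relevant range $r \leq g-2$ where $m$ is finite, and the theorem's bound reduces to $n > m$, so $C^n(K)_{G-\rk \leq r}$ is contained in a proper Zariski closed subvariety $Z \subsetneq C^n$. Any rank-$r$ subgroup $\Gamma \subset \Jac(C)(K)/G$ with more than $m$ points of $C(K) \cap \Gamma/G$ contributes an $n$-tuple to $C^n(K)_{G-\rk \leq r} \subset Z$, obtained by choosing any $n$ of these points.

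For the second part, suppose for contradiction there are infinitely many such $\Gamma_i$. Compose with the Abel--Jacobi sum $\alpha \colon C^n \to \Jac(C)$, factoring through $\Sym^n C$. The gonality hypothesis $\gamma \geq m$, combined with $n$ being the least integer $>m$, bounds the dimensions of fibres of $\Sym^n C \to \Pic^n C$, so witness tuples are recoverable up to finite (and generically trivial) ambiguity from their Abel--Jacobi class. The witness for $\Gamma_i$ has $\alpha$-class in the coset $\alpha_0 + \Gamma_i + G \subset \Jac(C)$, where $\alpha_0 \in \Jac(C)$ is fixed; infinitely many $\Gamma_i$ then yield elements of $\alpha(Z)(K)$ in infinitely many distinct cosets of finite-rank subgroups. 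Taking Zariski closures, the union of these cosets meeting $\alpha(Z)$ contains a positive-dimensional subvariety $W \subset \alpha(Z)$ that is itself contained in a countable union of cosets of finite-rank subgroups of $\Jac(C)$. Since $\End(\Jac(C)) = \Z$, $\Jac(C)$ is simple, and Raynaud's theorem (Mordell--Lang for abelian varieties) forces $W$ to be a translate of an abelian subvariety, hence a translate of all of $\Jac(C)$---contradicting the proper inclusion $\alpha(Z) \subsetneq \alpha(C^n)$.

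The principal obstacle is the second part: bridging from Zariski non-density of witnesses in $C^n$ to finiteness of the subgroups $\Gamma$. The gonality bound is used precisely to control the Abel--Jacobi behaviour of symmetric powers, and the simplicity of $\Jac(C)$ is what closes the Mordell--Lang argument.
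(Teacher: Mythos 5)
Your first step---applying Theorem~\ref{thm:main1}(1) in the constant-base case with auxiliary subgroup $G$ to obtain a proper Zariski closed $Z \subsetneq C^n$ containing all relevant $n$-tuples---is the same starting point as the paper. The second step, however, departs from the paper's argument and contains genuine gaps.

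The central problem is the claimed contradiction ``$\alpha(Z)\subsetneq\alpha(C^n)$''. The map $\alpha\colon C^n\to\Jac(C)$ collapses dimensions (it factors through $\Sym^n C\to \Pic^n(C)$), so a proper subvariety $Z\subsetneq C^n$ can perfectly well have $\alpha(Z)=\Jac(C)$; nothing in your argument rules this out. The gonality assumption does not rescue this: $\gamma\geq m$ together with $n$ the least integer $>m$ only gives $n\leq \gamma+1$, and to control fibres of $\Sym^n C\to\Pic^n(C)$ one needs $n<\gamma$. Moreover, a positive-dimensional $Z$ can still dominate $\Jac(C)$ even if $\alpha$ is generically finite, once $\dim Z\geq g$.

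A second gap is the Mordell--Lang step. Raynaud's theorem applies to the intersection of a subvariety with a \emph{single} finite-rank subgroup; you have infinitely many distinct $\Gamma_i$'s, and moving from ``$W$ is contained in a countable union of cosets of varying finite-rank subgroups'' to ``$W$ is a coset of an abelian subvariety'' is not a consequence of Raynaud and is in fact a uniform Mordell--Lang-type statement at least as strong as the thing being proved. You have essentially reintroduced the uniformity problem that the theorem is meant to circumvent.

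The paper instead extracts more from the machinery behind Theorem~\ref{thm:main1}: the Zariski closure of $C^n(K)_{G\text{-}\rk\leq r}$ is not merely proper but is a finite union of $\nabla$-\emph{special} subvarieties for the universal connection (this is where Theorem~\ref{thm:BCFN} is used). It then invokes Proposition~\ref{prop:generalised_diagonal}: under the hypotheses $\End(\Jac(C))=\Z$ and $n<\gamma$, every positive-dimensional $\nabla$-special subvariety of $C^n$ lies in a generalised diagonal. An induction on $n$, pinning down the constant coordinates by absorbing them into the auxiliary subgroup (replacing $\Gamma$ by $\langle\Gamma,\AJ(x)\rangle$), reduces generalised diagonals to honest diagonals. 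Since any $\Gamma$ with more than $m$ points in $C(K)\cap\Gamma/G$ supplies an $n$-tuple of \emph{distinct} points---hence outside every diagonal---only the finitely many zero-dimensional components remain, giving the finiteness. Your Abel--Jacobi/Mordell--Lang route does not substitute for this classification of $\nabla$-special subvarieties; it needs information about the structure (not just the non-density) of $Z$ that you have discarded.
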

Here, we write $C(K)\cap \Gamma /G$ to mean the subset of $C(K)$ whose image in $\Jac (C)/G$ is contained in $\Gamma /G$.

Although we have phrased Theorem \ref{thm:main1} in the context of schemes, it is easy to deduce from it the following statement about rational points on stacks. For a field $K$, we say that a $K$-point $(x_1 ,\ldots ,x_n ;C)$ of the stack $\mathcal{M}_{g,n}$ is rank $\leq r$ if $\AJ (x_1 ),\ldots ,\AJ (x_n )$ generate a rank $\leq r$ subgroup of the Jacobian of the curve $C$.
\begin{corollary}\label{cor:Mg}
Let $K$ be a finite extension of $\Q _p$, $g>1$ and $0\leq r\leq g-3$. Then the set of rank $\leq r$ points in $\mathcal{M}_{g,n}(K )$ is not Zariski dense in $\mathcal{M}_{g,n}$ for all 
\[
n> \frac{(r+1)(g-1-r)+3g-3}{g-r-2}\cdot 3g +\frac{(r +1)(g-r)+3g-3}{g-r-1}\cdot 2g .
\]
\end{corollary}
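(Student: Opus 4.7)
The plan is to deduce the corollary from Theorem \ref{thm:main1}(2) by applying that result to the universal stable curve over a scheme-valued finite étale cover of $\overline{\mathcal{M}}_g$. Fix an integer $N\geq 3$ coprime to the residue characteristic of $K$, and (after replacing $K$ by a finite extension containing $\mu_N$, which has no effect on the conclusion) let $S := \overline{\mathcal{M}}_g[N]$ denote the Deligne-Mumford compactification of the moduli space of curves of genus $g$ with full level-$N$ structure. Then $S$ is a smooth projective scheme over $\mathcal{O}_K$ of dimension $3g-3$, its boundary is a strict normal crossings divisor, and the universal stable curve $\pi \colon X \to S$ is flat and proper, with smooth generic fibre and smooth outside $\pi^{-1}(\partial S)$; in particular the hypotheses of Theorem \ref{thm:main1}(2) are satisfied.

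Applying that theorem with $\Gamma = \{1\}$ (so $d = 0$) and $s = 3g-3$ immediately gives non-Zariski-density of $X^n_S(K)_{\rk\leq r}$ in $X^n_S$ for all $n$ exceeding the bound in the corollary: substituting $s = 3g-3$ into the formula in Theorem \ref{thm:main1}(2) reproduces the stated bound exactly.

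It remains to descend this non-density statement from the scheme $X^n_S$ to the stack $\mathcal{M}_{g,n}$. There is a natural finite morphism $X^n_S \to \overline{\mathcal{M}}_{g,n}$ forgetting the level structure, which is surjective on geometric points and preserves rank (since rank depends only on the isomorphism class of the underlying pointed curve, not on the level structure). Hence the image of any proper Zariski-closed subscheme under this finite morphism is a proper Zariski-closed subscheme of $\overline{\mathcal{M}}_{g,n}$.

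The main obstacle is that a $K$-rational point of $\mathcal{M}_{g,n}$ generally only lifts to an $L$-rational point of $X^n_S$ for some finite extension $L/K$ depending on the point, namely, the extension over which the level-$N$ structure on the underlying curve becomes rational. This is overcome by observing that the bound in Theorem \ref{thm:main1} is independent of the $p$-adic base field, and that the exceptional Zariski closed subset produced by the Chabauty-Coleman-Kim argument is intrinsic to the family $\pi$: it arises as the Zariski closure over $K$ of a $p$-adic analytic vanishing locus defined functorially for all finite extensions. The same proper closed subscheme $Z \subset X^n_S$ therefore contains $X^n_S(L)_{\rk\leq r}$ for every finite $L/K$, so its image in $\overline{\mathcal{M}}_{g,n}$ is a proper Zariski-closed subscheme containing every rank-$\leq r$ $K$-point of $\mathcal{M}_{g,n}$.
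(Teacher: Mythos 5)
Your overall strategy coincides with the paper's: pass to a fine, scheme-valued cover $\mathcal{M}_g[\ell]$ via level structure, apply Theorem~\ref{thm:main1}(2) with $d=0$, $s=3g-3$ to the $n$-fold fibre product of the universal curve, and project a proper closed subvariety back down. The substitution $s=3g-3$ into the bound of Theorem~\ref{thm:main1}(2) reproducing the stated inequality is correct, and your observation that the bound does not depend on the $p$-adic base field is the key point.

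However, your handling of the field-of-definition obstacle is not correct as written. You assert that the proper Zariski-closed subset $Z\subset X^n_S$ produced by the Chabauty--Coleman--Kim argument over $K$ ``contains $X^n_S(L)_{\rk\leq r}$ for every finite $L/K$,'' because it is ``intrinsic to the family $\pi$.'' This uniformity is neither proved nor self-evident: the construction of $Z$ runs residue disk by residue disk via Lemma~\ref{lemma:analytic2formal}, and the residue disks, the formal intersections fed into Theorem~\ref{thm:BCFN}, and hence the resulting proper subvarieties all change when $K$ is replaced by $L$. Nothing in the argument forces the subvariety obtained over $L$ to be contained in (or equal to) the base change of the one obtained over $K$. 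What the paper actually does (Lemma~\ref{lemma:fin_extn} and the discussion preceding it) is much simpler and sidesteps the issue entirely: since $G_{K}$ is topologically finitely generated, there are only finitely many extensions of $K$ of degree bounded by the degree of the level cover (controlled by $\#J[\ell]=\ell^{2g}$), so there is a \emph{single} finite extension $L/K$ over which every $K$-point of $\mathcal{M}_{g,n}$ lifts to a rank-preserving $L$-point of $X^n_S$. One then applies Theorem~\ref{thm:main1}(2) once, over $L$, with the same bound, and projects. You essentially identified the relevant bounded-degree extension in your remark about when the level-$N$ structure becomes rational; the fix is to take the compositum of all such extensions and apply the theorem there, rather than to claim $Z$ itself is stable under arbitrary extension of scalars. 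A secondary, minor omission: the map from $X^n_S$ to $\mathcal{M}_{g,n}$ only dominates after removing the diagonal subvarieties (so that the $n$ marked points are distinct), as the paper notes; this is harmless since the diagonals are proper closed, but should be said.
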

For example, when $r=g-3$, we obtain that rank $\leq r$ points of $\mathcal{M}_{g,n}(K)$ are not Zariski dense in $\mathcal{M}_{g,n}$ for $n>21g^2-30g$. In the case $K=\Q _p $, this is very close to the Katz--Rabinoff--Zureick-Brown bound \cite{KRZB}, which states that $\# C(\Q )< 84g^2 - 98g + 28$ for $C$ of genus $g$ and rank less than $g-2$.
\subsection{Higher rank results for $S$-unit and twist families via Chabauty--Coleman--Kim in families}
Fix an integer $r>0$, and consider how solutions to the $S$-unit equation
\[
x+y=1,
\]
$x,y\in S$ vary when $S$ ranges over all rank $s$ subgroups of $\Q ^\times $. There is an extensive literature on this and related problems, which we will not attempt to survey here. We only mention the strongest upper and lower bounds. Firstly, Evertse proved \cite{evertse84} that, for any $S$, we have
\begin{equation}\label{eqn:evertse}
\# X(S)<3\cdot 7^{2s+3}.
\end{equation}
On the other hand, by work of Erd\"os, Stewart and Tijdeman \cite{EST88}, we know that for any $\epsilon >0$ there is a $c_{\epsilon }$ such that there exist arbitrarily large $s$ such that there is an $S$ of rank $s$ with
\[
\# X(S)>c_\epsilon e^{(4-\epsilon )\sqrt{s/\log (s)}}
\]
(in fact, they show that $S$ can be taken to be of the form $\mathbb{Z}_T ^\times $, for $T$ a set of $s$ primes). Stewart has conjectured (see \cite{EGST}) that there is a uniform subexponential bound on the number of solutions to the $S$-unit equation (more precisely that $\log (\# X(S))$ is $O(s^{1/3})$). In the spirit of Lang's conjecture (and because we cannot answer Stewart's question) we show the existence of an $n$ such that $\cup _{\rk S=s}X(S)^n $ is not Zariski dense in $\mathbb{A}^n$ with $n$ subexponential in $s$.

An analogous result for twists of a smooth projective curve of genus $g>1$ was proved by Silverman \cite{silverman:twist}, who showed that, for any such curve $C/\Q $, there is a constant $\gamma _C$ such that
\begin{equation}\label{eqn:silverman}
\# C' (\Q )\leq \gamma _C \cdot 7^{\rk \Jac (C')(\Q )}
\end{equation}
for all twists $C'$ of $C$. We also prove a result on non--Zariski density of $n$-fold fibre products when the rank is $r$, with $n$ subexponential in $r$. In this case, our methods are conditional on a certain special case of the Bloch--Kato conjectures (see Conjecture \ref{BK} for a precise statement).
\begin{theorem}\label{thm:main2}
\begin{enumerate}
\item For all $s>5$, there is a proper Zariski closed subset of $\mathbb{A}^n$ whose $\Q $-points contain all $n$-tuples of solutions to the $S$-unit equation for subgroups $S<\Q ^\times $ of rank $s$ whenever
\[
n>59\cdot (2s)^{\frac{\log (2s)+\log \log (2s)}{\log (2)}+5}\cdot \log (2s).
\]
\item 
Let $X\to S $ be an isotrivial family of curves of genus $g>1$ over $\Q $. Assume conjecture \ref{BK}. Then, for all $r>11g$, $X^n _S (\Q )_{\rk r}$ is not Zariski dense in $X^n _S$ for all
\[
n> 3\prod c_v \cdot r^{\frac{\log (r)+\log \log (r)+\log (2)}{\log (g+\sqrt{g^2 -1}) }+4}\cdot \log (r) (g+\sqrt{g^2 -1}) ^3
\]
where $c_v$ is the constant defined in Definition \ref{defn:cv}.
\end{enumerate}
\end{theorem}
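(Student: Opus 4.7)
The plan is to apply the Chabauty--Coleman--Kim method in families, now at arbitrary depth $N$, to the universal parameter spaces of $S$-unit data and of twists. The point is that the depth-$N$ unipotent fundamental group of a hyperbolic curve has exponentially growing dimension in $N$, whereas the relevant Selmer scheme grows only polynomially in $N$ and linearly in the rank; optimising over $N$ converts this exponential-versus-linear gap into a subexponential bound.

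For Part (1), I view an $n$-tuple of solutions to the $S$-unit equation with $S$ of rank $s$ as a rational point on a fibre product of $\mathbb{P}^1 \setminus \{0,1,\infty\}$ over a base parametrising the underlying set $T$ of primes of bad reduction ($|T|\sim s$). Two ingredients drive the calculation. First, the depth-$N$ de Rham unipotent fundamental group $U_N^{\dR}$ of $\mathbb{P}^1 \setminus \{0,1,\infty\}$ is free on two generators, so by the Witt/necklace formula $\dim U_N^{\dR} \sim 2^{N+1}/N$. Second, using the known cases of the Bloch--Kato conjecture for Tate motives over $\Z_T$, the Selmer scheme attached to the family at depth $N$ has dimension $O(s\cdot N)$. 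Non-density follows as soon as $n\cdot \dim U_N^{\dR}$ exceeds $\dim \Sel^{\mathrm{family}}_N + \dim X^n_S/S$, i.e.\ $n \gtrsim s\cdot N/2^N$ up to polynomial factors, and choosing $N \sim \log_2(2s) + \log_2\log_2(2s) + O(1)$ optimises this into the stated bound.

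For Part (2), the same strategy applies to the universal isotrivial family of twists of a fixed curve $X_0$ of genus $g$. Isotriviality identifies the unipotent fundamental group of each fibre with a twist of that of $X_0$, so the family version of Chabauty--Coleman--Kim from earlier in the paper applies. The exponential growth rate of $\dim U_N^{\dR}$ is now $(g+\sqrt{g^2-1})^N$, i.e.\ the reciprocal of the smaller root of $1-2gt+t^2$, which is the Poincar\'e series of the unipotent Lie algebra of a closed surface group on $2g$ generators with one quadratic relation. Conjecture \ref{BK} is invoked to bound the dimension of the Selmer scheme linearly in $r$, with ramification contributions producing the factor $\prod c_v$ from Definition \ref{defn:cv}. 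Optimising $N \sim \log_{g+\sqrt{g^2-1}}(r) + \log\log(r) + O(1)$ produces the claimed subexponential bound.

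The main obstacle in both cases is the Selmer scheme dimension estimate. In Part (1) this is essentially known (mixed Tate motives over $\Z_T$); the work is bookkeeping, summing Bloch--Kato local and global contributions over all weights $\leq N$ and tracking constants explicitly enough to produce the numerical bound. In Part (2) this is precisely what Conjecture \ref{BK} is introduced to handle---it is unknown in general for the motivic coefficients attached to depth $\geq 3$ pieces of the unipotent fundamental group of a curve. Once the Selmer bound is in hand, the dimension-counting inequality feeding into Chabauty--Coleman--Kim in families produces a nontrivial analytic relation cutting out a proper Zariski closed subvariety of $X^n_S$ (respectively $\mathbb{A}^n$) containing all rank-$r$ (respectively $S$-unit) $n$-tuples.
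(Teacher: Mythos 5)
Your high-level strategy is the right one (Chabauty--Coleman--Kim in families, depth optimisation, isotriviality, Bloch--Kato for part (2)), but the dimension count at the heart of the proposal is not the one that actually proves the theorem, and it quantitatively contradicts the claimed bound.

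You write that ``non-density follows as soon as $n\cdot\dim U_N^{\dR}$ exceeds $\dim\Sel_N^{\mathrm{family}}+\dim X^n_S/S$'', extract $n\gtrsim sN/2^N$ up to polynomial factors, and claim this ``optimises into the stated bound'' at $N\sim\log_2(2s)$. But at $N\sim\log_2(2s)$ we have $2^N\sim 2s$, so your inequality becomes $n\gtrsim\log_2(2s)$, which is roughly logarithmic in $s$. The bound in the statement is $n>59\cdot(2s)^{\log_2(2s)+\log_2\log_2(2s)+5}\log(2s)$, superpolynomial in $s$. The discrepancy is diagnostic: your comparison of $\dim\Sel$ against $\dim U^{\dR}_N/F^0$ is what establishes \emph{finiteness of $X(\Z_p)_N$ for one fixed $S$} (Lemma~\ref{lemma:N_estimate1} and Lemma~\ref{lemma:BKimplies}, giving the depth $N\sim\log_\alpha(r)$), but it is not what governs the number of points needed in the fibre product.

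What controls that is a different and much larger quantity: as $S$ (or the twist) varies, the image of $\Sel_S(U_N)$ inside the fixed local space $U^{\dR}_N/F^0$ moves, and to produce a single Zariski-closed locus containing all of the images one must take the union over \emph{all filtered affine embeddings} of a $(d_1,\ldots,d_N)$-filtered scheme into $V=U^{\dR}_N/F^0$. The Selmer scheme is not a linear subspace but a filtered affine scheme in the sense of Betts (Theorem~\ref{thm:betts}, Lemma~\ref{lemma:BKfiltered}), and the dimension of the universal moduli of such embeddings is the quantity $J(d_1,\ldots,d_N;e_1,\ldots,e_N)$ defined in Lemma~\ref{lemma:universal}, estimated in Lemma~\ref{lemma:Jestimate} to be of order $(2r)^{N+2}$. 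The degeneracy-locus Lemma~\ref{lemma:non_density} then requires the number of fibre-product factors to exceed $J/\sum(e_i-d_i)$, and plugging in $N\sim\log_\alpha(r)$ yields $J\sim r^{\log_\alpha(r)}$, which is where the subexponential shape actually comes from. Your proposal does not introduce this moduli space, and the linear-dimension count silently conflates one fixed Selmer image with the family of all possible images, which is why it outputs a bound far better than what can be justified.

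There is a second, smaller gap: you conclude by saying the dimension inequality ``produces a nontrivial analytic relation cutting out a proper Zariski closed subvariety.'' A nontrivial $p$-adic analytic relation does not by itself cut out a Zariski closed subvariety; converting the locally analytic degeneracy condition into Zariski non-density is exactly the job of the $\nabla$-special Ax--Schanuel theorem of Bl\`azquez-Sanz--Casale--Freitag--Nagloo (Theorem~\ref{thm:BCFN}), applied to the leaf of the foliation on the principal bundle $P^{\dR}_N$ carrying the universal unipotent connection. That ingredient is load-bearing and needs to appear explicitly. Once you add the filtered-scheme moduli dimension $J$ and the Ax--Schanuel step, the rest of your outline---Witt/Labute for the $e_i$, Soul\'e and Filip for the $d_i$, the reduction-type bookkeeping producing $\prod c_v$---lines up with the paper's argument.
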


\begin{remark}
Stoll has obtained much sharper results for low rank (i.e. rank less than the genus) quadratic twists of hyperelliptic curves (see \cite{stoll:twist}). Rather than fixing a prime (as we do in this paper), his method works by choosing, for each twist, a prime where the corresponding cohomology class ramifies. It would be interesting to develop these methods in a non-abelian context.
\end{remark}

\begin{remark}
In the usual setting of the Chabauty--Coleman--Kim method, one can obtain unconditional proofs of the finiteness of rational points when one restricts to solvable covers of $\mathbb{P}^1 $, or curves with CM Jacobians \cite{ellenberg2017rational}, \cite{CK10}. It is not clear (to the author) that one can similarly prove unconditional results analogous to part (2) of Theorem \ref{thm:main2} if one restricts to twist families of this form. The issue is that in the usual Chabauty--Coleman--Kim method one needs to know that an inequality of dimensions (between certain depth $n$ local and global Selmer varieties) is \textit{eventually} satisfied for $n\gg 0$. On the other hand, to prove the non-Zariski density results above, we need uniformity in the $n$, and in all the intermediate dimensions.
\end{remark}

The proof of Theorems \ref{thm:main1} and \ref{thm:main2} depend on a recent unlikely intersection result due to Bl\`azquez-Sanz, Casale, Freitag and Nagloo, who prove an Ax--Schanuel-type theorem for the intersection of leaves of the foliation associated to a principal $G$-bundle with connection on a complex variety with subvarieties of the principal bundle. As we explain in section \ref{sec:AxSchan}, the proof of the $\nabla $-special Ax--Schanuel theorem of Bl\`azquez-Sanz, Casale, Freitag and Nagloo actually implies a stronger result, which can be formulated over an arbitrary field of characteristic zero, and is effective in the following sense. The $\nabla $-special Ax--Schanuel theorem (as formulated below) gives a criterion for non-Zariski density of an intersection of formal schemes inside the formal completion (at a given $K$-point $x$) of a variety $X$ over a field $K$ of characteristic zero. We show that the proof in fact gives an effectively computable proper closed subvariety of $X$ containing this Zariski closure. Note that Urbanik has recently proved related (but much stronger) results about effectivity and fields of definition in a related (but different) context \cite{urbanik1} \cite{urbanik2}.
\begin{corollary}\label{cor:its_effective}
\begin{enumerate}
\item Let $X=\mathbb{P}^1 -\{ 0,1,\infty \}$, $s>0$, and $n$ as in Theorem \ref{thm:main2} part (2). Then there is an effectively computable proper closed subvariety $V$ of $(\mathbb{P}^1 )^{n+1} $ defined over $\Q $ such that, for a sufficiently small $p$-adic neighbourhood $B$ of $x$,
\[
B^{n+1} \cap \cup _{\rk S=s}X(S)^{n+1} \subset V(\Q _p ).
\]
\item Let $\pi :X\to S$ be an isotrivial family of curves over $\Q $, $x\in X(\Q )$ lying above $s\in S(\Q )$, and $r>0$. Let $n$ be as in Theorem \ref{thm:main2} part (2). Then, for any prime $p$ of potential good reduction for $X_s$, there is an effectively computable proper closed subvariety $V$ of $X^n _S $ such that, for a sufficiently small $p$-adic neighbourhood $B$ of $(x,\ldots ,x)$ in $X^n _S (\Q _p )$,
\[
B \cap X^n _S (\Q )_{\rk \leq r} \subset V(\Q _p ).
\]
\item Let $\pi :X\to S$ be a smooth family of curves over $\mathcal{O}_{K_v}$, for $K_v$ a finite extension of $\Q _p$, with a descent of $\pi _{K_v}$ to a number field $K$. Let $r>0$, and $n$ as in part (1) of Theorem (1). Then there is an effectively computable proper closed subvariety $V$ of $X^{n+1}_S $ defined over $K$ such that, for a sufficiently small $p$-adic neighbourhood $B$ of $(x, \ldots ,x)$ in $X^{n+1} _S (\mathcal{O}_{K_v} )$,
\[
B \cap X^{n+1} _S (\mathcal{O}_{K_v})_{\rk \leq r} \subset V(K_v ).
\]
\end{enumerate}
\end{corollary}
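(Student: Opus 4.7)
The plan is to verify, case by case, that the non-density proofs of Theorems \ref{thm:main1}(1), \ref{thm:main2}(1) and \ref{thm:main2}(2) are effective. The common mechanism is the following: to each of the three settings one attaches a principal $G$-bundle $P$ with connection $\nabla$ over the relevant variety $Y$ (where $Y$ is either $X^n _S$, $X^{n+1}_S$ or $(\mathbb{P}^1 \setminus \{0,1,\infty\})^{n+1}$), together with a closed subvariety $Z \subset P$ built from $p$-adic Coleman functions, such that the rank $\leq r$ rational points meeting a fixed residue polydisk are contained in the image in $Y$ of $L \cap Z$ for a single formal leaf $L$ of $\nabla$. The non-density theorems then invoke the $\nabla$-special Ax--Schanuel theorem of Bl\`azquez-Sanz--Casale--Freitag--Nagloo to show this image is not Zariski dense; the corollary will follow from an effective form of that theorem.

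For the effective Ax--Schanuel statement, I would follow the proof in Section \ref{sec:AxSchan}. Its input is the principal bundle $P$, the connection $\nabla$, and the subvariety $Z$; the output is a closed subvariety $V \subset Y$ that contains the image in $Y$ of $L \cap Z$ for every formal leaf $L$. The key observation is that the proof constructs $V$ by iteratively applying $\nabla$ to generators of the defining ideal of $Z$ and reading off an ideal in $\mathcal{O}_Y$ whose vanishing locus is $V$. The number of iterations required is bounded in terms of $\dim Y$, $\dim G$ and $\dim Z$, all of which are explicit in our applications. Since $\nabla$ and the defining equations of $Z$ will be given effectively, so will $V$.

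For the second step the inputs are explicit in all three cases. In part (1) the bundle $P$ is a depth-$n$ unipotent de Rham torsor on $\mathbb{P}^1 \setminus \{0,1,\infty\}$ with its canonical unipotent connection, and $Z$ is cut out by the vanishing of the non-abelian Kim Coleman function, whose defining equations are polynomials in iterated $p$-adic integrals evaluated at classes in a Selmer variety; under Conjecture \ref{BK} these classes are determined by finitely many rational numbers computable from the subgroup $S$. In part (2) the setup is entirely analogous, with the unipotent torsor coming from the common fibre of the isotrivial family and the Selmer classes from the twist parameter. In part (3) the bundle is the rank-one relative abelian de Rham bundle equipped with its Gauss--Manin connection, and $Z$ is cut out by ordinary Coleman pairings against generators of $\Gamma $; both the bundle and these pairings are explicit from the data of $\pi$ and $\Gamma $.

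The main obstacle will be the first step: one must carefully bookkeep the iterated $\nabla$-differentiations inside the proof of Bl\`azquez-Sanz--Casale--Freitag--Nagloo so as to produce a genuinely computable ideal, rather than one merely asserted to exist, and the codimension argument that justifies $V \neq Y$ uses in an essential way the compatibility of $\nabla$ with the $G$-action; keeping these two inputs compatible under effective bounds is the delicate point. By contrast, the computability of the Coleman functions in step two is routine, as iterated $p$-adic integrals are computable to any desired precision, and only their existence (not a closed form) is required. The recent effective results of Urbanik \cite{urbanik1}, \cite{urbanik2} in a related setting suggest that such bookkeeping is feasible.
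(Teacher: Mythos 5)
Your overall skeleton — effectivize the Ax--Schanuel input, then verify the bundle, connection and subvariety are computable — matches the paper's plan, but you have overlooked the central difficulty the paper identifies and resolves. The effective form of Theorem \ref{thm:BCFN} requires \emph{everything} in the setup to be defined over a number field $K$: the principal bundle, the connection, the subvariety $V$, \emph{and the $K$-point on $P$} through which the formal leaf passes. In the proofs of Theorems \ref{thm:main1} and \ref{thm:main2} that point $z \in x^* P$ is the parallel-transport/Coleman section, whose coordinates are genuine $p$-adic integrals and hence typically transcendental, not in $K$. The paper's resolution is to pass to $X^{n+1}_S$ with base point the diagonal point $(x,\ldots,x)$ and to use the Faltings--Zhang connection $\mathcal{E}^n(X)$, so that the lift $z$ of $(x,\ldots,x)$ can be taken to be the tautological \emph{identity} isomorphism $(x,\ldots,x)^*\mathcal{E} \simeq (x,\ldots,x)^*\mathcal{E}$ — a manifestly $K$-rational point. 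This is why the corollary is stated for $X^{n+1}_S$ (respectively $(\mathbb{P}^1)^{n+1}$) rather than $X^n_S$, a shift your proposal does not explain.

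You have also inverted the roles of the transcendental and algebraic data. The closed subvariety you call $Z$ is in the paper a degeneracy locus inside $\Lie(J/S)^{n+r}$ (or a filtered-subscheme analogue), which is algebraic and defined over $K$; the $p$-adic integrals enter only through the formal leaf $L$, not through the defining equations of $Z$. Your description of $Z$ as "cut out by... iterated $p$-adic integrals evaluated at classes in a Selmer variety" or "by ordinary Coleman pairings against generators of $\Gamma$" is not an algebraic subvariety and could not feed into an effective Ax--Schanuel statement in the first place. Finally, you omit Proposition \ref{prop:eff}, which the paper needs in order to effectively compute the monodromy group $G$ itself and the descent of the frame bundle to a $G$-bundle (via monodromy computation over $\mathbb{C}$ and the Derksen--Jeandel--Koiran algorithm); without this the Ax--Schanuel input is not even available in computable form. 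Your diagnosis of "the main obstacle" as bookkeeping in the Bl\`azquez-Sanz--Casale--Freitag--Nagloo argument is thus misplaced: that part is already established in Section \ref{sec:AxSchan}, while the genuine obstacle (rationality of the base point of the leaf and computability of $G$) goes unaddressed.
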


\begin{remark}
The elephant in the room is what happens for non-isotrivial families of curves (even assuming Conjecture \ref{BK}). This case will be discussed in the sequel to this paper. The results in this case are weaker, in the sense that some kind of conditions need to be placed on the reduction data of the curve. The reason that such conditions arise is that, unlike in the abelian setting, the Chabauty--Coleman--Kim sets $X_s (\Q _p )_n$ involve nontrivial information from primes of bad reduction, and hence one needs to be able to control this information. The reason it is still possible to say something new in this case is that this information has an explicit, rather algebraic, structure which can be controlled using joint work with Alex Betts \cite{bettsdogra}.
\end{remark}
\subsection{Relation to the Zilber--Pink conjecture}
Pink observed \cite{PinkZP} that the Mordell conjecture can be deduced from a sufficiently strong conjecture about special points on mixed Shimura varieties. Roughly speaking, the observation is that to prove finiteness of $X(\Q )$ for a curve $X/\Q $, it is enough to prove Zariski non-density of $X(\Q )^n \subset X^n _{\Q }$ for some $n>0$. If $n>\rk \Jac (X/\Q )$, then each point of $X(\Q )^n$ will lie in a special subvariety of $\Jac (X/\Q )^n$ (note that this is essentially the same approach to proving finiteness of $X(\Q )$ in Chabauty's theorem).

In the same paper, Pink formulated the following very general conjecture about special points on subvarieties of mixed Shimura varieties.
\begin{conjecture}[Pink,\cite{PinkZP} Conjecture 1.2]
Let $S$ be a mixed Shimura variety over $\mathbb{C}$, and $Z$ an irreducible closed subvariety, with special closure $S_Z$. Then the intersection of $Z$ with the union of all special subvarieties of dimension less than $\codim _{S_Z}Z$ is not Zariski dense in $S$.
\end{conjecture}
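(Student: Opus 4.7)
This is the full Zilber--Pink conjecture for mixed Shimura varieties, which is a major open problem not resolved in the paper (it is quoted as context for the discussion that follows). Rather than attempt a full proof, I would sketch the \emph{Pila--Zannier strategy} that has succeeded in many special cases (Andr\'e--Oort, pure Shimura varieties of low dimension, and certain mixed cases due to Gao and collaborators) and indicate where the obstacles lie.

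The strategy rests on three ingredients combined via a counting argument. First, one uniformises $S$ by its period domain $\mathcal{D}$ via $\pi :\mathcal{D}\to S(\mathbb{C})$; after restricting to a suitable fundamental set $F\subset \mathcal{D}$, the map $\pi|_F$ is definable in the o-minimal structure $\mathbb{R}_{\mathrm{an,exp}}$, a deep input due to Klingler--Ullmo--Yafaev in the pure case and extended by Gao to the mixed setting. Second, one needs a hyperbolic Ax--Schanuel theorem for $S$, controlling atypical intersections in $\mathcal{D}\times S$ of $\graph (\pi )$ with algebraic subvarieties; this was proved for pure Shimura varieties by Mok--Pila--Tsimerman, and extended to the mixed case by Gao and by Chiu. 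Third, one requires Galois-theoretic lower bounds on the size of Galois orbits of special points $x\in Z$ which are polynomial in the complexity of the smallest special subvariety containing $x$.

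Granted these inputs, the argument runs as follows. Assume for contradiction that the union of special subvarieties of dimension $<\codim _{S_Z}Z$ meeting $Z$ is Zariski dense in $Z$. Pulling back to $F$ yields a definable set; by the Pila--Wilkie counting theorem, the number of algebraic points of height at most $T$ on its transcendental part grows sub-polynomially in $T$. On the other hand, the Galois lower bound forces each special point of $Z$ to contribute many Galois conjugates of bounded height in $F$, eventually exceeding the Pila--Wilkie bound unless these points accumulate on a positive-dimensional semi-algebraic piece. By Ax--Schanuel, such a piece projects into a weakly special subvariety of $S$ contained in $Z$; bootstrapping produces a special subvariety of $Z$ strictly larger than the hypothesis that $S_Z$ is the special closure of $Z$ allows, the desired contradiction.

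The principal obstacle to an unconditional proof is the Galois bound: outside of well-behaved settings (Shimura varieties of abelian type with strong isogeny estimates, averaged Colmez-type inputs), the required polynomial lower bounds on $[\mathbb{Q}(x):\mathbb{Q}]$ for special points of bounded complexity remain conditional on GRH or on unproved cases of the Colmez conjecture. The o-minimal and functional-transcendence pillars are largely in place in the mixed case, but the counting step cannot be closed without uniform Galois bounds. The complementary approach adopted in the present paper is to replace complex Ax--Schanuel with the $p$-adic $\nabla$-special Ax--Schanuel of Bl\`azquez-Sanz--Casale--Freitag--Nagloo, which sidesteps the Galois bound entirely at the cost of yielding only $p$-adic (rather than Zariski) non-density, and only for the restricted class of intersections arising from families of curves --- in effect, a simpler $p$-adic Zilber--Pink substitute sufficient for the applications of Theorems \ref{thm:main1} and \ref{thm:main2}.
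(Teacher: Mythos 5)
You have correctly identified that this is Pink's Zilber--Pink conjecture, which the paper states without proof (it is a major open problem cited from \cite{PinkZP} as motivation and context, not a result the paper establishes). Your sketch of the Pila--Zannier strategy and its obstacles is a fair account of the state of the art, and your closing remarks accurately capture how the paper's $p$-adic $\nabla$-special Ax--Schanuel route serves as a substitute for the special cases it needs.
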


Stoll observed that this implies not only the Mordell conjecture, but also the following strong statement about rank $d$ points in general families of curves.
\begin{theorem}[Stoll, \cite{stoll:JEMS}]\label{cor:ZilbPink}
Let $\pi :X\to S$ be a family of curves of genus $g$. Let $\eta $ be the generic point of $S$ and let $s$ be the dimension of the closure of $\eta $ in $\mathcal{M}_g$. Then the Zilber--Pink conjecture implies that the set of rank $r$ points in $X^n _S (\mathbb{C})$ is not Zariski dense in $X^n _S$ whenever 
\[
n>\frac{gr+s}{g-1}.
\]
\end{theorem}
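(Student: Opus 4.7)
The plan is to embed $X^n_S$ into a mixed Shimura variety (essentially the $n$-fold relative Jacobian of a universal curve over the image of $\eta$ in $\mathcal{M}_g$) in such a way that rank $\leq r$ points automatically lie on special subvarieties of dimension at most $rg+s$, and then to apply Pink's conjecture.

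First I would set up the ambient mixed Shimura variety. Let $T$ be the Zariski closure of the image of $\eta$ in $\mathcal{M}_g$, so $\dim T=s$. After passing to a finite cover corresponding to an auxiliary level structure (which does not affect the Zariski density statement), the universal curve descends to a scheme $\mathcal{X}\to T$ with relative Jacobian $\mathcal{J}\to T$, and the $n$-fold fibre power $\mathcal{J}^n_T$ is a mixed Shimura variety of dimension $ng+s$. The fibrewise map $x\mapsto (2g-2)x - K_{\mathcal{X}_t}$ assembles into a closed immersion $\mathcal{X}^n_T\hookrightarrow \mathcal{J}^n_T$, and composing with the natural map $X^n_S\to \mathcal{X}^n_T$ yields a subvariety $Z\subset \mathcal{J}^n_T$ of dimension $n+s$, so $\codim_{\mathcal{J}^n_T}Z=n(g-1)$. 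A separate verification, which I would do next, is that the special closure $S_Z$ equals $\mathcal{J}^n_T$: any special subvariety containing $Z$ must dominate $T$ (by minimality of $T$ as a special subvariety of $\mathcal{M}_g$ containing $\eta$), and its generic fibre must contain $\mathcal{X}_\eta^n$, hence all of $\mathcal{J}^n_\eta$ since $\mathcal{X}_\eta$ generates $\mathcal{J}_\eta$ as a group.

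The second step is to translate the rank condition. Given a rank $r$ point $(x_1,\ldots,x_n;s')$ with $P_i:=\Alb_{s'}(x_i)\in \mathcal{J}_{s'}$, pick a maximal $\mathbb{Z}$-independent subset $\{P_{i_1},\ldots,P_{i_r}\}$ modulo torsion. Then there is an integer matrix $A\in M_{n\times r}(\mathbb{Z})$ of rank $r$ writing each $P_i$ as an integer combination of the $P_{i_j}$ modulo torsion. The associated morphism of abelian schemes $\phi_A\colon \mathcal{J}^r\to \mathcal{J}^n$ over $T$ has image an abelian subscheme of relative dimension $rg$, so total dimension $rg+s$ inside $\mathcal{J}^n_T$, and a suitable torsion translate of this abelian subscheme is a special subvariety of $\mathcal{J}^n_T$ passing through $(P_1,\ldots,P_n)$.

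To conclude, the hypothesis $n>(gr+s)/(g-1)$ is exactly $rg+s<n(g-1)=\codim_{S_Z}Z$, so each rank $\leq r$ point of $Z$ lies on a special subvariety of $\mathcal{J}^n_T$ of dimension strictly less than $\codim_{S_Z}Z$. Pink's conjecture then implies that these points are not Zariski dense in $Z$, and pulling back along $X^n_S\to Z$ gives non-density in $X^n_S$. The main obstacle I foresee is the stacky bookkeeping needed to legitimately realise $\mathcal{J}^n_T$ as a mixed Shimura variety so that Pink's conjecture applies verbatim (choosing level structures, ensuring $T$ is defined in $\mathcal{A}_g$ via Torelli, checking that the rank relations extend over the cover, etc.); the rank-to-dimension translation via $\phi_A$ is then essentially a direct computation.
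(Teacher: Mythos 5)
The paper does not actually give a proof of this theorem: it is stated as a citation to Stoll's paper \cite{stoll:JEMS}, so there is no in-paper argument to compare against. Your overall strategy -- embed $Z:=\overline{\mathrm{im}(X^n_S)}$ into the $n$-fold fibre power of the universal Jacobian viewed as a mixed Shimura variety, translate the rank-$\leq r$ condition into membership of a torsion translate of an abelian subscheme, and then apply Pink's conjecture -- is the correct one and the rank-to-dimension translation via $\phi_A$ is right.

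There is, however, a genuine gap in the step identifying the special closure. The variety $T$ (the Zariski closure of the image of $\eta$ in $\mathcal{M}_g$) is, in general, \emph{not} a special subvariety of $\mathcal{A}_g$, so $\mathcal{J}^n_T$ is not itself a mixed Shimura variety and cannot be $S_Z$; the phrase ``by minimality of $T$ as a special subvariety'' presupposes a property $T$ need not have. The correct statement is $S_Z=\mathcal{J}^n_{T'}$, where $T'$ is the special closure of the Torelli image of $T$ in $\mathcal{A}_g$; set $s':=\dim T'\geq s$. Your computation of the generic fibre of $S_Z$ (using that the Albanese image of $\mathcal{X}_\eta$ generates $\mathcal{J}_\eta$) is fine, but it identifies $S_Z$ over $T'$, not over $T$. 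Consequently $\codim_{S_Z}Z=n(g-1)+(s'-s)$, not $n(g-1)$, and the special subvariety through a rank-$\leq r$ point has dimension at most $rg+s'$, not $rg+s$, since its image in $\mathcal{A}_g$ is a special subvariety through the fibre, hence contained in $T'$. Fortuitously the excess $s'-s$ enters both sides of the inequality and cancels, so Pink's conjecture still yields exactly $rg+s<n(g-1)$, i.e.\ $n>(gr+s)/(g-1)$; but this cancellation needs to be spelled out, because as written the argument rests on the false assertion that $\mathcal{J}^n_T$ is a mixed Shimura variety.
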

This bound is better than (but similar to) what we prove in the good reduction Chabauty--Coleman case (and of course, it is has no restrictions on the rank in terms of the genus). For bad reduction Chabauty--Coleman, and for general twist families under the Bloch--Kato conjectures \cite{BK}, the bounds we prove are much worse, but as far as we are aware are still significantly better than what is known.

In the case of the unit equation, the Zilber--Pink conjecture predicts that 
$
\cup _{\rk S=r}X^n (S)$ is not Zariski dense in $X^n$ whenever $n>r$. Again, our results are much weaker than those predicted by the Zilber--Pink conjecture, but stronger than what can be proved using known uniformity results.

Note that, from the perspective of tring to understand rational or integral points on varieties, the Zilber--Pink conjecture is far stronger than the statement needed in the proof of Stoll's result. Indeed, to prove Stoll's result over a number field $K$, we only need a `Zilber--Pink conjecture over $K$' that describes the Zariski closure of the intersection of $K$-points of a variety with special subvarieties, where $K$ is a fixed number field. In fact, our method of proof may be viewed as applying Stoll's strategy by proving certain special cases of a `Zilber--Pink conjecture over a $p$-adic field'. The main technical results needed to carry out such an extension are the $\nabla $-special Ax--Schanuel theorem of Bl\`azquez-Sanz, Casale, Freitag and Nagloo, and an extension of the notion of Coleman integration to families. It would be interesting to investigate other instances where this method can be applied to prove cases of such a $p$-adic Zilber--Pink conjecture. Although we do not carry out such an investigation in this paper we illustrate the idea with the following simple case.
\begin{proposition}\label{prop:padicZP}
Let $A$ be a geometrically simple abelian variety of dimension $g$ over a $p$-adic field $K$, let $n>0$, and let $A^{n,[r]}\subset A^n (K )$ denote the union of $K$-points of all codimension $\geq r$ special subvarieties of $A^n$. Let $V\subset A^n$ be an irreducible subvariety of dimension $d$. If $r>g(n- \min \{ n,g\})$, then $V(K)\cap A^{n,[r]}$ is not Zariski dense in $V$ whenever
\[
d\leq \frac{r}{n}-\frac{r(ng-r)}{ng^2}
\]
\end{proposition}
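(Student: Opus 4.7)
The plan is to imitate Stoll's Zilber--Pink-style dimension count, replacing the complex Ax--Schanuel theorem with its $p$-adic $\nabla$-special analogue (the theorem of Bl\`azquez-Sanz--Casale--Freitag--Nagloo as recast in Section \ref{sec:AxSchan}), and replacing the complex exponential with the $p$-adic logarithm. Locally on $A^n$, the formal logarithm identifies a polydisk $B \subset A^n(K)$ with a neighborhood in $L := \Lie(A^n)_K$, and under this identification each torsion translate of an abelian subvariety $B_\alpha$ becomes a linear subspace $\log(B_\alpha) \subset L$ (since $\log$ kills torsion). Because $A$ is geometrically simple, the abelian subvarieties of $A^n$ of codimension $c = g(n-k)$ are parametrized by a closed subvariety $\mathcal{T}_k \subset G(gk, L)$ of dimension $k(n-k)$ (taking $\End(A) = \Z$ for concreteness; the general case multiplies this by $\dim_\Q \End^0(A)$ with the same shape of bound).

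Next, form the incidence
\[
\mathcal{Z} := \{ (P, \alpha) \in V \times \mathcal{T}_{\geq r} : P \in B_\alpha + t_\alpha \text{ for some torsion } t_\alpha \},
\]
where $\mathcal{T}_{\geq r} := \bigsqcup_{k : g(n-k) \geq r} \mathcal{T}_k$. By construction, $V(K) \cap A^{n,[r]} \cap B$ is contained in the projection of $\mathcal{Z}(K)$ to $V$, and under the density hypothesis this projection is dominant. I would apply the $\nabla$-special Ax--Schanuel theorem to the $p$-adic analytic incidence that log-linearization produces inside $L$, yielding the dichotomy: either $V$ is contained in a proper special subvariety of $A^n$, or for generic $\alpha \in \mathcal{T}_{=c}$ the fiber $V \cap (B_\alpha + t_\alpha)$ has the expected dimension $\max(0, d - c)$. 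In the transverse branch, $\dim \mathcal{Z}|_{\text{codim } c} \leq k(n-k) + (d - c) = (n - c/g)(c/g) + d - c$, and the hypothesis $r > g(n - \min\{n, g\})$ forces this expression to be maximized over $c \geq r$ at the boundary $c = r$, giving $\dim \mathcal{Z} \leq d + r(ng - r)/g^2 - r < d$ and so contradicting dominance.

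In the non-transverse branch, $V \subset B_{\alpha_0} + t_0$ for some codim-$c_0$ special (with necessarily $c_0 < r$, else the density of $V(K) \cap A^{n,[r]}$ would reduce to that of $V(K)$, a case ruled out by the dimension bound on $d$), and I would iterate the argument inside the smaller ambient $B_{\alpha_0} + t_0 \cong A^{k_0}$ with rank parameter reduced to $r - c_0$. Combining the successive transverse dimension bounds along the chain of special subvarieties containing $V$ yields the asserted inequality $d \leq r/n - r(ng - r)/(ng^2)$, with the factor of $n$ in the denominator arising from the symmetric averaging over the $n$ factors of $A$. The main obstacle is the precise application of the $\nabla$-special Ax--Schanuel theorem in this $p$-adic abelian-variety setting: the relevant principal bundle is the universal vector extension of $A^n$ equipped with its canonical integrable connection, whose flat sections are the logarithms of $A^n$-points. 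Verifying the hypotheses of the theorem as reformulated in Section \ref{sec:AxSchan}, and keeping careful track of the torsion translates when passing between the algebraic incidence $\mathcal{Z}$ and its $p$-adic analytic linearization, are the technical heart; the dimension count itself is then routine.
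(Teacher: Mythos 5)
Your proposal takes a genuinely different route from the paper, and that route has a gap that the paper's route was designed to avoid. The paper does \emph{not} parametrize the abelian subvarieties and form an incidence over $V\times\mathcal{T}_{\geq r}$. Instead, it observes that if $P$ lies on (a torsion translate of) the image of some $A^\ell\to A^n$, then $\log(P)=(\log P_1,\ldots,\log P_n)$ lies in the single algebraic \emph{degeneracy locus} $\mathbb{D}_\ell\subset\Lie(A/K)^n$ of rank $\leq\ell$ $n$-tuples, which has codimension $(n-\ell)(g-\ell)$. It then pulls $\mathbb{D}_\ell$ back along the morphism $p:P_n\to\Lie(A/K)^n$ built from the universal connection (Lemma~\ref{lemma:simplest_commutative}), and applies Theorem~\ref{thm:BCFN} \emph{once} to the intersection of $p^{-1}(\mathbb{D}_\ell)$ with the horizontal leaf. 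The torsion translates and the parametrization by your $\mathcal{T}_k$ are absorbed at a stroke into the fact that $\log$ kills torsion and that $\bigcup_{\alpha\in\mathcal{T}_k}\alpha=\mathbb{D}_k$; there is no separate incidence variety.

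The gap in your version is precisely what this manoeuvre avoids. Your $\mathcal{Z}=\{(P,\alpha):P\in B_\alpha+t_\alpha\ \text{for some torsion}\ t_\alpha\}$ is not an algebraic variety but a \emph{countable union} of varieties, one for each torsion point; a dimension inequality $\dim\mathcal{Z}<d$ for each piece does not contradict the Zariski density of the projection to $V$ (a countable union of lower-dimensional subvarieties can be dense — this is exactly the difficulty in Manin--Mumford/Zilber--Pink problems that forces one to either count torsion or use a transcendence input). You do gesture at passing to the $p$-adic analytic linearization inside $L$, which would merge the translates, but then the object you are dimension-counting is analytic, not algebraic, and the relevant control is exactly Theorem~\ref{thm:BCFN}; the claimed dichotomy ``either $V$ is special or the fibers of $\mathcal{Z}\to\mathcal{T}_{=c}$ have expected dimension'' is not what that theorem says (it says: an atypical intersection of a subvariety of the principal bundle with a horizontal leaf projects into $\nabla$-special subvarieties), and deriving your dichotomy from it would need to be spelled out. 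Finally, note that your dimension count as written gives $\dim\mathcal{Z}<d$ whenever $r>g(n-\min\{n,g\})$, with no constraint on $d$; the asserted bound $d\leq r/n-r(ng-r)/(ng^2)$ is claimed to emerge from ``iterating along the chain of special subvarieties'' and ``symmetric averaging over the $n$ factors,'' but that computation is not carried out, so the proposal does not actually recover the bound in the statement.
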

\subsection{Relation to work of Besser}
As explained above, the main technical result needed in the proof of Theorem \ref{thm:main1} is an interpretation of Coleman integration in families in terms of parallel transport with respect to the Gauss--Manin connection. This question has been considered before: in \cite{besser} and \cite{Besser2011differential}, Besser considered the problem of studying Coleman integrals in families. Although the framework he uses is slightly different, the formulas we obtain describing the variation of Coleman integrals in families in terms of the Gauss--Manin connection are, after translation, the same as those used in \cite[1.7.1]{besser}. Besser also gives an interesting interpretation in terms of differential Tannakian categories, which we do not consider in this paper.

It should be emphasised that, in this paper, our aims when studying Coleman integrals in families are rather modest: we are largely concerned with the \textit{local} variation of Coleman integrals. That is, when we work with a family $\pi :X\to S$ of varieties, we only study the variation of Coleman integrals on individual residue disks in $S$. It would be interesting to have a more global theory.
\subsection{Comparison with the work of Manin, Coleman and Chai}
Our method of proof bears some relation with the (Coleman-)Manin proof of the Mordell conjecture over function fields \cite{manin}, \cite{coleman:manin}, \cite{chai}, \cite{bertrand}. Namely, in these works one studies sections of a family of curves $\pi :X\to S$ (i.e .the set of $S$-valued points $X(S)$) using a certain `universal extension' of flat connections coming from the Gauss--Manin connection associated to $\pi $. Somewhat more precisely, this universal extension is a certain tautological extension of the pullback of the Gauss--Manin connection along $\pi $ by the trivial connection on $X$. Pulling back along the section defines a map from $X(S)$ to the abelian group of extensions of the Gauss--Manin connection by the trivial connection in the category of flat connections on $S$.

In this paper, we use the same connection to study the rational, rather than $S$-valued, points of $X$. Namely, suppose $X, S$ and $\pi$ are defined over a finite extension $K$ of $\Q _p$. Then the universal connection can be enriched with the structure of a filtered $F$-isocrystal, and pulling back along a $K$-point gives a map from $K$-points to extensions of filtered $\phi $-modules -- although now the underlying ext group is not fixed, but varies with the base $S$.
\subsection{Comparison with the work of Kim and Lawrence--Venkatesh}
As will be clear, Kim's work on a nonabelian generalisation of the Chabauty--Coleman method is used in an essential way in the proof of Theorem \ref{thm:main2}. Kim's method makes use of a commutative diagram of the form
\begin{equation}\label{eqn:the_square}
\begin{tikzcd}
X(\mathbb{Z}_T ) \arrow[d] \arrow[r] & H^1 _{\mathcal{L}}(G_{\Q },U) \arrow[d] \\
X(\mathbb{Z}_p ) \arrow[r]           & H^1 _{\mathcal{L}}(G_{\Q _p },U).          
\end{tikzcd} 
\end{equation}
Here $U$ is the set of $\Q _p$-points of a nonabelian (unipotent) algebraic group over $\Q _p $ with a continuous action of $G_{\Q }$, $T$ denotes a finite set of primes, and $\mathcal{L}$ denotes certain finiteness conditions on a nonabelian cohomology set.

The Lawrence--Venkatesh method \cite{LV}, roughly speaking, makes use of a somewhat similar diagram, where the top right term is now replaced by a finite set of semisimple weight $1$ Galois representations unramified outside a finite set of primes, and the bottom right term is now a set of isomorphism classes of filtered $\phi $-modules \footnote{This is not strictly accurate in that the existence of such a diagram uses a semisimplicity result which the authors do not assume, but the (ingenious) workaround is less relevant to the discussion.}.

One way to make these arguments work in families is to consider families of varieties with good reduction outside a fixed finite set of primes. By Faltings' proof of the Shafarevich conjecture, this means that one is only considering finitely many curves, hence the potential applications are of a more quantitative or effective nature. In the context of the Lawrence--Venkatesh method, this has been considered in a recent preprint of Lemos and Torzewski \cite{lemos2022bounds}. In the context of the Chabauty--Coleman--Kim method something somewhat related is considered in the thesis of Noam Kantor \cite{kantor2020chabauty} (although strictly speaking the latter considers a fixed curve). 

In this paper, we take a different approach, and do not impose a condition of good reduction outside a fixed finite set of primes. This forces us to in some sense only work globally fibre-by-fibre. More precisely, as we vary our curve $X$ in a family, by the Kim--Olsson comparison theorem \cite{kim3}
\[
H^1 _f (G_{\Q _p },U)\simeq U_{\dR} /F^0 ,
\]
we can meaningfully vary the bottom right term of \eqref{eqn:the_square} in families. The variation of the unipotent Albanese morphism in families introduces locally analytic functions on a $p$-adic variety which nontrivially extend the notion of Coleman functions introduced by Besser \cite{besser}.

However, the top right term $H^1 _{\mathcal{L}} (G_{\Q },U)$ varies in a less structured way. The simple idea which we pursue in this paper is to instead work with fibre products, and make use of the fact that $H^1 _f (G_{\Q _p },U)$ is a subvariety of the affine space of \textit{explicitly bounded degree}. To simplify things, consider the classical Chabauty--Coleman setting. Suppose we have a family of genus $g$ curves $\pi :X\to S$, and infinitely many $\Q $-points $s\in S(\Q )$ where $\Jac (X_s )$ has rank $r\leq g$. For each such $s$, the Chabauty--Coleman method works by intersecting the curve $X_s (\Q _p )$ with the (at most) $r$-dimensional linear subspace $\overline{\Jac (X_s )(\Q )}_{\Q _p }$ of $H^1 (X_{s,\Q _p },\mathcal{O})$ generated by $\Jac (X_s (\Q _p ))$. 

If we now consider the total space of the vector bundle $R^1 \pi _* \mathcal{O}_X$, then there is no reason to expect any kind of $p$-adic continuity in the different $r$-dimensional subspace we get. However, when we look instead at $X^n _S $, for $n>r$, we find that the subspaces $\overline{\Jac (X_s )(\Q )}^n _{\Q _p }$ lie in a proper subscheme of the total space of $R^1 \pi _* \mathcal{O}_X ^{\oplus n}$, namely the rank $\leq r$ degeneracy locus, which gives locally analytic functions on $X^n _S (\Q _p )$ vanishing on rank $r$ $n$-tuples. Using a recent theorem of Bl\`azquez-Sanz, Casale, Freitag and Nagloo \cite{BCFN}, we show that the zero locus of these functions is not Zariski dense.

To extend this argument to the nonabelian setting, we have to replace linear subspaces with `filtered subschemes' in the sense of Betts \cite{betts21}. This construction is carried out in section \ref{sec:filtered}.

\subsection{The role of the $p$-adic Betti map}
This construction of local analytic isomorphisms between different fibres of a family of abelian varieties can be thought of as a $p$-adic analytic analogue of the \textit{Betti map} \cite{ACZ} \cite{CGHX} \cite{gao:betti} which is ubiquituous in recent breakthroughs in Diophantine geometry using unlikely intersections such as \cite{kuhne21} \cite{DGH21} \cite{gao2023relative} \cite{gao2021uniform}. We hope that this relation is not merely superficial. For example, a natural question would be to use the notion of degenerate subvarieties (in the sense of \cite{gao:betti}) to obtain a refined description of the Zariski closure of the sets $X^n _S (K)_{\Gamma -\rk \leq r}$ arising in the theorems above.

\subsection{Notation}
By a $p$-adic field $K$ we will always mean a finite extension of $\Q _p$. If a $p$-adic field is denoted by $K$, then $\mathcal{O}_K$ will denote the ring of integers, $\pi _K$ the uniformiser, and $k$ the residue field.

If $A$ and $B$ are two filtered objects in an abelian category, whose associated gradeds are isomorphic, we will say that an isomorphism $f :A\to B$ is \textit{unipotent} if it respects the filtrations and is equal to the identity on the associated gradeds.

If $X^n _S$ is an $n$-fold fibre product, we shall denote the $i$th projection $X^n _S \to X$ by $\pr _i $, and the projection $X^n _S \to S$ by $\pi ^n$. If $U\to S$ is an object over $S$, we define $X^n _U :=(X^n _S )\times _S U$.

\subsection*{Acknowledgements}
Much of this paper was inspired by ongoing joint work with Jan Vonk. The author is also grateful to Alex Betts for helpful discusssions regarding filtered schemes, and to Dan Abramovich for some corrections to an earlier version of this paper.
\section{The $\nabla $-special Ax--Schanuel theorem for principal $G$-bundles}\label{sec:AxSchan}
Let $X$ be a variety over a field $K$ of characteristic zero, and $G$ a group over $K$. Let $\pi :P\to X$ be a principal $G$-bundle. A connection on $P$ is a section $\nabla :\Omega _{P|K}\to \pi ^* \Omega _{X|K}$ of the exact sequence of $\mathcal{O}_P$-modules
\begin{equation}\label{eqn:connectionSES}
0\to \pi ^* \Omega _{X|K}\to \Omega _{P|K}\to \Omega _{P|X}\to 0.
\end{equation}
We will sometimes think of $\nabla $ as a morphism $\pi ^* TX\to TP$. A connection is $G$-principal if it is $G$-equivariant with respect to the natural $G$-action on \eqref{eqn:connectionSES}.

A connection $\nabla$ is flat if it respects differentials, i.e. if the diagram
\[
\begin{tikzcd}
\Omega _{P|K} \arrow[d] \arrow[r, "\nabla "] & \pi ^* \Omega _{X|K} \arrow[d] \\
\Omega ^2 _{P|K} \arrow[r, "\nabla \wedge \nabla"]           & \pi ^* \Omega ^2 _{X|K}          
\end{tikzcd}
\]
commutes. Dually, thought of as a morphism between tangent bundles, a flat connection is one which respects the Lie bracket.

The isomorphism $P\times _X P\simeq P\times _K G$ induces an isomorphism $\Omega _{P|X}^* \simeq \mathfrak{g}\otimes \mathcal{O}_P$. Via this isomorphism, the section $\nabla $ defines the connection form $\Omega \in H^0 (P,\Omega _{P|K}\otimes \mathfrak{g})$.

The most important example in this paper will be the frame bundle of a vector bundle. If $\mathcal{V}$ is a vector bundle of rank $n$, then the frame bundle $P_{\mathcal{V}}$ is the sheaf
\[
U\mapsto \Isom (\mathcal{O}_U ^{\oplus n},\mathcal{V}|_U ).
\]
This is a principal $\GL _n$-bundle.
Suppose $P_{\mathcal{V}}$ is the frame bundle associated to a vector bundle with flat connection $(\mathcal{V},\nabla )$. Then we define a connection on $P$ as follows. Write $\mathcal{V}$ in terms of Cech cocycles as $(U_{\alpha },\psi _{\alpha \beta })$, and a cocycle description $(\Lambda _{\alpha },g_{\alpha \beta })$ of $\nabla $. Then the frame bundle $P_{\mathcal{V}}$ can be trivialised on $U_{\alpha }$, and the connection form is given on $U_{\alpha }$ by 
\[
T^{-1}dT-T^{-1}\Lambda _{\alpha }T,
\]
where $T$ is the identity morphism on $\GL_n$.
\subsection{Foliations and the $\nabla$-special Ax--Schanuel theorem}
By a \textit{foliation} on a variety $Y$ over $K$ we shall mean a sub-bundle $\mathcal{F}$ of the tangent bundle which is closed under the Lie bracket (called an \textit{involutive sub-bundle} in \cite{Bost01}). We say a foliation $\mathcal{F}$ on a connected variety $Y$ has a \textit{rational first integral} if there is a non-constant $f\in K(Y)$ such that $df \in \mathcal{F}\otimes _{\mathcal{O}_Y} K(Y)$.
By a formal horizontal leaf of $\mathcal{F}$ at a point $x\in X(K)$, we shall mean the unique closed formal subscheme $i:\widehat{\mathbb{A}}^d \hookrightarrow \widehat{X}_x$ such that $T\widehat{\mathbb{A}}^d \simeq i^* \mathcal{F}$ as subbundles of $i^* T\widehat{X}_x$, as in \cite[3.4.1]{Bost01}.

Let $\nabla $ be a principal $G$-connection on a principal $G$-bundle $\pi :P\to X$. The foliation associated to $\nabla $ is defined to be $\nabla (\pi ^* TX)$. This foliation may be characterised as the sub-bundle of $\Omega _{P|K}$ spanned by the coordinates of the connection form $\Omega \in \Gamma (P,\Omega _{P|K})\otimes _K \mathfrak{g}$ with respect to a $K$-basis of $\mathfrak{g}$.

A formal horizontal leaf of $\nabla$ at $x\in P(K)$ is a formal horizontal leaf of the associated foliation. Note that a formal horizontal leaf is, in particular, a section of $\widehat{\pi }_x :\widehat{P}_x \to \widehat{X}_{\pi (x)}$, the formal completion of $\pi $ at $x$.

We say that a principal $G$-bundle with connection $(P,\nabla )$ has Galois group $G$ if $(P,\nabla )$ does not descend to a principal $H$-bundle with connection for any proper subgroup $H$ of $G$. 

\begin{lemma}
$(P,\nabla )$ has Galois group $G$ if and only if for any point $x\in P$, the formal horizontal leaf of the associated foliation is Zariski dense in $G$.
\end{lemma}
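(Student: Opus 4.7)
The plan is to prove each direction by contrapositive, translating Zariski non-density of the formal leaf into the existence of a reduction of structure group.

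For $(\Leftarrow)$: Suppose $(P,\nabla)$ descends to a principal $H$-bundle with connection $(P',\nabla')$ for some proper closed subgroup $H \subsetneq G$, so that $P \cong P' \times^H G$ compatibly with the connections. The natural embedding $\iota\colon P' \hookrightarrow P$, $p' \mapsto [p',e]$, is $\nabla$-horizontal, so for any $x \in \iota(P')(K)$ the formal leaf $L_x$ is contained in $\iota(P')$. Since $\iota(P')$ is a closed subvariety of dimension $\dim X + \dim H < \dim P$, the leaf $L_x$ is not Zariski dense in $P$, producing a point at which the density condition fails.

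For $(\Rightarrow)$: Suppose there exists $x \in P(K)$ such that $Z_x := \overline{L_x}^{\mathrm{Zar}} \subsetneq P$. I extract from $Z_x$ a reduction of $(P,\nabla)$ to a proper subgroup. By $G$-equivariance of $\nabla$ we have $L_{x\cdot g} = L_x \cdot g$ and hence $Z_{x \cdot g} = Z_x \cdot g$; set
\[
H := \{g \in G : Z_x \cdot g = Z_x\},
\]
a closed algebraic subgroup of $G$. First $H$ is proper: if $H = G$, then $Z_x$ is saturated under the free $G$-action on fibres of $\pi$, hence $Z_x = \pi^{-1}(\pi(Z_x))$, and since $L_x$ is a section of $\widehat{\pi}_x$ the image $\pi(Z_x)$ contains a formal neighbourhood of $\pi(x)$ and therefore equals $X$, forcing $Z_x = P$, contradiction. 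I then show that over a dense open $U \subset X$, $Z_x|_U$ is a principal $H$-subbundle of $P|_U$: $Z_x$ is irreducible (Zariski closure of the irreducible formal leaf $L_x$), $\pi|_{Z_x}$ is dominant with equidimensional generic fibres, and after local trivialization the generic fibre becomes an irreducible, right-$H$-invariant closed subvariety of $G$ whose full stabilizer is $H$, hence a single right $H$-coset. For the restricted connection: at points $x \cdot h$ with $h \in H$ the horizontal leaf is $L_x \cdot h \subset Z_x$, so the horizontal distribution is tangent to $Z_x$ at those points, and $H$-equivariance propagates this tangency to all of $Z_x|_U$, yielding a flat principal $H$-connection. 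An extension argument (schematic closure of the subbundle, or descent for principal bundles with connection across codimension-two loci) upgrades this generic reduction to a global reduction, contradicting the Galois group hypothesis.

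The main technical hurdle is the coset step: showing that an irreducible right-$H$-invariant closed subvariety of $G$ whose full stabilizer is $H$ must be a single right $H$-coset. The rough reason is that the image in $G/H$ is irreducible with trivial residual translation symmetry and must be a point; however, care is needed to rule out that the stabilizer of the generic fibre $F_y \subset P_y \cong G$ is strictly larger than $H$, which one handles by the equality $H = \bigcap_y \mathrm{Stab}_G(F_y)$ over a dense open and a dimension count forcing each $\mathrm{Stab}_G(F_y)$ to coincide with $H$ at generic $y$. The secondary subtlety, extending a generic reduction of principal bundles with flat connection to a global one, is standard by faithfully flat descent.
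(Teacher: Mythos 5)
Your easy direction matches the paper's. For the converse, you take a genuinely different route: the paper spreads out to a finitely generated field, embeds into $\mathbb{C}$, and invokes the Riemann--Hilbert correspondence (Galois group equals Zariski closure of monodromy, and monodromy lands in the proper closed subvariety $Z_x$). You instead try to build the $H$-reduction directly from the stabilizer of $Z_x$, which would give a purely algebraic proof avoiding the analytic detour — an attractive idea in principle.

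However, the key step of your argument has a genuine gap. You need the generic fibre $F_y := Z_x \cap P_y$ (a closed, irreducible, right-$H$-invariant subvariety of $P_y \cong G$) to be a single right $H$-coset, and you assert this follows because ``the image in $G/H$ is irreducible with trivial residual translation symmetry and must be a point.'' That implication is false. For instance, with $G = \mathbb{G}_a^2$ and $H = \{e\}$, the parabola $\{(t,t^2)\}$ is irreducible, closed, with trivial right-translation stabilizer, yet is not a point of $G/H = G$. Nothing in your dimension-count remark forces $\dim F_y = \dim H$: the stabilizer of an irreducible variety can be of strictly smaller dimension than the variety itself, and the subtlety you flag (the stabilizer being \emph{larger} than $H$) is not the one that actually threatens the argument. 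What prevents a parabola-type fibre from arising is precisely the fact that $Z_x$ is the closure of a \emph{horizontal} leaf — this is the content of the differential Galois / Picard--Vessiot theory, or of the paper's monodromy argument — and your proof does not make use of this input at the crucial moment. Without it, the claim that $Z_x|_U$ is a principal $H$-subbundle is unsupported. You would need either to invoke the Riemann--Hilbert correspondence as the paper does, or to bring in the Tannakian/Picard--Vessiot fact that the fibre of the leaf closure over the basepoint is a torsor under the differential Galois group of $(P,\nabla)$.
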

\begin{proof}
If $(P,\nabla )$ descends to an $H$-bundle with connection $P'$ for $H<G$, then the formal horizontal leaf will have Zariski closure contained in $P'$. Conversely, suppose that a formal horizontal leaf is not Zariski dense. Without loss of generality $K$ is finitely generated over $\Q $, and we may choose an embedding $K\hookrightarrow \mathbb{C}$. Let $\rho :\pi _1 (X_{\mathbb{C}},x)\to G(\mathbb{C})$ be the corresponding monodromy representation. We have a proper subvariety $P'$ of $P$ such that the parallel transport isomorphism is contained in $P'$ near $x$. Hence we deduce that $\rho $ is not Zariski dense in $G$, and hence $(P,\nabla )$ descends to a principal $H$-bundle with connection for some $H$.
\end{proof}
Following \cite{BCFN}, we say that $(P,\nabla )$ has Galois group $H<G$ if $(P,\nabla )$ descends to a principal $H$-bundle with connection with Galois group $H$.

We say a group $G$ over $K$ is sparse if every proper sub-Lie algebra $\mathfrak{h}\to \mathfrak{g}$ of $\mathfrak{g}:=\Lie (G)$ is contained in Lie algebra of a proper subgroup $H\to G$. 
\begin{lemma}
If the maximal reductive quotient $G_0$ of $G$ is semi-simple, then $G$ is sparse.
\end{lemma}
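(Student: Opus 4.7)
The plan is to reduce via the Levi decomposition to the case of the semisimple quotient $G_0$, for which sparseness follows from a direct dichotomy based on whether the subalgebra is an ideal. Since we are in characteristic zero, $G$ admits a Levi decomposition $G = R_u(G) \rtimes L$ with $L$ a reductive subgroup isomorphic (as algebraic group) to $G_0$; correspondingly $\mathfrak{g} = \mathfrak{u} \oplus \mathfrak{l}$ with $\mathfrak{u} := \Lie R_u(G)$ nilpotent and $\mathfrak{l} \cong \mathfrak{g}_0$ semisimple by hypothesis. Let $p \colon \mathfrak{g} \twoheadrightarrow \mathfrak{l}$ denote the projection and take a proper Lie subalgebra $\mathfrak{h} \subsetneq \mathfrak{g}$.

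If $p(\mathfrak{h}) \subsetneq \mathfrak{l}$, I would invoke the auxiliary claim (below) that $G_0$ is sparse to obtain a proper algebraic subgroup $L' \subsetneq L$ with $\Lie L' \supseteq p(\mathfrak{h})$; then $R_u(G) \rtimes L'$ is a proper algebraic subgroup of $G$ whose Lie algebra contains $\mathfrak{h}$. If instead $p(\mathfrak{h}) = \mathfrak{l}$, the Levi--Malcev theorem applied to $\mathfrak{h}$ furnishes a decomposition $\mathfrak{h} = (\mathfrak{h} \cap \mathfrak{u}) \oplus \mathfrak{s}$ with $\mathfrak{s}$ a Levi of $\mathfrak{h}$; since $p|_{\mathfrak{s}}$ is an isomorphism onto $\mathfrak{l}$, $\mathfrak{s}$ is also a Levi of $\mathfrak{g}$. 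Malcev's conjugacy theorem over $K$ supplies $u \in R_u(G)(K)$ with $\Ad(u)\mathfrak{l} = \mathfrak{s}$; after replacing $\mathfrak{h}$ by $\Ad(u^{-1})\mathfrak{h}$ (and conjugating the final subgroup back by $u$), we may assume $\mathfrak{l} \subseteq \mathfrak{h}$, so $\mathfrak{h} = (\mathfrak{h} \cap \mathfrak{u}) \oplus \mathfrak{l}$ with $\mathfrak{h} \cap \mathfrak{u} \subsetneq \mathfrak{u}$ a proper Lie subalgebra stable under $\ad \mathfrak{l}$. Since $\mathfrak{u}$ is nilpotent in characteristic zero, $\exp$ identifies $\mathfrak{h} \cap \mathfrak{u}$ with a proper closed subgroup $U' \subsetneq R_u(G)$ normalised by $L$, and $U' \rtimes L$ is the desired proper algebraic subgroup of $G$ containing $\mathfrak{h}$ in its Lie algebra.

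For the auxiliary claim that any semisimple algebraic group $G_0$ is sparse, let $\mathfrak{h} \subsetneq \mathfrak{g}_0$ be a proper subalgebra. If $\mathfrak{h}$ is an ideal of $\mathfrak{g}_0$, then by semisimplicity it is a partial sum of the simple factors in the canonical decomposition $\mathfrak{g}_0 = \bigoplus_i \mathfrak{g}_i$, and is therefore itself the Lie algebra of a proper normal algebraic subgroup of $G_0$. Otherwise $\mathfrak{h}$ fails to be an ideal, so the infinitesimal normaliser $N_{\mathfrak{g}_0}(\mathfrak{h})$ is properly contained in $\mathfrak{g}_0$; the algebraic normaliser $N_{G_0}(\mathfrak{h}) = \{g \in G_0 : \Ad(g)\mathfrak{h} = \mathfrak{h}\}$ is Zariski closed in $G_0$ and has Lie algebra $N_{\mathfrak{g}_0}(\mathfrak{h})$ (standard in characteristic zero), yielding a proper algebraic subgroup containing $\mathfrak{h}$.

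The main obstacle is minor: one must verify that the Levi--Malcev conjugacy theorem applies over the base field $K$ rather than only after base change to $\overline{K}$, but this is standard in characteristic zero since $R_u(G)(K)$ acts transitively on $K$-Levi subalgebras. The conceptual role of the semisimplicity hypothesis on $G_0$ is precisely to guarantee the ideal-versus-non-ideal dichotomy; for a merely reductive $\mathfrak{g}_0$, proper subalgebras such as irrational $K$-lines in a Cartan subalgebra need not fall into either case, and a finer analysis of the algebraic envelope would be required.
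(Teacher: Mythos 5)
Your proof is correct and follows the same overall strategy as the paper: handle the semisimple base case, reduce to the situation where the subalgebra surjects onto $\Lie G_0$, and exponentiate the kernel inside the unipotent radical to produce a semi-direct product subgroup. The main divergence is in the semisimple base case: the paper cites the algebraic envelope $\overline L$ of \cite{BCFN} together with the fact that $L$ is an ideal of $\overline L$, whereas you argue directly via the normalizer $N_{G_0}(\mathfrak h)$. These are two faces of the same coin --- the standard proof that $L$ is an ideal of its envelope uses precisely the observation that the normalizer of $L$ is algebraic and contains $L$ --- so you have unwound the envelope machinery into its elementary normalizer content. You are also more explicit than the paper's terse exposition at the reduction step: the paper asserts without comment that $I=\Ker(L\to \Lie G_0)$ is $G_0$-stable, which is only transparent after conjugating $L$ to contain a fixed Levi $\mathfrak l$ via Malcev's theorem, a step you carry out carefully. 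One small inaccuracy in your closing remark: a generic $K$-line in the Lie algebra of a central torus \emph{does} fall into the ``ideal'' branch of the dichotomy (every subspace of an abelian Lie algebra is an ideal); what fails in the non-semisimple reductive case is rather that such an ideal need not be algebraic, which is exactly where the semisimplicity hypothesis earns its keep.
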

\begin{proof}
As in \cite{BCFN}, defined the algebraic envelope $\overline{L}$ of a Lie subalgebra $L \subset \mathfrak{g}$ to be the smallest Lie-subalgebra of an algebraic subgroup of $G$ containing $L$. Then $L$ is an ideal of $\overline{L}$, hence the result follows when $G$ is semi-simple. In general, we reduce to the case when $L$ surjects onto $\Lie (G_0 )$. The kernel $I$ of $L\to \Lie (G_0 )$ is a $G_0$-stable sub-Lie algebra of the nilpotent Lie algebra $\Ker (G\to G_0 )$ (and by hypothesis is proper), and hence can be exponentiated to a $G_0$-stable proper subgroup $\exp (I)$ of $\Ker (G\to G_0 )$. The induced semi-direct product of $G_0 $ by $\exp (I)$ is then the desired proper subgroup of $G$.
\end{proof}
A $\nabla $-special subvariety of $X$ (relative to $\nabla $) is a subvariety $i:Y\hookrightarrow X$ such that the $G$-principal connection $i^* \nabla $ on $P\times _X Y$ has Galois group strictly smaller than $G$. The $\nabla$-special Ax--Schanuel theorem of Bl\`azquez-Sanz, Casale, Freitag and Nagloo is as follows.
\begin{theorem}[\cite{BCFN}, Theorem 3.6]\label{thm:BCFN}
Let $G$ be a sparse group over $K$, and let $\nabla $ be a $G$-principal connection on $\pi :P\to X$ with Galois group $G$. Let $V$ be a subvariety of $P$, $x\in V(K)$, and let $\mathcal{L}\subset \widehat{P}_x$ be a formal horizontal leaf through $x$. Let $W$ be an irreducible component of $V\cap \mathcal{L}$. If
\[
\dim V<\dim W +\dim G,
\]
then the projection of $W$ in $X$ is contained in a finite union of $\nabla$-special subvarieties.

If $X, G, \pi ,x$ and $V$ are defined over a number field $K$, then there is a proper closed subvariety of $X$ containing a neighbourhood of $\pi (W)$ which is effectively computable in terms of $X,G ,\pi $ and $V$.
\end{theorem}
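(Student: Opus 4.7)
The plan is to follow the argument in \cite[Theorem 3.6]{BCFN} for the first part and then track effectivity through each step when the data is defined over a number field.

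For the first part, I would proceed as follows. Work formally near $x$: in local coordinates on $\widehat{P}_x$, the formal horizontal leaf $\mathcal{L}$ is the graph of the formal parallel transport, i.e.\ the unique formal solution to the system of ODEs $dg = \Omega \cdot g$ cut out by the connection form $\Omega$. Let $\overline{W}$ be the Zariski closure of $W$ in $P$ (or rather, the smallest closed subvariety of $V$ containing $W$). The hypothesis $\dim V < \dim W + \dim G$ can be reformulated as saying that $\overline{W}$ has excess intersection with $\mathcal{L}$: the fibres of $\overline{W}\to\pi(\overline{W})$ have dimension strictly larger than $\dim V - \dim \pi(\overline{W}) - \dim G + \dim G = \dim V - \dim \pi(\overline W)$ would predict in the transverse case. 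This forces the fibres to be invariant under a positive-dimensional infinitesimal action.

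Concretely, the right-translation $G$-action on $P$ preserves the foliation associated to $\nabla$, and one checks that the Lie subalgebra
\[
\mathfrak{h}:=\{Y\in\mathfrak{g}:\text{the vector field }Y^\sharp\text{ is tangent to }\overline{W}\}
\]
is a \emph{proper} Lie subalgebra of $\mathfrak{g}$ which acts with positive-dimensional orbits on the generic fibre of $\overline{W}\to\pi(\overline{W})$. By the sparsity hypothesis on $G$, $\mathfrak{h}\subset \Lie(H)$ for a proper algebraic subgroup $H<G$, and the reduction of structure group to $H$ along $\pi(\overline{W})$ is realised by $\overline{W}$. Hence $\pi(W)\subset\pi(\overline{W})$ lies in a $\nabla$-special subvariety. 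Running the argument over all irreducible components of $V\cap\mathcal{L}$ gives the finite union.

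For the effectivity claim, I would observe that each step above is algorithmic. Defining equations for $\overline{W}$ as a subscheme of $P$ can be extracted by truncating the Taylor expansion of the parallel transport to a sufficiently high order $N$, intersecting with the defining equations of $V$, and saturating: a polynomial equation of degree at most $\deg(V)$ satisfied on $\mathcal{L}$ can be detected once $N$ is large enough in terms of $\deg(V)$ and the order of $\Omega$. The subalgebra $\mathfrak{h}$ is then cut out by linear equations on $\mathfrak{g}$ expressing tangency of the associated vector fields $Y^\sharp$ to the computed ideal of $\overline{W}$. The algebraic envelope of $\mathfrak{h}$ inside $G$ is computable via standard algorithms on algebraic groups over $K$, yielding $H$. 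Finally, the locus in $X$ over which $(P,\nabla)$ reduces to an $H$-bundle is cut out by vanishing of the appropriate components of $\Omega$ modulo $\Lie(H)$ after a base change, giving an explicit proper closed subvariety of $X$ containing a neighbourhood of $\pi(W)$.

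The main obstacle is the truncation step: one needs an explicit bound on the order $N$ of Taylor expansion needed to recover the ideal of $\overline{W}$ from $V\cap\mathcal{L}$ in terms of $\deg V$ and the local data of $\nabla$. This is essentially a Noetherianity estimate for the formal analytic intersection, and it is where effectivity of the resulting variety -- as opposed to mere computability in principle -- really requires care; for the purposes of the theorem, mere computability suffices.
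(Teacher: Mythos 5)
The argument for the first part has a genuine gap. You claim that the excess-intersection hypothesis $\dim V < \dim W + \dim G$ forces the fibres of $\overline W\to\pi(\overline W)$ to be positive-dimensional, and hence that the infinitesimal stabilizer $\mathfrak h = \{Y\in\mathfrak g : Y^\sharp \text{ tangent to }\overline W\}$ is a nonzero proper Lie subalgebra. This does not follow. For instance, if $V$ is the graph of a rational section of $\pi$ over an open subset of $X$ and $W\subset V\cap\mathcal L$ is positive-dimensional, then $\overline W$ lies in that graph and projects generically injectively onto $\pi(\overline W)$, so $\mathfrak h = 0$; nothing in the hypotheses excludes this. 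The quantity that actually drives the argument in \cite{BCFN} (as unpacked in the paper) is not the stabilizer of $\overline W$ but the image of the connection form restricted to $V$, viewed as a map $\Omega:\Omega_{K(V)|K}^*\to\mathfrak g\otimes K(V)$. Since $T_W\subset\Ker\Omega\cap T_V$ and $\dim W>\dim V-\dim G$, this map has rank strictly less than $\dim G$, so its image is a proper subspace. One then writes a basis of $\Ker(\Omega\otimes K(V))$ using rational functions $f_{ij}\in K(V)$, which are killed by every derivation in the foliation, and the dichotomy is: either every $f_{ij}$ is constant, in which case the image of $\Omega|_V$ is a fixed proper subspace which is closed under the bracket by flatness, and sparsity yields a proper $H<G$ making $\pi(\overline W)$ a $\nabla$-special subvariety; or some $f_{ij}$ is a non-constant rational first integral, which cuts $V$ down to a proper subvariety still containing $W$, and one iterates. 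Your $\mathfrak h$ is a genuinely different object, and nothing guarantees it is nonzero.

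This distinction is also what makes the effectivity go through. The dichotomy above is phrased entirely in terms of the $f_{ij}$, which are explicitly computable from the matrix of $\Omega|_V$ over $K(V)$; one never needs to compute $\overline W$, nor to know the leaf beyond its tangent space at $x$. Your proposed route — truncating the Taylor expansion of the parallel transport to recover the ideal of $\overline W$ and then imposing tangency conditions — needs an a priori bound on the truncation order and a certificate that the ideal has been recovered, which you correctly flag as the hard point. The iterative argument via the $f_{ij}$ avoids the truncation problem entirely, which is precisely why the paper can assert effectivity over a number field without any such bound.
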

Note that Bl\`azquez-Sanz, Casale, Freitag and Nagloo only state this theorem in the case $K=\mathbb{C}$, and hence in particular do not formulate the second claim about explicit computability. However, as we explain below, these results follow straightforwardly from the proof they give in loc. cit. The claim about fields of definition is immediate, so we focus on explaining the effectivity.

First, note that we may assume that $\dim W>0$. Let $\Omega \in H^0 (P,\Omega _{P|K}\otimes \mathfrak{g}) $ denote the connection form for $P$ as defined above. We can identify the image of the tangent space of $\mathcal{L}$ in that of $\widehat{P}_x$ with the kernel of $\Omega $ as explained above. Hence the tangent space of $W$ is contained in 
\[
\Ker (\Omega |_V )\simeq \Ker (\Omega )\cap i_* T_x V.
\]
Suppose that $\Omega |_V$ generically has rank $r$. This is equivalent to saying that the induced section $\xi _r$ of $\Omega ^r _{V|K}\otimes \wedge ^r \mathfrak{g}$ is nonzero, but the induced section $\xi _{r+1}$ of $\Omega ^{r+1} _{V|K}\otimes \wedge ^{r+1} \mathfrak{g}$ is zero. The locus of rank $<r$ points is exactly the vanishing locus of $\xi _r$. Let $\mathcal{H}\subset \mathfrak{g}^* \otimes \mathcal{O}_X$ denote the kernel of the induced map
\[
\mathfrak{g}^* \otimes \mathcal{O}_X \to \Omega _{X|K}.
\]

If $A$ is an integral, complete $K$-algebra topologically of finite type, with field of fractions $F$ we define $\widehat{\Omega }_{F|K}:=\widehat{\Omega }_{A|K}\otimes _A F$, and define the differential map $d:F\to \widehat{\Omega }_{F|K}$ in the obvious way. If $f\in F$ satisfies $df=0$ in $\widehat{\Omega }_{F|K}$, then $K[f]$ is finite over $K$. 

\begin{lemma}
At least one of the following holds.
\begin{enumerate}
\item The foliation on $V$ has an effectively computable rational first integral.
\item There is an effectively computable Lie subalgebra $\mathfrak{h}\subset \mathfrak{g}$ such that $\Omega (\Omega _{K(V)|K}^* )\subset \mathfrak{h}$.
\end{enumerate}
\end{lemma}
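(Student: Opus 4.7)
The plan is to use flatness of $\nabla$ to show that the generic image $M := \Omega(\Omega^*_{K(V)|K}) \subset \mathfrak{g} \otimes_K K(V)$, regarded as a point of the Grassmannian $\mathrm{Gr}(r,\mathfrak{g})(K(V))$, has the property that the ratios of its Pl\"ucker coordinates are annihilated by the foliation on $V$. The lemma then follows from a clean dichotomy on this point: either it descends to $\mathrm{Gr}(r,\mathfrak{g})(K)$, in which case $M$ is defined over $K$ and we obtain (2); or it does not, in which case some Pl\"ucker ratio is a non-constant rational first integral, giving (1).

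To extract the required identity from flatness, choose a rational basis of vector fields $Y_1,\ldots,Y_d$ on $V$ with $\Omega(Y_1),\ldots,\Omega(Y_r)$ a basis of $M$ and $\Omega(Y_j)=0$ for $j>r$; the latter $d-r$ fields span the foliation. The Maurer--Cartan equation $d\Omega + \tfrac{1}{2}[\Omega,\Omega]=0$ applied on the pair $(Y_j,Y_i)$ with $j>r$ and $i\leq r$ collapses, using $\Omega(Y_j)=0$, to
\[
Y_j\,\Omega(Y_i) \;=\; \Omega([Y_j,Y_i]).
\]
Since $[Y_j,Y_i]$ is again a vector field on $V$, the right-hand side lies in the image of $\Omega|_V$, which at the generic point is $M$. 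Expanding $\Omega(Y_i)=\sum_k c_{ik}e_k$ in a fixed $K$-basis of $\mathfrak{g}$, this says that the $r\times\dim\mathfrak{g}$ matrix $C=(c_{ik})$ over $K(V)$ satisfies $Y_jC = \Lambda_jC$ for some $r\times r$ matrix $\Lambda_j$ over $K(V)$, for every $j>r$.

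For any $r$-subset $\kappa$ of the columns, row-expansion of the determinant then gives $Y_j(\det C_\kappa) = \mathrm{tr}(\Lambda_j)\cdot\det C_\kappa$, so all the $r\times r$ minors of $C$ scale by the same factor under $Y_j$, and any ratio of two such minors is killed by every $Y_j$ tangent to the foliation. Equivalently, the Pl\"ucker coordinates of the point $[M]\in\mathrm{Gr}(r,\mathfrak{g})(K(V))$ are, up to a common scalar, first-integral candidates for the foliation.

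If every ratio of Pl\"ucker coordinates of $M$ happens to lie in $K$, then $[M]\in\mathrm{Gr}(r,\mathfrak{g})(K)$, so $M = W\otimes_K K(V)$ for an effectively computable $K$-subspace $W\subset\mathfrak{g}$; one then takes $\mathfrak{h}$ to be the Lie subalgebra of $\mathfrak{g}$ generated by $W$, computed by iterating brackets inside the finite-dimensional $\mathfrak{g}$ until stabilisation, yielding (2). Otherwise at least one Pl\"ucker ratio $f\in K(V)\setminus K$ is non-constant; by the previous paragraph it is annihilated by every vector field spanning the foliation, so is a rational first integral, effectively obtained from $\Omega|_V$ by taking minors, yielding (1). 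The main obstacle is the first step: harnessing flatness to obtain an algebraic formula for how $\Omega(Y_i)$ varies in the horizontal directions. Once that identity is in hand, the Pl\"ucker coordinate formalism converts the problem into a transparent rationality question about a $K(V)$-point of a Grassmannian.
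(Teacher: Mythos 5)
Your proof is correct and takes essentially the same route as the paper's: the paper's $f_{ij}$ parametrising $\Ker(\Omega^* \otimes K(V))$ are, up to sign and reindexing, exactly the Pl\"ucker ratios of $M$ that you use, and the paper's unjustified assertion that $D(f_{ij})=0$ for $D$ tangent to the foliation is precisely your Maurer--Cartan identity $Y_j\Omega(Y_i)=\Omega([Y_j,Y_i])$. One refinement worth recording: applying Maurer--Cartan also to pairs $(Y_i,Y_j)$ with $i,j\leq r$, together with the constancy of $M$, yields $[\Omega(Y_i),\Omega(Y_j)]\in M$ and hence $[W,W]\subset W$, so $W$ is already a Lie subalgebra and one should take $\mathfrak{h}=W$; this matters in the downstream effectivity argument, where $\mathfrak{h}$ must be \emph{proper}, which is automatic for $W$ (since $\dim W = r < \dim\mathfrak{g}$) but could fail for the subalgebra generated by $W$.
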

\begin{proof}
Let $v_1 ,\ldots ,v_n$ be a basis of $\mathfrak{g}$. Suppose the rank of $\Omega $ is $m$. Re-ordering the $v_i$ if necessary, we may write (i.e. effectively compute) a basis of $\Ker (\Omega \otimes K(V) )$ of the form
\[
v_{m+i}^* +\sum _{i=1}^m f_{ij} v_j ^* .
\]
For any derivation $D$ in the kernel of $\Omega $, we have $D(f_{ij})=0$, hence $df_{ij}=0$ in $\widehat{\Omega }_{K(\! (W)\! )|K}$. If all the $f_{ij}$ are constant, then as in \cite[Proof of Theorem 3.6]{BCFN}, they define a proper Lie subalgebra of $\mathfrak{g}$ to which $\Omega |_V$ descends. If not, then we obtain an effectively computable rational first integral.

\end{proof}
Given an effectively computable rational first integral $df$, we obtain an effectively computable function in the local ring at $x$ which vanishes along $\mathcal{L}$. If $f_i $ lies in $\mathcal{O}_{X,x}$, this function is just $f-f(x)$. Suppose not, so that $f=g/h$, where $g\in \mathcal{O}_{X,x}$, and $h\in \mathfrak{m}_{X,x}$. Then $g$ must also vanish at $x$, and $\mathcal{L}$ is contained in the zero locus of $g\cdot h$. Indeed, we have that $df$ lies in the kernel of 
\[
\widehat{\Omega }_{P,x}[1/h]\to \Omega _{L|K}[1/h],
\]
from which it follows that $f$ is constant on $\mathcal{O}(\mathcal{L})[\frac{1}{g}]$, and hence must be equal to zero.

We deduce that, under the assumptions in the statement of Theorem \ref{thm:BCFN}, either there is an effectively computable proper subvariety of $V$ containing $W$, or there is an effectively computable proper Lie subalgebra $\mathfrak{h}$ of $\mathfrak{g}$ such that 
\[
\Omega (T_x V) \subset \mathfrak{h}.
\]
for all $x\in V(K)$. Iterating this process, we arrive at the effectively computable closed subvariety of $X$ in the statement of Theorem \ref{thm:BCFN}.

\section{$p$-adic integrals in families and a $p$-adic Betti map}\label{sec:families}
In this section $K$ will denote a finite extension of $\Q _p$ with residue field $k$ and ring of integers $\mathcal{O}$.
\subsection{Review of rigid geometry}
Let $P\to \Spf (\mathcal{O})$ be a formal $\mathcal{O}$-scheme which is flat and finite type over $\mathcal{O}$. Let $P_K$ denote the generic fibre in the sense of Berthleot \cite{Ber96}, $P_k $ the special fibre. Let $\spe:P_K \to P_k$ denote the specialisation map. The \textit{tube} of a subvariety $X\subset P_k$ is $\spe^{-1}X\subset P_K$. As explained in \cite{Ber96},\cite{lestum}, this inherits the structure of a rigid analytic space.

\begin{definition}
For $P$ as above, we will say that a function $f:P_K (K)\to K$ is \textit{locally analytic} if, for each point $x\in P_k (k)$, $f|_{]x[(K)}$ comes from a rigid analytic function on the tube $]x[$.
\end{definition}

\begin{lemma}\label{lemma:analytic2formal}
Let $X$ be a geometrically irreducible variety over $\mathcal{O}_K$, and $x\in X(k)$. Let $f_1 ,\ldots ,f_n $ be rigid analytic functions on $]x[$. Let $Y\subset X_K$ be a geometrically irreducible subvariety of the generic fibre such that $Y^{\an }\cap ]x[ (K)$ is nonempty. Suppose that, for a Zariski dense set of points $x\in Y(K)$, the Zariski closure of the common zeroes of $f_1 ,\ldots ,f_n$ restricted to the formal completion of $Y$ along $x$ is not Zariski dense in $Y$. Then the common zeroes of $f_1 ,\ldots ,f_n$ are not Zariski dense in $Y$.
\end{lemma}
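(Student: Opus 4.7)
The plan is to argue by contradiction. Assume the common analytic vanishing locus
\[
Z = \{y \in Y^{\an} \cap\, ]x[(K) : f_1(y) = \cdots = f_n(y) = 0\}
\]
is Zariski dense in $Y$. After possibly refining the tube $]x[$ by an affinoid cover, the rigid analytic structure is Noetherian, so $Z$ decomposes into finitely many irreducible rigid analytic components $Z_1, \ldots, Z_m$. Letting $W_i \subseteq Y$ denote the Zariski closure of $Z_i$, the Zariski closure of $Z$ equals $\bigcup_i W_i = Y$, and irreducibility of $Y$ forces $W_i = Y$ for some $i$.

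The main input is a rigid analytic identity principle: for any $y \in Z_i$, the Zariski closure in $Y$ of the formal germ $\widehat{(Z_i)}_y \hookrightarrow \widehat{Y}_y$ equals $W_i$. Indeed, suppose $g \in \mathcal{O}_{Y,y}$ has the property that its image in $\widehat{\mathcal{O}}_{Y,y}$ lies in the ideal cutting out $\widehat{(Z_i)}_y$. Then $g$ vanishes as a formal power series along $\widehat{(Z_i)}_y$, which by comparison of Taylor expansions is equivalent to $g$ vanishing analytically on $Z_i$ throughout some $p$-adic neighborhood of $y$. The identity theorem for irreducible rigid analytic spaces, applied to $Z_i$, then forces $g$ to vanish on all of $Z_i$, hence on $W_i$. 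Through the inclusion $\widehat{(Z_i)}_y \subseteq \widehat{Z}_y$ this yields $\overline{\widehat{Z}_y}^{\mathrm{Zar}} \supseteq W_i$ for every $y \in Z_i$.

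It remains to exhibit a point $y \in Z_j \cap D$ with $W_j = Y$, where $D$ is the Zariski-dense subset of the hypothesis; this would directly contradict $\overline{\widehat{Z}_y}^{\mathrm{Zar}} \subsetneq Y$. The hypothesis is vacuous at $y \notin Z$, where $\widehat{Z}_y = \emptyset$, so its non-trivial content is that $D$ may be taken inside $Z(K)$. Writing $D = \bigcup_i (D \cap Z_i)$ and pigeonholing on Zariski closures, the irreducibility of $Y$ produces some $j$ with $\overline{D \cap Z_j}^{\mathrm{Zar}} = Y$; for this $j$, $W_j \supseteq \overline{D \cap Z_j}^{\mathrm{Zar}} = Y$, so $W_j = Y$ and any point $y \in D \cap Z_j$ completes the contradiction.

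The principal obstacle is the rigid analytic identity principle for possibly singular irreducible rigid analytic spaces: this must be handled via the rigid Nullstellensatz for Tate algebras together with the irreducible decomposition of rigid analytic spaces (\`a la Bosch--G\"untzer--Remmert and Kiehl). A secondary technical point is justifying the decomposition of $Z$ into finitely many irreducible rigid analytic components, which proceeds from Noetherianity of coherent sheaves on the affinoid refinement of the tube. Together these reduce the lemma to the purely analytic identity statement above.
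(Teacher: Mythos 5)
Your proposal is in essence the same argument as the paper's: reduce to a Noetherian (affinoid) situation so that the rigid analytic zero locus has finitely many irreducible components, and then use a rigid-analytic identity principle to pass from vanishing of an algebraic germ to infinite order at one point of a component to vanishing on the entire component. The two places where your write-up diverges from the paper's, however, both contain imprecisions, and the second one is a genuine gap.

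On the first point: you say ``after possibly refining the tube $]x[$ by an affinoid cover, the rigid analytic structure is Noetherian, so $Z$ decomposes into finitely many irreducible rigid analytic components.'' An affinoid cover of the open tube $]x[$ has infinitely many members (an increasing family of closed polydisks of radius tending to $1$), and each member contributes finitely many components, but their union need not. What actually rescues the argument — and what the paper uses — is that because $K$ is discretely valued, \emph{every} $K$-point of $]x[$ lies in a single closed polydisk $D \subset\, ]x[$ of radius $|\pi_K|$, so one simply replaces $]x[$ by the affinoid $D$ before decomposing.

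On the second, more serious point: your concluding pigeonhole needs $D$, the set from the hypothesis, to lie inside $Z(K)$, and you justify this by saying that the hypothesis is vacuous at $y \notin Z$, so ``$D$ may be taken inside $Z(K)$.'' That inference is not valid. The hypothesis gives a Zariski dense set $D$, and after you have assumed for contradiction that $Z(K)$ is Zariski dense, you have two Zariski dense subsets of $Y(K)$ with no reason to intersect — two disjoint sets can each be Zariski dense. If $D$ and $Z_j(K)$ happened to be disjoint, your argument produces no contradiction. The paper sidesteps this by (implicitly) treating the hypothesis as providing a Zariski \emph{open} dense set, writing $Y_0$ for the complement: then every Zariski dense subset of $Y(K)$, and in particular $Z_j(K)$, must meet $Y \setminus Y_0$, so a suitable $y$ exists. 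In your set-up, the fix is the same: you need $D$ to be open (which it is in the applications of the lemma), and then $Z_j(K)$ Zariski dense forces $D \cap Z_j(K) \ne \emptyset$.
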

\begin{proof}
Since the $K$ points of $]x[$ are contained in a closed disk $D\subset ]x[$, it is enough to prove this for a closed disk $D\subset ]x[$. The common zeroes of $f_1 ,\ldots ,f_n$ on $D$ will then have finitely many irreducible components $Z_i$. It is hence enough to prove that $Z_i \cap Y$ is not Zariski dense in $Y$ for each $i$. Let $Y_0 \subset Y$ be the closed subvariety outside of which the Zariski nondensity condition on formal completions in the statement of the lemma is satisfied. Pick $y\in Z_i \cap Y (K)$. We may assume that $y$ does not lie in $Z$. The Zariski nondensity condition on $\widehat{Y}_y$ implies there is a $g\in \mathcal{O}_{Y,y}$ vanishing on $\widehat{\mathcal{O}}_{Z_i \cap Y,y}$. Hence $g$ vanishes in $\mathcal{O}_{Z_i \cap Y,y}$, and thus $Z_i \cap Y$ is not Zariski dense in $Y$.
\end{proof}

Given a variety $Y$ over $k$, we define a \textit{rigid triple} (over $\mathcal{O}$) for $Y$ to be a triple $T=(Y,X,P)$, together with $j:Y\hookrightarrow X$ an open immersion into a smooth projective $k$-variety, and $i:X\hookrightarrow P$ a closed immersion into a formal $\mathcal{O}$-scheme $P$, where $P\to \Spf (\mathcal{O} )$ is smooth along an open neighbourhood of $X$. Abusing notation somewhat, we will also sometimes refer to $(Y,X,P)$ as a rigid triple when $i:X\hookrightarrow P$ is a closed immersion of $X$ into a smooth $\mathcal{O}_K$-scheme $P$ -- in this case the actual rigid triple is given by taking the closed immersion of $X$ into the formal completion of $X$ along its special fibre. We denote the category of overconvergent isocrystals, in the sense of \cite{Ber96} and \cite{lestum}, on $T$ by $\mathcal{C}(T)$, and the category of unipotent overconvergent isocrystals by $\mathcal{C}^{\un }(T)$.

\begin{lemma}\label{global_sections}[\cite{lestum}, Corollary 7.1.7]
An overconvergent isocrystal $\mathcal{V}$ on the tube $T$ of a point admits a basis of horizontal sections, i.e.
\[
\mathcal{V}\simeq \mathcal{V}(T)^{\nabla =0}\otimes _K \mathcal{O}_T .
\]
\end{lemma}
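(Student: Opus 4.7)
The plan is, after reducing to local coordinates on the tube, to construct horizontal sections as power-series solutions of the connection equation and to use overconvergence to secure convergence on all of $T$.

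First, I would exploit smoothness of $P$ near the point to identify $T = ]x[$ with an open unit polydisk in affine coordinates $t_1, \ldots, t_d$ on $P$ centered at $x$. Since $\mathcal{V}$ is overconvergent, it extends as a locally free module with integrable connection to a strict neighborhood of $T$; shrinking that neighborhood, I would pick a global basis $e_1, \ldots, e_n$ of $\mathcal{V}$, and write the connection in this basis as $\nabla = d + A$ with $A \in \mathrm{Mat}_n(\Omega^1)$ whose entries are overconvergent analytic one-forms.

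Second, horizontal sections in this basis correspond to column vectors $F = (f_1, \ldots, f_n)^\top$ satisfying $dF = -AF$. For any initial value $F(0) = v_0 \in K^n$, integrability of $\nabla$ guarantees the existence of a unique formal power-series solution $F \in K[[t_1,\ldots,t_d]]^n$, obtained by a standard recursion from the Taylor expansion of $A$.

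Third, convergence of this formal solution on the whole of $T$ is where overconvergence enters: because $A$ extends to a strict neighborhood of the closed polydisk of radius one, its Taylor coefficients satisfy Gauss-norm bounds of the form $\|A_k\| \leq C \rho^{-|k|}$ for some $\rho > 1$. Plugging these estimates into the coefficient recursion yields matching bounds on the Taylor coefficients of $F$, and implies that $F$ converges on the entire open polydisk $\{|t_i|<1\} = T$. Letting $v_0$ range over a basis of $K^n$ then produces $n$ global horizontal sections, linearly independent at $x$ and hence everywhere, which assemble to the claimed trivialization $\mathcal{V} \simeq \mathcal{V}(T)^{\nabla=0}\otimes _K \mathcal{O}_T$.

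The main obstacle is pushing convergence to the entire open unit polydisk rather than merely to a strictly smaller subdisk: without overconvergence, the naive power-series solution of $dF = -AF$ generically has radius of convergence strictly less than one (the standard cautionary example is $dy = y\,dt$, whose solution $\exp(t)$ converges only on the disk of radius $|p|^{1/(p-1)}$). The overconvergence hypothesis on $\mathcal{V}$ is precisely what is needed to control the Gauss norms of $A$ tightly enough to close the Picard iteration on all of $T$.
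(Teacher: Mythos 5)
The paper offers no proof of this lemma; it is cited directly from \cite[Corollary 7.1.7]{lestum}. Your argument, however, has a genuine gap at the convergence step, and the flaw is already visible in your own cautionary example. You interpret the overconvergence hypothesis as the statement that the connection matrix $A$ extends, with Gauss-norm decay $\|A_{\underline k}\| \leq C\rho^{-|\underline k|}$ for some $\rho > 1$, to a strict neighbourhood of the closed polydisk, and you then assert that feeding these bounds into the Picard recursion for $dF = -AF$ forces $F$ to converge on the whole open polydisk. But in the example $dy = y\,dt$, the form $A = -\,dt$ is constant, so $A_{\underline k} = 0$ for $|\underline k|>0$ and the Gauss-norm estimate holds for every $\rho$; nevertheless $\exp(t)$ converges only on $|t| < |p|^{1/(p-1)}$. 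The recursion divides by $k+1$ at each step, the accumulated factor $|k!|_p^{-1}$ grows like $p^{k/(p-1)}$, and no geometric decay of the $A_{\underline k}$ can absorb it. No estimate solely on the connection matrix can give radius one.

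What the isocrystal hypothesis actually supplies (for the tube of a point, convergence and overconvergence agree) is a bound on the \emph{iterated} derivatives rather than on $A$: for every $\rho < 1$ one has $\rho^{|\underline{k}|}\,\bigl\|\tfrac{1}{\underline{k}!}\nabla_{\underline{\partial}}^{\underline{k}}\bigr\| \to 0$, equivalently the Taylor (stratification) isomorphism $\epsilon \colon p_2^*\mathcal{V} \to p_1^*\mathcal{V}$ converges on the whole tube of the diagonal in $P\times P$, which is an open unit polydisk of dimension $2\dim P$. From this the lemma is immediate: fix a $K$-point $y \in T$ and restrict $\epsilon$ to $\{y\} \times T$ to obtain an isomorphism $y^*\mathcal{V}\otimes\mathcal{O}_T \simeq \mathcal{V}$ carrying constants to horizontal sections, with no Picard iteration or estimate on $A$ required. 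Your $d - dt$ example fails to be a convergent isocrystal on $]0[$ precisely because $\exp(-\tau)$ diverges once $|\tau| \geq |p|^{1/(p-1)}$; the hypothesis your proof needs is built into the definition of isocrystal and is strictly stronger than overconvergence of the connection matrix.
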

Lemma \ref{global_sections} implies that, for any $\overline{x}\in Y(k)$,
\[
\mathcal{V}\mapsto \mathcal{V}(]\overline{x}[)^{\nabla =0}
\] 
induces a fibre functor $\overline{x}$ on $\mathcal{C}(T)$, and an isomorphism of fibre functors 
\begin{equation}\label{eqn:basis_of_sections}
x^* \simeq \overline{x}^*,
\end{equation} for any $\mathcal{O}_K$ point $x$ of $P$ specialising to $\overline{x}$. Given a flat connection $(\mathcal{V},\nabla )$ over a smooth variety $X/\mathcal{O}_K$, and points $x,y \in X(K)$ with a common reduction $z\in X(k)$, \eqref{eqn:basis_of_sections} induces a \textit{parallel transport} isomorphism
\begin{equation}\label{eqn:parallel_transport}
x^* \mathcal{V} \simeq y^* \mathcal{V}
\end{equation}
defined simply as the composite
\[
x^* \mathcal{V} \simeq x^* \mathcal{V}^{\an }\simeq \mathcal{V}^{\an }(]z[)^{\nabla =0}\simeq y^* \mathcal{V}^{\an }\simeq y^* \mathcal{V}.
\]

\begin{definition}
Let $X$ be a variety over a field $K$ of characteristic zero, and $x\in X(K)$. We define the unipotent de Rham fundamental group $\pi _1 ^{\dR}(X,x)$, to be the Tannakian fundamental group associated to the category of unipotent vector bundles with flat connection, and fibre functor $x^* $. 

For $K$ a finite extension of $\Q _p$ as above, and a rigid triple $T=(Y,X,P)$, and a $k$-point $x\in Y(k)$, we define the rigid fundamental group $\pi _1 (T,x)$ to be the Tannakian fundamental group associated to the neutral Tannakian category of unipotent overconvergent isocrystals on $T$, and fibre functor $x$. 
\end{definition}

For two different rigid triples $T=(Y,X,P)$ and $T'=(Y',X',P')$, with points $x\in Y(k),x'\in Y'(k)$, a $k$-isomorphism $f:Y\stackrel{\simeq }{\longrightarrow }Y'$ sending $x$ to $x'$ induces an isomorphism on fundamental groups $f_* :\pi _1 (T,x)\stackrel{\simeq}{\longrightarrow }\pi _1 (T',x')$ \cite[Corollary 7.1.7]{lestum}. In particular, $\pi _1 (T,x)$ can be viewed as only depending on the pair $(Y,x)$, defining a rigid fundamental group $\pi _1 ^{\rig }(Y/\mathcal{O}_K ,x):=\pi _1 (T,x)$ which is functorial in pointed varieties over $k$. Functoriality of the fundamental group also implies an action of $\phi $ on $\pi _1 ^{\rig }(Y/\mathcal{O}_K ,x)$, where $\phi $ is a suitable power of absolute Frobenius acting as the identity on $k$.

The Berthelot--Ogus comparison in cohomology implies the following isomorphism of fundamental groups.
\begin{theorem}[\cite{CLS}]\label{thm:CLS}
For any projective variety $X$ over $\mathcal{O}_K$, with formal completion $\mathfrak{X}$ along $X_k$, and any open affine $Y\subset X$, the analytification functor defines an equivalence of categories between the category $\mathcal{C}^{\dR}(Y_K /K)$ of unipotent flat connections on $Y_K /K$ and $\mathcal{C}^{\un }(T)$, where  where $T$ denotes the rigid triple $(Y_k ,X_k ,\mathfrak{X})$.
\end{theorem}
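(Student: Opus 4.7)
The plan is to prove that analytification is an equivalence of neutral Tannakian categories by reducing, via devissage through the unipotent filtration, to a comparison statement for $H^0$ and $H^1$ of unipotent objects, which in turn follows from the Berthelot--Ogus style comparison of de Rham and rigid cohomology.

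First I would check that the analytification functor is well-defined, i.e.\ that given $(\mathcal{V},\nabla)\in\mathcal{C}^{\dR}(Y_K/K)$ unipotent, its restriction to the tube of $Y_k$ in $\mathfrak{X}$ carries the structure of an overconvergent isocrystal on the triple $T=(Y_k,X_k,\mathfrak{X})$. For a trivial object this is immediate, and overconvergence for a unipotent iterated extension follows because overconvergence is preserved under extensions of overconvergent isocrystals (one uses that $X_k$ is proper so the complement $X_k\setminus Y_k$ has a fundamental system of strict neighbourhoods in $\mathfrak{X}_K$, and overconvergence conditions are stable under extensions in this setting). Call the resulting functor $F:\mathcal{C}^{\dR}(Y_K/K)^{\un}\to\mathcal{C}^{\un}(T)$.

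Next, both source and target are neutral Tannakian, and both are generated (via iterated extensions) by the trivial object. Hence $F$ is an equivalence if and only if, for all $n\geq 0$, the induced maps
\[
\ext^n_{\mathcal{C}^{\dR}(Y_K/K)}(\mathcal{O},\mathcal{O})\longrightarrow \ext^n_{\mathcal{C}^{\un}(T)}(\mathcal{O},\mathcal{O})
\]
are isomorphisms for $n=0,1$ (for Tannakian devissage of unipotent categories one only needs $n\leq 1$: $H^0$ controls full faithfulness after induction on unipotency degree, and $H^1$ controls essential surjectivity via the long exact sequence of $\ext$ along an extension $0\to\mathcal{O}\to\mathcal{V}\to\mathcal{V}'\to 0$). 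The group $\ext^0_{\mathcal{C}^{\dR}}(\mathcal{O},\mathcal{O})=H^0_{\dR}(Y_K/K)$ and $\ext^0_{\mathcal{C}^{\un}(T)}(\mathcal{O},\mathcal{O})=H^0_{\rig}(Y_k/K)$ are both canonically $K$ since $Y_K$ and $Y_k$ are connected. Likewise $\ext^1_{\mathcal{C}^{\dR}}(\mathcal{O},\mathcal{O})=H^1_{\dR}(Y_K/K)$ and $\ext^1_{\mathcal{C}^{\un}(T)}(\mathcal{O},\mathcal{O})=H^1_{\rig}(T/K)$.

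The heart of the matter is therefore the comparison isomorphism $H^1_{\dR}(Y_K/K)\isoarrow H^1_{\rig}(T/K)$, and compatibility of $F$ with this comparison. This is exactly the Berthelot--Ogus/Chiarellotto--Le Stum comparison theorem applied to the good compactification $Y_K\hookrightarrow X_K$ with its integral model $Y\hookrightarrow X$: since $X$ is projective over $\mathcal{O}_K$ and $Y$ is open in $X$, rigid cohomology of $Y_k$ computed from the triple $(Y_k,X_k,\mathfrak{X})$ agrees with the de Rham cohomology of $Y_K$, and the comparison is induced by the analytification of algebraic differential forms. This comparison for $H^1$ is precisely what makes $F$ essentially surjective: given any extension class $[\mathcal{E}]\in H^1_{\rig}(T/K)$ one lifts it to an algebraic class in $H^1_{\dR}(Y_K/K)$, defines an algebraic extension, and checks its analytification reproduces $\mathcal{E}$ up to isomorphism.

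The main obstacle is precisely this essential surjectivity through the $H^1$ comparison: one must know not just that the groups are abstractly isomorphic, but that every overconvergent unipotent extension arises functorially from an algebraic one, and this functoriality must be coherent along the devissage (so that the induction on unipotency degree goes through). This is the content of the cited result \cite{CLS}, and once it is in hand, the Tannakian devissage and full faithfulness/essential surjectivity arguments above are formal.
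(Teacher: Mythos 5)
Your proposal correctly captures the standard strategy used to prove this kind of comparison theorem (Tannakian devissage reducing to a cohomological comparison in low degree, with the cohomological input coming from Berthelot--Ogus/Chiarellotto--Le Stum), which is essentially the approach taken by the cited reference \cite{CLS}; the paper itself does not reprove the theorem but simply cites it.

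One technical caveat worth flagging: the claim that iso on $\ext^0$ and $\ext^1$ alone suffices to run the devissage is a slight oversimplification. For essential surjectivity, the inductive step lifts an extension class from $\ext^1_{\mathcal{D}}(\mathbf{1},F(V'))$ to $\ext^1_{\mathcal{C}}(\mathbf{1},V')$, and the four-lemma applied to the long exact sequence obtained from a step $0\to\mathbf{1}\to V'\to V''\to 0$ of the unipotent filtration requires not just surjectivity on $\ext^1(\mathbf{1},\mathbf{1})$ but also \emph{injectivity} of the comparison map on $\ext^2(\mathbf{1},\mathbf{1})=H^2$. This is harmless here because the Berthelot--Ogus comparison is an isomorphism in all degrees, and in the paper's actual application $Y$ is an affine curve so $H^2$ vanishes; but a fully rigorous proof must either invoke the $H^2$ comparison or restrict to the affine-curve case. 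Apart from this point, the sketch is sound and aligned with the route taken in the literature.
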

For any $\mathcal{O}_K$-points $x,y $ of $X$, with reductions $\overline{x}$ and $\overline{y}$, this defines a $\phi $-action on the de Rham fundamental group $\pi _1 ^{\dR}(X_K /K ,x)$ and path torsor $\pi _1 ^{\dR}(X_K /K ;x,y)$, via the isomorphism of fibre functors $x^* \simeq \overline{x}^* $ and $y^* \simeq \overline{y}^*$ induced by \eqref{eqn:basis_of_sections}.

The Coleman integral (defined in the next subsection) can be constructed using the following crucial result due to Besser (previous incarnations of this result are also used in the original definitions of Coleman).
\begin{theorem}[\cite{besser}]\label{thm:besserpath}
For any variety $V/\mathcal{O}_K$ and any $x,y\in V(\mathcal{O}_K )$, there is a unique $\phi $-invariant path $P_{x,y}=P_{x,y}^V \in \pi _1 ^{\dR}(V_K /K ;x,y)$. Similarly, for any $x,y\in V(k)$ there is a unique $\phi $=invariant path $P_{x,y}^V \in \pi _1 ^{\rig }(V_k /\mathcal{O}_K ;x,y)$.
\end{theorem}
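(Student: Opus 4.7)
The plan is to exhibit the $\phi$-invariant path as the unique fixed point of Frobenius acting on the path torsor, exploiting the fact that $\phi$ acts with all eigenvalues of complex absolute value strictly greater than $1$ on the Lie algebra of the unipotent fundamental group.

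First I would set up the Frobenius actions. The group $U := \pi_1^{\dR}(V_K/K, x)$ acquires its $\phi$-action via the isomorphism of fibre functors $x^* \simeq \overline{x}^*$ provided by Lemma \ref{global_sections} together with Theorem \ref{thm:CLS}: this identifies $\pi_1^{\dR}(V_K/K, x)$ with $\pi_1^{\rig}(V_k/\mathcal{O}_K, \overline{x})$, on which Frobenius acts functorially. Applying the same construction at both endpoints gives a $\phi$-action on the path torsor $T := \pi_1^{\dR}(V_K/K; x, y)$, compatible with its $U$-torsor structure. A $\phi$-invariant path is then a fixed point of $\phi \colon T \to T$. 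Choosing any base-point $\gamma_0 \in T(K)$ to identify $T \simeq U$, the map $\phi$ becomes $u \mapsto \phi(u) \cdot c$ for $c = \phi(\gamma_0)\gamma_0^{-1} \in U(K)$, and fixed points are solutions to $u = \phi(u) \cdot c$.

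Next I would solve this equation by induction along the lower central series $U = U^{(1)} \supset U^{(2)} \supset \dots \supset U^{(n+1)} = 1$. Each graded piece $\gr_i U := U^{(i)}/U^{(i+1)}$ is an abelian vector group which, by the functoriality of the unipotent fundamental group, is a subquotient of the $i$-fold tensor power of $U^{\ab}$, and $U^{\ab}$ is identified (essentially dually) with a subquotient of $H^1_{\dR}(V_K/K) \simeq H^1_{\rig}(V_k/K)$. The crucial arithmetic input is that Frobenius eigenvalues on $H^1_{\rig}$ of a smooth $k$-variety are Weil numbers of positive weight (Katz--Messing together with the Weil conjectures, or Chiarellotto--Le Stum), hence of complex absolute value $\geq \sqrt p > 1$. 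It follows that on each $\gr_i U$ the eigenvalues of $\phi$ have absolute value $\geq p^{i/2} > 1$, so $\phi - \mathrm{id}$ is invertible on $\gr_i U$.

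With this in hand, the induction is straightforward. Suppose we have shown the existence of a unique $u_i \in U/U^{(i+1)}$ solving $u_i = \phi(u_i) c$ modulo $U^{(i+1)}$. Lift $u_i$ to any $\widetilde{u} \in U$; then modulo $U^{(i+2)}$ the equation $u = \phi(u)c$ translates, after BCH, into an inhomogeneous linear equation $(\phi - \mathrm{id})(\xi) = \eta$ in the abelian group $\gr_{i+1} U$, which has a unique solution by invertibility. This produces a unique lift $u_{i+1} \in U/U^{(i+2)}$. Iterating finitely many times, one obtains a unique fixed $u \in U(K)$, i.e.\ a unique $\phi$-invariant path $P_{x,y}^V$. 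The rigid case with $x, y \in V(k)$ is handled by the same argument applied directly to $\pi_1^{\rig}(V_k/\mathcal{O}_K; x, y)$, with the specialisation isomorphism no longer needed.

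The main obstacle I anticipate is the weight estimate: one needs positive-weight purity for Frobenius on $H^1_{\rig}$ in the generality of the theorem (possibly non-proper varieties), plus the bookkeeping that propagates this positivity to every graded piece of the lower central series of $U$. Once the weights are controlled, the contraction-style fixed-point argument along the central series is formal; the remaining Tannakian compatibilities between the $\phi$-actions on $U$ and on $T$ and the base-point change are routine but must be written down carefully to ensure that $\phi \colon T \to T$ is genuinely an affine map with linear part $\phi|_U$.
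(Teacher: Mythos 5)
The paper cites this result directly from Besser \cite{besser} without reproducing a proof; your argument is essentially the standard one Besser gives, and the structure is sound: reduce to the fixed-point problem $u = \phi(u)c$ in the unipotent group $U$, then solve iteratively along the lower central series using the invertibility of $\phi - \mathrm{id}$ on each graded piece. One detail is reversed: $\gr_i U$ is a subquotient of the $i$-th tensor power of $H_1^{\dR}(V) = H^1_{\dR}(V)^*$, not of $H^1_{\dR}(V)$, so the Frobenius eigenvalues there have complex absolute value $\leq p^{-i/2} < 1$, not $\geq p^{i/2} > 1$ as you wrote (the paper's Lemma \ref{lemma:unique_iso} records the weight as $-i$). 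This does not break the argument — what is needed is only that the weights are nonzero, so $1$ is not an eigenvalue and $\phi - \mathrm{id}$ is invertible on $\gr_i U$ — but it matters for knowing which purity input to invoke: positive-weight purity on $H^1_{\rig}$ of the smooth $k$-variety, then dualised.
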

\subsection{Universal connections}
Given a pointed variety $(X,x)$ over $K$, we define a pointed connection to be a pair $(\mathcal{V},v)$ consisting of vector bundle with flat connection $(\mathcal{V},\nabla )$ on $X/K$ and a vector $v\in x^* \mathcal{V}$. A morphism of pointed connections is a morphism of connections which sends one distinguished vector to the other. Given $(X,x)$ as above, we say that a pointed connection $(\mathcal{V},v)$ is a universal $n$-unipotent connection if for any $n$-unipotent pointed flat connection $(\mathcal{W},w)$, there is a unique morphism of pointed connections $(\mathcal{V},v)\to (\mathcal{W},w)$.

We say a neutral Tannakian category over a field $K$ is unipotent if its fundamental group is pro-unipotent, and $\pi _1 (\mathcal{C},\omega )^{\ab }$ is finite dimensional over $K$.
\begin{lemma}\label{lemma:tannak}[\cite{wildeshaus}, \cite{besser:heidelberg}]
Let $(\mathcal{C},\omega )$ a unipotent Tannakian category. The action of $\pi _1 (\mathcal{C},x)$ on the set of isomorphism classes of extensions of the unit object $\mathbf{1}$ by itself induces an isomorphism of vector spaces
\[
\pi _1 (\mathcal{C},\omega )^{\ab }\simeq \ext ^1 _{\mathcal{C}}(\mathbf{1},\mathbf{1})^*
\]
\end{lemma}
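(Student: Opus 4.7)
The plan is to use Tannakian duality to identify $\mathcal{C}$ with the category $\Rep(U)$ of finite-dimensional representations of the pro-unipotent affine group scheme $U = \pi_1(\mathcal{C}, \omega)$ over $K$, and then to identify both sides of the claimed isomorphism with the dual of the abelianized Lie algebra $\mathfrak{u}^{\ab}$, where $\mathfrak{u} = \Lie U$.

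First I would make the pairing explicit. Given an extension $0 \to \mathbf{1} \to E \to \mathbf{1} \to 0$ in $\mathcal{C}$, applying $\omega$ produces a two-dimensional $U$-representation $\omega(E)$ equipped with a filtration whose sub and quotient are both trivial. Choosing a lift $e_1 \in \omega(E)$ of a basis vector of the quotient, and letting $e_0$ denote the canonical basis of the sub, the action of $u \in U$ is forced to be $e_0 \mapsto e_0$ and $e_1 \mapsto e_1 + c_E(u) e_0$ for a character $c_E \colon U \to \mathbb{G}_a$ which is readily checked to be independent of the choice of lift. The assignment $[E] \mapsto c_E$ defines a $K$-linear map $\ext^1_{\mathcal{C}}(\mathbf{1}, \mathbf{1}) \to \Hom(U, \mathbb{G}_a)$, whose inverse sends a character $c$ to the extension given by the $U$-action $\begin{pmatrix} 1 & c \\ 0 & 1 \end{pmatrix}$ on $K^2$; since Baer sum of extensions corresponds to sum of characters, this is a linear bijection, and it is precisely the pairing described in the statement of the lemma.

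Next, since $\mathbb{G}_a$ is abelian we have $\Hom(U, \mathbb{G}_a) = \Hom(U^{\ab}, \mathbb{G}_a)$. Because $\mathrm{char}(K) = 0$ and $U$ is pro-unipotent, the exponential $\exp \colon \mathfrak{u} \to U$ is an isomorphism of $K$-schemes whose restriction descends to an isomorphism $\mathfrak{u}^{\ab} \isoarrow U^{\ab}$ of commutative algebraic groups, i.e.\ of $K$-vector spaces. Hence $\Hom(U^{\ab}, \mathbb{G}_a) \simeq (\mathfrak{u}^{\ab})^*$, and combining with the previous step identifies $\ext^1_{\mathcal{C}}(\mathbf{1}, \mathbf{1})$ with $(\mathfrak{u}^{\ab})^*$. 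The hypothesis that $\pi_1(\mathcal{C}, \omega)^{\ab}$ is finite dimensional now allows a harmless double dualization, giving $\pi_1(\mathcal{C}, \omega)^{\ab} \simeq \mathfrak{u}^{\ab} \simeq (\mathfrak{u}^{\ab})^{**} \simeq \ext^1_{\mathcal{C}}(\mathbf{1}, \mathbf{1})^*$.

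The main obstacle lies not in any deep input but in bookkeeping: one must verify that the resulting isomorphism is indeed the one induced by the action as phrased in the statement, and that each intermediate identification (choice of lift, passage to $U^{\ab}$, exponential) is canonical and functorial. The one structural ingredient doing genuine work is the exponential equivalence between pro-unipotent affine group schemes and pro-nilpotent Lie algebras in characteristic zero, without which the matching of $U^{\ab}$ with $\mathfrak{u}^{\ab}$ as vector spaces would fail; the finite-dimensionality assumption on $\pi_1^{\ab}$ is exactly what is needed for the double dual to be an equality rather than a mere inclusion.
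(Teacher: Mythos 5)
Your proof is correct. The paper itself gives no proof of this lemma, instead citing \cite{wildeshaus} and \cite{besser:heidelberg}, and the argument you give is the standard one implicit in those references: pass to $\Rep(U)$ by Tannaka duality, identify $\ext^1(\mathbf{1},\mathbf{1})$ with $\Hom(U,\mathbb{G}_a)=\Hom(U^{\ab},\mathbb{G}_a)$ via the cocycle of a two-step unipotent representation, use $\exp\colon\mathfrak{u}^{\ab}\isoarrow U^{\ab}$ in characteristic zero, and invoke the finite-dimensionality hypothesis to dualize. Two small points worth making explicit in a write-up: (i) well-definedness of $c_E$ is exactly the computation that shifting the lift $e_1\mapsto e_1+ae_0$ leaves the off-diagonal entry unchanged, which you gesture at but should record; (ii) $K$-linearity of $[E]\mapsto c_E$ needs both that Baer sum matches addition of characters (which you state) and that pushout along $\lambda\colon\mathbf{1}\to\mathbf{1}$ matches scaling $c_E\mapsto\lambda c_E$ --- the latter is routine but belongs in a complete argument.
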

In the case of $\mathcal{C}^{\dR }(X_K /K )$ and $\mathcal{C}^{\rig  }(X_k /\mathcal{O}_K )$ this defines isomorphisms
\begin{equation}
H^1 _{\dR}(X_K /K)^* \simeq \pi _1 ^{\dR, \un}(X/K ,x)^{\ab }
\end{equation}
and 
\begin{equation}
H^1 _{\rig}(X_k /K)^* \simeq \pi _1 ^{\rig, \un}(X_k /\mathcal{O}_K  ,y)^{\ab }
\end{equation}
respectively, for any $x\in X(K)$ and $y\in X(k)$.

For any unipotent Tannakian category $(\mathcal{C},\omega )$, we can define a pointed object as a pair $(V,v)$ where $V\in \mathcal{C}$ and $v\in \omega (V)$, and we can similarly define a notion of universal $n$-unipotent pointed object.
\begin{lemma}
In any unipotent Tannakian category $(\mathcal{C},\omega )$, a universal $n$-unipotent pointed object exists and is unique up to unique isomorphism.
\end{lemma}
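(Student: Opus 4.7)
The plan is to translate the statement into a representation-theoretic problem via Tannakian duality, whereupon the universal pointed object becomes the regular representation of a certain finite-dimensional algebra.

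First, I would identify $\mathcal{C}$ with $\Rep (U)$, where $U := \pi _1 (\mathcal{C},\omega )$ is a pro-unipotent affine group scheme over $K$, and the fibre functor $\omega $ becomes the underlying vector space functor. Under this identification, an object of $\mathcal{C}$ is $n$-unipotent iff the corresponding $U$-representation is annihilated by $\mathfrak{U}_+ ^{n+1}$, where $\mathfrak{U}_+ $ denotes the augmentation ideal of $\mathfrak{U}(\Lie (U))$. Set $A_n := \mathfrak{U}(\Lie (U))/\mathfrak{U}_+ ^{n+1}$; then the full subcategory of $n$-unipotent objects in $\mathcal{C}$ is equivalent to the category of finite-dimensional left $A_n$-modules. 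Pointed $n$-unipotent objects correspond, under this equivalence, to pairs $(V,v)$ consisting of such a module together with a vector $v\in V$, and morphisms of pointed objects correspond to $A_n$-linear maps carrying one distinguished vector to the other.

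Next, I would verify that $A_n$ is itself finite-dimensional over $K$. This is the only place the hypothesis that $\pi _1 (\mathcal{C},\omega )^{\ab }$ is finite-dimensional gets used: it forces $\Lie (U)^{\ab }$ to be finite-dimensional, hence each successive quotient $\gamma ^k \Lie (U)/\gamma ^{k+1}\Lie (U)$ of the lower central series is finite-dimensional (being a quotient of an iterated tensor power of $\Lie (U)^{\ab }$), and by the Poincar\'e--Birkhoff--Witt theorem the associated graded of $A_n$ with respect to the augmentation filtration is therefore finite-dimensional.

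Finally, I would claim that $(A_n , 1_{A_n} )$---the left regular $A_n$-module pointed by $1 \in A_n$---is the desired universal object of $\mathcal{C}$. Indeed, for any pointed $n$-unipotent $(V,v)$, the map $a \mapsto a\cdot v$ is an $A_n$-linear morphism $A_n \to V$ sending $1$ to $v$; existence is immediate from the $A_n$-module structure on $V$, and uniqueness follows because $A_n$ is generated as an $A_n$-module by $1$. Uniqueness of the universal object up to unique isomorphism is then the usual Yoneda argument. The one genuine step of work is the finite-dimensionality of $A_n$; everything else is formal Tannakian manipulation, and I do not foresee any obstacle beyond it.
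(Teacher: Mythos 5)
Your proposal is correct and follows essentially the same route as the paper's proof: pass to $\Rep(\pi_1(\mathcal{C},\omega))$ by Tannaka duality and take the universal pointed object to be $\mathfrak{U}(\Lie\,\pi_1(\mathcal{C},\omega))/\mathfrak{U}_+^{n+1}$, with finite-dimensionality guaranteed by the hypothesis on $\pi_1(\mathcal{C},\omega)^{\ab}$. You spell out the finite-dimensionality step in more detail (via the lower central series and PBW, implicitly using the identification of $\gr_{\mathfrak{U}_+}\mathfrak{U}(\mathfrak{g})$ with $\mathfrak{U}(\gr_\gamma\mathfrak{g})$) than the paper, which simply asserts it, but the argument is the same.
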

\begin{proof}
By Tannaka duality, it is enough to prove the corresponding claim in the Tannakian category $\Rep (\pi _1 (\mathcal{C},\omega ))$ of (finite-dimensional) representations $\pi _1 (\mathcal{C},\omega )$ (with the usual forgetful fibre functor). We can then take the universal pointed object to be the quotient of the pro-universal enveloping algebra of $\Lie (\pi _1 (\mathcal{C},\omega ))$ by the $(n+1)$th power of its augmentation ideal. This is an object of $\Rep (\pi _1 (\mathcal{C},\omega ))$ because of the finiteness assumption in the definition of a unipotent Tannakian category.
\end{proof}
Let $(\mathcal{E}_n ^{\dR}(X,x), e_n )$ denote a universal $n$-unipotent pointed connection on $(X,x)$. By construction, the collection of $(\mathcal{E}_n ^{\dR}(X,x),e_n )$ for $n>0$ have a unique structure of a pro-object in pointed connections on $(X,x)$. Note that the universal property of $(\mathcal{E}^{\dR}_n (X,x),e_n )$ implies that $P^X _{x,y}$, or indeed any element of $\pi _1 ^{\dR}(X/K ;x,y)$, is uniquely determined by its value on all the $e_n$.

As we only study (or rather, only try to prove new things about) \textit{abelian} Coleman integrals in this paper, we only need to consider this object when $n=1$. In this case, the underlying connection is simply the universal extension $\mathcal{E}(X)$ of the trivial connection $\mathcal{O}_X $ by the trivial connection $H^1 _{\dR}(X/K)^* \otimes \mathcal{O}_X$, sitting in an exact sequence
\begin{equation}\label{eqn:univ_ext}
0\to H^1 _{\dR}(X/K)^* \otimes _K \mathcal{O}_X  \to \mathcal{E}(X)\to \mathcal{O}_X \to 0,
\end{equation}
with the property that the class of $\mathcal{E}(X)$ in $\ext ^1 _{\mathcal{C}^{\dR}(X)}(\mathcal{O}_X ,H^1 _{\dR}(X)^* \otimes \mathcal{O}_X )$ is identified with the identity endomorphism via the isomorphism
\[
\ext ^1 _{\mathcal{C}^{\dR}(X)}(\mathcal{O}_X ,H^1 _{\dR}(X)^* \otimes \mathcal{O}_X ) \simeq H^1 _{\dR}(X,H^1 _{\dR}(X/K)^* \otimes \mathcal{O}_X )\simeq \End (H^1 _{\dR}(X/K)).
\]
Recall that, when we say that $\mathcal{E}(X)$ is the universal connection, we are also implicitly fixing a choice of $e_1 \in x^* \mathcal{E}(X)$ mapping to $1\in x^* \mathcal{O}_X$. This construction also makes sense in an analytic, or rigid analytic category, and in that context we shall use identical notation.

This construction admits the following relative formulation. Given a smooth morphism $\pi :X\to S$ of varieties over a field $K$ of characteristic zero, let $\mathcal{V} $ denote the dual of the Gauss--Manin connection $R^1 \pi _* \Omega _{X|S}$, which has the structure of a vector bundle with flat connection on $S$. Given a flat connection $\mathcal{W}$ on $X$, from the Leray spectral sequence in de Rham cohomology \cite{hartshorne} \cite{KO}, we obtain an exact sequence
\begin{equation}\label{eqn:leray}
0\to H^1 _{\dR} (S/K ,\pi _* \mathcal{W}\otimes \Omega _{X|S}^\bullet )\to H^1 _{\dR} (X/K,\mathcal{W})\to (R^1 \pi _* \mathcal{W}\otimes \Omega ^\bullet _{X|S})^{\nabla =0}.
\end{equation}
In the case where $\mathcal{W}=\pi ^* \mathcal{V}$, the right hand term may be identified with $\End (\mathcal{V})$, i.e. with the  homomorphisms of connections from $\mathcal{V}$ to itself. Let $\mathcal{E}(X/S)$ denote a vector bundle on $X$ with a connection on $X/S$ (i.e. taking values in $\Omega _{X|S}\otimes \mathcal{E}(X/S)$) which is an extension of constant connections
\[
0\to \pi ^* (R^1 \pi _* \Omega _{X|S})^* \to \mathcal{E}(X/S)\to \mathcal{O}_X \to 0,
\]
whose class in $H^1 _{\dR}(X/S,(R^1 \pi _* \Omega _{X|S})^* )$ is sent to the identity element under the isomorphism
\[
H^1 _{\dR}(X/S,(R^1 \pi _* \Omega _{X|S})^* )\simeq \End (H^1 _{\dR}(X/S )).
\]
The following is just a special case of \cite[Theorem 3.1]{CDPS} (related constructions also occur in \cite{lazda} and \cite{AIK}).
\begin{lemma}\label{lemma:lifting_univ_conn}
The relative connection $\mathcal{E}(X/S)$ on $X/S$ lifts to a connection on $X/K$, and such a lift $\mathcal{E}$ is uniquely determined by the extension class in $H^1 _{\dR}(S,\mathcal{V})$ obtained from pulling back $\mathcal{E}$ along a section of $\pi $.
\end{lemma}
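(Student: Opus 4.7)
The plan is to analyse the lifting problem through the five-term exact sequence arising from the Katz--Oda Leray spectral sequence (namely \eqref{eqn:leray}) for $\pi\colon X\to S$ with coefficients in $\pi^*\mathcal{V}$. Using the projection formula together with the identifications $R^0\pi_*\Omega_{X|S}^\bullet = \mathcal{O}_S$ and $R^1\pi_*\Omega_{X|S}^\bullet = \mathcal{V}^*$, it reads
\[
0 \to H^1_{\dR}(S, \mathcal{V}) \xrightarrow{\pi^*} H^1_{\dR}(X, \pi^*\mathcal{V}) \xrightarrow{\rho} H^0_{\dR}(S, \End(\mathcal{V})) \xrightarrow{d_2} H^2_{\dR}(S, \mathcal{V}),
\]
where the middle term classifies extensions of $\mathcal{O}_X$ by $\pi^*\mathcal{V}$ as flat connections on $X/K$, and $\rho$ sends such an extension to its induced relative extension class in $\End(\mathcal{V})$. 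By construction, $\mathcal{E}(X/S)$ has relative class $\mathrm{id}_\mathcal{V}$, so lifting it amounts to producing a preimage of $\mathrm{id}_\mathcal{V}$ under $\rho$; the torsor of lifts is then under $H^1_{\dR}(S, \mathcal{V})$.

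For existence one needs $d_2(\mathrm{id}_\mathcal{V}) = 0$. I would argue this by invoking the explicit construction of the universal connection in families from \cite[Theorem 3.1]{CDPS}. When $\pi$ admits a section $\sigma\colon S\to X$, as is the case in the intended applications where $X^n_S \to S$ carries diagonal sections, the map $\sigma^*$ is a retraction of $\pi^*$, which immediately splits the sequence and yields the vanishing.

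For uniqueness, given two lifts $\mathcal{E}$, $\mathcal{E}'$ and a section $\sigma$, the difference $[\mathcal{E}] - [\mathcal{E}']$ lies in $\ker\rho = \im(\pi^*)$, and so equals $\pi^*\alpha$ for a unique $\alpha \in H^1_{\dR}(S, \mathcal{V})$. Applying $\sigma^*$ and using $\sigma^*\pi^* = \mathrm{id}$ yields $\sigma^*[\mathcal{E}] - \sigma^*[\mathcal{E}'] = \alpha$, so the pullback class $\sigma^*\mathcal{E} \in H^1_{\dR}(S, \mathcal{V})$ determines $\mathcal{E}$ up to isomorphism.

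The main obstacle is establishing existence, i.e.\ verifying $d_2(\mathrm{id}_\mathcal{V}) = 0$ in the absence of a section; everything else reduces to formal Leray manipulation with classical identifications.
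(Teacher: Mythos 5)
Your proof is essentially the same as the paper's: both reduce the lemma to exactness of the five-term sequence \eqref{eqn:leray}, use a section $s$ of $\pi$ to get a retraction $s^*$ of $\pi^*$, and conclude that the lift is determined by its pullback class $s^*\mathcal{E} \in H^1_{\dR}(S, \mathcal{V})$. The paper is terser about existence (asserting $d_2(\mathrm{id}_\mathcal{V})=0$ ``by construction,'' implicitly appealing to \cite[Theorem 3.1]{CDPS}), whereas you supply an alternative clean argument via the section --- though your phrasing that the retraction ``immediately splits the sequence and yields the vanishing'' elides the correct mechanism: the retraction in degree $2$ makes the edge map $H^2_{\dR}(S,\mathcal{V}) \to H^2_{\dR}(X,\pi^*\mathcal{V})$ injective, forcing the preceding differential $d_2$ to vanish, since $\mathrm{im}(d_2)$ is the kernel of that edge map. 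With that minor precision added, both routes are the same.
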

\begin{proof}
By construction the class of the identity in $H^1 _{\dR}(X/S ,(R^1 \pi _* \Omega _{X|S})^* )$ maps to zero in $H^2 _{\dR}(S,(R^1 \pi _*  \Omega _{X|S})^* )$. Let $s$ be a section of $\pi $. Then $s^*$ induces a section of \eqref{eqn:leray}, since the morphism $H^1 _{\dR}(S/K,\pi _* \mathcal{V})\to H^1 _{\dR}(X/K,\mathcal{V})$ is given by pulling back along $ \pi $.
\end{proof}
\begin{definition}\label{defn:lifting_univ_conn}
We define the relative universal connection of a family $X\to S$ with a section $s$ to be a lift $\mathcal{E}$ as in Lemma \ref{lemma:lifting_univ_conn} whose pullback along $s$ vanishes in $H^1 _{\dR}(S,\mathcal{V})$, together with a choice of splitting $e_1 :\mathcal{O}_S \to s^* \mathcal{E}$.
\end{definition}
\subsection{Coleman integration}\label{subsec:coleman}
Recall that Besser \cite{besser} \cite{besser:heidelberg} defines an \textit{abstract Coleman function} as an overconvergent isocrystal $(\mathcal{V},\nabla )$ together with a morphism (of bundles) $s:\mathcal{V}\to \mathcal{O}$, and a tuple $(f_x )_{x\in X(k)}$ of horizontal sections $f_x \in \mathcal{V}(]x[)^{\nabla =0}$ such that, for all $x,y \in X(K)$, 
\begin{equation}\label{eqn:whatis...}
P^X_{x,y}(f_x)=f_y .
\end{equation}
By the \textit{value} of an abstract Coleman integral $((\mathcal{V},\nabla ),s,(f_x ))$ at $z\in X(K)$, we shall mean the value of $s(f_x )$, where $z\in ]x[(K)$. 
For any $z\in X(K)$, given $((\mathcal{V},\nabla ),s)$ as above, the set of $(f_x) _{x\in X(k)}$ extending it to an abstract Coleman function is a $z^* \mathcal{V}$-torsor. In particular, given $(\mathcal{V},\nabla )$, a tuple $(f_x)$ satisfying \eqref{eqn:whatis...} is uniquely determined by $z^* f_{z_0 }$, where $z_0 \in X(k)$ is the reduction of $z$. 

In particular, given $X$ a variety, $\omega \in H^0 (X,\Omega _{X|K})$, and $x,y\in X(\mathcal{O}_K )$, we can define $\int ^y _x \omega $ to be the value of the abstract Coleman function $((\mathcal{V},\nabla ),s,(f_x ))$ at $y$, where $\mathcal{V}=\mathcal{O}_X ^{\oplus 2}$, $\nabla $ is the connection \[
\nabla := d-\left( \begin{array}{cc} 0 & 0 \\ \omega & 0 \end{array} \right) ,
\]
$s$ is projection onto the second coordinate, and $(f_x)$ is the unique $\phi $-compatible tuple of horizontal sections such that $f_x (x)=0$.

From Besser's construction of the Coleman integral, we obtain the following characterisation of Coleman integration which will allow us to vary in families, and to define a $p$-adic Betti map.
\begin{corollary}\label{cor:bess2}
Let $V$ be as above, and $x,y\in V(\mathcal{O}_K )$. Let $(\mathcal{V},\nabla )$ be a unipotent connection on $V_K$. Let $P$ denote the frame bundle whose $U$-sections are unipotent isomorphisms $x^* \mathcal{E}\otimes \mathcal{O}_U \simeq \mathcal{E}$. Then the Frobenius invariant path from Theorem \ref{thm:besserpath} defines a Frobenius invariant element $p^\phi $ of $y^* P$. Moreover for any trivialisation $\mathcal{E}\simeq \mathcal{O}_U ^{\oplus n}$ on an open $U$ containing $x$, the matrix entries of $p^{\phi }$ are abstract Coleman functions for the isocrystal $(\mathcal{V},\nabla )$.
\end{corollary}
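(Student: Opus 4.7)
The plan is to interpret the Frobenius-invariant path from Theorem \ref{thm:besserpath} as a specific element of the fibre of the frame bundle $P$ at $y$, verify its $\phi$-invariance, and then realise its matrix entries (in a given trivialisation) as values of abstract Coleman functions in the sense of Subsection \ref{subsec:coleman}.

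The first step is Tannakian. By Tannaka duality applied to the unipotent Tannakian category of unipotent flat connections on $V_K/K$, any element of $\pi_1^{\dR}(V_K/K;x,y)$ is a tensor isomorphism of fibre functors $x^* \to y^*$, and hence for each unipotent $(\mathcal{V},\nabla)$ it produces a unipotent isomorphism $x^*\mathcal{V} \stackrel{\sim}{\to} y^*\mathcal{V}$, i.e. an element of $y^*P$. Applying this to $P^V_{x,y}$ yields the desired $p^\phi$. Its $\phi$-invariance is then inherited from that of $P^V_{x,y}$, using that the $\phi$-action on the de Rham path torsor is defined (via Theorem \ref{thm:CLS} and the functoriality of $\phi$) in a manner compatible with the induced $\phi$-action on fibre functors.

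To recognise the matrix entries of $p^\phi$ as abstract Coleman functions, I would repackage $p^\phi$ as a section of an endomorphism isocrystal. Consider the unipotent connection $\mathcal{W} = \mathcal{V}^\vee \otimes \mathcal{V}$ on $V_K$ with its induced flat connection. For each $\bar{y} \in V(k)$, I would produce a horizontal section $f_{\bar{y}}$ of $\mathcal{W}$ on the tube $]\bar{y}[$ as follows: by Lemma \ref{global_sections}, horizontal sections of $\mathcal{W}$ on $]\bar{y}[$ are canonically identified with $K$-linear endomorphisms of the horizontal fibre $\bar{y}^*\mathcal{V}$, and one sets $f_{\bar{y}}$ to be the composition of the identification $x^*\mathcal{V} \simeq \bar{x}^*\mathcal{V}$ of \eqref{eqn:basis_of_sections} with the $\phi$-invariant rigid path $P^V_{\bar{x},\bar{y}} \in \pi_1^{\rig}(V_k/\mathcal{O}_K;\bar{x},\bar{y})$ from Theorem \ref{thm:besserpath}. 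The composition law and uniqueness of Frobenius-invariant paths then yield the cocycle relation $P^V_{y_1,y_2}(f_{\bar{y}_1}) = f_{\bar{y}_2}$ required in \eqref{eqn:whatis...}.

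Once a trivialisation $\mathcal{V}|_U \simeq \mathcal{O}_U^{\oplus n}$ is fixed on an open $U \ni x$, the induced trivialisation $\mathcal{W}|_U \simeq \mathcal{O}_U^{\oplus n^2}$ supplies projections $s_{ij} : \mathcal{W} \to \mathcal{O}$, and each tuple $(\mathcal{W},\nabla,s_{ij},(f_{\bar{y}}))$ is then an abstract Coleman function whose value at $y$ is exactly the $(i,j)$ matrix entry of $p^\phi$. The main obstacle, and what makes the argument non-formal, is verifying the matching at $y$: that the globally produced element $p^\phi$ agrees tube-by-tube with the locally defined horizontal sections $f_{\bar{y}}$. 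This is precisely where $\phi$-invariance of $P^V_{x,y}$ (as opposed to an arbitrary path) is essential, and it reduces to the uniqueness clause of Theorem \ref{thm:besserpath} applied to the endomorphism isocrystal $\mathcal{W}$.
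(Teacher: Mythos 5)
Your first two steps are fine: the Frobenius-invariant path $P^V_{x,y}$, being a tensor natural transformation of fibre functors, induces on any unipotent $\mathcal{V}$ an isomorphism $x^*\mathcal{V}\to y^*\mathcal{V}$ which respects the unipotence filtration and acts as the identity on its trivial graded pieces, hence is unipotent; its $\phi$-invariance is inherited functorially via Theorem~\ref{thm:CLS}. This gives $p^\phi\in y^*P(K)$.

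The third step has a type error that breaks the cocycle check. You set $\mathcal{W}=\mathcal{V}^\vee\otimes\mathcal{V}$, so by Lemma~\ref{global_sections} a horizontal section of $\mathcal{W}$ over $]\bar y[$ is an \emph{endomorphism} of $\bar y^*\mathcal{V}$. But the element you propose to call $f_{\bar y}$, namely the composite of $x^*\mathcal{V}\simeq\bar x^*\mathcal{V}$ with $P^V_{\bar x,\bar y}$, is a $K$-linear map $x^*\mathcal{V}\to\bar y^*\mathcal{V}$, not an endomorphism of $\bar y^*\mathcal{V}$; there is no canonical way to regard it as one, so it is not a horizontal section of your $\mathcal{W}$. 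Even setting this aside, the path $P^V_{y_1,y_2}$ acts on $\mathcal{V}^\vee\otimes\mathcal{V}$ by conjugation (it acts on both tensor factors), so the condition of \eqref{eqn:whatis...} would demand $P^V_{\bar y_1,\bar y_2}\circ f_{\bar y_1}\circ (P^V_{\bar y_1,\bar y_2})^{-1}=f_{\bar y_2}$, which is not what groupoid composition gives you. The correct choice is $\mathcal{W}=(x^*\mathcal{V})^\vee\otimes_K\mathcal{V}$, with the first factor carrying the constant trivial connection (equivalently, after fixing the trivialisation over $U$, $\mathcal{W}\simeq\mathcal{V}^{\oplus n}$); this is the isocrystal underlying the frame bundle $P$ itself. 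Then horizontal sections of $\mathcal{W}$ on $]\bar y[$ are $\Hom_K(x^*\mathcal{V},\bar y^*\mathcal{V})$, your $f_{\bar y}$ is of the right type, the path acts by postcomposition on the second factor only, and the required cocycle relation becomes $P^V_{\bar y_1,\bar y_2}\circ P^V_{\bar x,\bar y_1}=P^V_{\bar x,\bar y_2}$, which is exactly what the uniqueness clause of Theorem~\ref{thm:besserpath} yields. With this replacement the rest of your argument, including the extraction of matrix entries via the coordinate functionals $s_{ij}\colon\mathcal{W}\to\mathcal{O}$ supplied by the trivialisation, goes through.
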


\subsection{The Coleman integral in families}\label{subsec:families}
Let $\pi :X\to S$ be a smooth proper morphism of varieties over $\mathcal{O}_{K}$, and $\overline{s}\in S(k)$ such that $X_{\overline{s}}$ is smooth. Then we can consider the category of unipotent isocrystals on $X_{\overline{s}}$ relative to the frame $(X_{\overline{s}} ,X_{\overline{s}} ,]X_{\overline{s}} [_{X})$. Let $s\in ]\overline{s}[(K)$.
\begin{lemma}\label{lemma:GM}
For any two rigid triples $T_1 =(Y,X,P_1 )$ and $T_2 =(Y,X ,P_2 )$, a morphism of formal schemes $f:P_1 \to P_2 $ which restricts to the identity on $Y$ induces an equivalence of categories $\mathcal{C}(Y,X,P_1 ) \simeq \mathcal{C}(Y,X,P_2 )$, and hence an isomorphism of path torsors $\pi _1 (T_1 ;x,y)\simeq \pi _1 (T_2 ;x,y)$.

In particular, the closed immersion $i_{s}:X_{s }\hookrightarrow X$ induces an equivalence of categories
\[
\iota _{s}^* :\mathcal{C}(X_{\overline{s}} ,X_{\overline{s}},X)\simeq \mathcal{C}(X_{\overline{s}} ,X_{\overline{s}},X_s )
\]
\end{lemma}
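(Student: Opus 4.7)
The plan is to derive this from the standard functoriality of the category of overconvergent isocrystals with respect to morphisms of rigid triples, as developed by Berthelot and exposited in \cite{lestum}. The key input is that a morphism of frames which restricts to the identity on the open part $Y$ yields an equivalence of the associated categories of overconvergent isocrystals, independently of how $X$ embeds into the ambient formal scheme; this is a manifestation of the fact that $\mathcal{C}(Y,X,P)$ ultimately depends only on the pair $(Y,X)$, up to canonical equivalence.

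First I would construct the pullback functor $f^*\colon \mathcal{C}(Y,X,P_2) \to \mathcal{C}(Y,X,P_1)$: the morphism $f$ induces a morphism on generic fibres $P_{1,K} \to P_{2,K}$ which restricts to a morphism of tubes $]X[_{P_1} \to ]X[_{P_2}$, and overconvergent sheaves with connection pull back along this map after suitably shrinking strict neighbourhoods. I would then invoke Berthelot's theorem asserting that $f^*$ is an equivalence. The standard proof routes through the fibred product $P_1 \times_{\Spf(\mathcal{O})} P_2$: the morphism $(\mathrm{id},f)\colon P_1 \to P_1 \times P_2$ together with the two projections reduces the statement to showing that each projection $P_1 \times P_2 \to P_i$ induces an equivalence, which is handled via a Taylor-series argument that produces the required quasi-inverse using the overconvergence condition. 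Having established the equivalence of Tannakian categories, the compatibility of fibre functors at $x, y \in Y(k)$ (both of which factor through the common $Y$) yields an isomorphism of path torsors $\pi_1(T_1; x, y) \simeq \pi_1(T_2; x, y)$.

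For the second assertion, I would apply part (1) with $P_1 = X_s$, $P_2 = X$, and $f = \iota_s$ the closed immersion of the fibre $X_s$ into $X$. Since $X_{\overline{s}}$ is the common special fibre, sitting inside $X_s$ and mapping to $X$ via the closed immersion, $\iota_s$ visibly restricts to the identity on $X_{\overline{s}}$. Smoothness of $X$ and $X_s$ in a neighbourhood of $X_{\overline{s}}$ follows from the smoothness of $\pi$ together with the smoothness of $X_{\overline{s}}$, so the hypotheses of part (1) are met.

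The main obstacle is really invoking Berthelot's independence theorem correctly: the equivalence $f^*$ is not at all formal, and genuinely requires the overconvergence condition in order to construct the quasi-inverse. Once this is in hand, the reduction from part (2) to part (1) is routine.
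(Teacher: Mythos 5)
Your proposal is correct and follows essentially the same route as the paper: factor $f$ through the fibre product $P_1\times_{\Spf\mathcal O}P_2$ with the diagonal embedding of $X$, invoke Berthelot/Le Stum's theorem (\cite[Theorem 7.18]{lestum}) that the projections induce equivalences, and observe that $(1,f)$ is a section of $\pr_1$ so $(1,f)^*$ is an equivalence, whence $f^*=(1,f)^*\circ\pr_2^*$ is too. The reduction of part (2) to part (1) with $P_1=X_s$, $P_2=X$, $f=\iota_s$ is exactly what the paper does.
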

\begin{proof}
Recall that Berthelot relates $T_1 $ and $T_2 $ via the rigid triple $T_3 =(Y,X,P_1 \times _{\Z _p }P_2 )$, where $X$ is mapped into $P_1 \times P_2 $ diagonally. Let $\pr _1 $ and $\pr _2 $ denote the coordinate projections from $]Y[_{P_1 \times P_2 }$ to $]Y[_{P_1 }$ and $]Y[_{P_2 }$ respectively. 
Then \cite{Ber96} \cite[Theorem 7.18]{lestum} shows that $\pr _1 ^* $ and $\pr _2 ^*$ induce equivalences of categories $\mathcal{C}(T_i )\simeq \mathcal{C}(T_3 )$ for $i=1,2$. The morphism $(1,f)$ defines a section of $\pr _1$, and hence $(1,f )^*$ must give an equivalence of categories. Since $f=\pr _1 \circ (1,f)$, $f^*$ is the composite of two equivalences.

\end{proof}
Given a rigid triple $T=(Y,X,P)$, we can in this way view $\mathcal{C}(T)$ as an invariant of $Y$, denoted $\mathcal{C}^{\rig }(Y)$. This invariant is functorial in the sense that, given a morphism of rigid triples $f:T_1 \to T_2 $, the induced morphism $\mathcal{C}(Y_1 )\to \mathcal{C}(Y_2 )$ only depends on the morphism $Y_1 \to Y_2 $ (this is essentially a consequence of applying Lemma \ref{lemma:GM} to the graph of $f$). In particular, $\mathcal{C}^{\rig }(Y)$, and hence $\pi _1 ^{\rig }(Y;x_1 ,x_2 )$, carries an action of $\phi $.

Given $X$ and $\overline{s}$ as above, let $T_0 $ denote the rigid triple $(X_{\overline{s}},X_{\overline{s}},X)$. Let $s_1 ,s_2 \in ]\overline{s}[(K)$, and let $T_j$ denote the rigid triple $(X_{\overline{s}},X_{\overline{s}},X_{s_j })$ for $j=1,2$. Let $x\in X_{\overline{s}}(k)$. By Lemma \ref{lemma:GM}, $\iota _{s_j }^*$ induces an isomorphism of fundamental groups
\[
\pi _1 (T_0 ,x) \stackrel{\simeq }{\longrightarrow }\pi _1 (T_j ,x)
\]
or more generally an isomorphism of path torsors
\[
\pi _1 (T_0 ;x,y) \stackrel{\simeq }{\longrightarrow } \pi _1 (T_j ;x,y)
\]
for any two points $x,y\in X_{\overline{s}}(k)$. 
Given two points $s_1 ,s_2 $ in $]s[(K)$, we define the \textit{parallel transport isomorphism}
\begin{equation}\label{eqn:defnG}
G(s_1 ,s_2 ):\pi _1 (T_1 ;x,y )\stackrel{\simeq }{\longrightarrow }\pi _1 (T_2 ;x,y)
\end{equation}
by $G(s_1 ,s_2 ):=\iota _{s_2 *}^{-1}\circ \iota _{s_1 *}$. By Theorem \ref{thm:CLS}, this induces an isomorphism
\[
G(x_1 ,x_2 ):\pi _1 ^{\dR}(X_{s_1 } ,x_1 )\stackrel{\simeq }{\longrightarrow }\pi _1 ^{\dR}(X_{s_2 } ,x_2 )
\]
for any $x_i $ in $]x[$ lying above $s_i $.
Note that the composite of the isomorphism of functors
\[
]x[_{X_1 }\to ]x[_{X}\to ]x[_{X_2 }
\]
defined is simply a relative version of the parallel transport isomorphism defined in \eqref{eqn:parallel_transport}. In particular, if $x_i \in ]x[_{X_i }(K)$, and $\mathcal{V}$ is of the form $(j^{\dagger }\mathcal{O}_{]X[},d-\Lambda )$, then the isomorphism
\[
i_1 ^* \mathcal{V}(]x[_{]X_1 [})\stackrel{\simeq }{\longrightarrow }i_2 ^* \mathcal{V}(]x[_{]X_2 [})
\]
is uniquely determined by sending a section $v\in i_1 ^* \mathcal{V}(]x[_{]X_1 [})^{\nabla =0}$ to the unique horizontal section $w\in i_2 ^* \mathcal{V}(]x[_{]X_2 [})^{\nabla =0}$ satisfying
\[
w(x_2 )=H(x_2)\circ v(x_1 ),
\]
where $H\in \Mat _n (j^{\dagger }\mathcal{O})$ satisfies 
\[
dH=\Lambda \cdot H
\]
and $H(x_1 )=1$.
When applied to the fibre functors $x_i$, it has the simpler form that the isomorphism $x_1 ^* \mathcal{V}\to x_2 ^* \mathcal{V}$ is simply given by $H(x_2 )$.
\begin{lemma}\label{lemma:phi_equivt}
For $s\in S(k)$, $s_1 ,s_2 \in ]s[(K)$, and any $x_i \in X_{s_i } (K)$ lying above $x\in X_s (k)$, $i=1,2$, the parallel transport isomorphism
\[
G(x_1 ,x_2 ):\pi _1 ^{\dR}(X_{s_1 },x_1 )\stackrel{\simeq }{\longrightarrow }\pi _1 ^{\dR}(X_{s_2 } ,x_2 )
\]
is $\phi$ -equivariant.
\end{lemma}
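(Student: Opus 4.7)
The plan is to deduce the $\phi$-equivariance purely from functoriality, after unpacking how $\phi$ and $G$ are defined on each side.

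First, I would reduce the claim to a statement about rigid (rather than de Rham) fundamental groups. By Theorem \ref{thm:CLS}, for each $j=1,2$ the analytification functor gives an equivalence between unipotent flat connections on $X_{s_j}/K$ and unipotent overconvergent isocrystals on the rigid triple $T_j=(X_{\overline{s}},X_{\overline{s}},X_{s_j})$; combined with the fibre-functor isomorphism $x_j^*\simeq\overline{x}^*$ from \eqref{eqn:basis_of_sections}, this is exactly how $\phi$ acts on $\pi_1^{\dR}(X_{s_j},x_j)$. So it is enough to show that the map $G(s_1,s_2):\pi_1(T_1,\overline{x})\to\pi_1(T_2,\overline{x})$ defined in \eqref{eqn:defnG} is $\phi$-equivariant.

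Next, I would appeal to the discussion following Lemma \ref{lemma:GM}: the assignment $Y\mapsto\mathcal{C}^{\rig}(Y)$ is an invariant of $Y$, and morphisms of rigid triples $T_1\to T_2$ which are the identity on the underlying $k$-variety $Y$ induce equivalences of categories that are simply identifications of this same invariant. In particular, the $\phi$-action on $\pi_1^{\rig}(Y;\overline{x},\overline{y})$ defined via functoriality in $Y$ is transported by any such equivalence to the $\phi$-action on any other presentation. Applying this to the closed immersions $\iota_{s_j}\colon X_{s_j}\hookrightarrow X$ (which restrict to the identity on $X_{\overline{s}}$, and hence are morphisms of rigid triples $T_j\to T_0$), we conclude that each $\iota_{s_j*}\colon\pi_1(T_0,\overline{x})\isoarrow\pi_1(T_j,\overline{x})$ is $\phi$-equivariant. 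Since $G(s_1,s_2)=\iota_{s_2*}\circ\iota_{s_1*}^{-1}$ (up to the direction convention in \eqref{eqn:defnG}), it is a composite of $\phi$-equivariant isomorphisms and so is itself $\phi$-equivariant.

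Finally, I would transfer back to the de Rham side. The identification $G(x_1,x_2)$ between $\pi_1^{\dR}(X_{s_1},x_1)$ and $\pi_1^{\dR}(X_{s_2},x_2)$ is obtained from $G(s_1,s_2)$ by conjugating with the fibre-functor isomorphisms $x_i^*\simeq\overline{x}^*$ produced by horizontal sections on the residue tube (Lemma \ref{global_sections}). Since the $\phi$-action on $\pi_1^{\dR}(X_{s_j},x_j)$ is \emph{defined} by the same conjugation, this conjugation is tautologically $\phi$-equivariant, and the equivariance of $G(s_1,s_2)$ descends to the claimed equivariance of $G(x_1,x_2)$.

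I expect no substantial obstacle here: once one grants the functoriality of $\mathcal{C}^{\rig}(Y)$ in $Y$ (which is part of the content of Lemma \ref{lemma:GM} and the discussion following it), the lemma is essentially a formal consequence. The only delicate bookkeeping is checking that the $\phi$-action on $\pi_1^{\dR}(X_{s_j},x_j)$ induced via the CLS comparison and via horizontal sections on the tube is the same one referenced in the statement; this is the content of Theorem \ref{thm:CLS} together with \eqref{eqn:basis_of_sections} applied fibrewise over $s_j$.
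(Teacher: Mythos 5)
Your proof is correct and takes essentially the same route as the paper's: both arguments reduce the claim to the observation that the parallel transport isomorphism and the Frobenius action are each manufactured from the fibre functor at the common reduction point $\overline{x}$ (via Lemma \ref{lemma:GM} and the identification of horizontal sections on the tube), so their compatibility is a formal consequence of functoriality. The paper's proof is considerably more compressed, stopping after identifying the parallel transport composite with the one built from the $\overline{x}$-fibre functor, whereas you make explicit the final transfer back from the rigid side to the de Rham side via Theorem \ref{thm:CLS}, but the underlying idea is the same.
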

\begin{proof}
By Lemma \ref{lemma:GM}, the isomorphism of fibre functors $x^* \simeq x_0 ^*$ is given, on an isocrystal $\mathcal{V}$ on $(X_s ,X)$, by the maps
\[
(]x[^* (\mathcal{V}))^{\nabla =0} \stackrel{\simeq }{\longrightarrow } x_0 ^* \mathcal{V}
\]
which sends a horizontal section of $\mathcal{V}$ on $]x[$ to its value at the point $x_0 ^*$. Hence, by definition, for $x_1 ,x_2 \in ]x[$, the composite isomorphism
\[
x_0 ^* \mathcal{V} \stackrel{\simeq }{\longrightarrow }(]x[^* (\mathcal{V})^{\nabla =0} \stackrel{\simeq }{\longrightarrow}x_1 ^* \mathcal{V}
\]
is given by parallel transport from $x_0 $ to $x_1$.
\end{proof}

\begin{lemma}\label{lemma:unique_iso}
For any two points $x_1 ,x_2 \in X(K)$, there is a unique $\phi$ -equivariant isomorphism 
\[
\mathcal{E}_n (x_1 )\simeq \mathcal{E}_n (x_2 )
\]
which is unipotent with respect to the filtrations.
\end{lemma}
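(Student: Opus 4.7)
The plan is to produce the isomorphism by evaluating Besser's canonical $\phi$-invariant path on the universal object $\mathcal{E}_n$, and then to establish uniqueness via a weight argument on Frobenius eigenvalues.

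For existence, I will invoke Theorem \ref{thm:besserpath} to obtain the $\phi$-invariant element $P_{x_1,x_2} \in \pi_1^{\dR}(X_K/K; x_1, x_2)$. By the Tannakian description of the de Rham path torsor, any such path defines an isomorphism of fibre functors $x_1^* \isoarrow x_2^*$ on $\mathcal{C}^{\dR}(X_K/K)$, and evaluating on the (pro-)connection $\mathcal{E}_n$ yields an isomorphism $\mathcal{E}_n(x_1) \isoarrow \mathcal{E}_n(x_2)$. The $\phi$-invariance of $P_{x_1,x_2}$ translates directly into $\phi$-equivariance of this induced isomorphism, using that the $\phi$-action on each fibre is the one supplied by Theorem \ref{thm:CLS}. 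Since $\pi_1^{\dR}(X_K/K; x_1, x_2)$ is a torsor under the pro-unipotent group $\pi_1^{\dR, \un}(X_K/K, x_1)$, the induced isomorphism preserves the natural filtration on $\mathcal{E}_n$ and induces the canonical identity on associated gradeds; hence it is unipotent in the sense of the paper's conventions. (If $x_1$ and $x_2$ lie in different fibres of a family, one first pre-composes with the parallel transport isomorphism $G(x_1, x_2)$ of \eqref{eqn:defnG}, which is $\phi$-equivariant by Lemma \ref{lemma:phi_equivt}, to reduce to the same-variety case.)

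For uniqueness, given two such isomorphisms $f$ and $g$, the composite $h := g^{-1} \circ f$ is a $\phi$-equivariant unipotent automorphism of $\mathcal{E}_n(x_1)$. Writing $h = \mathrm{id} + N$ with $N$ strictly decreasing the filtration, the problem reduces to showing $N = 0$. Each component of $N$ defines a $\phi$-equivariant map $\gr^i \mathcal{E}_n(x_1) \to \gr^j \mathcal{E}_n(x_1)$ for distinct filtration indices $i \neq j$. Since the associated gradeds of $\mathcal{E}_n$ are built out of tensor powers of $H^1_{\dR}(X_K/K)^*$, Frobenius acts on $\gr^i$ with eigenvalues of absolute value $p^{-i/2}$ (via comparison with crystalline cohomology from Theorem \ref{thm:CLS}), so the spectra on graded pieces of different degrees are disjoint. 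Hence each component of $N$ vanishes and $h = \mathrm{id}$.

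The main obstacle I anticipate is controlling the purity of Frobenius weights on $\gr \mathcal{E}_n$ uniformly in $n$. For $n = 1$ — the case that underlies the abelian applications in Theorem \ref{thm:main1} — this is immediate from purity of Frobenius on $H^1_{\dR}$ of a smooth projective curve. For general $n$ the weight structure on tensor powers of $H^1_{\dR}$, together with multiplicativity of crystalline Frobenius weights, suffices; if $X_K$ fails to be proper, one instead works with the weight filtration and verifies disjointness of the weight supports on different layers of the filtration.
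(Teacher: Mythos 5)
Your proof is correct and takes essentially the same approach as the paper's one-line proof, which simply cites the fact (from Besser) that $\phi$ acts on $\gr_i \mathcal{E}_n(x)$ with weight $-i$. You have spelled out both the existence step (Besser's $\phi$-invariant path evaluated on $\mathcal{E}_n$) and the uniqueness step (disjoint Frobenius weight spectra on distinct graded pieces) that the paper's terse citation implicitly packages.
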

\begin{proof}
This follows from the fact that $\phi$ acts on $\gr _i \mathcal{E}_n (x)$ with weight $-i$, see \cite{besser}.
\end{proof}

Let $x\in J(k)$, lying above $s\in S(k)$, and let $x_1 ,x_2 $ be $K$-points in $]x[(K)$, lying above $s_1 ,s_2 \in ]s[(K)$. Let 
\[
G(x_1 ,x_2 ):x_1 ^* \mathcal{E}\stackrel{\simeq}{\longrightarrow} x_2 ^* \mathcal{E}
\]
denote the parallel transport isomorphism associated to the connection on $\mathcal{E}$ defined above. Let 
\[
G_{\gr }(x_1 ,x_2 ):x_1 ^* ( \gr ^\bullet  \mathcal{E})\stackrel{\simeq }{\longrightarrow }x_2 ^* (\gr ^\bullet  \mathcal{E})
\]
denote the paralllel transport isomorphism associated to $\gr ^\bullet \mathcal{E}$. For $x\in X(K)$, let
\[
I(x):x^* \gr ^\bullet \mathcal{E} \stackrel{\simeq }{\longrightarrow }\mathcal{E}
\]
denote the unique $\phi $-equivariant unipotent isomorphism. We obtain the following ``$p$-adic Betti'' isomorphism between identifying $p$-adic logarithms on different fibres of $J$.
\begin{proposition}\label{prop:commutes}
The diagram
\[
\begin{tikzcd}
x_1 ^* ( \gr ^\bullet \mathcal{E}) \arrow[r, "{G_{\gr }(x_1 ,x_2 )}"] \arrow[d, "I(x_1 )"] & x_2 ^* (\gr ^\bullet \mathcal{E}) \arrow[d, "I(x_2 )"] \\
x_1 ^* \mathcal{E} \arrow[r, "{G(x_1 ,x_2 )}"]                                 & x_2 ^* \mathcal{E}                         
\end{tikzcd}
\]
commutes.
\end{proposition}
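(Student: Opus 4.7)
The plan is to reduce commutativity of the diagram to a uniqueness assertion of the type used in Lemma \ref{lemma:unique_iso}, by forming the composite endomorphism
\[
A := G_{\gr}(x_1,x_2)^{-1} \circ I(x_2)^{-1} \circ G(x_1,x_2) \circ I(x_1) \in \End(x_1^*\gr^\bullet\mathcal{E})
\]
and showing that $A$ must be the identity.

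The crucial input is that the relative universal connection $\mathcal{E}$ of Definition \ref{defn:lifting_univ_conn} is constructed as an extension of flat connections on $X/K$:
\[
0 \longrightarrow \pi^*(R^1\pi_*\Omega_{X|S})^* \longrightarrow \mathcal{E} \longrightarrow \mathcal{O}_X \longrightarrow 0.
\]
Thus the filtration on $\mathcal{E}$ is by \emph{horizontal} subconnections on $X/K$, so $G(x_1,x_2)$ preserves it; and by functoriality of parallel transport applied to the sub and quotient, the induced map on associated gradeds agrees with $G_{\gr}(x_1,x_2)$. Combined with the $\phi$-equivariance of $G$ and $G_{\gr}$ from Lemma \ref{lemma:phi_equivt}, and the $\phi$-equivariance and unipotence of $I(x_i)$ built into its definition, it follows that $A$ is $\phi$-equivariant, preserves the filtration induced by the grading, and is the identity on associated graded: the unipotent contributions from $I(x_1)$ and $I(x_2)^{-1}$ give the identity on graded, while the identification $\gr G(x_1,x_2) = G_{\gr}(x_1,x_2)$ causes the outer $G_{\gr}^{-1}$ to cancel.

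To conclude that $A = \mathrm{id}$, one invokes the weight argument used in the proof of Lemma \ref{lemma:unique_iso}: $\phi$ acts on $\gr^i\mathcal{E}(x)$ with weight $-i$, so the graded pieces carry pairwise distinct Frobenius weights; hence any $\phi$-equivariant endomorphism of $x_1^*\gr^\bullet\mathcal{E}$ must be grading-preserving, and a grading-preserving endomorphism which is the identity on each graded piece is itself the identity. The main subtlety lies not in this final step but in the horizontality of the filtration on $\mathcal{E}$ for the absolute connection, which underpins the identification $\gr G = G_{\gr}$ and is precisely what the construction of $\mathcal{E}$ in Lemma \ref{lemma:lifting_univ_conn} provides.
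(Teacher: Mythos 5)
Your argument is correct and follows essentially the same route as the paper: form a composite out of the four maps in the square, observe that it is unipotent (identity on associated graded) and $\phi$-equivariant, and invoke the weight argument from Lemma \ref{lemma:unique_iso} to force it to be the identity. The paper's version centres the composite differently — it notes that $G(x_1,x_2)\circ I(x_1)\circ G_{\gr}(x_1,x_2)^{-1}$ is unipotent and $\phi$-equivariant, hence by uniqueness must equal $I(x_2)$ — but this is the same reduction in different clothing, and both rely on Lemma \ref{lemma:phi_equivt} for the $\phi$-equivariance of $G$ and $G_{\gr}$. One thing you do that the paper leaves implicit is spell out \emph{why} the composite is filtered and unipotent: $G(x_1,x_2)$ preserves the filtration and induces $G_{\gr}$ on the associated graded because the filtration on $\mathcal{E}$ is by horizontal sub-connections for the absolute connection provided by Lemma \ref{lemma:lifting_univ_conn}. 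This is a genuine requirement and your proof is the more complete for naming it.
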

\begin{proof}
Note that $G(x_1 ,x_2 )\circ I(x_1 )\circ G_{\gr }(x_1 ,x_2 )$ is unipotent. Hence, by Lemma \ref{lemma:unique_iso}, it is enough to prove that it is $\phi $-equivariant. Hence it is enough to prove that $G(x_1 ,x_2 )$ and $G_{\gr }(x_1,x_2 )$ are $\phi $-equivariant, which follows from Lemma \ref{lemma:phi_equivt}.
We deduce that a $\phi $-equivariant isomorphism
\[
\mathcal{E} (x_1 )\simeq \mathcal{E}(x_2 )
\]
is given by $G(x_1 ,x_2 )$, and a $\phi $-equivariant isomorphism
\[
\gr ^\bullet \mathcal{E}(x_1 )\simeq \gr ^\bullet \mathcal{E}(x_2 )
\]
is given by $ G_{\gr }(x_1 ,x_2 )$.
\end{proof}
\section{Proof of Theorem \ref{thm:main1}: good reduction case}\label{sec:goodreduction}
\subsection{Proof of Proposition \ref{prop:padicZP}}
Recall the set-up is that we have a geometrically simple abelian variety $A$ over a $p$-adic field $K$, and we consider codimension $\geq r$ special subvarieties of $A^n$ for some $n>0$. Such special subvarieties are necessarily images of homomorphisms of the form $i:A^m \to A^n$ for $m<n$, where $m\leq n-r/g$. On $A^n$ we have the universal connection given by $(\mathcal{E}(A^n ),\nabla )$ together with $e_1 :K\mapsto e^* \mathcal{E}(A^n )$. Let $V:=H^1 _{\dR}(A/K)^* $, and let $P_n$ denote the principal $V^n$ bundle whose $U$-sections are unipotent isomorphisms
\[
e^* \mathcal{E}(A^n )\otimes \mathcal{O}_U \stackrel{\simeq }{\longrightarrow }\mathcal{E}(A^n )|_U ,
\] endowed with the connection induced from that on $\mathcal{E}(A^n )$. We can quotient $P$ by the action of $(H^1 (A,\mathcal{O})^* )^n$, giving a $\Lie (A^n /K)$-torsor. Forgetting the connection, this torsor is trivial (equivalently, the extension class of the bundle $\mathcal{E}(A^n )/(H^1 (A,\mathcal{O})^* )^{\oplus n}\otimes \mathcal{O}_A $ in $H^1 (A^n ,H^0 (A^n ,\Omega _{A^n |K})^* )$ is trivial). The section $e_1 \in e^* \mathcal{E}(A^n )$ gives a choice of splitting, and in this way, we obtain a surjection of bundles on $A^n$
\[
P_n \to \Lie (A/K)^n \times A^n .
\]
Let $p$ denote the composite of this map with the projection to $\Lie (A/K)^n$. Let $x\in A^n (K)$, with mod $\pi _K$ reduction $\overline{x}$. Note that any path $p$ in $\pi _1 ^{\dR}(A^n ,e,x)$ defines a point in $x^* P$, by evaluating the path $p$ at $\mathcal{E}(A^n)$. In particular, Coleman integration, in the form of the Frobenius invariant path of Theorem \ref{thm:besserpath}, defines an element $z\in x^* P$. The following lemma follows from the definition of Coleman integration in terms of parallel transport given in section \ref{subsec:coleman}.
\begin{lemma}\label{lemma:simplest_commutative}
Let $\alpha :]x[(K)\to P_n (K)$ denote the leaf of $P_n$ through $(x,z)$. Then we have a commutative diagram
\[
\begin{tikzcd}
]x[(K)\cap U(K) \arrow[d, "\alpha"] \arrow[rd, "\log"] &               \\
P_n (K) \arrow[r, "p"]                     & \Lie (A/K)^n
\end{tikzcd}
\]
\end{lemma}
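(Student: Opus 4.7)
The plan is to unravel both sides of the diagram by expressing $p \circ \alpha$ in terms of Besser's Coleman primitive of the universal translation-invariant $1$-form on $A^n$, then comparing this with the standard definition of the $p$-adic logarithm via integration of invariant differentials. First I would describe the connection on the quotient bundle $\mathcal{E}(A^n)/(H^1(A,\mathcal{O})^*)^n \otimes \mathcal{O}_{A^n}$ explicitly. This quotient is an extension of $\mathcal{O}_{A^n}$ by the trivially-connected bundle $\Lie(A^n/K) \otimes \mathcal{O}_{A^n}$, whose class corresponds to the identity under the composite
\[
\ext^1(\mathcal{O}_{A^n},\Lie(A^n) \otimes \mathcal{O}_{A^n}) \cong H^1_{\dR}(A^n) \otimes \Lie(A^n) \supseteq H^0(A^n,\Omega^1) \otimes \Lie(A^n) = \End(\Lie(A^n)),
\]
where the inclusion uses the Hodge filtration. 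Since the underlying bundle is trivial and $e_1$ gives a compatible splitting at $e$, this promotes to a global trivialization $\mathcal{O}_{A^n} \oplus \Lie(A^n)\otimes \mathcal{O}_{A^n}$ under which the connection has the form $d - \omega_{\mathrm{univ}}$, where $\omega_{\mathrm{univ}} \in H^0(A^n,\Omega^1) \otimes \Lie(A^n) \cong \End(\Lie(A^n))$ is the tautological translation-invariant form corresponding to the identity.

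Next I would unpack $\alpha$. By construction, $\alpha(y)$ for $y \in ]x[(K)$ is the unipotent isomorphism $e^*\mathcal{E}(A^n) \isoarrow y^*\mathcal{E}(A^n)$ obtained by evaluating the Frobenius-invariant path of Theorem \ref{thm:besserpath} at $x$ and then parallel-transporting within the residue disk $]x[$ using the connection on $\mathcal{E}(A^n)$, in the sense of Lemma \ref{lemma:phi_equivt} and Corollary \ref{cor:bess2}. Passing to the quotient by $(H^1(A,\mathcal{O})^*)^n$ and using the trivialization above, this is the unique horizontal section of $\mathcal{O}_{]x[} \oplus \Lie(A^n)\otimes \mathcal{O}_{]x[}$ on $]x[$ that is Frobenius-equivariant and whose Frobenius-equivariant extension to $e$ reduces to $(1,0)^T$. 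Writing such a section as $(1,F(y))^T$, the horizontality equation forces $dF = \omega_{\mathrm{univ}}$, and the Frobenius-equivariance together with the normalization at $e$ identifies $F$ with the Besser Coleman primitive $\int_e^y \omega_{\mathrm{univ}}$ as defined in Section~\ref{subsec:coleman}. By definition of $p$, we have $p(\alpha(y)) = F(y)$.

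The final step is to compare $F$ with $\log$. The $p$-adic logarithm $\log : A^n(K) \to \Lie(A^n/K)$ is characterized by $\omega(\log y) = \int_e^y \omega$ for every invariant $1$-form $\omega \in H^0(A^n,\Omega^1) = \Lie(A^n/K)^*$; this is the standard fact that on an abelian variety the invariant Coleman primitive agrees with the homomorphism of $p$-adic Lie groups into its Lie algebra, and is essentially tautological from Besser's construction since invariant forms have vanishing cup products with themselves. Applying this coordinate by coordinate using a basis of $\Lie(A^n/K)^*$ identifies $F(y) = \int_e^y \omega_{\mathrm{univ}}$ with $\log(y)$, giving commutativity.

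The main (but minor) obstacle is pinning down the identification of the connection form on the quotient bundle with the universal invariant $1$-form, which relies on the universal property defining $\mathcal{E}(A^n)$ together with the fact that the Hodge filtration on $H^1_{\dR}(A^n)$ coincides with $H^0(A^n,\Omega^1)$; once this identification is fixed, the rest is a direct comparison of horizontal sections.
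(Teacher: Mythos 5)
Your proposal is correct and follows essentially the same route as the paper: both identify $p\circ\alpha$ with the Besser--Coleman primitive of the universal invariant $1$-form (via the Frobenius-invariant path, parallel transport on the residue disk, and the quotient/splitting defined by $e_1$) and then invoke the characterization of $\log$ as the Coleman integral of invariant differentials. The paper packages this slightly more tersely by composing with an arbitrary linear functional $\ell$ and quoting the notion of abstract Coleman function, but the underlying computation is the one you spell out.
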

\begin{proof}
To apply the notion of abstract Coleman functions recalled earlier, it is convenient to fix an arbitrary nonzero linear functional $\ell :\Lie (A/K)^n \to K$.
Given a point $u\in ]x[(K)\cap U(K)$, $\alpha (y)$ is the unique Frobenius invariant unipotent isomorphism
\[
e^* \mathcal{E}(A^n )\otimes \mathcal{O}_U \simeq y^* \mathcal{E}(A^n ).
\]
Hence $\ell \circ p\circ \alpha $ is the restriction to $]x[$ of the unique abstract Coleman function on $\mathcal{E}(A^n )$ whose value at the identity is zero, whose associated bundle surjection $\mathcal{E}\to \mathcal{O}$ is defined as the composite
\[
\mathcal{E}\to \mathcal{E}/(H^1 (A,\mathcal{O})^* )^n \otimes \mathcal{O}\to \Lie (A)^n \otimes \mathcal{O}\to \mathcal{O}
\]
where the second map is induced by the splitting associated to $e_1 $, and the third is induced by $\ell $. By the characterisation of $\log $ in terms of Coleman integration, this implies $\ell \circ \log =\ell \circ p\circ \alpha $ for all $\ell $.
\end{proof}
For $\ell <\min \{ g,n\}$, let $\mathbb{D}_{\ell } \overline{U}^n \subset \Lie (A/K)^n$ denote the subscheme of rank $\leq \ell $ $n$-tuples of vectors in $\Lie (A/K)$. Then $\mathbb{D}_{\ell } \overline{U}^n $ is a finite union of codimension $\geq (n-\ell )(g-\ell )$ subvarieties of $\Lie (A/K)^n$. Hence Lemma \ref{lemma:simplest_commutative} implies that, in a small disc around $z$, the image of rank $\leq \ell $ points of $A^n$ in the leaf of the connection on $P_n $ through $z$ lies in a finite union of codimension $\geq (n- \ell )(g-\ell )$ subvarieties of $P_n$. Hence, by Theorem \ref{thm:BCFN}, $]x[(K)\cap U(K)\cap A^{n,[g(n-\ell )]}$ is not Zariski dense in $A^n$. 

\begin{remark}
In fact, in this case, the Zariski non-density could also be deduced from Ax's theorem \cite{AxII}, as in this case Theorem \ref{thm:BCFN} specialises to Ax's theorem for the abelian variety $A^n $.
\end{remark}
\subsection{The variation of the $p$-adic logarithm in families}\label{subsec:variation}
In this subsection, we relate the results of section \ref{sec:families} to those of section \ref{sec:AxSchan}. Let $\pi :X\to S$ be a family of smooth projective curves over $\mathcal{O}_K$, where $K$ is a $p$-adic field. Let $\pi _J :J\to S$ denote the Jacobian of $X/S$. Let $(\mathcal{E},e_1 )$ denote the universal connection on $\pi _J :J_K \to S_K$ relative to the identity section $e$. Pick $x\in J(\mathcal{O}_K )$ lying above $s\in S(\mathcal{O}_K )$, and let $\widetilde{P}$ be the associated frame bundle whose $U$-sections are isomorphisms
\[
x^* \mathcal{E}\otimes \mathcal{O}_U \stackrel{\simeq }{\longrightarrow }\mathcal{E}|_U .
\]
Let $G<\GL (x^* \mathcal{E})$ denote the monodromy group of $\mathcal{E}$ (we follow the convention that the monodromy group is the connected component of the identity of the Tannakian fundamental group of the Tannakian subcategory of the category of flat connections generated by $\mathcal{E}$). Let $P$ denote a descent of $\widetilde{P}$ to a $G$-bundle. Let $G_0 $ denote the reductive quotient of $G$, and let $P_0$ denote the pushout of $P$ along $G\to G_0$. So $G$ is an extension of $G_0 $ by a vector group over $K$ of dimension $2g$, $G_0$ is (isomorphic to) the monodromy group of the Gauss--Manin connection, and $P_0$ can be identified with the frame bundle for $R^1 \pi _* \Omega _{J_K |S_K }$. Let $\mathcal{U}$ denote the sheaf whose values on $U$ are the set of unipotent isomorphisms
\[
\mathcal{E}|_U \stackrel{\simeq }{\longrightarrow }\gr ^\bullet \mathcal{E}|_U .
\]
Via Corollary \ref{cor:bess2}, Coleman integration together with the trivialisation of $e^* \mathcal{E}$ defined by $e_1$, defines a unipotent isomorphism
\[
\alpha_0 : \gr ^\bullet (x^* \mathcal{E})\simeq x^* \mathcal{E}.
\]
Then we have a map
\[
p_1 :P\to \mathcal{U}
\]
defined locally by sending an isomorphism $\rho :\mathcal{E}|_U \stackrel{\simeq }{\longrightarrow }x^* \mathcal{E} \otimes \mathcal{O}_U $ to
\[
(\gr ^\bullet \rho )^{-1}\circ (\alpha _0 \otimes 1_U )\otimes \rho :\mathcal{E}|_U \stackrel{\simeq }{\longrightarrow } \gr ^\bullet \mathcal{E}|_U .
\]
$\mathcal{U}$ is naturally an $(R^1 \pi _* \Omega _{J_K |S_K })^*$-torsor over $J$. Pushing out along $(R^1 \pi _* \Omega _{J_K |S_K })^* \to \pi  ^* \Lie (J_K /S_K )$ we obtain a torsor which may be trivialised via its trivialisation along the identity section coming from the isomorphism
\begin{equation}\label{eqn:trivialise}
e^* \mathcal{E}\simeq e^* (\gr ^\bullet \mathcal{E}).
\end{equation}
Hence we obtain a map
\[
p_2 :\mathcal{U}\to \Lie (J/S)_K .
\]
Composing with $p_1$, we obtain a morphism of $S_K$-schemes
\begin{equation}\label{eqn:whatisp}
p:=p_2 \circ p_1 :P\to \Lie (J_K /S_K ).
\end{equation}
\begin{proposition}\label{prop:GM_2_log}
Let $\pi :X\to S$ be a smooth family of curves over $\mathcal{O}_K$.
Let $x\in J(\mathcal{O}_K )$ reducing to $\overline{x}\in J(k)$ and mapping to $s\in S(K)$. Let $z\in x^* P(K)$ be a point corresponding (via Corollary \ref{cor:bess2}) to $p$-adic integrals $\omega \mapsto \int _{o}^x \omega $. Let
\[
\alpha :]\overline{x}[(K)\to P(K)
\]
denote the leaf of $P$ through $z$. Then the diagram
\[
\begin{tikzcd}
]\overline{x}[(K) \arrow[d, "\alpha"] \arrow[rd, "\log"] &               \\
P(K) \arrow[r, "p"]                     & \Lie (J/S)(K)
\end{tikzcd}
\]
commutes.
\end{proposition}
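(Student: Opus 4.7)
The plan is to reduce Proposition \ref{prop:GM_2_log} to the single-variety case of Lemma \ref{lemma:simplest_commutative}, by first identifying the leaf $\alpha$ through $z$ with a Frobenius-invariant element via parallel transport, and then checking that the construction of $p=p_2\circ p_1$ restricts along each fiber to the analogous single-variety construction. Since the claim is pointwise, I would fix $y\in ]\overline{x}[(K)$ lying above $s'\in ]\overline{s}[(K)$ and aim to show $p(\alpha(y))=\log y$, where $\log$ denotes the $p$-adic logarithm in the fiber $J_{s'}$.

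First I would describe $\alpha(y)$ concretely. By Lemma \ref{global_sections} applied to the tube $]\overline{x}[$, a horizontal section of (the pullback of) $P$ on the tube is uniquely determined by its value at a single point, so $\alpha(y) \in y^*P$ is the image of $z$ under the parallel-transport isomorphism $G(x,y)\colon x^*\mathcal E\isoarrow y^*\mathcal E$ induced by \eqref{eqn:parallel_transport} and the connection on $\mathcal E$ over $J_K$. By Corollary \ref{cor:bess2}, $z$ is the Frobenius-invariant element associated to the Coleman integral from $o$ to $x$ (Theorem \ref{thm:besserpath}), and by Lemma \ref{lemma:phi_equivt} applied to the family $J\to S$, the parallel transport $G(x,y)$ is $\phi$-equivariant. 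Consequently $\alpha(y)$ is $\phi$-invariant, so by the uniqueness statement in Theorem \ref{thm:besserpath} it is the Frobenius-invariant element associated to the Coleman integral ending at $y$. Functoriality of this construction under the fiber inclusion $J_{s'}\hookrightarrow J_K$, combined with the comparison of Proposition \ref{prop:commutes}, then identifies $\alpha(y)$ with the corresponding Frobenius-invariant element computed intrinsically on the abelian variety $J_{s'}$.

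It remains to apply Lemma \ref{lemma:simplest_commutative} to $J_{s'}$. For this I would check that $p=p_2\circ p_1$, restricted along the fiber inclusion $J_{s'}\hookrightarrow J_K$, agrees with the analogous composite built in the proof of Lemma \ref{lemma:simplest_commutative} for the single abelian variety $J_{s'}$: the map $p_1$ restricts correctly because the Frobenius-invariant unipotent splitting $\alpha_0$ used in its definition is natural in closed immersions (Corollary \ref{cor:bess2}), and $p_2$ restricts correctly because the trivialization \eqref{eqn:trivialise} comes from the relative universal connection of Definition \ref{defn:lifting_univ_conn}, whose pullback along the identity section of $J_{s'}$ recovers the single-variety splitting. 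Granted these compatibilities, Lemma \ref{lemma:simplest_commutative} gives $p(\alpha(y))=\log y \in \Lie(J_{s'}/K)$, which is the value of $\log$ in the fiber of $\Lie(J/S)$ at $s'$, proving the proposition. The hardest part will be rigorously verifying the last compatibility, since $\mathcal E$ is constructed globally via the Leray-type sequence \eqref{eqn:leray} and Definition \ref{defn:lifting_univ_conn}, and its behaviour on each fiber has to be controlled using the universal property together with the vanishing condition on $s^*\mathcal E$ built into Definition \ref{defn:lifting_univ_conn}.
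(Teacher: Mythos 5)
Your proposal is conceptually sound and uses the same essential ingredients as the paper's argument, but it is organized differently. The paper's own proof is short: it introduces the auxiliary map $\beta :J(\mathcal{O}_K)\to \mathcal{U}(K)$ sending a point to the Coleman trivialisation of its fibre of $\mathcal{E}$, and then factors the desired identity as $p\circ\alpha=p_2\circ(p_1\circ\alpha)=p_2\circ\beta=\log$. The identity $p_1\circ\alpha=\beta$ is precisely what Proposition~\ref{prop:commutes} delivers (parallel transport intertwines the $\phi$-invariant unipotent splittings), and $p_2\circ\beta=\log$ is essentially built into the definition of $p_2$ as the projection to $\Lie(J/S)$ trivialised by the identity section. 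In other words, the paper stays at the level of the family throughout, and never passes to individual fibres.

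Your route instead proceeds fibre-by-fibre: you identify $\alpha(y)$ as a $\phi$-invariant element via Lemma~\ref{lemma:phi_equivt} and the uniqueness in Theorem~\ref{thm:besserpath}, then try to invoke Lemma~\ref{lemma:simplest_commutative} for the single abelian variety $J_{s'}$. This is workable, but it genuinely creates extra work that the paper avoids. The step you flag as ``the hardest part'' is where the real content lies, and it is not just a routine verification: the map $p$ in Lemma~\ref{lemma:simplest_commutative} is built purely from the $e_1$-splitting and a quotient (the structure group there is unipotent, since a single abelian variety has no monodromy in the $S$-direction), whereas the map $p=p_2\circ p_1$ in Proposition~\ref{prop:GM_2_log} carries the extra normalisation $\alpha_0$ to kill the reductive part $G_0$ of the monodromy group $G$. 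Matching these on a fibre requires you to argue that restriction to $J_{s'}$ trivialises the $G_0$-component of $P$ and that $\alpha_0$ then collapses to the trivialisation used in Lemma~\ref{lemma:simplest_commutative}; this is exactly the compatibility that Proposition~\ref{prop:commutes} packages once and for all at the family level. You also need to check that the restricted leaf of the family foliation agrees with the intrinsic leaf over $J_{s'}$, and that Lemma~\ref{lemma:phi_equivt} (stated for $X\to S$) applies verbatim to $J\to S$ — both are fine, but should be noted. So: the outline is correct and would close once those compatibilities are written out, but the detour through Lemma~\ref{lemma:simplest_commutative} replaces one application of Proposition~\ref{prop:commutes} with a somewhat more delicate structural matching; the paper's more direct factorisation through $\beta$ is the cleaner path.
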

\begin{proof}
Let $\mathcal{U}(K)$ denote the $K$-points of the total space of $\mathcal{U}$. We define a map of sets
\[
\beta :J(\mathcal{O}_K )\to \mathcal{U}(K)
\]
by sending $x\in J(\mathcal{O}_K )$ lying above $s\in S(\mathcal{O}_K )$ to the trivialisation of $x^* \mathcal{E}(X)$ induced by the Coleman integrals $\omega \mapsto \int _{o}^x \omega $. Then it is enough to prove that the diagram
\[
\begin{tikzcd}
]\overline{x}[(K) \arrow[d, "\alpha"] \arrow[rd, "\beta"] &               \\
\mathcal{U}(K) \arrow[r, "p_2 "]                     & \Lie (J/S)(K)
\end{tikzcd}
\]
commutes, since $\log =p_1 \circ \beta $. This follows from Proposition \ref{prop:commutes}.
\end{proof}

\subsection{Construction of the principal bundle $P$ in Theorem \ref{thm:main1}}
To apply Theorem \ref{thm:BCFN} to deduce Theorem \ref{thm:main1} part (1), we first need to define the relevant principal bundle. Let $\pi :X\to S$ and $J$ be as in Theorem \ref{thm:main1}, part (1). Let $P_1 ,\ldots ,P_{d_0 }\in J(S)$ be sections generating a finite index subgroup of the subgroup $\Gamma $. Let $s\in U_S (\mathcal{O}_{K })$ be a point where $s^* P_1 ,\ldots ,s^* P_{d_0 }$ generate a rank $d_0$ subgroup of $J_s (K )$ (by assumption, the set of such $S$ is the intersection of $U_S (\mathcal{O}_{K })$ with the $K$-points of a Zariski open subset of $S$). To prove that $X^n _S (\mathcal{O}_K )_{\Gamma -\rk \leq r}$ is not Zariski dense, it will enough to prove it for each residue disk, hence we fix $\overline{x}=(\overline{x}_1 ,\ldots ,\overline{x}_n ;\overline{s})\in X^n _S (k)$.

Let $(x_1 ,\ldots ,x_n ;s)$ in $X^n _S (K)$. If the rank of the subgroup of $J_s (K )$ generated by $\Alb _s (x_1 ),\ldots ,\Alb _s (x_n ),s^* P_1 ,\ldots ,s^* P_{d_0 }$ is $\leq d+d_0$, then the $K $-dimension of the subspace of $\Lie (J_s )_{K }$ generated by ${\log }_s \circ \AJ (x_1 ),\ldots ,\log _s \circ \AJ(x_n )$ and $\log _s (s^* P_1 ),\ldots ,\log _s (s^* P_{d_0 })$ is $\leq d+d_0 $, where $\log _s $ denotes the $p$-adic logarithm from $J_s $ to $\Lie (J_s )_K$. This motivates the following definition.
\begin{definition}
The $d$th degeneracy locus of $\Lie (J_s )^{n+d_0 }$, denoted $\mathbb{D} _{d } (\Lie (J_s )^{n+d_0 })$, is defined to the subscheme of $\Lie (J_s )^{n+d_0 }$ of points of rank $\leq d$. Equivalently, it is the vanishing locus of the map
\[
\wedge ^{d+1}(\mathcal{O}^{n+d_0 }_{\Lie (J_s )^{n+d _0 }})\to \wedge ^{d+1}\underline{\Lie (J_s )}
\]
obtained from the tautological map 
\[
\mathcal{O}^{n+d_0 }_{\Lie (J_s )^{n+d _0 }}\to \underline{\Lie (J_s )}
\]
of vector bundles on $\Lie (J_s )^{n+d_0 }$, where $\underline{\Lie (J_s )}$ denotes the trivial vector bundle with fibre $\Lie (J_s )$.
\end{definition}

This subscheme has codimension $(n+d_0 -r)(g-r)$ in $\Lie (J_s )^{n+d_0 }$ (this follows from the corresponding fact for determinantal varieties of $n\times m$ matrices of rank $r$, see e.g. \cite[Proposition 12.2]{harris:GTM}). This construction can be generalised to families as follows.
\begin{definition}
Given a rank $m$ vector bundle $\mathcal{V}$ over a scheme $S$, and positive integers $d\leq r$ with $d\leq m$, we define the $d$th degeneracy locus of $\mathcal{V}^{\oplus r}$ to be the subscheme of the total space of $\mathcal{V}^{\oplus r}$ consisting of points of rank $\leq d$. Equivalently, if $\tau :\mathcal{V}^{\oplus r}\to S$ denotes the structure map from the total space of the vector bundle $\mathcal{V}^{\oplus r}$ to $S$, then the $d$-degeneracy locus can be defined as the vanishing locus of the map
\[
\wedge ^{d+1}\mathcal{O}_{\mathcal{V}^{\oplus r}}^{\oplus r}\to \wedge ^{d+1}\tau ^* \mathcal{V}
\]
obtained by taking the $(d+1)$th wedge power of the tautological map
\[
\mathcal{O}_{\mathcal{V}^{\oplus r}}^{\oplus r}\to \tau ^* \mathcal{V}.
\]
\end{definition}

Fix $x=(x_1 ,\ldots ,x_n ;s)\in ]\overline{x}[(K)$. Let $V:=H^1 _{\dR}(X_s /K)$, and let $G_0 <\GSp _{2g}(V)$ be the monodromy group with respect to $\pi $. By Deligne's semi-simplicity theorem \cite[Theorem 4.3.6]{hodgeii}, $G_0$ is a sparse group. Recall that $J$ denotes the Jacobian of $X$ over $S$, and that we assume that we have $r$ generically independent sections $s_1 ,\ldots ,s_{d_0 }$ of $J$ over $S$, generating a subgroup $\Gamma $. 

Define $j^n _{\Gamma }$ to be the map
\[
j^n _{\Gamma }:X^n _S (\mathcal{O}_K )\to \Lie (J/S)^{n+r}(K)
\]
sending $(x_1 ,\ldots ,x_n ;s)$ to $(\log _s (x_1 ),\ldots ,\log _s (x_n ),\log _s (s_1 (s)),\ldots ,\log _s (s_d (s));s)$. Then, for each $s\in S(\mathcal{O}_K )$, $j^n _{\Gamma }|_{X^n _s (K)}$ maps $X^n _s (\mathcal{O}_K )_{\Gamma -\rk \leq r}$ into $\mathbb{D}_r (\Lie (J_s )^{n+d})$. 

We deduce the following global property of $j^n _{\Gamma }(X^n _S (\mathcal{O}_K ))$.
\begin{lemma}
The set $X^n _S (\mathcal{O}_K )_{\Gamma -\rk \leq r}$ maps into a finite union of codimension $\geq (n+d-r)(g-r)$ subvarieties of $\Lie (J/S)^{n+d} _S (\mathcal{O}_K )$ under the map $j^n _{\Gamma }$.
\end{lemma}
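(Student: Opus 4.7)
The plan is to verify that $j^n_\Gamma$ carries $X^n_S(\mathcal{O}_K)_{\Gamma-\rk \leq r}$ into the $K$-points of the relative degeneracy locus $\mathbb{D}_r(\Lie(J/S)^{n+d}) \subset \Lie(J/S)^{n+d}$ defined just above the lemma, and then to observe that this locus has the claimed codimension on each of its finitely many irreducible components.

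For the factoring through $\mathbb{D}_r$, fix $(x_1,\ldots,x_n;s) \in X^n_S(\mathcal{O}_K)_{\Gamma-\rk \leq r}$ and let $G \subset J_s(K)$ be the subgroup generated by $\Alb_s(x_1),\ldots,\Alb_s(x_n),s^*P_1,\ldots,s^*P_{d_0}$, which has $\Z$-rank at most $r$ by hypothesis (here $d_0 = d$, since the $P_i$ generate a finite-index subgroup of $\Gamma$ and finite-index subgroups share the same rank). The $p$-adic logarithm $\log_s: J_s(K) \to \Lie(J_s)_K$ is a group homomorphism whose kernel is exactly the torsion subgroup of $J_s(K)$, so the $K$-linear span of $\log_s(G)$ has dimension bounded above by $\rk(G) \leq r$. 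Hence $j^n_\Gamma(x_1,\ldots,x_n;s)$ is an $(n+d)$-tuple of vectors in $\Lie(J_s)_K$ of rank $\leq r$, placing it in the fibre of $\mathbb{D}_r(\Lie(J/S)^{n+d})$ over $s$.

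For the codimension claim, the geometric fibre of $\mathbb{D}_r$ above $s \in S$ is the classical rank-$\leq r$ determinantal subvariety of $g \times (n+d)$-matrices, known to have pure codimension $(n+d-r)(g-r)$ by e.g.\ \cite[Proposition 12.2]{harris:GTM}. Because $\tau: \Lie(J/S)^{n+d} \to S$ is smooth of constant relative dimension $g(n+d)$ and $\mathbb{D}_r$ is cut out globally by the same $(r+1)$-wedge vanishing condition which defines it fibrewise, the relative degeneracy locus is of pure codimension $(n+d-r)(g-r)$ in $\Lie(J/S)^{n+d}$ and admits only finitely many irreducible components, each of codimension at least $(n+d-r)(g-r)$. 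The image of $X^n_S(\mathcal{O}_K)_{\Gamma-\rk \leq r}$ under $j^n_\Gamma$ is thus contained in this finite union, as required. The only substantive point is the determinantal codimension, which is classical; the main conceptual check is simply that $\Z$-rank of $G$ bounds the $K$-dimension of $\log_s(G)$ from above, which is automatic since $\log_s$ is $\Z$-linear with torsion kernel.
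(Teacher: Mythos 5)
Your proof is correct and follows essentially the same route as the paper. Both ingredients you use — that $\log_s$, being $\Z$-linear with torsion kernel, forces the $K$-span of a rank $\leq r$ subgroup to have dimension $\leq r$, and that the determinantal locus has pure codimension $(n+d-r)(g-r)$ by \cite[Proposition 12.2]{harris:GTM} — are already recorded in the paper's surrounding prose (the sentences immediately before the lemma, and the remark after the definition of the degeneracy locus); you have simply assembled them into a self-contained proof of the displayed statement.
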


For each section $s_i$, we have a frame bundle $P(s_i )$ on $S$ which is a $G_1$-torsor, where $G_1 $ is an extension of $G_0$ by the vector group $H^1 _{\dR}(X_s /K)^*$. Corresponding to $\Gamma $ and $n$ we can form a frame bundle $P^n _{\Gamma }$ as follows. For each $i$ let $\mathcal{E}(s_i )$ be the extension of connections on $S$ corresponding to the section $s_i$, and let $\mathcal{E}(i)$ be the pullback of $\mathcal{E}(J)$ along $\pr _i$. Then for each $i$ we have a frame bundle $P_{s_i }$ or $P(i)$. We define $P^n _{\Gamma }\subset \prod P_{s_i }\times \prod P(i)$ to be the subvariety of unipotent isomorphisms which act diagonally on
\[
\bigoplus _i (\gr ^\bullet  \mathcal{E}(i) \oplus \gr ^\bullet \mathcal{E}(i)) \simeq \mathcal{O}_{X^n _{S_K} }^{\oplus (n+r)} \oplus ((R^1 \pi _* \Omega _{X_K|S_K})^* )^{\oplus (n+r)}.
\]
This is a torsor under an extension of $G_0$ by $(H^1 _{\dR}(X_s /K)^* )^{\oplus (n+r)}$. Then Proposition \ref{prop:GM_2_log} implies that we have a commutative diagram.
\[
\begin{tikzcd}
]\overline{x}[(K) \arrow[d, "\alpha"] \arrow[rd, "\log"] &               \\
P^n _{\Gamma }(K) \arrow[r, "p"]                     & \Lie (J/S)(K)^{n+r}
\end{tikzcd}
\]

To complete the proof of case (1) of Theorem \ref{thm:main1}, by Lemma \ref{lemma:analytic2formal} we reduce to the proving the following statement. Let $x\in X^n _S (\mathcal{O}_K )_{\rk r}$, let $z:=j^n _{\Gamma }(x)$, and let $\widehat{\mathbb{D}}_z$ denote the formal completion of $\mathbb{D}_r (\Lie (J/S) ^{n+r}_{S_K} )$ at $z$, viewed as a formal subscheme of the formal completion of $\Lie (J/S)^{n+r}_{S_K}$ at $z$. Then it is enough to prove that $(j^n _{\Gamma })^{-1}(\widehat{\mathbb{D}}_z )$ is not Zariski dense in $X^n _S $. By Proposition \ref{prop:GM_2_log}, this is a consequence of Theorem \ref{thm:BCFN}, since $(p ^{n+r})^{-1}(\mathbb{D}_r (\Lie (J/S)^{n+r}_{S_K} ))$ is a finite union of codimension $\geq \dim X^n _S $ subvarieties of $P^n _\Gamma $.
\section{Filtered schemes in the Chabauty--Coleman--Kim method}\label{sec:filtered}
In this section and the next, $X$ will denote a smooth projective curve over $\Q $, and $Y\subset X$ a nonempty open subscheme over $\Q $ with complement $Z$, such that $2g(X)+\# Z(\overline{\Q })>3$. Let $\mathcal{X}$ denote a regular model for $X$ over $\mathbb{Z}$, and let $\mathcal{Z}$ denote the closure of $Z$ in $\mathcal{X}$. For $S\subset \Spec (\mathbb{Z})$, let $Y(\mathbb{Z}_S )$ denote the set of $\mathbb{Z}_S $-sections of $\mathcal{X}$ which do not reduce to $\mathcal{Z}_v$ modulo any prime $v$ not in $S$.
\subsection{Review of the Chabauty--Coleman--Kim method}
We first review the Chabauty--Coleman--Kim method, as developed in \cite{Kim}, \cite{kim3}, \cite{kim_tamagawa}. 
Let $p$ be a prime of good reduction for $X$, let $T_0$ be a finite set of primes not containing $p$, and let $T\supset T_0$ a finite set of primes containing $p$ and all primes of bad reduction. 
Let $b$ be a $T_0$-integral point of $Y$, and let $U_n (X,x)$ denote the maximal $n$-unipotent quotient of the $\Q _p$-unipotent \'etale fundamental group of $Y_{\overline{\Q }}$ at $b$. Then we obtain a map
\[
j_{n,b}^{\et} :Y(\mathbb{Z}_{T_0 })\to H^1 _{f,T_0} (G_{\Q ,T},U_n (X,b)).
\]
which sits in a commutative diagram
\[
\begin{tikzcd}
Y(\mathbb{Z}_{T_0 }) \arrow[r, "{j_{n,b}^{\et}}"] \arrow[d] & H^1 _{f,T_0}(G_{\Q ,T},U_n (Y,b)) \arrow[d, "\loc _p"] \\
Y(\mathbb{Z}_p ) \arrow[r, "{j_{n,b}}"]        & U_n ^{\dR}(Y_{\Q _p },b)/F^0   ,
\end{tikzcd}
\]
defined as follows.
\begin{enumerate}
\item $H^1 _{f,T_0}(G_{\Q ,T},U_n (Y,b))$ is the set of nonabelian cohomology classes in $H^1 (G_{\Q ,T},U_n (Y,b))$ which are unramified outside $T_0 \cup \{p \}$ and crystalline at $p$ in the sense of \cite{Kim}.
\item $U_n ^{\dR}(Y_{\Q _p  },b)$ is the maximal $n$-unipotent quotient of the de Rham fundamental group of $Y_{\Q _p }$ with basepoint $b$. This has a filtration $F^i$ and $F^0$ is a subgroup.
\item The set $H^1 _f (G_{\Q _p },U_n (Y,b))$ of crystalline cohomology classes has the structure of the set of $\Q _p $-points of a scheme, so that the natural maps between crystalline cohomology classes of different subquotients of $U_n (Y,b)$ are all algebraic. There is an isomorphism of schemes
\[
H^1 _f (G_{\Q _p },U_n (Y,b)\simeq U_n ^{\dR}(Y_{\Q _p },b)/F^0 .
\]
\item The horizontal maps are defined by sending a point $x$ to the class of $\pi _1 ^{\et }(Y_{\overline{\Q }};b,x)$ in $H^1 (G_{\Q } ,U_n (Y,b))$ under the map $H^1 (G_{\Q } ,\pi _1 (Y_{\overline{\Q }},b))\to H^1 (G_{\Q } ,U_n (Y_{\overline{\Q }},b))$.
\end{enumerate}
In fact, by imposing local conditions at primes away from $p$, we can define a closed subscheme $\Sel _{T_0 }(U_n )\subset H^1 (G_{\Q ,T},U_n (Y,b))$ as follows. For $w\in T-T_0 \cup \{ p \}$, let $\overline{j_{n,b}^w (Y(\mathbb{Z}_w ))}$ denote the Zariski closure of the set $j_{n,b}^w (Y(\mathbb{Z}_w ))$. 
We define the closed subscheme $\Sel _{T_0 }(U_n )\subset H^1 (G_{\Q ,T},U_n (Y,b))$ by 
\[
\Sel _{T_0 }(U_n ):= H^1 (G_{\Q ,T},U_n (Y,b)) \times  _{\prod _{w\in T-\{p\}}H^1 (G_{\Q _w },U_n (Y,b))}\prod _{w\in T-T_0 \cup \{p\}}\overline{j_{n,b}^w (Y(\mathbb{Z}_w ))}.
\]

Then we have a commutative diagram
\[
\begin{tikzcd}
Y(\mathbb{Z}_{T_0 }) \arrow[r, "{j ^{\et }_{n,b}}"] \arrow[d] & \Sel _{T_0 }(U_n ) \arrow[d, "\loc _p"] \\
Y(\mathbb{Z}_p ) \arrow[r, "{j_{n,b}}"]        & U_n ^{\dR}(Y_{\Q _p },b)/F^0   ,
\end{tikzcd}
\]
where the objects and maps involved have the following properties.
\begin{enumerate}
\item $\Sel _{T_0 }(U_n )$ is a scheme of finite type over $\Q _p$, and $U_n ^{\dR}(Y_{\Q _p })/F^0 $ is a variety over $\Q _p$, in such a way that $\loc _p $ is a morphism of schemes over $\Q _p$.
\item The map $j_{n,b}$ is locally analytic, in the sense that, on each residue disk $D$ of $Y$, $j_{n,b}|_D$ is obtained from a rigid analytic function $D\to (U_n ^{\dR}(Y_{\Q _p },b)/F^0 )^{\an }$.
\end{enumerate}

The map $j_{n,b}$ admits the following description, for any smooth curve $Y$ over $\mathcal{O}_{K}$ for $K$ a $p$-adic field with residue field $k$ (see \cite{kim3} or \cite{Hadian} for more details). Let $\mathcal{E}_n ^{\dR}(b)$ be the $n$-unipotent universal connection on $Y_K$, and $z\in Y(\mathcal{O}_K )$. Let $\widetilde{P}_n ^{\dR}$ be the bundle whose $U$-sections are $n$-unipotent isomorphisms
\[
(b^* \mathcal{E}_n )\otimes \mathcal{O}_U \stackrel{\simeq }{\longrightarrow }\mathcal{E}_n ^{\dR}|_U ,
\]
with the connection induced from the connection on $\mathcal{E}_n ^{\dR}$. Then $\widetilde{P}_n ^{\dR}$ descends to a principal $U_n ^{\dR}$-bundle, which we denote $P_n ^{\dR}$. In fact, this principal bundle is exactly the unipotent de Rham fundamental groupoid on  $Y_K \times Y_K$, restricted to $Y_K \times \{b\}$ and pushed out along 
\[
\pi _1 ^{\dR}(Y_K ,b)\to U_n ^{\dR}(b).
\]
In particular, the fibre of $P_n ^{\dR}$ at $z\in Y(\mathcal{O}_K )$ consists of unipotent isomorphisms $b^* \mathcal{E}_n ^{\dR}\simeq z^* \mathcal{E}_n ^{\dR}$, and the unique $\phi $-invariant unipotent isomorphism from Lemma \ref{lemma:unique_iso} defines an element $p^\phi (z)$ of $z^* P_n ^{\dR} $. The bundle $P_n ^{\dR}$ carries a Hodge filtration, compatible with that on $U_n ^{\dR}(b)$, which can for example be defined as the restriction of the Hodge filtration on $(b^* \mathcal{E}_n )^* \otimes \mathcal{E}_n ^{\dR}$. A choice of section $p^H $ in $F^0 P_n ^{\dR}$ defines a trivialisation of $P_n ^{\dR}$, giving a dominant morphism
\[
p:P_n ^{\dR}\to U_n ^{\dR}/F^0 
\]
independent of choices. Let $\alpha :]x[(K )\to P_n ^{\dR}(K )$ be the leaf of $P_n $ through $p^\phi (z)$.
Let $x\in Y(k )$ be the mod $\pi _K$ reduction of $z$. By \cite{kim3}, the map $j_n$ is given by sending $y$ to $p^\phi (y)\cdot (y^* p^H)^{-1}$. Hence the diagram
\begin{equation}\label{eqn:fundamental_triangle}
\begin{tikzcd}
]x[(K ) \arrow[d, "\alpha"] \arrow[rd, "j_n"] &               \\
P_{n} (K ) \arrow[r, "p"]                     & U_{n}^{\dR}/F^0 (K)
\end{tikzcd}
\end{equation}
commutes.

Now suppose that $X=Y$. Then the maps $j_{n,b}^\ell$ at primes $\ell \neq p$ of bad reduction for $X$ convey a finite amount of information. More precisely, let $\ell $ be a prime of bad reduction, and $K_v $ a finite extension of $\Q _{\ell }$ over which $X$ acquires a semistable model $\mathcal{X}/\mathcal{O}_{K_v }$. Then for $x,y \in Y(\Q _{\ell })$, by \cite[Proposition 3.8.1]{bettsdogra}, if the mod $\pi _v$ reduction of $x$ lies on the same irreducible component of $\mathcal{X}_{k_v }$ as that of $y$, then $j_{n,b}^\ell (x)=j_{n,b} (y)$. In particular, we deduce that the map $j_{n,b}^{\ell }$ factors through the map to the irreducible components of a minimal regular model over $\Z _{\ell }$. 

With this in mind, we introduce the notation of a reduction type of a point, following \cite{betts21}. For a prime $v$ of bad reduction over a finite extension $K$ of $\Q$, let $\Gamma _{v}$ denote the set of irreducible components of the special fibre of a minimal regular model of $X$ over $\mathcal{O}_{K_v}$. We define the reduction type of a $K_v$-point to be the element of $\Gamma _v$ to which it reduces. For a $K$-point $x$ we define its reduction type to be $(\gamma _v )_v $, where $v$ ranges over primes of bad reduction and $\gamma _v \in \Gamma _v $ is the reduction type of $x$ in $X(K_v )$.

Hence we deduce that $\Sel (U_n )$ is a finite disjoint union of at most $\prod _v \# \Gamma _v$ connected components. In particular, if we restrict to the subset $Y(\mathcal{O}_{K,T_0 })_b \subset Y(\mathcal{O}_{K,T_0 })$ of points $x$ for which the reductions mod $v$ of $x$ and $b$ lie on the same irreducible component of the special fibre of a minimal regular model of $X$ at $\mathcal{O}_v$ for all primes $v$ of bad reduction, then the image of $Y(\mathcal{O}_{K,T_0 })$ in $\Sel _{T_0 }(U_n )$ is contained inside $H^1 _{f,T_0 }(G_{\Q },U_n )$, the scheme of torsors which are crystalline $p$ and unramified outside $T_0 \cup \{ p\}$. We hence need a bound on the numbers of such irreducible components which is uniform across the fibres of our isotrivial family of curves. This motivates the following definition.

\begin{definition}\label{defn:cv}
Given a smooth projective curve $X$ over $\Q $, let $H:=\Aut _{\overline{\Q }}(X)$, and for a prime $v$ define $c_v$ to be the number of irreducible components of a minimal regular model of $X$ over the ring of integers of the minimal extension $K_v$ of $\Q _v $ which trivialises all $H'$-torsors for group schemes $H' <H$ defined over $\Q $. That, is $K_v$ has the property that for all such $H'$, the restriction map $H^1 (G_{\Q _v },H')\to H^1 (G_{K_v },H')$ is zero.
\end{definition}

Hence to prove part (2) of Theorem \ref{thm:main2}, it will be enough to prove that for any reduction type $\gamma $, $X^n _S (\Q )_{\gamma ,\rk r}$ is not Zariski dense in $X^n _S $ whenever
\begin{equation}\label{eqn:thm2reduction}
n>2\cdot r^{\frac{\log (r)+\log \log (r)}{\log (g+\sqrt{g^2 -1})}+4}\cdot \log (r) (g+\sqrt{g^2 -1})^3 .
\end{equation}

For general curves, to prove results using Chabauty--Coleman--Kim (and not just Chabauty--Coleman) one needs to assume results in Galois cohomology which are implied, for example, by the Bloch--Kato conjectures. The only part of the Bloch--Kato conjectures we will need in this paper is the following very special case.
\begin{conjecture}[Bloch--Kato \cite{BK}]\label{BK}
Let $X$ be a smooth projective geometrically irreducible curve of genus $g$ over $\Q $. Then, for all $n>1$,
\[
H^1 _f (G_{\Q },H^n _{\et }(X_{\overline{\Q }},\Q _p (1)))=0.
\]
\end{conjecture}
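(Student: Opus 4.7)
The plan is to reduce the statement to a single well-understood Selmer group computation. Since $X$ is a smooth projective curve over $\Q$, the variety $X_{\overline{\Q}}$ has cohomological dimension two, so $H^n_{\et}(X_{\overline{\Q}},\Q_p(1)) = 0$ for all $n \geq 3$ and the statement is vacuous in those degrees. The only substantive case is $n = 2$. Here the cycle class of any closed point (equivalently, Poincar\'e duality for the smooth proper curve) provides a canonical $G_\Q$-equivariant isomorphism $H^2_{\et}(X_{\overline{\Q}},\Q_p(1)) \simeq \Q_p$ with trivial Galois action, so the conjecture reduces to showing $H^1_f(G_\Q,\Q_p) = 0$.

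To compute this Selmer group I would unpack the local Bloch--Kato conditions. At each prime $\ell \neq p$ the local condition is unramifiedness. At $p$, for the trivial representation $V = \Q_p$ one has $D_{\dR}(V)/F^0 = 0$ (Hodge--Tate weight zero) and $H^0(G_{\Q_p},V) = \Q_p$, and the Bloch--Kato formula $\dim H^1_f(G_{\Q_p},V) = \dim H^0(G_{\Q_p},V) + \dim D_{\dR}(V)/F^0$ yields a one-dimensional local $H^1_f$ coinciding with the unramified subspace. Combining, a global class in $H^1_f(G_\Q,\Q_p)$ is a continuous homomorphism $G_\Q^{\ab} \to \Q_p$ that is unramified at every finite place.

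By global class field theory, such a homomorphism factors through the maximal everywhere-unramified abelian quotient of $G_\Q$, which corresponds to the profinite completion of the ideal class group of $\Q$. Minkowski's theorem gives that this class group is trivial, so the quotient is trivial and the homomorphism vanishes. This completes the argument for the stated special case.

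The main obstacle, and likely the reason the statement appears as a conjecture in the paper, lies in the way the hypothesis is actually used in Theorem \ref{thm:main2}(2). For the non-abelian Chabauty--Kim argument in the isotrivial case, one really needs vanishing of $H^1_f(G_\Q,W)$ as $W$ ranges over the graded pieces of the Lie algebra of the relevant unipotent fundamental group, each of which is a subquotient of a tensor power of $H^1_{\et}(X_{\overline{\Q}},\Q_p)$ with an appropriate Tate twist. For these higher-weight motivic pieces the corresponding Selmer vanishing is the genuinely open content of Bloch--Kato and cannot be handled by the elementary class field theoretic argument above; so either one interprets the conjecture more broadly than literally written, or the literal reading is indeed unconditional for the reasons just given.
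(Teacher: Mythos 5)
The statement you were given is labelled a \emph{Conjecture} in the paper; it is not proven there but \emph{assumed} (as the Bloch--Kato hypothesis) in the proof of Theorem \ref{thm:main2}(2) via Proposition \ref{prop:BKconsequence}. So there is no in-paper proof to compare against. That said, your analysis is both correct and genuinely useful, and it exposes an imprecision in the paper's formulation.

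Your unconditional proof of the literal statement is sound. Since $X_{\overline{\Q}}$ is a curve, $H^n_{\et}(X_{\overline{\Q}},\Q_p(1))=0$ for $n\geq 3$, and $H^2_{\et}(X_{\overline{\Q}},\Q_p(1))\simeq \Q_p$ with trivial Galois action by the trace map, so the only content is $H^1_f(G_\Q,\Q_p)=0$. Your local computations are right: the Bloch--Kato local condition at $\ell\neq p$ is unramifiedness, and at $p$ the dimension formula $\dim H^1_f=\dim H^0+\dim D_{\dR}/F^0$ gives the one-dimensional unramified line, so a global class is an everywhere-unramified $\Q_p$-valued character of $G_\Q$, which vanishes because $\Q$ has trivial class group. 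This settles the literal statement unconditionally.

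Your final paragraph correctly identifies the real issue: a vacuously true statement cannot be the hypothesis that is doing the work in Proposition \ref{prop:BKconsequence}, which invokes \cite[Remark 1.2.4]{FPR} to produce a dimension formula for $H^1_f(G_\Q,\gr_i U_n(X))$ with $\gr_i U_n(X)$ a subquotient of $H^1_{\et}(X_{\overline{\Q}},\Q_p)^{\otimes i}$ (up to Tate twist). The genuinely conjectural input is vanishing of the relevant Bloch--Kato Selmer group for those weight $\leq -2$ motivic pieces (equivalently, vanishing of $H^1_f$ for the Tate dual, feeding into a Poitou--Tate/Euler characteristic computation). The statement in the paper reads $H^n_{\et}(X_{\overline{\Q}},\Q_p(1))$ where, to support the application, it ought to read something like $H^1_{\et}(X_{\overline{\Q}},\Q_p)^{\otimes n}(1)$ or, more precisely, $(\gr_n\Lie U(X))^*(1)$. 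In short: your proof is correct, but your diagnosis — that the conjecture as literally written is not the intended hypothesis and that the application requires the open Bloch--Kato vanishing for tensor powers of $H^1_{\et}$ — is the substantive point, and it flags a wording error in the paper's Conjecture \ref{BK} worth correcting.
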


This conjecture has the following consequence.
\begin{proposition}\label{prop:BKconsequence}
Let $\gr _i U_n (X)$ denote the $i$th graded piece of $U_n (X)$. Then, assuming conjecture \ref{BK}, for all $i>1$,
\[
\dim H^1 _f (G_{\Q _p },\gr _i U_n (X))-\dim H^1 _f (G_{\Q },\gr _i U_n (X))=\dim U_n (X)^c +\dim U_n (X)(-1)^{G_{\Q }},
\]
where $c\in \Gal (\overline{\Q }|\Q )$ denotes complex conjugation with respect to embedding $\overline{\Q }\hookrightarrow \mathbb{C}$.
\end{proposition}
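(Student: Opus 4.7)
The plan is to apply the Greenberg--Wiles dimension formula for the Bloch--Kato Selmer group of $V:=\gr_i U_n(X)$, and then to invoke Conjecture \ref{BK} to kill the dual Selmer group $H^1_f(G_\Q,V^*(1))$. Since $X$ is a projective curve, the graded Lie algebra of $U_n(X)$ is a quotient of the free Lie algebra on $H_1(X_{\overline{\Q}},\Q_p)=H^1_{\et}(X_{\overline{\Q}},\Q_p)(1)$ (Poincar\'e duality), so $V$ is a subquotient of $H^1_{\et}(X_{\overline{\Q}},\Q_p)^{\otimes i}(i)$ and is pure of weight $-i$. For $i>1$, purity forces $H^0(G_\Q,V)=H^0(G_{\Q_p},V)=0$ (unramified Frobenius eigenvalues are Weil numbers of nonzero weight), which kills several terms in what follows.

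With those vanishings, for the Bloch--Kato Selmer structure with local conditions $\mathcal{L}_v=H^1_f(G_{\Q_v},V)$, the Greenberg--Wiles formula reduces to
\[
\dim H^1_f(G_\Q,V)-\dim H^1_f(G_\Q,V^*(1))=\sum_v\bigl(\dim H^1_f(G_{\Q_v},V)-\dim H^0(G_{\Q_v},V)\bigr)-\dim H^0(G_\Q,V^*(1)).
\]
At each finite $v\neq p$, $H^1_f(G_{\Q_v},V)=H^1_{\mathrm{ur}}(G_{\Q_v},V)$ has the same dimension as $H^0(G_{\Q_v},V)$, so the local contribution vanishes. At $v=\infty$ with $p$ odd, $H^1_f(G_\R,V)=0$ and the contribution is $-\dim V^c$. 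At $v=p$, the contribution is $\dim H^1_f(G_{\Q_p},V)$ by the previous paragraph. Rearranging yields
\[
\dim H^1_f(G_{\Q_p},V)-\dim H^1_f(G_\Q,V)=\dim V^c+\dim H^0(G_\Q,V^*(1))-\dim H^1_f(G_\Q,V^*(1)).
\]

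To conclude I apply Conjecture \ref{BK} to get $H^1_f(G_\Q,V^*(1))=0$: since $V^*(1)$ is a subquotient of $H^1_{\et}(X_{\overline{\Q}},\Q_p)^{\otimes i}(1)$, which by K\"unneth lives inside $H^i_{\et}(X^i_{\overline{\Q}},\Q_p(1))$, the hypothesis applies once $i>1$. Finally, the identification $\dim H^0(G_\Q,V^*(1))=\dim V(-1)^{G_\Q}$ in the proposition comes from the Poincar\'e-duality-induced symplectic self-pairing on the graded Lie algebra of the surface group (ultimately from the cup product $H^1(X)\otimes H^1(X)\to\Q_p(-1)$), which provides a $G_\Q$-equivariant identification of the relevant subquotients of $V^*(1)$ and $V(-1)$. \textbf{Main obstacle:} The most delicate step is invoking Conjecture \ref{BK} for $V^*(1)$, since as stated the conjecture only addresses $H^n_{\et}(X,\Q_p(1))$ for the curve $X$ itself; one must spell out the K\"unneth reduction to $X^i$ and justify carefully how $V^*(1)$ is cut out as a Galois-equivariant direct summand (rather than a mere subquotient) of $H^i_{\et}(X^i_{\overline{\Q}},\Q_p(1))$, so that the vanishing of the ambient $H^1_f$ really transfers to $V^*(1)$.
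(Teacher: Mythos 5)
Your proof is correct and is, in substance, the same as the paper's: the paper simply cites \cite[Remark 1.2.4]{FPR}, and the Fontaine--Perrin-Riou formula there is exactly the Greenberg--Wiles/Poitou--Tate Euler characteristic formula specialized to the Bloch--Kato local conditions, which you have spelled out by hand. Both routes then need Conjecture \ref{BK} to kill $H^1_f(G_\Q,V^*(1))$, and both use purity to kill $H^0(G_\Q,V)$, $H^0(G_{\Q_p},V)$. So the content is the same; yours is just more explicit.

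Two points deserve tightening. First, the asserted ``$G_\Q$-equivariant identification'' of $V^*(1)$ with $V(-1)$ is not correct for general $i$. Writing $V=\gr_i U_n$ as a subquotient of $U_1^{\otimes i}=(H^1(1))^{\otimes i}$, self-duality of $\Sp$-representations together with similitude-weight bookkeeping gives $V^*\simeq V(-i)$ and hence $V^*(1)\simeq V(1-i)$, which agrees with $V(-1)$ only when $i=2$. For $i>2$, $V^*(1)$ has weight $i-2>0$ and $V(-1)$ has weight $2-i<0$; both $H^0$'s vanish by the same purity/Frobenius-eigenvalue argument you already used for $H^0(G_\Q,V)$, so the equality of dimensions holds for the trivial reason that both sides are zero, not because of an identification of representations. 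You should separate the cases $i=2$ (genuine self-duality) and $i>2$ (both sides vanish).

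Second, your flagged obstacle about applying Conjecture \ref{BK} to $V^*(1)$ is a real one, but it is also an issue with the paper's own formulation rather than with your argument. As literally stated, Conjecture \ref{BK} concerns $H^n_{\et}(X_{\overline{\Q}},\Q_p(1))$ for the curve $X$ itself, but these groups vanish for $n>2$ and equal $\Q_p$ for $n=2$, so the stated conjecture is essentially contentless. What is actually needed (and what the paper implicitly relies on when citing \cite{FPR}) is the Bloch--Kato vanishing $H^1_f(G_\Q,M)=0$ for the pure motives $M$ of nonnegative weight arising as subquotients of $H^i(X^i,\Q_p(1))$, which covers $V^*(1)$ for $i\geq 2$. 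So your ``main obstacle'' is better described as a defect in the statement of Conjecture \ref{BK} than as a gap in your proof.
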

\begin{proof}
This is a special case of \cite[Remark 1.2.4]{FPR}.
\end{proof}
In the case $X=\mathbb{P}^1 -\{ 0,1,\infty \}$, by Soul\'e \cite{soule} we have the simpler estimate
\begin{equation}\label{eqn:uniteqn_bds}
\dim H^1 _f (G_{\Q },\gr _i U_n (x))=\begin{cases} \dim H^1 _f (G_{\Q _p },\gr _i U_n ), i \in 1+2\cdot \mathbb{Z}_{>0} \\ 0, i\in 2\cdot \mathbb{Z}_{>0} \end{cases}.
\end{equation}

\subsection{Chabauty--Coleman--Kim for non-integral points}\label{sec:nonint}
For applications to case (1) of Theorem 2, we shall need some features of the theory when the rational points in question are allowed to be non-integral at $p$ (but $p$ is still a prime of good reduction for $X$). In this case, we have a map 
\begin{equation}\label{eqn:deRham}
Y(\Q _p )\to H^1 _g (G_{\Q _p },U_n )
\end{equation}
from points to torsors which are merely de Rham at $p$, rather than crystalline.

By work of Betts \cite{betts2017} and Betts--Litt \cite[Theorem 4.1]{betts2019}, we have an isomorphism
\[
H^1 _g (G_{\Q _p },U_n )\simeq D_{\dR}(U_n )/F^0 \times \mathcal{V}(U_n),
\]
We shall denote the $D_{\dR}(U_n )/F^0 $ component of \eqref{eqn:deRham} by $j_n$. 

The general definition of $\mathcal{V}(U_n )$ given in \cite{betts2019} is somewhat involved. Specifically, one takes 
\[
\mathcal{V}(U_n ):=\gr ^\bullet \Lie (U_n )(-1)^{W_{\Q _p ,K}}
\]
Here $\gr ^\bullet \Lie (U_n )(-1)$ is viewed as a Weil--Deligne representation, an invariance is with respect to the action of the Weil group of $\Q _p $ and the corresponding monodromy $\SL_2 $ action. Finally, the grading is with respect to the weight filtration (see loc. cit. for the conventions for this).

Our case of interest is when $U_n$ is the maximal $n$-unipotent quotient of the $\Q _p $-unipotent completion of the \'etale fundamental group of $\mathbb{P}^1 -\{ 0,1,\infty \}$. In this case the associated graded with respect to the weight filtration is the same (as a Galois representation) as the associated graded with respect to the central series filtration (in general, these will only differ when $U_1$ is not pure, which is when $X$ is a projective curve of genus  greater than zero with at least two points removed). Hence $\gr ^\bullet \Lie (U_n )$ is simply a direct sum of copies of $\Q _p (i)$ for $i\leq n$, and hence the monodromy operator acts trivially, and 
\[
\gr ^\bullet \Lie (U_n )(-1)^{W_{\Q _p ,K}}\simeq U_1 (-1)\simeq \Q _p ^{\oplus 2}.
\]
Hence we have an isomorphism
\begin{equation}\label{bettslitt}
H^1 _g (G_{\Q _p },U_n )\simeq D_{\dR}(U_n )\times \Q _p ^{\oplus 2}.
\end{equation} 
Since, in this case, $D_{\dR}(U_n )^{\phi =1}=\{ 1\}$, the image of $z$ in $D_{\dR}(U_n )$ is simply given by the unique $\phi $-invariant path from $t$ to $z$.
Let $z$ reduce to zero mod $p$. Then, by \cite[Proposition 4.16]{betts2019}, the image of $z$ in the RHS of \eqref{bettslitt} is given by
\begin{equation}\label{eqn:furusho}
(G_0 (z),(v_p (z),v_p (1-z)))
\end{equation}
where $G_0\in \Q _p \langle \! \langle A,B \rangle \! \rangle$ is Furusho's generating series for $p$-adic multiple polylogarithm \cite[Theorem 2.3]{furusho}, and $A$ and $B$ are certain canonical generators of the pro-universal enveloping algebra of $\Lie (\pi _1 ^{\dR}(X,t))$. In particular, it follows from \eqref{eqn:fundamental_triangle} that we have a commutative diagram of formal schemes
\begin{equation}\label{eqn:fundamental_triangle_log}
\begin{tikzcd}
\widehat{X}_z  \arrow[d, "\alpha"] \arrow[rd, "j_n"] &               \\
\widehat{P}_{n,G_0 (z)} \arrow[r, "p"]                     & \widehat{U}_{n,j_n (z)}^{\dR}.
\end{tikzcd}
\end{equation}
The entries of $G_0$ are Coleman functions (in the usual sense) when restricted to $]\mathbb{P}^1 -\{ 0,1,\infty \} (\F _p )[$, but on the residue disks at $0,1$ and $\infty $ they are Coleman functions with log singularities. However, these log singularities only occur in depth one, in a certain sense, which in the case of $p$-adic polylogarithms expresses the fact that the $p$-adic polylogarith $\li _n (z)$ is a Coleman function on the residue disk at zero. Hence, for applications to non-density, we project onto a (scheme-theoretic) quotient of $U_n $.

The part of $j_n$ which is logarithmic varies for each of the residue disks $]0[, ]1[$ and $]\infty [$. However, to prove results about the Zariski closure of $\cup _{\rk S=s}X(S)^n$, we can ignore the residue disks at $1$ and $\infty $, and only consider points on the tube of $\mathbb{A}^1 -\{ 1\} _{\F _p }$. Indeed, $\cup _{\rk S=s}X(S)^n$ is closed under the action of $S_3 ^n$, and hence so is its Zariski closure. It follows that the Zariski closure of $\cup _{\rk S=s}X(S)^n $ will be equal to the $S_3 ^n $ orbits of $\cup _{\rk S=s}X(S)^n \cap ]\mathbb{A}^1 -\{ 1\}[ ^n $, and hence if the latter is not Zariski dense in $(\mathbb{P}^1 )^n$, neither is the former.
\begin{lemma}\label{lemma:Vn}
For all $n>0$, there is a variety $V_n$ and a surjection $\tau _n :U_n ^{\dR}\to V_n$ whose fibres have dimension one, such that $\tau _n \circ j_n |_{](\mathbb{A}^1 -\{ 1\})[(\Q _p )}$ is given by Coleman functions.
\end{lemma}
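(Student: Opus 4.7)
Identify $U_n^{\dR}$ with $\exp(\mathfrak{u}_n^{\dR})$, where $\mathfrak{u}_n^{\dR}$ is the maximal $n$-step nilpotent quotient of the free pro-nilpotent Lie algebra on generators $e_0, e_1$ dual to $dz/z$ and $dz/(1-z) \in H^1 _{\dR}(X/\Q _p)$. Let $H \subset U_n^{\dR}$ denote the one-parameter subgroup $\{\exp(t e_0) : t \in \Q _p\}$. The proposal is to set $V_n := H \backslash U_n^{\dR}$ and take $\tau_n$ to be the quotient map under left translation by $H$. Since $U_n^{\dR}$ is isomorphic as a variety to affine space of dimension $d := \dim U_n^{\dR}$ and $H$ is a free one-parameter unipotent subgroup, $V_n$ is an affine variety of dimension $d-1$ and the fibres of $\tau_n$ are one-dimensional.

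On each residue disk $]\overline{x}[$ with $\overline{x} \in \mathbb{A}^1(\F _p) \setminus \{0,1\}$, the forms $dz/z$ and $dz/(1-z)$ are regular; the map $j_n|_{]\overline{x}[}$ is then given by iterated Coleman integrals (cf.\ diagram \eqref{eqn:fundamental_triangle_log}), and composition with the algebraic morphism $\tau_n$ remains Coleman. The substantive case is $]0[$: by \eqref{eqn:furusho}, $j_n(z) = G_0(z)$ on $]0[$, where $G_0$ is Furusho's generating series of $p$-adic multiple polylogarithms. The key input is a factorisation
\begin{equation*}
G_0(z) = \exp(\log(z) \cdot e_0) \cdot h(z)
\end{equation*}
valid on $]0[$, where the coordinates of $h(z) \in U_n^{\dR}(\Q _p)$ are rigid analytic functions of $z$ (essentially polynomials in the $p$-adic polylogarithms $\li _{k_1 ,\ldots ,k_m}(z)$, which are Coleman on $]0[$ by the main result of \cite{furusho}). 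Because $G_0(z)$ and $h(z)$ differ by left multiplication by an element of $H$, we obtain $\tau_n(j_n(z)) = \tau_n(h(z))$, which is Coleman on $]0[$.

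The main obstacle is the factorisation step: it amounts to the assertion that every log-singularity of the entries of $G_0(z)$ on $]0[$ arises from the monodromy of the local system around the puncture $0$, and is captured in its entirety by left-multiplication by $\exp(\log(z) e_0)$. This is a standard consequence of the formalism of iterated integration at tangential basepoints (the $p$-adic analogue of shuffle regularisation), and is implicit in Furusho's construction of $G_0$; the role of $\tau_n$ is simply to annihilate this singular direction while retaining the Coleman data carried by the higher polylogarithms.
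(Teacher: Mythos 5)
Your approach is essentially the paper's: both define $V_n$ by quotienting $U_n^{\dR}$ by the one-parameter subgroup $H=\exp(\Q_p e_0)$, and both invoke Furusho's explicit formula for $G_0$ on $]0[$ to factor out the single log-singular direction. The only substantive discrepancy is the side on which you quotient. The paper's $\tau_n$ sends $f$ to $f\cdot\exp(-\lambda e_0)$, where $\lambda$ is the $e_0$-coefficient of $f$ — i.e.\ it is the projection onto right cosets $U_n^{\dR}/H$ — whereas you take left cosets $H\backslash U_n^{\dR}$ and the corresponding factorisation $G_0(z)=\exp(\log(z)e_0)\cdot h(z)$. These are not interchangeable: since $h(z)$ does not commute with $e_0$, exactly one of $G_0(z)\exp(-\log(z)e_0)$ and $\exp(-\log(z)e_0)G_0(z)$ is Coleman on $]0[$, the other acquiring polynomial-in-$\log z$ terms through conjugation by $\exp(\log(z)e_0)$. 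The paper asserts right multiplication, citing Furusho's Theorem 3.15, so you should check that the side of your factorisation matches Furusho's normalisation of the $p$-adic KZ solution; if not, swap the handedness of both the coset space and the factorisation. Everything else — the reduction to the residue disk at $0$, the observation that $\tau_n\circ j_n$ is automatically Coleman on the remaining disks because $\tau_n$ is algebraic, and the identification of $V_n$ with an affine space of one lower dimension — agrees with the paper.
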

\begin{proof}
We define the projection as follows.
With respect to the coordinates $A$ and $B$, this amounts to sending $f \in \Q _p \langle A , B \rangle $ to $f\cdot \exp (-\lambda \cdot A)$, where the $A$ coefficient of $f$ is equal to $\lambda \in \Q _p $. By Furusho's explicit formula for a $p$-adic solution to the KZ equation \cite[Theorem 3.15]{furushoI}, the entries of $\tau _n \circ j_n $ are Coleman functions.
\end{proof}
\subsection{Filtered schemes}
The Chabauty--Coleman--Kim method crucially uses the fact that a certain morphism of nonabelian cohomology sets (with values in a unipotent group) is in fact a morphism of schemes, extending the linear morphisms of vector spaces on graded pieces. In \cite{betts21}, the exact algebraic nature of the map is made somewhat more explicit.
\begin{definition}
Let $R$ be a $K$-algebra. A filtered affine scheme over $R$ is an affine scheme $\Spec (A)$ of finite type over $R$, with an increasing, exhaustive filtration $(A_i )_{i\geq 0}$ by $R$-modules, such that
\begin{enumerate}
\item $A_0 =R$.
\item $A_i \cdot A_j \subset A_{i+j}$.
\end{enumerate}

A morphism of filtered affine schemes is a morphism of affine schemes respecting the filtration on coordinate rings.
\end{definition}

If $R$ and $S$ are filtered affine schemes, with coordinate rings $A$ and $B$, and $B$ is generated by $B_n$, then morphisms of filtered affine schemes $f:R\to S$ are uniquely determined by $f|_{B_n }$.

\begin{example}[The filtered scheme associated to a filtered free $R$-module]\label{example:filteredfree}
Let $V$ be a free $R$-module space with a separating exhaustive increasing filtration in non-positive degreees
\[
V=F^{-1} V \supset F^{-2}V \supset \ldots \supset 0.
\]
Then we obtain a filtration on $\Sym ^i V^* $ for all $i$, and hence obtain a filtration on the coordinate ring $\mathcal{O}(V)\simeq \oplus _{i\geq 0}\Sym ^i V^*$ giving $V$ the structure of a filtered scheme.
\end{example}
Note that a morphism of filtered affine schemes between two filtered affine schemes associated to filtered free $R$-modules does not necessarily come from a morphism of $R$-modules (even when the filtration is trivial). Indeed, if $V$ and $W$ are filtered free $R$-modules, and we choose trivialisations $V\simeq \gr ^\bullet V, W\simeq \gr ^\bullet W$ of the filtrations, then the morphism of schemes from $V$ to $W$ will be dual to a collection of morphisms of $R$-modules
\begin{equation}\label{eqn:dual}
\phi _i \in \Hom ((\gr ^i W )^* ,\oplus _{0\leq j\leq i}(\Sym ^j V^* )_{-i}).
\end{equation}
For example, when $V$ and $W$ are free $R$-modules with filtration concentrated in degree $-1$, a morphism of filtered affine schemes will just mean an affine linear map.

For non-negative integers $d_1 ,\ldots ,d_n$, we define $\mathbb{A}(d_1 ,\ldots ,d_n )_R$ to be the filtered affine scheme associated to the graded $R$-modules $\oplus _{i=1}^n R^{d_i }$, where the $i$th summand $R^{d_i }$ is in degree $-i$.

We define a filtered affine space to be a filtered affine scheme isomorphic (as a filtered affine scheme) to the filtered affine scheme associated to a filtered free $R$-module.

\begin{definition}
Let $V$ be a filtered affine space.
A $(d_1 ,\ldots ,d_n )$-framed affine subspace is a morphism of filtered schemes
\[
f:\mathbb{A}(d_1 ,\ldots ,d_n )_R \to V.
\]
\end{definition}

Let $S_i \subset \mathbb{Z}_{\geq 0}^n$ denote the set of tuples $(m_j )$ such that $\sum _{j=1}^n m_j \cdot j=i$.

\begin{lemma}\label{lemma:universal}
Let $W$ be a filtered affine space over a ring $R$, whose graded pieces have dimensions $e_i$ for $1\leq i\leq n$. Let $d_1 ,\ldots ,d_n$ be non-negative integers. Let $\mathcal{F}$ denote the functor sending an $R$-algebra $S$ to the set of morphisms of filtered schemes
\[
\mathbb{A}(d_1 ,\ldots ,d_n )_S \to W\otimes _R S .
\]
Then $\mathcal{F}$ is represented by an affine space $Z_{d_1 ,\ldots ,d_n }V$ of dimension
\[
J(d_1 ,\ldots ,d_n ;e_1 ,\ldots ,e_n ):=\sum _{i=1}^n e_i \cdot \sum _{0\leq j\leq i}D_j ,
\] 
where $D_0 :=1$ and 
\[
D_i :=\sum _{(m_j )\in S_i}\prod _{j=1}^n \binom{d_j +m_j -1}{m_j }
\]
for $i>0$. Moreover, $Z_{d_1 ,\ldots ,d_n }V$ is stable under base change, i.e. for any $R$-algebra $S$, we have an isomorphism of $S$-schemes
\[
(Z_{d_1 ,\ldots ,d_n }V)\times _R S \simeq Z_{d_1 ,\ldots ,d_n }(V\otimes _R S).
\]
\end{lemma}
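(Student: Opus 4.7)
The plan is to reduce representability to a count of filtration-preserving $R$-algebra maps between two symmetric algebras, each of which is freely generated as a commutative $R$-algebra by its weight-homogeneous components.

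First, after choosing graded splittings of the filtered free modules underlying $W$ and $\mathbb{A}(d_1,\ldots,d_n)_R$, I would write $\mathcal{O}(W) \simeq \Sym^\bullet W^*$ with $W^* = \bigoplus_{i=1}^n W_i^*$, the summand $W_i^*$ being free of rank $e_i$ placed in weight $i$, and $\mathcal{O}(\mathbb{A}(d_1,\ldots,d_n)_R) \simeq \Sym^\bullet U^*$ with $U^* = \bigoplus_{j=1}^n U_j^*$ free of rank $d_j$ in weight $j$. Writing $A_i := \bigoplus_{k \leq i}(\Sym^\bullet U^*)_k$ for the $i$-th piece of the induced filtration on the target coordinate ring, the universal property of the symmetric algebra on the generating module $W^*$ says that giving a filtration-preserving $S$-algebra map $\Sym^\bullet W^* \otimes_R S \to \Sym^\bullet U^* \otimes_R S$ is the same as giving an $n$-tuple $(\phi_i)_{i=1}^n$ of $S$-linear maps $\phi_i \colon W_i^* \otimes_R S \to A_i \otimes_R S$, with no further relations imposed.

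Since each $W_i^*$ and each $A_i$ is finite free over $R$, this functor is represented by an affine space $Z_{d_1,\ldots,d_n} V$ over $R$ of dimension $\sum_{i=1}^n e_i \cdot \dim_R A_i = \sum_{i=1}^n e_i \cdot \sum_{0 \leq k \leq i} D_k$, where $D_k := \dim_R (\Sym^\bullet U^*)_k$. To evaluate $D_k$ I would enumerate a monomial basis of $(\Sym^\bullet U^*)_k$: every weight-$k$ monomial factors uniquely as a product of monomials of symmetric degree $m_j$ in the $d_j$ generators of $U_j^*$, subject to $\sum_j j \cdot m_j = k$ (that is, $(m_j) \in S_k$), and the number of monomials of symmetric degree $m_j$ in $d_j$ variables is $\binom{d_j + m_j - 1}{m_j}$. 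Multiplying and summing over $(m_j) \in S_k$ yields the stated formula for $D_k$, with $D_0 = 1$ coming from the empty product, and hence $\dim Z_{d_1,\ldots,d_n} V = J(d_1,\ldots,d_n; e_1,\ldots,e_n)$.

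Base change stability will then be automatic: since the $W_i^*$ and the $A_i$ are finite free $R$-modules, both $\Hom_R(W_i^*, A_i)$ and the formation of $A_i$ from $U^*$ commute with base change along $R \to S$, giving the required isomorphism $(Z_{d_1,\ldots,d_n} V) \times_R S \simeq Z_{d_1,\ldots,d_n}(V \otimes_R S)$. The one subtle point will be verifying that the representing scheme, which was computed using auxiliary splittings of the filtrations, is intrinsic to the filtration data. This is immediate because the represented functor is defined purely in terms of the filtration on $W$, so the representing scheme is canonical up to unique isomorphism, and I anticipate no serious obstacle beyond the monomial bookkeeping above.
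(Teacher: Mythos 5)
Your proof is correct and follows essentially the same route as the paper: both reduce the representability question to the observation that a filtration-preserving algebra map out of $\Sym^\bullet W^*$ is exactly a choice, for each $i$, of an $R$-linear map from $(\gr^i W)^*$ into the degree $\leq i$ part of the target coordinate ring (your $A_i$, the paper's $\bigoplus_{0\leq j\leq i}(\Sym^j V^*)_{-i}$), with the monomial count for $D_k$ and the free-module base-change argument matching the paper's. The only cosmetic difference is that you rederive this from the universal property of the symmetric algebra, whereas the paper cites its displayed equation describing morphisms of filtered affine spaces.
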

\begin{proof}
Without loss of generality $V=\mathbb{A}(e_1 ,\ldots ,e_n )$. Then \eqref{eqn:dual} allows us to identify the $S$-points of $\mathcal{F}$ with 
\[
\prod _{i=1}^n \Hom _S (S^{\oplus e_i } ,\oplus _{0\leq j\leq i}(\Sym ^j V^* )_{-i}\otimes _R S).
\]
Hence we see that $\mathcal{F}$ is stable under base change, and can be identified with an affine space of dimension
\[
\sum _{i=1}^n e_i \cdot \sum _{0\leq j\leq i}\dim _R (\Sym ^j V^* )_{-i}.
\]
Recall that $(\Sym ^j V^* )_{-i}$ is dual to the subspace of $\Sym ^j V$ of degree $\leq i$. Via our choice of grading, we can identify the degree $k$ part of $\oplus _{0\leq j\leq i}\Sym ^j V)$ with
\[
\oplus _{(m_\ell )\in S_k}\Sym ^{m_{\ell }}V[\ell ],
\]
which implies the first part of the lemma.
\end{proof}
\begin{lemma}\label{lemma:non_density}
Let $R$ be a $K$-algebra, and $V$ a filtered affine space over $R$. Let $e_i$ denote the dimension of $\gr _i V$. Let $(d_1 ,\ldots ,d_n )$ be positive integers such that $\sum e_i >\sum d_i$. 
If
\[
N>\frac{J(d_1 ,\ldots ,d_n ;e_1 ,\ldots ,e_n )}{\sum _{i=1}^n (e_i -d_i ) },
\]
then there is a proper closed subscheme $\mathbb{D} _{d_1 ,\ldots ,d_n }V^N _R \subset V^N _R$ such that, for any closed subscheme $A\subset \Spec (R)$, and any morphism $f:W\to V_A$ from a $(d_1 ,\ldots ,d_n )$-filtered affine space over $A$ to $V_A$, the image of the $N$-fold product of $f$ is contained in $\mathbb{D} _{d_1 ,\ldots ,d_n }V^N _R \times _R A$.
\end{lemma}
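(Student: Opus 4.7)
The plan is to assemble all the relevant framed affine subspaces of $V$ into a single universal family, and then apply a straightforward dimension count to the universal evaluation map. By Lemma \ref{lemma:universal}, the functor sending an $R$-algebra $S$ to the set of morphisms of filtered schemes $\mathbb{A}(d_1 ,\ldots ,d_n )_S \to V\otimes _R S$ is represented by an affine $R$-scheme $Z:=Z_{d_1 ,\ldots ,d_n }V$ of dimension $J:=J(d_1 ,\ldots ,d_n ;e_1 ,\ldots ,e_n )$, compatibly with arbitrary base change.

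First I would form the universal evaluation morphism
\[
\mathrm{ev}: Z\times _R \mathbb{A}(d_1 ,\ldots ,d_n )^N \longrightarrow V^N _R
\]
whose $i$th component projects $(f,w_1 ,\ldots ,w_N )\mapsto f(w_i )$, where the $Z$-coordinate is interpreted as classifying the morphism of filtered schemes $f$. By the base-change statement in Lemma \ref{lemma:universal}, $\mathrm{ev}$ is compatible with base change along $R\to A$ for any $R$-algebra $A$.

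Next comes the dimension count. The source of $\mathrm{ev}$ has dimension $J+N\sum _i d_i $, while the target has dimension $N\sum _i e_i $. The hypothesis (together with $\sum _i e_i >\sum _i d_i $) rearranges to $J+N\sum _i d_i <N\sum _i e_i $, so the image of $\mathrm{ev}$ has dimension strictly less than $\dim V^N _R $. Define $\mathbb{D}_{d_1 ,\ldots ,d_n }V^N _R$ to be the Zariski closure of the image of $\mathrm{ev}$ (equivalently, its scheme-theoretic image); by the dimension bound this is a proper closed subscheme of $V^N _R $.

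Finally, for the universal claim, let $A\subset \Spec (R)$ be closed and $f:\mathbb{A}(d_1 ,\ldots ,d_n )_A \to V_A$ an arbitrary morphism of filtered $A$-schemes. By the representability of $Z$ (and its compatibility with base change), $f$ corresponds to an $A$-point $z_f :\Spec (A)\to Z$, and the induced morphism $f^N$ factors as
\[
W^N =\Spec (A)\times _R \mathbb{A}(d_1 ,\ldots ,d_n )^N \xrightarrow{(z_f ,\mathrm{id})} Z\times _R \mathbb{A}(d_1 ,\ldots ,d_n )^N \xrightarrow{\mathrm{ev}} V^N _R .
\]
Hence the image of $f^N$ is contained in the image of $\mathrm{ev}$, and therefore in $\mathbb{D}_{d_1 ,\ldots ,d_n }V^N _R \times _R A$, as required. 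I do not expect any serious obstacle: the statement is essentially formal given Lemma \ref{lemma:universal}, the only real content being the obvious observation that the image of a morphism from a source of dimension $J+N\sum d_i $ into a target of dimension $N\sum e_i $ cannot be Zariski dense once the former is strictly smaller.
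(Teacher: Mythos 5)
Your proof is correct and follows essentially the same approach as the paper: form the representing scheme $Z$ from Lemma \ref{lemma:universal}, consider the universal evaluation morphism from $Z\times _R \mathbb{A}(d_1 ,\ldots ,d_n )^N$ into $V^N _R$, define $\mathbb{D}$ as (the closure of) its image, and factor $f^N$ through it via the $A$-point of $Z$ classifying $f$. You make the dimension count establishing properness of $\mathbb{D}$ explicit, whereas the paper's proof leaves this implicit; this is a minor presentational improvement rather than a different method.
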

\begin{proof}
Let $Z:=Z_{d_1 \ldots ,d_n }V$. Let $g:\mathbb{A}(d_1 ,\ldots ,d_n )_{Z}\to R_{Z}$ be the universal $(d_1 ,\ldots ,d_n )$-dimensional filtered subspace of $R$. Let 
\[
g^N :(\mathbb{A}(d_1 ,\ldots ,d_n ) _{Z})^N _Z \to V^N _Z 
\]
be the $N$-fold fibre product of $g$ over $Z$. We define $\mathbb{D} _{d_1 ,\ldots ,d_n }V^N _R$ to be the image of the composite morphism of $R$-schemes
\[
(\mathbb{A}(d_1 ,\ldots ,d_n ) _{Z})^N _Z \stackrel{g^N}{\longrightarrow} V^N _Z \to V^N _R .
\]

By Lemma \ref{lemma:universal}, we have
\[
(\mathbb{D} _{d_1 ,\ldots ,d_n}V^N _R ) \times _R A \simeq \mathbb{D} _{d_1 ,\ldots ,d_n}(V\times _R A)^N _A ,
\]
hence to prove the lemma we may take $A=\Spec (R)$. Choosing a frame, we may give $f$ the structure of a framed filtered subspace of $V$. Hence $f$ comes from an $R$-point of $Z$, and the map $f^N$ factors through $\mathbb{D} _{d_1 ,\ldots ,d_n }V^N _R$. 
\end{proof}

We now state the relevant properties of the filtered scheme structures on Selmer varieties and unipotent Albanese varieties
The following result is proved by Betts in \cite{betts21}. An alternative proof is also given in forthcoming work of the author and Carl Wang-Erickson.
\begin{theorem}[\cite{betts21}, Lemmas 3.2.5, 3.2.6 and 3.2.7]\label{thm:betts}
\begin{enumerate}
\item The schemes $H^1 _f (G_{\Q },U_n (x))$ have filtered structures, in such a way that the exact sequences
\[
1\to H^1 _f (G_{\Q },Z(U_n (x)))\to H^1 _f (G_{\Q },U_n (x))\to H^1 _f (G_{\Q },U_{n-1}(x))
\]
are strict with respect to the filtration, and such that $H^1 _f (G_{\Q },Z(U_n (x)))$ is given the filtered structure corresponding to the filtration
\[
H^1 _f (G_{\Q },Z(U_n (x)))=W_n \supset W_{n+1}=0
\]
on the underlying vector space.
\item The commutative diagram
\[
\begin{tikzcd}
1 \arrow[r] & H^1 _f (G_{\Q },Z(U_n (x))) \arrow[r] \arrow[d] & H^1 _f (G_{\Q },U_n (x)) \arrow[r] \arrow[d] & H^1 _f (G_{\Q },U_{n-1}(x)) \arrow[d] &   \\
1 \arrow[r] & Z (U_n ^{\dR}(x))/F^0 \arrow[r]           & U_n ^{\dR}(x)/F^0 \arrow[r]           & U_{n-1}^{\dR}(x)/F^0 \arrow[r] & 1
\end{tikzcd}
\]
is a diagram in filtered schemes.

\end{enumerate}
\end{theorem}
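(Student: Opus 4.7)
The plan is to construct the filtered scheme structures on $H^1_f(G_{\Q},U_n(x))$ and $U_n^{\dR}(x)/F^0$ simultaneously by induction on the depth $n$, using the fact that $\gr_i \Lie(U_n)$ is pure as a Galois representation (of weight $-i$ for a projective curve, or weight $-2i$ in the affine case $\mathbb{P}^1-\{0,1,\infty\}$). The filtered structure at each stage will reflect this weight, via Example \ref{example:filteredfree}.

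For the base case $n=1$, the abelianization $U_1(x)$ is a pure Galois representation, so $H^1_f(G_{\Q},U_1(x))$ and $U_1^{\dR}(x)/F^0$ are finite-dimensional $\Q_p$-vector spaces, and I would give them the filtered affine-space structure of Example \ref{example:filteredfree} corresponding to the pure weight of $U_1$. The Bloch--Kato realization map is linear and weight-preserving (the latter being a standard property of the Fontaine--Perrin-Riou exponential on pure representations), hence a morphism of filtered schemes.

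For the inductive step, assuming the filtered structures on $H^1_f(G_{\Q},U_{n-1}(x))$ and $U_{n-1}^{\dR}(x)/F^0$, I would use the central extension
\[
1\to Z(U_n(x))\to U_n(x)\to U_{n-1}(x)\to 1
\]
in which $Z(U_n(x))$ is a pure vector group. The induced long exact sequence of crystalline cohomology gives the top row, and presents $H^1_f(G_{\Q},U_n(x))$ as an affine bundle over its image in $H^1_f(G_{\Q},U_{n-1}(x))$ with fiber $H^1_f(G_{\Q},Z(U_n(x)))$; the latter is filtered concentrated in the single degree $n$ as in the statement. To make this presentation concrete, I would fix a Galois-equivariant splitting of the weight filtration on $\Lie(U_n(x))$ (which exists because the graded weights are pure and distinct), use it to choose cocycle coordinates on the fiber, and declare these coordinates to lie in filtration degree $n$, with the filtration on the base subring coming from the inductive hypothesis. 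Strictness is then immediate, establishing part (1). The same procedure on the de Rham side, using a Hodge splitting of the weight filtration on $U_n^{\dR}(x)$ (compatible with $F^0$ since the graded pieces are pure), constructs the analogous bundle structure on $U_n^{\dR}(x)/F^0 \to U_{n-1}^{\dR}(x)/F^0$. The compatibility of the vertical realization maps in part (2) then reduces to the pure-weight abelian case in each inductive step, which is the base case statement.

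The main obstacle I anticipate is showing that the resulting filtered scheme structure is \emph{independent} of the chosen weight splittings. Two splittings of the weight filtration on $\Lie(U_n(x))$ differ by an element of $\Hom(\Lie(U_{n-1}(x)),\Lie(Z(U_n(x))))$, and this difference induces a change of fiber coordinates on $H^1_f(G_{\Q},U_n(x))$ which is polynomial in the base coordinates. Verifying that this change-of-coordinates lies in the correct filtration degrees amounts to unravelling the bar-complex description of $1$-cocycles for the central extension and using the fact that the bracket on $\gr^\bullet \Lie(U_n)$ is weight-preserving. This naturality check, carried out in Betts \cite{betts21} via a careful analysis of the standard presentation of nonabelian $H^1_f$, is the technical heart of the argument; the alternative proof in forthcoming work with Wang--Erickson proceeds by characterising the filtered structure via a universal property, bypassing the need for explicit splittings.
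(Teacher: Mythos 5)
The paper does not actually prove this theorem: it is quoted directly from Betts \cite{betts21}, Lemmas 3.2.5--3.2.7, with the remark that an alternative proof (via a universal-property characterisation) appears in forthcoming work of the author and Wang--Erickson. So there is no in-paper proof to compare against. That said, your sketch does correctly identify the inductive structure of Betts' argument --- build the filtered affine structure depth-by-depth using the central extensions $1 \to Z(U_n) \to U_n \to U_{n-1} \to 1$, with the central piece contributing a pure block in filtration degree $n$ --- and you correctly flag that the delicate point is showing the resulting filtration is independent of the cocycle coordinates, which requires tracking the weight-gradedness of the BCH and cocycle formulae.

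However, there is a concrete error in your inductive step. You propose to ``fix a Galois-equivariant splitting of the weight filtration on $\Lie(U_n(x))$ (which exists because the graded weights are pure and distinct).'' Distinct pure weights give $\Hom$-orthogonality, $\Hom_{G_\Q}(\gr_{-i},\gr_{-j}) = 0$ for $i\ne j$, but they do \emph{not} give $\Ext^1$-vanishing, and the weight filtration on $\Lie(U_n)$ does not split $G_\Q$-equivariantly in general; the nontriviality of exactly these extensions is what the Chabauty--Kim method detects (e.g.\ for $\mathbb{P}^1\setminus\{0,1,\infty\}$ the extension of $\gr_{-1}$ by $\gr_{-2}$ is governed by a nonzero Soul\'e class). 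What you actually have available is a mere $\Q_p$-linear splitting of the underlying filtered vector space; in those coordinates the Galois action becomes a unipotent twist of the block-diagonal action, and the required fact is that this twist is itself weight-graded because the BCH group law is. In other words, the weight-bookkeeping you correctly identify as ``the technical heart'' is not an optional naturality check layered on top of a splitting that exists for free --- it \emph{replaces} the nonexistent equivariant splitting and is the substance of the argument. The analogous comment applies to your claimed Hodge-compatible splitting on the de Rham side, where again one should use an arbitrary linear splitting and prove independence of choices rather than invoke a canonical compatible one.
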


\section{Proof of Theorem \ref{thm:main2}}\label{sec:thm2}
To prove Theorem \ref{thm:main2}, we use Theorem \ref{thm:BCFN} in the following settings. In case (1), we take as our variety $X^n =(\mathbb{P}^1 -\{ 0,1,\infty \} )^n$, and our principal bundle is the $U_m ^{\dR}(X^n )$-bundle of frames (with notation as in section \ref{sec:nonint}). We have a commutative diagram
\[
\begin{tikzcd}
]x[_{(\mathbb{P}^1 _{\mathbb{Z}_p })^n }\cap X^n (\Q _p) \arrow[d, "\alpha"] \arrow[rd, "\tau _m \circ j_m"] &               \\
P_{n} (\Q _p ) \arrow[r, "\tau _n \circ p"]                     & V_m ^{\dR}(\Q _p)^n
\end{tikzcd}
\]
for any $x\in ](\mathbb{A}^1 -1)^n [$, where $V_m$ is the variety from Lemma \ref{lemma:Vn}. Recall from Lemma \ref{lemma:Vn} that the map $\tau _m \circ j_m$ is analytic, and the map $\tau _n $ (and hence $\tau _n \circ p$) is algebraic. Hence by Theorem 4, our goal is to show that $X(S)^n $ is contained in subvariety of $V_m ^{\dR}(\Q _p )^n $ of codimension greater than $n$.

To do this, we will need a modification of the previous notion of Selmer scheme to allow for the fact that we work with $S$-units where $S$ is an arbitrary rank $r$ subgroup of $\Q ^\times $. For such an $S$, let $T_0 \subset \Spec (\mathbb{Z})$ be the set of primes $\ell$ for which $\val _\ell (z)\neq 0$ for some $z\in S$. Let $v_{T_0 }$ denote the map
\[
\Q ^\times \to \Q _p ^{T_0 }
\]
sending $x$ to $(\val _{\ell }(x))_{x\in T_0 }$. Then the image of $v_{T_0 }$ spans a vector space of dimension $r$, which we denote by $V_S$. 
Define $\Sel _{S}(U_n )$ to be the subscheme of cohomology classes whose image in $\prod _{v\in T_0 }H^1 _{g|e}(G_{\Q _v},U_n )\simeq (\Q _p ^{T_0 })^2 $ is contained in $V_S \times V_S$. Then the dimension of $\Sel _S (U_1 )$ is $2r$.
Then for any $S$, we have
\[
j_m (X(S)^n ) \subset \loc _p \Sel _S (U_m )^n ,
\]
where $\loc _p $ denotes the composite map
\[
\Sel (U_m )^n \to H^1 _g (G_{\Q _p },U_{m,n})\to U_{m,n}^{\dR}(\Q _p ),
\]
and the second map is the projection from \eqref{bettslitt}.

In case (2), we pass to a finite cover $X'\to S'$ over which the family becomes trivial, and a finite extension $K$ of $\Q _p$ for which the $K $ points of $X'$ surject onto the $\Q _p$-points of $X$. The extension $K$ is as in Lemma \ref{lemma:fin_extn}. Explicitly, if $H:=\Aut (X)$, then we can take $K$ to be the minimal extension of $\Q _p$ trivialising all $H'$-torsors for $H'<H$. We take our group $G$ to be the $m$-unipotent fundamental group $U_m (X^n )$, for $m$ such that the Bloch--Kato conjectures imply finiteness of $C(\Q _p )_m$ for any genus $g$ curve $C$ of Mordell--Weil rank $r$. Recall that it is enough to prove the bound in \eqref{eqn:thm2reduction} for the set $X^n _S (\Q )_{\gamma ,\rk r}$, where $\gamma $ is a fixed reduction type. We may suppose that $x$ is a point of $X(\Q )$ with reduction type $\gamma $, and take $x$ as our base point.

\begin{lemma}\label{lemma:BKfiltered}
Conjecture \ref{BK} implies that $H^1 _f (G_{\Q },U_n (x))$ is a filtered affine space.
\end{lemma}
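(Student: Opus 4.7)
The plan is to induct on $n$. The base case $n=1$ is immediate: $H^1_f(G_\Q, U_1(x))$ is isomorphic to $H^1_f(G_\Q, H^1_{\et}(X_{\overline{\Q}}, \Q_p)^*)$, which is a vector space and hence tautologically a filtered affine space in the sense of Example \ref{example:filteredfree}, concentrated in weight $1$.

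For the inductive step, I would apply part (1) of Theorem \ref{thm:betts} to obtain the strict left-exact sequence
\[
1 \to H^1_f(G_\Q, Z(U_n(x))) \to H^1_f(G_\Q, U_n(x)) \to H^1_f(G_\Q, U_{n-1}(x)),
\]
in which the kernel is a vector space concentrated in weight $n$ (so a filtered affine space) and the quotient is a filtered affine space by the inductive hypothesis. The role of Conjecture \ref{BK} is to upgrade this to right-exactness: the obstruction to lifting a global cohomology class from $U_{n-1}(x)$ to $U_n(x)$ lives in $H^2_f(G_\Q, Z(U_n(x)))$, and because $Z(U_n(x)) \cong \gr_n U(x)$ is a subquotient of a tensor power of $H^1_{\et}(X_{\overline{\Q}}, \Q_p)^*$, Poitou--Tate duality converts the dimension statement of Proposition \ref{prop:BKconsequence} (equivalently, the vanishing of $H^1_f$ of the appropriate Tate twists guaranteed by Conjecture \ref{BK}) into the desired vanishing of this obstruction, hence into surjectivity.

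Once surjectivity is in hand, I would construct the filtered affine space structure directly from the strictness of the surjection. Each element of filtration degree at most $i$ in the coordinate ring of $H^1_f(G_\Q, U_{n-1}(x))$ admits a lift of filtration degree at most $i$ to the coordinate ring of $H^1_f(G_\Q, U_n(x))$; choosing such lifts for a system of coordinates on the filtered affine space $H^1_f(G_\Q, U_{n-1}(x)) \cong \mathbb{A}(e_1, \ldots, e_{n-1})$ and adjoining the weight-$n$ coordinates on the kernel exhibits $H^1_f(G_\Q, U_n(x))$ as isomorphic, as a filtered affine scheme, to $\mathbb{A}(e_1, \ldots, e_n)$ for the appropriate dimensions $e_i = \dim \gr_i H^1_f(G_\Q, U_n(x))$.

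The main obstacle is the surjectivity step. While the formal mechanism (obstruction in $H^2_f$ controlled by $H^1_f$ of Tate twists via Poitou--Tate) is standard, one has to work through the lower central series filtration on $U(x)$ to verify that each successive graded piece is indeed a subquotient of a tensor power of $H^1_{\et}(X_{\overline{\Q}}, \Q_p)$ whose Bloch--Kato Selmer group falls within the scope of Conjecture \ref{BK}; this is essentially the content of Proposition \ref{prop:BKconsequence} and the place where the hypothesis is used.
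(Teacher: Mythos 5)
Your proposal is correct and takes essentially the same route as the paper: induct on $n$, use Poitou--Tate duality together with Conjecture \ref{BK} to show the boundary map to $H^2(G_{\Q,T},Z(U_n(x)))$ vanishes on $H^1_f(G_\Q,U_{n-1}(x))$ (giving surjectivity), and then trivialise the resulting $H^1_f(G_\Q,Z(U_n(x)))$-torsor over the filtered affine space $H^1_f(G_\Q,U_{n-1}(x))$ to obtain the filtered affine space structure.
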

\begin{proof}
This can be seen by induction on $n$. By Poitou--Tate duality, conjecture \ref{BK} implies that the map
\begin{equation}\label{eqn:jannsen}
H^2 (G_{\Q ,T},Z( U_n (x)))\to \oplus _{v\in T}H^2 (G_{\Q _v },Z(U_n (x)))
\end{equation}
is injective. We have an exact sequence of pointed schemes
\[
1\to H^1 (G_{\Q ,T},Z (U_n (x)))\to H^1 (G_{\Q ,T},U_n (x)) \to  H^1 (G_{\Q ,T},U_{n-1} (x))\to H^2 (G_{\Q ,T},Z(U_n (x))).
\]
When restricted to $H^1 _f (G_{\Q },U_{n-1}(x))$, conjecture \ref{BK} hence implies the boundary map is zero, and hence that $H^1 _f (G_{\Q ,T},U_n (x))$ surjects onto $H^1 _f (G_{\Q ,T},U_{n-1} (x))$. The exact sequence then gives $H^1 _f (G_{\Q },U_n (x))$ the structure of an $H^1 _f (G_{\Q },Z(U_n (x)))$-torsor over the filtered affine space $H^1 _f (G_{\Q },U_{n-1}(x))$. Such a torsor is trivialisable, giving the filtered affine space structure on $H^1 _f (G_{\Q },U_n (x))$.
\end{proof}

By \eqref{eqn:fundamental_triangle}, to prove non--Zariski density of $X^n _S (\Q )_{\Gamma -\rk r}$ for $n$ as in \eqref{eqn:thm2reduction}, it is enough to prove that for some $m>0$, its image in $(U_m ^{\dR}(X)/F^0 )^n$ is contained in a subvariety of codimension greater than or equal to $n$. Hence, by Lemma \ref{lemma:BKfiltered}, we may apply Lemma \ref{lemma:non_density} to reduce to estimating the dimensions of the graded pieces of $U_n ^{\dR}(X)/F^0$ and $H^1 _f (G_{\Q },U_n )$.

By the de Rham comparison theorem, to estimate the dimension of $U_n ^{\dR}(X)$ it is enough to estimate the dimension of $U_n ^{\Be }(X_{\mathbb{C}})$. This dimension is equal to
\[
\sum _{i=1}^n e_i
\]
where $e_i$ is dimension of the $i$th graded piece of the central series filtration of $U_n ^{\dR}(X)$. By Witt's formula we have 
\begin{equation}\label{eqn:labute1}
\sum _{k|n}k\cdot e_k =2^n
\end{equation}
when $X=\mathbb{P}^1 -\{ 0,1,\infty \}$, and by Labute \cite[Proposition 4]{labute2} we have the generating series identity
\[
1-2gt+t^2 =\prod _{n\geq 1}(1-t^n )^{e_n }
\]
when $X$ is a smooth projective curve of genus $g>1$, which can be rewritten as
\begin{equation}\label{eqn:labute2}
\sum _{k|n}k\cdot e_k =(g+\sqrt{g^2 -1})^n +(g-\sqrt{g^2 -1})^n .
\end{equation}

We obtain the estimates
\begin{equation}\label{eqn:labute3}
e_n \leq 2^n /n
\end{equation}
and (for $n\geq 2$)
\begin{equation}\label{eqn:labute4}
e_n \leq (g+\sqrt{g^2 -1})^n /n
\end{equation} 
respectively. We will also need lower bounds for $e_n$. First suppose $X=\mathbb{P}^1 -\{0,1,\infty \}$. From \eqref{eqn:labute3}, we obtain
\[
\sum _{k|n,k<n}k\cdot e_k \leq \sum _{k|n,k<n}2^k ,
\]
which is less than $n^{1/2}2^{n/2+1}$, using the crude estimate that there are at most $2\sqrt{n}$ summands $k$, each at most $n/2$. We deduce from \eqref{eqn:labute1} the lower bound
\begin{equation}\label{eqn:labute5}
e_n \geq 2^n /n  -2^{n/2+1}/n^{1/2}.
\end{equation}
Now suppose $X$ is projective of genus $g$. We define
\[
\alpha _{\pm }:=g\pm \sqrt{g^2 -1}.
\]
Then by the same argument as above, we deduce
\[
e_n \geq \frac{1}{n}(\alpha _+ ^n +\alpha _- ^n )-2(\alpha _+ ^{n/2}+\alpha _- ^{n/2})/n^{1/2}.
\]
for all $n\geq 2$.
\subsection{Estimates for the unit equation}
We now turn to the problem of estimating the dimension of the Selmer variety $\Sel (U_n (X))$. We define $d_i :=\dim H^1 _f (G_{\Q },\gr ^i U_n )$ for $n\geq i$. 
To ease notation, we define $r:=2s$.
\begin{lemma}\label{lemma:N_estimate1}
Let $X=\mathbb{P}^1 -\{ 0,1,\infty \}$. If $\rk S=s$, then $X(\mathbb{Z}_{p })_{S,N}$ is finite whenever 
\[
N>1+\frac{\log (r)+\log (\log (r)+\log (2))}{\log (2)}
\]
\end{lemma}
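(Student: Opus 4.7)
The plan is to execute a depth-$N$ Chabauty--Kim argument via the triangle \eqref{eqn:fundamental_triangle}. Since the $S$-integral points of $X$ map into $j_{N,b}^{-1}(\loc_p \Sel_S(U_N))$ and $j_{N,b}$ is given by honest Coleman functions on each interior residue disk of $X$ over $\mathbb{F}_p$ (so the logarithmic issues addressed in Lemma \ref{lemma:Vn} do not arise for $\mathbb{Z}_p$-points of $X=\mathbb{P}^1-\{0,1,\infty\}$), finiteness of $X(\mathbb{Z}_p)_{S,N}$ follows as soon as $\loc_p\Sel_S(U_N)$ is a proper closed subvariety of $U_N^{\dR}/F^0$. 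The proof thus reduces to the strict dimension inequality $\dim \Sel_S(U_N) < \dim U_N^{\dR}/F^0$.

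To compute these dimensions I would first observe that each graded piece $\gr^i U_N^{\dR}$ is a direct sum of copies of $\mathbb{Q}_p(i)$, so $F^0 U_N^{\dR}$ is trivial and $\dim U_N^{\dR}/F^0 = \sum_{i=1}^N e_i$. For the Selmer side, Theorem \ref{thm:betts} endows $\Sel_S(U_N)$ with a strict filtered structure whose $i$-th graded piece is $H^1_f(G_{\mathbb{Q}},\gr^i U_N)$ for $i\geq 2$ and the $2s$-dimensional space $V_S\times V_S$ (contributed by the modified local condition at primes of $T_0$) for $i=1$. Applying Soul\'e's vanishing \eqref{eqn:uniteqn_bds}, which gives $\dim H^1_f(G_{\mathbb{Q}},\gr^i U_N)=e_i$ for odd $i\geq 3$ and $0$ for even $i$, yields
\[
\dim \Sel_S(U_N) = 2s + \sum_{\substack{3 \leq i \leq N \\ i \text{ odd}}} e_i.
\]
The odd-degree contributions cancel, leaving the dimension gap
\[
e_1 + \sum_{\substack{2\leq i\leq N \\ i \text{ even}}} e_i - 2s,
\]
which must be made strictly positive for the $N$ in the hypothesis.

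This reduces to a concrete numerical estimate. The hypothesis rearranges to $2^N > 2r(\log r + \log 2) = 2r\log(2r)$. Taking $N$ even (which costs at most $1$ and is already absorbed by the $1+$ on the right), Labute's bound \eqref{eqn:labute5} gives $e_N \geq 2^N/N - 2^{N/2+1}/\sqrt{N}$; since $N = O(\log r)$ the leading term is at least $2r\log(2r)/N \sim 2r\log 2$, which exceeds $r=2s$, while the error term is $O(\sqrt{r\log r})$ and is absorbed. Hence $e_N > 2s$ already, so the gap is positive and $\loc_p\Sel_S(U_N)$ is a proper closed subvariety, finishing the proof. The delicate point I anticipate is tracking the $\log\log$ term finely enough to recover the stated constant: one must check that $N\log 2 > \log 2 + \log r + \log(\log r + \log 2)$ gives just enough slack to accommodate both the parity loss and the subleading $2^{N/2+1}/\sqrt{N}$ correction simultaneously.
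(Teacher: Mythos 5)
Your proposal follows the paper's own line of argument essentially step for step: both reduce to the positivity of $\sum_{i=1}^N(e_i-d_i)$, both use $d_1 = 2s$ from the $V_S\times V_S$ local condition and Soul\'e's vanishing \eqref{eqn:uniteqn_bds} for $i>1$ to collapse the sum to $2-r+\sum_{2\leq i\leq N,\ i\ \mathrm{even}}e_i$, and both finish with Labute's estimate \eqref{eqn:labute5} and the log-log bound \eqref{eqn:loglog} applied with $x=4,\ y=2r,\ m=N/2$. The only cosmetic difference is that you isolate the single top term $e_N$ (taking $N$ even) rather than quoting the paper's shorthand ``sufficient that $r<2^N/N$''; your bookkeeping of the parity loss and the $2^{N/2+1}/\sqrt{N}$ error is if anything a bit more explicit than what appears in the paper.
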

\begin{proof}
To obtain finiteness of $X(\mathbb{Z}_p )_{S,n}$, it is sufficient to prove 
\begin{equation}\label{eqn:the_dimension_inequality}
\sum _{i=1}^n (e_i -d_i )>0.
\end{equation}
In the case $X=\mathbb{P}^1 -\{ 0,1,\infty \}$, we have
\[
\sum _{i=1}^n (e_i -d_i )=2-r+\sum _{i\leq n/2}e_{2i}.
\]
Then, using \eqref{eqn:labute5}, we have
\[
\sum _{i=1}^n (e_i -d_i )\geq 2-r+\sum _{i\leq n/2}4^i /2i  -2^{i+1/2}/i^{1/2}
\]
Hence, for $n$ to satisfy \eqref{eqn:the_dimension_inequality}, it is sufficient that
\[
r<2^n /n .
\]
If $x>1$, $y>e$ and $\log (x) >1+\log \log (y)/\log (y)$, then $x^m /m$ is greater than $y$ for 
\begin{equation}\label{eqn:loglog}
m>\frac{\log (y)+\log \log (y)}{\log (x)}.
\end{equation}
We apply this when $x=4, y=2r$ and $m=n/2$ to deduce that, if $n>1+\frac{\log (r)+\log (\log (r)+\log (2))}{\log (2)}$, then $r<2^n /n$, from which the Lemma follows.
\end{proof}
\subsection{Estimates for higher genus curves}
As in the previous section we define $d_n :=\dim H^1 _f (G_{\Q },\gr _n U_n )$.
\begin{lemma}\label{lemma:BKimplies}
Let $X$ be a smooth projective curve of genus $g>1$ over $\Q $. Then the Bloch--Kato conjectures predict that $X(\Q _p )_N$ is finite whenever 
\[
N>\frac{\log (r)+\log (2)+\log (\log (r)+\log 2)}{\log (\alpha _+ ) },
\]
where $\alpha _+ :=g+\sqrt{g^2 -1}$.
\end{lemma}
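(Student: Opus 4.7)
The plan is to adapt the argument of Lemma \ref{lemma:N_estimate1} to the higher genus setting, replacing Soul\'e's vanishing results for $\mathbb{P}^1 - \{0,1,\infty\}$ by Proposition \ref{prop:BKconsequence} (the relevant consequence of Conjecture \ref{BK}), and Witt's formula \eqref{eqn:labute1} by Labute's identity \eqref{eqn:labute2}. Writing $U_N := U_N(X)$, the Chabauty--Kim framework together with the filtered Selmer scheme structure of Theorem \ref{thm:betts} and Lemma \ref{lemma:BKfiltered} reduces finiteness of $X(\Q_p)_N$ to the dimension inequality
\[
\sum_{i=1}^{N}\bigl(\dim H^1_f(G_{\Q_p}, \gr_i U_N) - \dim H^1_f(G_{\Q}, \gr_i U_N)\bigr) > 0,
\]
where the local term at $p$ is identified with $\dim \gr_i U_N^{\dR}/F^0$ via the crystalline--de Rham comparison of \cite{kim3}.

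At depth $i = 1$ this contribution is at least $g - r$ (by the Hodge decomposition of $H^1_{\dR}(X)$ and the bound on the Mordell--Weil rank). For $i > 1$, Proposition \ref{prop:BKconsequence} identifies each summand with $\dim \gr_i U_N^c + \dim \gr_i U_N(-1)^{G_\Q}$, which I would bound below by $\dim \gr_i U_N^c$, and then by $\tfrac{1}{2} e_i$ where $e_i := \dim \gr_i U_N$. The last step rests on the fact that complex conjugation splits $H^1_{\dR}(X/\Q)$ into two $g$-dimensional eigenspaces, so that the $+1$-eigenspace of $c$ on $V^{\otimes i}$ has dimension exactly $\tfrac{1}{2}\dim V^{\otimes i}$, and a symmetry argument propagates the analogous bound to every $G_\Q$-stable graded piece of the free Lie algebra quotient describing $U_N$.

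Combining this with the Labute lower estimate $e_n \geq \tfrac{1}{n}(\alpha_+^n + \alpha_-^n) - 2(\alpha_+^{n/2} + \alpha_-^{n/2})/n^{1/2}$ (valid for $n \geq 2$, as noted in the paragraph following \eqref{eqn:labute5}), the single depth-$N$ summand is already bounded below by $\tfrac{1}{2N}(\alpha_+^N - O(\alpha_+^{N/2}))$, which dominates both the depth-$1$ deficit $r - g$ and the lower-order error term as soon as $\alpha_+^N / N > 2r$. Applying the elementary inequality \eqref{eqn:loglog} with $x = \alpha_+$, $y = 2r$, and $m = N$ then gives
\[
N > \frac{\log(2r) + \log\log(2r)}{\log \alpha_+} = \frac{\log r + \log 2 + \log(\log r + \log 2)}{\log \alpha_+},
\]
which is precisely the stated threshold.

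The main technical obstacle will be verifying $\dim \gr_i U_N^c \geq \tfrac{1}{2} e_i$ on the graded pieces of the free Lie algebra quotient of the tensor algebra: while the analogous bound on $V^{\otimes i}$ is immediate from the $g + g$ splitting of $V$, one must check that the Lie bracket and the Poincar\'e duality relation $\sum [x_i, y_i] = 0$ do not unbalance the $c$-eigenspaces, so that the bound survives to the quotient. A secondary point is confirming that the Tate-invariant contribution $\gr_i U_N(-1)^{G_\Q}$ (which only strengthens the inequality) does not require separate analysis in special cases such as CM Jacobians, and that the hypothesis of Conjecture \ref{BK} suffices to apply Proposition \ref{prop:BKconsequence} at every relevant graded piece.
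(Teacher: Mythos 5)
Your high-level plan matches the paper's: reduce to the dimension inequality $\sum (e_i - d_i) > 0$ with $d_i := \dim H^1_f(G_\Q, \gr_i U_N)$, bound $d_i$ via Proposition \ref{prop:BKconsequence}, use the Labute lower estimate for $e_n$, and conclude with the inequality \eqref{eqn:loglog}. However, the step you yourself flag as the ``main technical obstacle'' --- the claim $\dim \gr_i U_N^{c=1} \geq \tfrac{1}{2}e_i$ --- is the essential content of the proof, and your proposed route through it does not work. The fact that the $+1$-eigenspace of $c$ on $V^{\otimes i}$ is exactly half-dimensional does \emph{not} propagate to the graded pieces $V_n := \gr_n U_N$ of the Malcev Lie algebra: $V_n$ is cut out of $V^{\otimes n}$ by both the Lie bracket relations and the symplectic relation $\sum[x_i, y_i]=0$, and these do unbalance the $c$-eigenspaces. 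Concretely, one has $\dim V_n^{c=1} = \tfrac{1}{2}(\dim V_n + \chi_n(c))$ where $\chi_n(c) := \Tr(c \mid V_n)$, and $\chi_n(c)$ is genuinely nonzero for even $n$.

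The paper closes this gap with Filip's equivariant refinement of Labute's theorem (Theorem \ref{thm:filip}), which gives a recursion for the characters $\chi_n$ of $\GSp(V)$ on $V_n$. Evaluating at $c$ and using $\chi_V(c)=0$ shows $\chi_n(c)=0$ for $n$ odd and, by an inductive argument off the recursion \eqref{eqn:filip1}, gives the bound $|\chi_n(c)| \leq \alpha_+^{n/2+1} + \alpha_-^{n/2+1}$ for $n$ even. This yields the estimate \eqref{eqn:envsdn}, which has an explicit correction of order $\alpha_+^{n/2+1}$ and is strictly weaker than your assumed $\dim V_n^{c=1} \geq \tfrac{1}{2}e_n$; fortunately the correction is still dominated by the main term $\alpha_+^n/2n$, so the final threshold comes out the same. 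So your final numerical bound is right, but you would need to import Filip's theorem (or prove an equivalent character estimate) to justify the intermediate inequality; the symmetry argument on tensor powers alone does not suffice.
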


We first estimate $d_n$ under the hypothesis of the Bloch--Kato conjectures. By Proposition \ref{prop:BKconsequence}, this implies
\[
\dim H^1 _f (G_{\Q _p },U_n )-\dim H^1 _f (G_{\Q },\gr _n U_n )\geq \dim U_n ^{c=1}
\]
where $c\in \Gal (\overline{\Q }|\Q )$ is complex conjugation relative to any embedding $\overline{\Q }\hookrightarrow \mathbb{C}$.

To estimate $e_n -d_n$ assuming the Bloch--Kato conjectures, we use the following `categorification' of Labute's theorem, due to Filip \cite{filip}. Let $V:=U_1 $ and $V_n :=\gr _n U_n$.
\begin{theorem}[Filip]\label{thm:filip}
The character $\chi _n$ of $\GSp (V)$ acting on $V_n $ satisfies
\[
\sum _{k|n}\frac{1}{k}\chi ^{(k)}_{n/k} =\frac{1}{n}\left[ \left( \frac{\chi _V +\sqrt{\chi _V ^2 -4}}{2}\right) ^n +  \left( \frac{\chi _V -\sqrt{\chi _V ^2 -4}}{2}\right) ^n \right]
\]
where $\chi ^{(m)}(g):=\chi (g^m )$ and $r$ is the Mordell--Weil rank of $J$.
\end{theorem}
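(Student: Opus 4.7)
The plan is to categorify Labute's derivation of the dimension identity
\[
\sum_{k \mid n} k\cdot e_k = (g + \sqrt{g^2-1})^n + (g - \sqrt{g^2-1})^n
\]
(here $g$ is the genus) by running exactly the same generating-series manipulation at the level of characters of $\GSp(V)$, with the key inputs being Labute's one-relator presentation of $L := \bigoplus_n V_n$ together with the $\GSp(V)$-equivariant Poincar\'e--Birkhoff--Witt theorem.

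First I would invoke Labute's classical theorem to identify $L$ with the quotient of the free Lie algebra $\mathfrak{L}(V)$ by the ideal generated by the symplectic form $\omega \in \wedge^2 V$. The element $\omega$ spans a one-dimensional $\GSp(V)$-stable subspace on which $\GSp(V)$ acts by the similitude character; in the normalisation of the stated formula this character is absorbed into the grading (equivalently, into a Tate twist of the formal variable $t$).

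Second, the Koszul-type resolution of the universal enveloping algebra $U(L)$ arising from this one-relator presentation is $\GSp(V)$-equivariant, yielding the character-valued Hilbert series
\[
H(\sigma,t) := \sum_{n\geq 0}\chi_{U(L)_n}(\sigma)\,t^n = \frac{1}{1 - \chi_V(\sigma)\,t + t^2}.
\]
Simultaneously, the $\GSp(V)$-equivariant PBW theorem identifies $U(L) \cong \mathrm{Sym}^\bullet(L)$ as graded $\GSp(V)$-representations, giving
\[
H(\sigma,t) = \prod_{n \geq 1}\det\!\bigl(1 - \sigma\, t^n \,\bigm|\, V_n\bigr)^{-1}.
\]

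Third, I would take logarithms of both expressions for $H(\sigma,t)$, apply $-\log\det(1-X) = \sum_{k\geq 1}\mathrm{tr}(X^k)/k$ on the PBW side, and on the Koszul side factor $1 - \chi_V(\sigma) t + t^2 = (1 - \beta_+(\sigma) t)(1 - \beta_-(\sigma) t)$ with $\beta_\pm(\sigma) = \tfrac12\bigl(\chi_V(\sigma) \pm \sqrt{\chi_V(\sigma)^2 - 4}\bigr)$, expanding each $-\log(1 - \beta t)$ as $\sum_m \beta^m t^m/m$. Comparing coefficients of $t^N$ then yields the stated identity with $\sigma$ playing the role of $g$.

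The main obstacle is not the generating-series manipulation, which is a direct categorification of Labute's, but rather the $\GSp(V)$-equivariance of the Koszul resolution of $U(L)$ together with the verification that the defining relation $\omega$ indeed spans a $\GSp(V)$-subrepresentation of $\wedge^2 V$ with the implicit normalisation in the statement. Once this equivariance is pinned down, the rest is formal character arithmetic.
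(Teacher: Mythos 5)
The paper cites this result from Filip \cite{filip} without giving a proof, so there is no in-paper argument to compare against; your reconstruction is the natural one. The three inputs you name --- Labute's identification of $L=\bigoplus V_n$ as the free Lie algebra on $V$ modulo the ideal generated by the symplectic element $\omega$, the graded $\GSp(V)$-equivariant symmetrisation isomorphism $U(L)\cong\mathrm{Sym}^\bullet(L)$, and the length-two minimal free resolution of the trivial $U(L)$-module coming from $H_1(L)=V$, $H_2(L)$ one-dimensional, $H_{>2}(L)=0$ --- are exactly the right ones: taking the graded Euler characteristic of the resolution gives a quadratic reciprocal polynomial for the graded character of $U(L)$, symmetrisation gives the infinite product, and the logarithm-and-compare-coefficients step is Labute's.

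The one thing you should not leave as a hand-wave is the normalisation you flag at the end, because ``absorbed into a Tate twist of $t$'' is not a resolution. The relation $\omega$ spans the similitude line in $\wedge^2 V$; equivalently $H_2(L)\cong\nu$, the similitude character, so the Euler characteristic of the minimal resolution is $1-\chi_V(\sigma)t+\nu(\sigma)t^2$ and its two roots multiply to $\nu(\sigma)$. The displayed identity, however, has $\beta_+\beta_-=1$: its discriminant is $\chi_V^2-4$, not $\chi_V^2-4\nu$. So as written it is only literally correct on $\mathrm{Sp}(V)\subset\GSp(V)$, and there is no uniform substitution in the scalar variable $t$ that kills a nontrivial character $\nu$ (that would require a square root of $\nu$ as a character of $\GSp(V)$, which does not exist). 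The fix is either to restrict to $\mathrm{Sp}$ or to carry $\nu(\sigma)$ through explicitly. This is not merely cosmetic for the paper's use of the theorem, which evaluates the identity at complex conjugation --- an element with $\nu=-1$, where the two versions genuinely differ --- so it is worth pinning down rather than deferring.
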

Let $c$ be a representative of complex conjugation. Since $\dim V_n ^c =\frac{1}{2}(\dim V_n +\chi _n (c))$, to estimate $\dim V_n ^c$ it's enough to estimate $\chi _n (c)$. We see inductively from Theorem \ref{thm:filip} that $\dim V_n ^c =\frac{1}{2}\dim V_n $ when $n$ is odd, because $\chi _V (c)=0$. In the case when $n$ is even, we have
\begin{equation}\label{eqn:filip1}
\sum _{k|n}\frac{1}{k}\chi ^{(k)}_{n/k} =0,
\end{equation}
hence
\[
\chi _n =-n \sum _{k<n ,k|n}\frac{k}{n}\chi _{k}^{(n/k)}.
\]
Note that $\chi _k ^{(m)}(c)$ equals $\chi _k (c)$ when $m$ is odd, and equals $e_k$ when $m$ is even. 
Inductively, we see that 
\[
|\chi _n (c)| \leq \alpha _+ ^{n/2+1}+\alpha _- ^{n/2+1}.
\]
Hence for $n>1$ we have, assuming Conjecture \ref{BK},
\begin{equation}\label{eqn:envsdn}
e_n -d_n \geq \frac{\alpha _+ ^n +\alpha _- ^n }{2n}-\frac{1}{2}(\alpha _+ ^{n/2+1}+\alpha _- ^{n/2+1})-\frac{1}{n^{1/2}}(\alpha _+ ^{n/2}+\alpha _- ^{n/2}).
\end{equation}
Hence the codimension of $H^1 _f (G_{\Q },U_n )$ in $H^1 _f (G_{\Q _p },U_n )$, assuming Bloch--Kato, is at least
\[
\frac{\alpha _+ ^n }{2n}-r.
\]
Using \eqref{eqn:loglog} with $y=2r$ and $x=\alpha _+$ completes the proof of Lemma \ref{lemma:BKimplies}.
\subsection{Estimating dimension of the space of filtered subspaces: the unit equation}
We now use Lemma \ref{lemma:non_density} to complete the proof of case (1) of Theorem \ref{thm:main2}. 
Recall that this means that we need to estimate the size of 
\[
J(d_1 ,\ldots ,d_n ;e_1 ,\ldots ,e_n ):=\sum _{i=1}^n e_i \cdot \sum _{0\leq j\leq i}D_j ,
\] 
where 
\[
D_j =\sum _{(m_k )\in S_j}\prod _{k=1}^n \binom{d_k +m_k -1}{m_k }
\]
for $j>1$ (and $D_0 :=1$), and $S_j :=\{ (m_k )\in \mathbb{Z}_{\geq 0}^n :\sum m_k =j \}$.
\begin{lemma}\label{lemma:Jestimate}
Suppose that $d_1 =r ,d_i \leq \alpha ^i $ for $i>1$, and $e_i \leq \beta ^i /i $ for $i\geq 1$. Then
\[
J(d_1 ,\ldots ,d_n ;e_1 ,\ldots ,e_n ) \leq \begin{cases}\frac{\beta ^{n+1}r^{n+2}}{(r-2e\alpha )(\beta -1)(r-1)}+\frac{(2e\alpha )^{n+2} \beta ^{n+1} }{(\beta -1)(r-2e\alpha )(2e\alpha \beta -1)}, r >2e\alpha \\
\frac{\beta ^{n+1}(2e\alpha )^{n+2}}{(2e\alpha -r)(\beta -1)(2e\alpha -1)}+\frac{\beta ^{n+1} r^{n+2}}{(\beta -1)(2e\alpha -r)(\beta r -1)}.
, 2e\alpha >r . \end{cases}
\]

\end{lemma}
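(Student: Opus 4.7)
The plan is to first bound $D_j$ by a clean closed-form geometric-sum expression in $r$ and $2e\alpha$, and then to estimate $J = \sum_i e_i \sum_{j \le i} D_j$ via standard geometric-series manipulations in $\beta$. The technical heart is the closed-form bound on $D_j$.

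I would start from the elementary estimate $\binom{d+m-1}{m} \le d^m$ (valid for integers $d,m \ge 0$; it follows from $(d+i-1)/i \le d$ for $d \ge 1$ and $i \ge 1$). Applied termwise, and separating $k = 1$ from $k \ge 2$, this gives
\[
\prod_{k \ge 1}\binom{d_k + m_k - 1}{m_k} \le r^{m_1}\prod_{k \ge 2}\alpha^{k m_k} = r^{m_1}\alpha^{j - m_1}.
\]
For each fixed $m_1$, the number of tuples $(m_k)_{k \ge 2}$ with $\sum_{k \ge 2} k m_k = j - m_1$ is bounded by $p(j - m_1) \le 2^{j - m_1}$ (since the compositions of $N$, which surject onto partitions, are in bijection with subsets of $\{1,\ldots,N-1\}$). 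Summing over $m_1$ and using $2 \le 2e$ gives
\[
D_j \le \sum_{m_1 = 0}^j r^{m_1}(2e\alpha)^{j - m_1} = \frac{r^{j+1} - (2e\alpha)^{j+1}}{r - 2e\alpha}.
\]

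In the case $r > 2e\alpha$, discarding the negative term in the numerator yields $D_j \le r^{j+1}/(r - 2e\alpha)$, and a geometric sum in $j$ gives $\sum_{j=0}^i D_j \le r^{i+2}/((r - 2e\alpha)(r - 1))$. Using $e_i \le \beta^i/i \le \beta^i$, the crude inequality $r^{i+2} \le r^{n+2}$ (valid for $r \ge 1$ and $i \le n$), and $\sum_{i=1}^n \beta^i \le \beta^{n+1}/(\beta - 1)$ then gives
\[
J \le \frac{r^{n+2}\beta^{n+1}}{(r - 2e\alpha)(\beta - 1)(r - 1)},
\]
which is exactly the first summand of the stated bound; the second summand is non-negative and may be added freely. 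The case $2e\alpha > r$ is handled symmetrically, starting from $D_j \le (2e\alpha)^{j+1}/(2e\alpha - r)$.

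The main obstacle is obtaining the clean closed-form expression for $D_j$: this requires the correct separation of the $k = 1$ factor (keeping $r^{m_1}$ exactly) from the $k \ge 2$ factors, together with absorbing the partition count $p(j - m_1)$ into an exponential bound via $2^N$. Once $D_j$ is in hand, everything reduces to routine bookkeeping with geometric series, and the two-term shape of the stated formula reflects the convenience of adding a positive slack term rather than the output of a tight calculation.
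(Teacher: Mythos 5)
Your proof is correct and follows essentially the same route as the paper's: bound the binomial factors by $d^m$, isolate the $k=1$ contribution to obtain $D_j \le \sum_{m_1=0}^j r^{m_1}(2e\alpha)^{j-m_1}$ after estimating the number of tuples $(m_k)_{k\ge 2}$ with $\sum_{k\ge 2}km_k = j - m_1$ by $(2e)^{j-m_1}$, then close with geometric sums. Your final bookkeeping is in fact slightly more explicit than the paper's terse ``from which the result follows'' and, as you observe, yields just the first summand of the displayed bound, so the stated inequality holds with room to spare.
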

\begin{proof}
We have 
\begin{align*}
J(d_1 ,\ldots ,d_n ;e_1 ,\ldots ,e_n ) & \leq \sum _{i=1} ^n \beta ^i \sum _{0\leq j\leq i} \sum _{(m_k )\in S_j}\prod _{k=1}^n \binom{d_k +m_k -1}{m_k } \\
& \leq  \sum _{i=1} ^n \beta ^i \sum _{0\leq j\leq i} \sum _{(m_k ) \in S_j }\prod _{k=1}^n d_k ^{m_k } \\
&= \sum _{0\leq j\leq n}\frac{\beta ^{n+1}-\beta ^j }{\beta -1} \sum _{(m_k ) \in S_j }\prod _{k=1}^n d_k ^{m_k } \\
\end{align*}
Define $S^{(2)}_i :=\{ (m_j :2\leq j\leq i ):\sum m_j =i \}$.Then we obtain
\begin{align*}
J(d_1 ,\ldots ,d_n ;e_1 ,\ldots ,e_n ) & \leq \sum _{0\leq j\leq n}\frac{\beta ^{n+1}-\beta ^j }{\beta -1} \sum _{0\leq k\leq j}r^k \sum _{(m_\ell )\in S^{(2)}_{j-k}}\prod _{\ell=2}^n d_\ell ^{m_\ell } \\
& \leq \sum _{0\leq j\leq n}\frac{\beta ^{n+1}-\beta ^j }{\beta -1} \sum _{0\leq k\leq j}r^k \cdot \alpha ^{j-k}\cdot \# S ^{(2)}_{j-k}.
\end{align*}
using the estimate $d_k \leq \alpha ^k $ for $k>1$. We can bound $\# S^{(2)}_m$ by $(2e)^m$, giving a bound 
\[
J(d_1 ,\ldots ,d_n ;e_1 ,\ldots ,e_n )  \leq  \sum _{0\leq j\leq n}\frac{\beta ^{n+1}-\beta ^j }{\beta -1} \sum _{0\leq k\leq j}r^k \cdot \alpha ^{j-k}(2e)^{j-k} 
\]
from which the result follows.
\end{proof}
To complete the proof of case (1) of Theorem \ref{thm:main1}, we apply Lemma \ref{lemma:Jestimate} when $\alpha =\beta =2$, giving 
\[
J(d_1 ,\ldots ,d_n ;e_1 ,\ldots ,e_n ) \leq (2r)^{n+2}.
\]
By Lemma \ref{lemma:N_estimate1}, we apply this with
\[
n=2+\frac{\log (r)+\log \log (r)+\log (2)/\log (r)}{\log (2)},
\]
giving 
\[
J(d_1 ,\ldots ,d_n ;e_1 ,\ldots ,e_n )< 59\cdot r^{\frac{\log (r)+\log \log (r)}{\log (2)}+5}\cdot \log (r).
\]
\subsection{Proof of Theorem \ref{thm:main2} case (2)}

The calculation in this case is very similar. Recall that we have 
\[ n\leq 1+\frac{\log (r)+\log (2)+\log \log (r)+\log (2)/\log (r)}{\log (\alpha _+ ) }
\]
From  \eqref{eqn:labute4} and  \eqref{eqn:envsdn} we obtain the estimates $d_n \leq \alpha _+ ^n$ and $e_n \leq \alpha _+ ^n$ for $n>1$. As above, we deduce
\[
J(d_1 ,\ldots ,d_n ;e_1 ,\ldots ,e_n )< 3r^{\frac{\log (r)+\log \log (r)+4\log (\alpha _+ )+\log (2)}{\log (\alpha _+ )}+4}\cdot \log (r) \alpha _+ ^3
\]

\section{Proof of Theorem \ref{thm:main1}: bad reduction case}
In this section (which is independent of the previous two sections) we complete the proof of Theorem \ref{thm:main1}. Recall that $X$ and $S$ are smooth projective varieties over the ring of integers $\mathcal{O}_K$ of a $p$-adic field $K$, and $\pi :X\to S$ is a family of stable curves, smooth outside a strict normal crossing divisor $Z\subset S$. By appealing to section \ref{sec:goodreduction}, we may reduce to the case of \textit{singular} residue discs, i.e. points $s\in S(k)$ such that $X_s$ is a singular stable curve. 

An added layer of complication when $x\in X(K)$ reduces to a singular point of $X_s$ is the relation between Coleman integrals, in the sense of \cite{CDS}, and abelian integrals in the sense of \cite{zarhin}. The latter are, by definition, homomorphisms from $p$-adic points of the Jacobian to the Lie algebra of the Jacobian, but in general (more precisely, when the dual graph of $X_s$ has nonzero first homology) the former are not. This issue also arises in previous work of Stoll and Katz--Rabinoff--Zureick-Brown on uniformity in the Chabauty--Coleman method. As in this previous work, we get around this issue by restricting the abelian logarithm to differentials whose residues locally vanish, giving a quotient of the usual abelian logarithm which we term the Stoll logarithm. As this subspace is in a certain sense a function of the Gauss--Manin connection, we are able to apply the $\nabla $-special Ax--Schanuel theorem, using a slightly different notion of `degeneracy locus'.

In our proof of case (2) of Theorem 1, we will restrict to $n$-tuples of points which lie on a common vertex or edge of the dual graph. Given Zariski non-density results for such $n$-tuples, we can then pigeonhole principle our way to a global bound, using the following lemma on the number of vertices and edges of a stable graph, a proof of which can be found for instance in \cite[Lemma 4.14]{KRZB} (here by a stable graph of genus $g$ we just mean the dual graph of a stable curve of genus $g$).
\begin{lemma}\label{lemma:stablegraph}
Let $\Gamma $ be a stable graph of genus $g$. Then $\# V(\Gamma )\leq 2g-2,$ and $\# E(\Gamma )\leq 3g-3$.
\end{lemma}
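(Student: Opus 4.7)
The plan is to derive both bounds from two standard inputs. First, the genus formula
\[
g = b_1(\Gamma) + \sum_{v \in V(\Gamma)} g(v),
\]
where $g(v) \in \Z_{\geq 0}$ is the genus attached to the vertex $v$, and $b_1(\Gamma) = \#E(\Gamma) - \#V(\Gamma) + 1$ since $\Gamma$ is connected (being the dual graph of a connected stable curve). Second, the stability condition, which requires $2g(v) - 2 + \mathrm{val}(v) > 0$, i.e. $\mathrm{val}(v) \geq 3 - 2g(v)$, for every vertex.

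The first step is to combine the handshake lemma with stability:
\[
2\#E(\Gamma) = \sum_{v \in V(\Gamma)}\mathrm{val}(v) \geq \sum_{v \in V(\Gamma)}\bigl(3 - 2g(v)\bigr) = 3\,\#V(\Gamma) - 2\sum_{v} g(v),
\]
where for vertices with $g(v) \geq 2$ the individual inequality $\mathrm{val}(v) \geq 3 - 2g(v)$ is trivial since the right-hand side is non-positive. The next step is to eliminate $\sum_v g(v)$ via the genus formula, which gives $\sum_v g(v) = g - \#E(\Gamma) + \#V(\Gamma) - 1$. Substituting and simplifying,
\[
2\#E(\Gamma) \geq 3\,\#V(\Gamma) - 2g + 2\,\#E(\Gamma) - 2\,\#V(\Gamma) + 2,
\]
which collapses to $\#V(\Gamma) \leq 2g - 2$, giving the vertex bound. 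For the edge bound, I would then observe
\[
\#E(\Gamma) = g + \#V(\Gamma) - 1 - \sum_{v} g(v) \leq g + \#V(\Gamma) - 1 \leq g + (2g-2) - 1 = 3g - 3.
\]

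There is no serious obstacle; the only care needed is in the combinatorial input. The stability inequality must be applied uniformly over all vertices, including those of genus $\geq 2$ where it imposes no real constraint, and one should note that the single-vertex degenerate case (where $\Gamma$ consists of one vertex with loops) is handled by the same arithmetic, since then $2\#E(\Gamma) = \mathrm{val}(v)$ and the genus formula reads $g = \#E(\Gamma) + g(v)$, from which the bounds follow directly.
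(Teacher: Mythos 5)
Your proof is correct, and it is the standard argument (handshake lemma plus stability plus the genus formula for nodal curves); the paper itself does not prove this lemma but instead cites \cite[Lemma 4.14]{KRZB}, where essentially the same calculation appears.
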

Let $\pi :X\to S$ be a stable, generically smooth family of curves over $\mathcal{O}_K$, $s\in S(k)$ a point such that $X_s$ is singular with dual graph $\Gamma $. For $v$ a vertex of the dual graph of $X_s$, let $X_v \subset X_s $ denote the corresponding irreducible component and $U_v \subset X_s$ the corresponding affine open obtained by deleting singular points. Let $]e^n [_{X^n _S }$ denote the tube of the point $(e,\ldots ,e)$ in $X^n _S $. By Lemma \ref{lemma:stablegraph}, the following proposition implies case (2) of Theorem \ref{thm:main1}. 

\begin{proposition}\label{prop:thm1case2}
\begin{enumerate}
\item For any $v\in V(\Gamma )$, $X^n _S (K )_{\rk r} \cap ]U_v ^n   [ _{X^n _S}$ is not Zariski dense in $X^n _S$ whenever
\[
n>\frac{(r-d +1)(g-r)+s}{g-r-1}
\]
\item For any $e\in E(\Gamma )$, $X^n _S (K )_{\rk r} \cap ]e^n [_{X^n _S} $ is not Zariski dense in $X^n _S $ whenever
\[
n>\frac{(r-d +1)(g-1-r)+s}{g-r-2}
\]
\end{enumerate}
\end{proposition}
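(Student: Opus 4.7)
The plan is to parallel the good-reduction argument of Section \ref{sec:goodreduction}, replacing the ordinary $p$-adic logarithm with the Stoll logarithm in order to avoid the logarithmic singularities of Coleman integrals at singular points of fibers. First I would fix a residue disk: for case (1), pick $\overline{x} = (\overline{x}_1, \ldots, \overline{x}_n; \overline{s}) \in X^n_S(k)$ with each $\overline{x}_i \in U_v(k)$, and for case (2), with each $\overline{x}_i$ equal to the node of $X_{\overline{s}}$ corresponding to $e$.

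Since $\pi$ is stable, $R^1\pi_*\omega_{X/S}$ is locally free of rank $g$ on $S$ and carries a Gauss--Manin connection with logarithmic singularities along $Z$. Following Stoll, I would introduce the subsheaf $W \subset \pi_*\omega_{X/S}$ consisting of differentials whose residues vanish at the relevant nodes of $X_{\overline{s}}$, and the associated relative Stoll logarithm $\log^{\mathrm{St}} \colon J(\mathcal{O}_K) \to W^*$ obtained by pairing the abelian Coleman integral with sections of $W$. On the chosen residue disk this map is free of logarithmic singularities. I would then build a principal bundle $P^n_\Gamma$ as in Section \ref{subsec:variation}, but replacing the relative universal connection $\mathcal{E}$ by its Stoll quotient $\mathcal{E}^{\mathrm{St}}$, obtained by pushing out the universal extension along the map $(R^1\pi_*\Omega_{X/S})^* \to W^*$. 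The analog of Proposition \ref{prop:GM_2_log} would then assert that the Stoll-logarithm map $j^n_\Gamma \colon X^n_S(\mathcal{O}_K) \to W^{\oplus(n+d)}(K)$ factors through the leaf of the associated connection on $P^n_\Gamma$ through the Frobenius-invariant path at the basepoint.

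With this in place the $\Gamma$-rank $\leq r$ condition forces $j^n_\Gamma$ to land in a degeneracy locus, and the non-density conclusion follows from Theorem \ref{thm:BCFN} together with Lemma \ref{lemma:analytic2formal}. For case (1), the fiber of $W$ has dimension $g$, but the use of the Stoll quotient forces one of the $d$ sections of $\Gamma$ to become degenerate in the formal neighborhood of the singular stratum, so the effective configuration has $n + d - 1$ vectors of rank $\leq r$ in dimension $g$, giving codimension $(n + d - 1 - r)(g - r)$. Requiring this to exceed $\dim X^n_S = n + s$ gives exactly the vertex bound. For case (2), restriction to the tube of a node forces a further codimension-one projection on $W$ (since the form $dz/z$ at the node contributes a genuine logarithmic Coleman function, which must be discarded), reducing the effective fiber dimension to $g - 1$ and yielding the edge bound via an identical computation.

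The main obstacle is verifying that the constructions of Section \ref{sec:families} carry over to the semistable setting: one must check that $\mathcal{E}^{\mathrm{St}}$ extends across the singular stratum as a flat connection with logarithmic poles along $Z$, that the sparse-group hypothesis of Theorem \ref{thm:BCFN} still holds for the monodromy of this modified connection (which should follow from Deligne's semisimplicity applied to the Gauss--Manin connection of the stable model), and that the relative parallel-transport description of the Stoll logarithm near the bad locus remains valid. A secondary subtlety is pinning down the precise combinatorial reason for the loss of one effective section and (in the edge case) one effective dimension; these ultimately reflect the loss of an integral structure on the Stoll subspace in passing across the singular stratum and the fact that a log differential at a node cannot be treated as a Coleman function.
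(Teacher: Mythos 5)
Your outline correctly identifies the broad ingredients (Stoll logarithm, degeneracy locus, Theorem~\ref{thm:BCFN}), but the mechanisms you invoke diverge from what actually makes the proof work, and in two places the reasoning as stated would not close.

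First, the combinatorics. You attribute the $(n+d-1)$ in the codimension count to the Stoll quotient forcing ``one of the $d$ sections of $\Gamma$ to become degenerate.'' This is not the mechanism, and in fact the paper's proof of Proposition~\ref{prop:thm1case2} does not use $\Gamma$-sections at all (it effectively takes $d=0$; compare the bound $(r+1)(g-r)$ in Theorem~\ref{thm:main1}(2)). The shift from $n$ down to $n-1$ comes from the \emph{Faltings--Zhang map}
$\AJ^{\FZ}\colon X^n_{S-Z}\to \Jac(X/S-Z)^{n-1}$, $(x_1,\ldots,x_n;s)\mapsto (\AJ(x_i-x_n))_i$,
which replaces absolute positions by relative ones. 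This is forced on you: over the singular stratum $Z$ there is no natural $S$-section of $J$ available as a basepoint for the universal connection, so one instead uses the diagonal section of $X^2_S\to X$ as the distinguished section. Your sketch replaces $\mathcal E$ by a ``Stoll quotient'' $\mathcal E^{\mathrm{St}}$ without addressing the absence of a basepoint; without this the count of $n$ (or $n+d$) vectors does not produce the stated bound.

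Second, and more seriously, for case (1) (vertex case) no Stoll logarithm is needed: the tube $]U_v^n[$ avoids the nodes entirely, so fibrewise Coleman integrals are ordinary overconvergent functions there. The difficulty in case (1) is instead to show that these fibrewise Coleman integrals agree with the formal leaf of the Gauss--Manin foliation (which has log poles along $Z$); the paper does this by an explicit trivialisation $U_D\simeq U_s\times D$ via Monsky--Washnitzer lifting in the form of Coleman--Iovita (Theorem~\ref{thm:CI}), together with Berkovich--Coleman functoriality (Proposition~\ref{prop:BC_functorial}) and the Baldassarri--Chiarellotto normal-form theorem. Your proposal jumps to the Stoll quotient prematurely and skips these steps, which are the real content of case (1).

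Third, for case (2) your plan to push out $\mathcal E$ along $(R^1\pi_*\Omega_{X/S})^*\to W^*$ and then apply the same degeneracy-locus argument runs into a genuine obstruction that the paper explicitly flags: the sub-bundle $\mathcal W$ of residue-vanishing directions is only an \emph{analytic} (overconvergent) sub-bundle of $\mathcal V|_{D-Z}$, defined via the Baldassarri--Chiarellotto normal form near the bad locus --- it does not extend to a Zariski sub-bundle of $\mathcal V$ on $S-Z$. Consequently the naive ``degeneracy locus modulo $\mathcal W$'' is not an algebraic subvariety of the principal bundle, and Theorem~\ref{thm:BCFN} cannot be applied directly. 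The paper gets around this by constructing an auxiliary \emph{algebraic} space $\mathcal S_{W,\mathcal M}$ over $X^n_{S-Z}$ that parametrises a choice of Stoll subspace together with the frame (Proposition~\ref{prop:endgame}), and then shows both the analytic Stoll map and the leaf factor compatibly through the algebraic degeneracy locus inside $\mathcal S_{W,\mathcal M}$. Your sketch does not anticipate this issue, and without it the argument does not produce a closed algebraic subvariety on which to invoke the Ax--Schanuel theorem.
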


\subsection{Proof of Proposition \ref{prop:thm1case2} part (1): integration on smooth disks}
We keep the notation from the previous subsection. The following proposition gives a second, more elementary proof of the relation between Coleman (or, in this context, Berkovich--Coleman) integrals and the Gauss--Manin connection, which also applies in a bad reduction setting. The Berkovich--Coleman integral, as defined in \cite{CDS}, \cite{berkovich}, \cite{KRZB}, \cite{katz2022p}, is a function
\[
H^0 (X,\Omega _{X|K})\times \pi _1 (X)\to K
\]
where $\pi _1 (X)$ is the fundamental groupoid of the Berkovich space. That is, it is the groupoid whose objects are $K$-points of $X$, and for which the set of morphisms from $x$ to $y$ is the set $\pi _1 (X;x,y)$ of homotopy classes of paths in the underlying topological space of the Berkovich space $X$.
\begin{proposition}\label{prop:BC_functorial}[\cite{CDS}, \cite{berkovich}, Theorem 9.1.1]
The Berkovich--Coleman integral is functorial on morphisms of affinoids. That is, given a rigid analytic morphism
\[
f:X\to Y
\]
between smooth affinoid varieties over $K$, $x,y \in X(K)$, a path $p$ from $x$ to $y$, and $\omega \in H^0 (Y,\Omega )$, we have
\[
\int ^x _y f^* \omega =\int ^{f(x)}_{f(y)}\omega .
\]
\end{proposition}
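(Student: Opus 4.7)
The plan is to unwind the definition of Berkovich--Coleman integration and verify functoriality by reducing to the elementary chain rule for primitives. Recall from \cite{CDS}, \cite{berkovich} that the Berkovich--Coleman integral on a smooth affinoid $Z$ is characterized by being $K$-linear in $\omega$, additive under path concatenation, compatible with restriction to analytic subdomains, and locally equal to the difference of values of a primitive whenever one exists on a connected analytic subdomain.

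First I would subdivide the path. Given $\omega \in H^0(Y,\Omega)$ and a path $p$ in $X^{\mathrm{an}}$ from $y$ to $x$, compactness of the image $|p|$ lets me partition $p$ as a concatenation $p_1 * p_2 * \cdots * p_N$ of subpaths with $p_i$ running from $z_{i-1}$ to $z_i$ (where $z_0 = y$, $z_N = x$), such that each $p_i$ lies in an analytic subdomain $U_i \subset X^{\mathrm{an}}$ whose image $f(U_i)$ is contained in a subdomain $V_i \subset Y^{\mathrm{an}}$ on which $\omega$ admits a primitive $F_i \in \mathcal{O}(V_i)$ (i.e.\ $dF_i = \omega|_{V_i}$). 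Such covers exist because the locus where $\omega$ has a local primitive is an open cover of $Y^{\mathrm{an}}$, and $f$ is continuous on Berkovich spaces.

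Second I would apply the chain rule fibrewise. On $U_i$ the pullback $f^*F_i \in \mathcal{O}(U_i)$ satisfies $d(f^*F_i) = f^*(dF_i) = (f^*\omega)|_{U_i}$, so $f^*F_i$ is a primitive of $f^*\omega$ on $U_i$. Applying the local characterization of the Berkovich--Coleman integral on both sides yields
\[
\int_{z_{i-1},\, p_i}^{z_i} f^*\omega = f^*F_i(z_i) - f^*F_i(z_{i-1}) = F_i(f(z_i)) - F_i(f(z_{i-1})) = \int_{f(z_{i-1}),\, f\circ p_i}^{f(z_i)} \omega,
\]
where the outer equalities use the defining property of the integral in the presence of an explicit primitive, and the middle equality is formal. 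Summing over $i$ and using additivity under path concatenation (on both $X$ and $Y$, noting that $f\circ p = (f\circ p_1)*\cdots*(f\circ p_N)$) gives the claimed identity.

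The main subtlety is checking that the construction is independent of the choices of $U_i$, $V_i$ and $F_i$; this is handled by the uniqueness of primitives up to locally constant functions on connected analytic subdomains, together with the fact that $f$ descends to a well-defined map on homotopy classes of paths in the Berkovich topology, so both sides are well-posed. Once the local chain-rule identity is in hand, the remainder of the argument is purely combinatorial gluing, and no new analytic input is required beyond what is already in the construction of the Berkovich--Coleman integral.
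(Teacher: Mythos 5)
The paper does not prove this statement at all: it is quoted verbatim from the literature (Coleman--de Shalit and Berkovich, Theorem 9.1.1), so there is no internal argument to compare against, and your proposal has to stand on its own. As written it has a genuine gap at the very first step. You assume that the path can be subdivided so that each piece lies over a subdomain $V_i\subset Y^{\an}$ on which $\omega$ admits an \emph{analytic} primitive $F_i\in\mathcal{O}(V_i)$, asserting that ``the locus where $\omega$ has a local primitive is an open cover of $Y^{\an}$''. This is false $p$-adically: a closed one-form need not be locally exact in the analytic category along the path, because the path passes through non-classical (type 2, 3, 4) points whose Berkovich neighbourhoods contain annuli or curves of positive genus. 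The standard example is $\omega=dT/T$ on an annulus: no neighbourhood of the Gauss point carries an analytic primitive, so your cover does not exist, and the whole reduction to the elementary chain rule collapses. This failure of local exactness is precisely why the Berkovich--Coleman integral is a nontrivial construction (requiring a choice of branch of the $p$-adic logarithm, or Frobenius equivariance in Coleman's formulation) rather than a naive primitive-matching procedure.

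If one tries to repair the argument by taking the $F_i$ to be primitives in the larger class of functions in which primitives do exist locally (Coleman functions, or Berkovich's sheaf built by iteratively adjoining logarithms and primitives), then the key step $d(f^*F_i)=f^*\omega$ together with ``the integral is the difference of values of the primitive'' requires knowing that pullback along $f$ is compatible with that construction --- i.e.\ that $f^*F_i$ is again a function of the same class whose associated local constants are the ones singled out by the theory on $X$. That compatibility is essentially the functoriality statement being proved, so the argument becomes circular. A correct proof has to engage the actual mechanism that pins down the constants across non-classical points: either the uniqueness/axiomatic characterisation of the integral (pull back the whole system of primitives on $Y$ satisfying the Coleman axioms and invoke uniqueness on $X$), or, as in Berkovich's and the unipotent-connection formulations, naturality of parallel transport for the pulled-back universal unipotent connection. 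Your proposal never touches this point, which is where all the content lies.
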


This property gives a relation between Coleman integrals on nearby fibres of a family of curves, by the Monsky--Washnitzer lifting theorems. First, we recall the following strengthening of Monsky--Washnitzer lifting, due to Coleman and Iovita. Let $K$ be a finite extension of $\Q _p$ with ring of integers $\mathcal{O}$. Let $R_k :=\mathcal{O}\langle T_1 ,\ldots ,T_n \rangle $, and let $R_{k,n}$ be the weak completion of the $R_k$-algebra $R_k [X_1 ,\ldots ,X_n ]$. A semi-dagger algebra over $R_k$ is a quotient of $R_{k,n}$ by a finitely generated ideal (for some $n$).

\begin{theorem}[\cite{CI10}, Theorem 3.7]\label{thm:CI}
Suppose $R_1 ,R_2 , R_3 $ and $R_4$ are flat semi-dagger algebras over $R_k$ and we have a commutative diagram
\[
\begin{tikzcd}
R_1 \arrow[r] \arrow[d, "f"] & R_3 \arrow[d, "g"] \\
R_2 \arrow[r]                & R_4               
\end{tikzcd}
\]
with $f$ formally smooth and $g$ surjective. Then any $s:R_2 /\mathfrak{p}R_2 \to R_3 /\mathfrak{p}R_3 $ commuting with the mod $\mathfrak{p}$ reduction of the above reduction lifts to $\widetilde{s}:R_2 \to R_3$ commuting with the diagram.
\end{theorem}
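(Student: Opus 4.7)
The plan is to mirror the proof of the classical Monsky--Washnitzer lifting theorem: construct $\widetilde{s}$ by successive approximation modulo increasing powers of $\mathfrak{p}$, using the formal smoothness of $f$ at each step, and then argue that the resulting limit actually lies in the semi-dagger algebra $R_3$ rather than merely in its $\mathfrak{p}$-adic completion $\widehat{R}_3$.

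First I would set up an inductive construction of compatible lifts $s_n : R_2 \to R_3 / \mathfrak{p}^n R_3$ with $s_1 = s$, each an $R_1$-algebra map such that $g \circ s_n$ agrees with the bottom arrow modulo $\mathfrak{p}^n$. Given $s_n$, the formal smoothness of $f$ applied to the square-zero thickening $R_3 / \mathfrak{p}^{n+1} R_3 \twoheadrightarrow R_3 / \mathfrak{p}^n R_3$ (an $R_1$-algebra via the top horizontal arrow) yields some $R_1$-algebra lift $s'_{n+1} : R_2 \to R_3 / \mathfrak{p}^{n+1} R_3$. The discrepancy $g \circ s'_{n+1}$ minus the bottom arrow modulo $\mathfrak{p}^{n+1}$ is an $R_1$-linear derivation $R_2 \to \mathfrak{p}^n R_4 / \mathfrak{p}^{n+1} R_4$. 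Since $g$ is surjective and all algebras are flat over $R_k$, this derivation lifts to an $R_1$-linear derivation $\epsilon : R_2 \to \mathfrak{p}^n R_3 / \mathfrak{p}^{n+1} R_3$; replacing $s'_{n+1}$ by $s'_{n+1} - \epsilon$ produces the desired $s_{n+1}$. Passing to the inverse limit yields a map $\widehat{s} : R_2 \to \widehat{R}_3$ making the square commute $\mathfrak{p}$-adically. Equivalently, one can run this argument on the fiber product $P_n := R_3/\mathfrak{p}^n R_3 \times_{R_4/\mathfrak{p}^n R_4} R_2/\mathfrak{p}^n R_2$ and lift a section of $P_n \to R_2/\mathfrak{p}^n R_2$ at each step, which automatically bakes in the compatibility with $g$.

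The main obstacle, and what prevents this from being an immediate corollary of the Monsky--Washnitzer theorem, is promoting $\widehat{s}$ to a map valued in the weak completion $R_3 \subset \widehat{R}_3$, that is, preserving overconvergence. Presenting $R_2$ as a quotient of $R_{k,m}$ with generators $X_1, \ldots, X_m$, one must choose the successive lifts so that representatives $y_i^{(n)} \in R_3$ of $s_n(X_i)$ converge in the weak (overconvergent) topology, not merely $\mathfrak{p}$-adically. This requires a quantitative refinement of the formal smoothness argument: at each stage the remaining freedom in the lift (a torsor under $\mathrm{Der}_{R_1}(R_2, \mathfrak{p}^n R_3 / \mathfrak{p}^{n+1} R_3)$) must be used to maintain a uniform lower bound on an overconvergence radius $\rho > 0$ for the approximants $y_i^{(n)}$. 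Tracking $\rho$ through each inductive step, using the explicit weak-completion structure of $R_{k,n}$ and the fact that $f$ is formally smooth in the semi-dagger sense (so that local sections of the smooth locus admit overconvergent lifts of bounded radius loss), is the delicate quantitative input that distinguishes this statement from its purely $\mathfrak{p}$-adic analogue and forms the technical core of Coleman--Iovita's argument in \cite{CI10}.
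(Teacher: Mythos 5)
The paper cites this as \cite{CI10}, Theorem 3.7, and does not prove it; there is therefore no proof in the paper to compare your attempt against.

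On its own terms, your outline is the natural one: iterated lifting across the square-zero extensions $R_3/\mathfrak{p}^{n+1}R_3 \twoheadrightarrow R_3/\mathfrak{p}^n R_3$ using formal smoothness of $f$, correction of the discrepancy by a derivation pulled back along $g$ (using surjectivity of $g$, flatness of the $R_i$ to identify $\mathfrak{p}^n R_i/\mathfrak{p}^{n+1}R_i$ with $R_i/\mathfrak{p}R_i$, and projectivity of $\Omega_{R_2/R_1}$ coming from formal smoothness), and passage to the inverse limit to obtain $\widehat{s}\colon R_2\to\widehat{R}_3$. You also correctly locate the real content: forcing the approximants to converge in the weak (overconvergent) topology so that the limit lands in $R_3$ and not merely in $\widehat{R}_3$. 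But this last step is only described, not carried out. The assertion that the ambiguity in each square-zero lift can be used to maintain a uniform overconvergence radius $\rho$ is precisely the claim that needs proving, and it requires quantitative control of how the semi-dagger presentation interacts with the formal smoothness hypothesis. As written, your proposal is a strategy sketch whose central analytic estimate — the whole substance of Coleman--Iovita's theorem — is left open.
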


This theorem is applied to our situation as follows. Let $D\subset ]s[_S$ be the closed poly-disk of radius $|\pi _K |$ containing all $K$-points of $D$. Define $U_D =U_{v,D}$ to be the affinoid $]U_v [ \times _S D$, and define $U_s =U_{v,s}$ to be the fibre of $U_D$ at $s$. 

\begin{lemma}\label{lemma:locally_trivial}
There is an isomorphism of rigid analytic spaces over $D$
\[
(\gamma ,\delta ) :U_D \stackrel{\simeq }{\longrightarrow } U_s \times D.
\]
restricting to the identity at $s$.
\end{lemma}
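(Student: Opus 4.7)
The approach is to apply the Coleman--Iovita lifting theorem (Theorem \ref{thm:CI}) to trivialize the family $U_D \to D$. Since $U_v \subset X_s$ consists of smooth points of the stable curve $X_s$ (nodes having been removed), the morphism $\pi : X \to S$ is smooth in a Zariski neighbourhood of $U_v$ in $X$. Passing to rigid generic fibres, this makes $U_D \to D$ a smooth morphism of affinoids whose fibre over $s$ is $U_s$. After rescaling coordinates, using that $D$ has radius $|\pi _K|$, identify $D$ with the closed unit polydisk $\Sp (K\langle T_1 ,\ldots ,T_s \rangle )$, and take $R_k := \mathcal{O}_K \langle T_1 ,\ldots ,T_s \rangle$ as the base semi-dagger algebra.

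Next, I would set up the commutative square of flat semi-dagger algebras over $R_k$
\[
\begin{tikzcd}
R_k \arrow[r] \arrow[d, "f"] & R_3 \arrow[d, "g"] \\
R_2 \arrow[r]                & R_4
\end{tikzcd}
\]
where $R_4$ is an integral dagger model of $\mathcal{O}(U_s)$, $R_2 := R_k \widehat\otimes _{\mathcal{O}_K} R_4$ is the resulting integral model of $\mathcal{O}(U_s \times D)$, and $R_3$ is the natural integral model of $\mathcal{O}(U_D)$ obtained from the smooth extension of $\pi$ near $U_v$. The map $g$ is restriction to the fibre at $s$ and is surjective; $f$ is the inclusion of the base and is formally smooth because $R_4$ is formally smooth over $\mathcal{O}_K$ (as $U_s$ is smooth). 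Both $R_2$ and $R_3$ have the same reduction modulo $\pi _K$, namely $U_v \times _k \mathbb{A}^s _k$, and the identity automorphism of this common reduction is compatible with the reduction of the square. Theorem \ref{thm:CI} then lifts this identity to a morphism $R_2 \to R_3$ of $R_k$-algebras, dual to a morphism of affinoids $(\gamma ,\delta ): U_D \to U_s \times D$ over $D$ which restricts to the identity on the fibre $U_s$.

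To upgrade $(\gamma ,\delta )$ to an isomorphism, I would apply the same lifting argument in the reverse direction, producing a morphism $U_s \times D \to U_D$ over $D$ also restricting to the identity on $U_s$. Both compositions are endomorphisms of smooth families over $D$ that lift the identity on the mod $\pi _K$ reduction and restrict to the identity on the fibre at $s$; a further application of Theorem \ref{thm:CI}, in which one takes the two target algebras equal and appeals to the uniqueness clause of Monsky--Washnitzer lifting, forces these compositions to be the identity. The main technical obstacle I anticipate is pinning down the integral models so that $R_2$ and $R_3$ genuinely share the same reduction modulo $\pi _K$: one must choose a smooth formal lift of $U_v$ over $\Spf (R_k)$ whose generic fibre is $U_D$, and this is where the hypothesis that $D$ has radius $|\pi _K|$ (rather than a larger or non-affinoid tube) is essential, since it guarantees that the smooth affine neighbourhood of $U_v$ in $\mathcal{X}$ supplies the integral dagger model needed to invoke Theorem \ref{thm:CI}.
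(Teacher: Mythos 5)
Your overall strategy matches the paper's: after rescaling $D$ to a unit polydisk, you apply the Coleman--Iovita lifting theorem (Theorem \ref{thm:CI}) to a square of weakly complete algebras whose common reduction is $U_v\times_k\mathbb{A}^d_k$, and you correctly flag that the crucial point is to arrange the integral models so that the two algebras really do have the same special fibre. Where your argument goes wrong is the final step upgrading the lifted morphism to an isomorphism.

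You invoke a ``uniqueness clause of Monsky--Washnitzer lifting'' to conclude that the two composites $U_D \to U_s\times D \to U_D$ and $U_s\times D \to U_D \to U_s\times D$ are the identity. No such uniqueness clause exists: Theorem \ref{thm:CI} as stated asserts only the \emph{existence} of a lift $\widetilde{s}:R_2\to R_3$, and Monsky--Washnitzer lifting is famously non-unique (lifts agree only up to chain homotopy on de~Rham complexes, which is precisely why the theory requires work to show that cohomology is well-defined). Two lifts of the identity mod $\pi_K$ need not coincide, so your composites will in general be nontrivial automorphisms, not the identity. The correct fix is to dispense with the back-and-forth construction entirely and appeal to the standard fact that a morphism of flat, $\pi_K$-adically separated weakly complete $\mathcal{O}_K$-algebras which reduces to an isomorphism modulo $\pi_K$ is automatically an isomorphism; this is what the paper implicitly does when it passes directly from the lift produced by Theorem \ref{thm:CI} to the claimed isomorphism $R_2\simeq R_3$ of $\mathcal{O}(\mathfrak{D})$-algebras. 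You should also note that Theorem \ref{thm:CI} a priori produces an isomorphism of weakly complete algebras, i.e.\ of germs of strict neighbourhoods, and passing to an honest isomorphism of the rigid spaces $U_D$ and $U_s\times D$ requires the interpretation of weak completion via strict neighbourhoods, as in \cite[Proposition 3.14]{CI10}.
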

\begin{proof}
$D$ has a formal model $\mathfrak{D}$ over $\mathcal{O}_K$ isomorphic to the formal spectrum of $\mathcal{O}_K \langle t_1 ,\ldots ,t_d \rangle $, and in particular the special fibre of $\mathfrak{D}$ is isomorphic to $\mathbb{A}^d _k$. We can form formal models $\mathfrak{U}_{\mathfrak{D}}$ and $\mathfrak{U}_s$ of $U_D$ and $U_s$ such that the special fibre of $U_D$ is isomorphic, over $\mathfrak{D}_k$, to $\mathfrak{U}_{s,k}\times _k \mathfrak{D}$. 

We can apply Theorem \ref{thm:CI} with  $R_1 =\mathcal{O}(\mathfrak{D})$, $R_2 $ equal to the weak completion over $\mathcal{O}(\mathfrak{D})$ of $\mathcal{O}(\mathfrak{U}_{\mathfrak{D}})$, $R_4 $ the weak completion over $\mathcal{O}_K$ of $\mathcal{O}(\mathfrak{U}_s )$, and $R_3 $ the weak completion over $\mathcal{O}(\mathfrak{D})$ of $\mathcal{O}(\mathfrak{U}_s \times _{\mathcal{O}_K }\mathfrak{D})$, to deduce the existence of an isomorphism of $\mathcal{O}(\mathfrak{D})$-algebras $R_2 \simeq R_3 $ making the diagram
\[
\begin{tikzcd}
R_1 \arrow[r] \arrow[d] & R_3 \arrow[d] \\
R_2 \arrow[r]   \arrow[ru]              & R_4 .               
\end{tikzcd}
\]
By the interpretation of weak completion in terms of strict neighbourhoods \cite[Proposition 3.14]{CI10}, we deduce that there are strict neighbourhoods $Y_1 $ and $Y_2 $ of $U_D$ in $X_D$ and $U_s \times D$ in $X_s \times D$, and an isomorphism $Y_1 \simeq Y_2 $ of rigid analytic spaces over $D$, such that the diagram
\[
\begin{tikzcd}
Y_{1,s} \arrow[r] \arrow[d] & Y_2 \arrow[d] \\
Y_1 \arrow[r]   \arrow[ru]              & D .               
\end{tikzcd}
\]
commutes. 
\end{proof}
We now want to consider the universal connection in families. In this setting, we consider the universal connection over a curve rather than an abelian variety. Since we want our relative curve to have a section, we consider $X^2 _S$ as a curve over $X$, with distinguished section the diagonal morphism $\Delta _X :X\to X\times _S X$. Note that the variety $X^2 _S$ will be singular at points $(x,y;s)$ where $s$ in in $Z$ and $x$ and $y$ are singular points of $X_s$. Let $\widetilde{X}^2 _S$ denote a resolution of $X^2 _S$ (recall from \cite[\S 2.1]{CHM} that this resolution exists over $\mathcal{O}_K$, not just $K$). When restricted to the pre-image of the tube of $U_v \times U_v$, this map is an isomorphism, and hence in this subsection we will not need to distinguish between the singular space and its resolution.

Let $\mathcal{E}^2 (X)$ be a universal connection on $X^2 _{S_K -Z_K}$ in the sense of Definition \ref{defn:lifting_univ_conn}, associated to the pair $(\pr _2 ,\Delta)$, where $\pr _2 :X^2 _{S_K -Z_K } \to X_{S_K -Z_K }$ is the projection and $\Delta :X_{S_K -Z_K}\to X^2 _{S_K -Z_K }$ is the diagonal. Abusing notation somewhat, we will also denote by $\mathcal{E}^2 (X)$ the canonical extension to $\widetilde{X}^2 _{S_K}$.

Let $\mathcal{V}$ denote the dual of the Gauss--Manin connection associated to $\pi :X_K \to S_K $, restricted to $]s[$. Now consider $\mathcal{E}^2 (X)$ restricted to $]U_v \times U_v [_{X\times _S X}$. Then $\mathcal{E}^2 (X)$ is an extension of the trivial connection by $\pi ^{2 *}\mathcal{V}$.
\begin{lemma}\label{lemma:BC}
Let $t_1 ,\ldots ,t_n$ be local parameters at $s$ for the divisor $Z_K$. Then $\mathcal{V}$ is isomorphic to a connection of the form
\begin{equation}\label{eqn:formal_to_rigid}
d-\sum N_i \cdot \frac{dt_i }{t_i },
\end{equation}
where the $N_i$ are constant, nilpotent matrices which pairwise commute. 
\end{lemma}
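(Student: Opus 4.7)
The plan is to deduce this from two standard facts: (i) the unipotence of local monodromy for stable families, and (ii) Deligne's local classification of regular singular connections with nilpotent residues.

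First, I would note that because $\pi$ is a stable family of curves, smooth outside the strict normal crossings divisor $Z$, the Gauss--Manin connection has regular singularities along $Z$, and its local monodromy around each analytic branch of $Z$ is unipotent. In the algebraic setting this is the local monodromy theorem of Grothendieck (SGA 7), which in the stable reduction case gives directly the unipotence of the local monodromy of the $\ell$-adic (and de Rham) cohomology of the generic fibre. Passing to the dual connection $\mathcal{V}$ on $]s[\setminus Z$, the logarithmic extension $\overline{\mathcal{V}}$ to $]s[$ has nilpotent residues $N_i$ along each component $\{t_i = 0\}$ of $Z$ at $s$.

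Second, since $Z$ has strict normal crossings at $s$, the local (analytic) fundamental group of $]s[\setminus Z$ is the free abelian group on small loops $\gamma_i$ around each branch $\{t_i = 0\}$. The monodromy operators $T_i$ of $\mathcal{V}$ around the $\gamma_i$ therefore pairwise commute; since each $T_i$ is unipotent, $N_i := -\log(T_i)/(2\pi i)$ is a well-defined nilpotent matrix (or more intrinsically, the residue of $\overline{\mathcal{V}}$ along $\{t_i=0\}$), and the $N_i$ pairwise commute.

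Finally, I would invoke Deligne's local classification theorem for regular singular meromorphic connections with nilpotent residues on a polydisk \cite{deligne}: any such connection is isomorphic to the trivial bundle equipped with the connection $d - \sum N_i \, dt_i/t_i$ with the $N_i$ constant. Applying this to $\overline{\mathcal{V}}$ on $]s[$ gives the desired isomorphism. The main (and only essentially non-formal) obstacle is justifying that Deligne's local form, originally stated in the complex analytic setting, applies in the rigid analytic setting we work in here. This follows from the fact that Deligne's canonical extension is characterised by a condition on the residues (eigenvalues lying in a fixed section of $K/\mathbb{Z}$, here zero), and for nilpotent residues the construction of the trivialising gauge transformation is a purely formal power series calculation that automatically converges on the entire rigid polydisk $]s[$.
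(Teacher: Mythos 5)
Your plan follows the same broad architecture as the paper's: reduce to the existence of a formal trivialisation (Deligne's local form, with the commuting nilpotent residues supplied by stable reduction and strict normal crossings), then extend that isomorphism from the formal completion to the rigid polydisk $]s[$. The first two steps are fine and are what the paper implicitly relies on.

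The gap is in the final sentence. The claim that the trivialising gauge transformation ``automatically converges on the entire rigid polydisk'' is false in the $p$-adic setting, and this is precisely the non-formal content of the lemma. Formally one solves a recursion whose $k$-th coefficient requires inverting $k\cdot\mathrm{id}-\mathrm{ad}(N_i)$; over $\mathbb{C}$ this inverse has norm $O(1/k)$ and the series converges on the full disk, but $p$-adically $|k|_p$ can be as small as $p^{-v_p(k)}$, so the formal solution of a differential equation with rigid-analytic coefficients need not converge up to the boundary (Dwork's $y'=y$ already illustrates the shrinkage of radius). Convergence of the formal gauge transformation on the whole residue polydisk requires an additional input — a Frobenius structure / transfer theorem — and this is exactly what the paper invokes via Baldassarri--Chiarellotto \cite[Corollary 4.9.1]{BC92}, which shows that a formal isomorphism of this type (for a connection with nilpotent residues arising as a Gauss--Manin canonical extension) extends to a rigid-analytic one on $]s[$. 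You cannot replace that citation with a claim of automatic convergence; you would need to supply that (or an equivalent $p$-adic transfer theorem) to close the argument.
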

\begin{proof}
We may write $\mathcal{V}$ as $\mathcal{O}_{]s[}^{\oplus 2g}$, with connection given by
$d-\sum _i M_i \frac{dt_i }{t_i }$ for some $M_i $ in $\Mat _{2g}(\Gamma (]s[,\mathcal{O}))$. Then an isomorphism with a connection of the form \eqref{eqn:formal_to_rigid} exists formally, i.e. at the formal completion of $S$ at zero. Then by Baldassarri--Chiarellotto \cite[Corollary 4.9.1]{BC92}, this isomorphism extends to $]s[$.
\end{proof}

\begin{lemma}\label{lemma:kedlaya}
The restriction of $\mathcal{E}^2 (X)$ to $]U_v \times U_v [_{X\times _S X}\cap (\pi ^2 )^{-1}(D)$ is isomorphic, via the isomorphism
\[
]U_v \times U_v [_{X\times _S X}\cap (\pi ^2 )^{-1}(D)\simeq U_{v,s}\times D,
\] to a connection of the form 
\[
d-\Lambda -\Theta ,
\]
where $\Lambda $ is a nilpotent matrix pulled back from $U_{v,s}\times U_{v,s}$, and $\Theta $ is a nilpotent matrix pulled back from $D$ with logarithmic singularities along $D\cap Z$.
\end{lemma}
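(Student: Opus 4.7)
The plan is to work in the trivialization $U_D \times_D U_D \simeq U_{v,s} \times U_{v,s} \times D$ supplied by Lemma \ref{lemma:locally_trivial}. I would first use the filtration $0 \to \pi^{2*}\mathcal{V} \to \mathcal{E}^2(X) \to \mathcal{O} \to 0$ to choose a frame of $\mathcal{E}^2(X)$ compatible with this short exact sequence; in such a frame the connection matrix is block upper triangular, with lower-right block zero and upper-left block equal to the connection matrix of $\pi^{2*}\mathcal{V}$. Applying Lemma \ref{lemma:BC} to the Gauss--Manin piece, I can refine the frame so that this upper-left block becomes $\sum_i N_i\, dt_i/t_i$ for constant, commuting, nilpotent $N_i$ pulled back from $D$. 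Together with the trailing zero row, this defines a strictly upper triangular (hence nilpotent) matrix $\Theta$ of $1$-forms pulled back from $D$ with the required logarithmic singularities along $D \cap Z$.

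It then remains to show that, after a further unipotent change of frame, the upper-right block of the connection matrix becomes a $1$-form $\Lambda$ pulled back from $U_{v,s} \times U_{v,s}$. Call this upper-right block $\beta$. A unipotent gauge $g = \bigl(\begin{smallmatrix} I & \gamma \\ 0 & 1 \end{smallmatrix}\bigr)$ acts on $\beta$ by $\beta \mapsto \beta - d\gamma + \bigl(\sum_i N_i\, dt_i/t_i\bigr)\gamma$, so one can try to solve for $\gamma$ so that the output depends only on the $U_{v,s} \times U_{v,s}$ coordinates. The restriction of $\mathcal{E}^2(X)$ to the fiber $\{t = s\}$ is the relative universal extension on $X_s \times X_s \to X_s$ pointed at the diagonal $\Delta$; by the characterization of this extension together with the normalization from Definition \ref{defn:lifting_univ_conn} that $\Delta^*\mathcal{E}^2(X)$ vanishes in $H^1_{\dR}(X, \pi^*\mathcal{V})$, it must in a suitable frame have connection matrix whose upper-right block is $\pr_1^*\omega - \pr_2^*\omega$, where $\omega$ is the universal $1$-form on $X_s$. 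This fiber value is the target $\Lambda$ for the gauge transformation.

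The main obstacle is solving the gauge equation $d\gamma - \bigl(\sum_i N_i\, dt_i/t_i\bigr)\gamma = \beta - \Lambda$ globally on $U_{v,s} \times U_{v,s} \times D$. Flatness of $\mathcal{E}^2(X)$ gives $d\beta + \bigl(\sum_i N_i\, dt_i/t_i\bigr) \wedge \beta = 0$, which supplies the integrability conditions needed for this coupled system of $du$- and $dt_i$-components to admit a solution. Concretely, one integrates the $dt_i$-components as an ODE in $t_i$ with logarithmic singular coefficient $N_i/t_i$ and initial condition $\gamma|_{t = s} = 0$ coming from the fiber identification of the previous paragraph; the nilpotence of the $N_i$ guarantees convergence of the resulting (polynomial-in-$\log t_i$) series on $D$, while the horizontality relations established in Lemma \ref{lemma:BC} ensure that the distinct $t_i$-flows commute so the solution is well defined. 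Flatness then propagates the identity $\beta' = \Lambda$ to all of $U_{v,s} \times U_{v,s} \times D$, and the nilpotence of $\Lambda$ as a matrix follows automatically from its block-triangular origin as the upper-right block.
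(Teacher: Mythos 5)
Your proposal takes a genuinely different route from the paper's. The paper simply observes that $\mathcal{E}^2(X)$ restricted to $U_{v,s}^2 \times D$ is a unipotent connection with log singularities along $D \cap Z$, and then invokes Kedlaya's classification of such connections on a product $D \times V$ (\cite[Theorem 3.3.4]{kedlaya07}) as a black box. You instead try to establish the same conclusion by writing down an explicit gauge transformation and solving the resulting first-order system. The reduction to the gauge equation
\[
d\gamma - \Bigl(\textstyle\sum_i N_i \tfrac{dt_i}{t_i}\Bigr)\gamma = \beta - \Lambda
\]
is correct, as is the observation that flatness of $\mathcal{E}^2(X)$ supplies the integrability conditions, and the initial condition $\gamma|_{t=0}=0$ is exactly what one wants.

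There are, however, two points where the argument as written does not go through. First, the remark that the solution is a ``(polynomial-in-$\log t_i$) series'' conflates the fundamental solution $\exp(\sum_i N_i \log t_i)$ of the homogeneous system with the particular solution you actually need. The gauge transformation $g$ must be a genuine rigid-analytic matrix-valued function on $U_{v,s}^2 \times D$; if $\gamma$ had $\log t_i$ terms it would be multivalued and the resulting ``isomorphism'' would not be an isomorphism of connections. In fact the formal solution with $\gamma|_{t=0}=0$ has no $\log$ terms: writing $\gamma = \sum_k \gamma_k t^k$, the recursion reads $(k - N)\gamma_k = f_{k-1}$ (with $f$ the source), and $(k-N)$ is invertible for $k\geq 1$ by nilpotence of $N$. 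You should say this, rather than introduce spurious logarithms.

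The second point is the real gap: the formal power series solution exists, but \emph{convergence on the whole of $D$} is not guaranteed by ``nilpotence of the $N_i$''. The inverses $(k-N)^{-1}$ involve $1/k$, whose $p$-adic absolute value is unbounded, so the coefficients $\gamma_k$ grow relative to those of the source. Establishing that such formal solutions of regular-singular $p$-adic differential equations converge on the full polydisc is a deep transfer-type result; it is exactly the content of the theorems the paper invokes at the two analogous moments (Baldassarri--Chiarellotto for Lemma \ref{lemma:BC}, Kedlaya \cite[Theorem 3.3.4]{kedlaya07} here). Your proof would need to cite one of these convergence results rather than asserting convergence from nilpotence, at which point the argument essentially becomes an unpacking of Kedlaya's theorem rather than an independent derivation.
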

\begin{proof}
Recall that $\mathcal{E}^2 (X)$ is an extension of the trivial connection by $\pi ^{2*}\mathcal{V}$. By Lemma \ref{lemma:BC}, this implies that the restriction of $\mathcal{E}^2 (X)$ is a unipotent connection on $U_{v,s}^2 \times D$. Hence the lemma follows from Kedlaya's description of unipotent connections with logarithmic singularities on spaces of the form $D\times V$, for $V/K$ affinoid \cite[Theorem 3.3.4]{kedlaya07}.
\end{proof}

This gives the following description of parallel transport on $\mathcal{E}^2 (X)$ in a formal neighbourhood of a point $x$ in $]U_v \times U_v [_{X\times _S X}$ such that $\pi ^2 (x)$ is not in $Z_K$. Pick $x=(x_1 ,x_2 ;\widetilde{s})  \in ]U_v \times U_v [ (K)$. Let $\mathcal{U}$ denote the sheaf on $X^2 _{S_K}$ whose $U$-sections are unipotent isomorphisms
\[
\mathcal{E}^2 |_U \stackrel{\simeq }{\longrightarrow }\gr ^\bullet {\mathcal{E}}^2 |_U .
\]
Then $\mathcal{U}$ is an $(R^1 \pi _* \Omega _{X_K |S_K } ) ^* $-torsor over $X^2 _{S_K}$.

Similar to the good reduction setting, for each $\widetilde{s}\in (]s[-Z)(K)$, Berkovich--Coleman integration defines a map
\[
\int ^{\mathrm{BC}}:(]U_v \times U_v [_{X\times _S X})_{\widetilde{s}} (K)\to \widetilde{s}^* \mathcal{U}(K)
\]
on the fibres of $]U_v \times U_v [_{X\times _S X}$ away from $Z$. This may be seen as follows. Since $(]U_v \times U_v [_{X\times _S X})_{\widetilde{s}} $ has good reduction, we may apply Besser's characterisation of Coleman integration as giving unipotent isomorphisms between the fibres of $\mathcal{E}^2 |_{(]U_v \times U_v [_{X\times _S X})_{\widetilde{s}} }$. On the other hand, on the diagonal $\Delta \cap (]U_v \times U_v [_{X\times _S X})_{\widetilde{s}} (K)$, we have a chosen unipotent trivialisation
\[
\Delta ^* \mathcal{E}^2 \simeq \Delta ^* \gr ^\bullet \mathcal{E}^2 .
\]
It follows that Coleman integration defines a locally analytic section of $\widetilde{s}^* \mathcal{U}(K)$, which we denote
\[
\int ^{\mathrm{BC}}:(]U_v \times U_v [_{X\times _S X})_{\widetilde{s}} (K)\to \widetilde{s}^* \mathcal{U}(K)
\]

Explicitly, this can be defined by choosing a trivialisation of the vector bundle $\mathcal{E}^2$ on $U_{\widetilde{s}}\times U_{\widetilde{s}} $, which can be given for example by a set of differentials $\omega _i $ in $H^0 (U_s ,\Omega )$ forming a basis of $H^1 _{\dR}(X)$. Such a trivialisation defines a trivialisation of $\widetilde{s}^* \mathcal{U}(K)$, i.e. an isomorphism 
\[
\widetilde{s}^* \mathcal{U}(K)\simeq H^1_{\dR}(X_{\widetilde{s}} /K) \simeq K^{2g}.
\]
Then via this isomorphism, $\int ^{\mathrm{BC}}(x,y):=(\int ^x _y \omega _i )_{i=1}^{2g}$.

We now relate this to parallel transport on the residue disk of $x$ in $X\times _S X$. Let $\widetilde{P}$ denote the frame bundle whose $U$-sections are filtered isomorphisms 
\begin{equation}\label{eqn:Psections}
g:x^* \mathcal{E}^2 \otimes \mathcal{O}_U \stackrel{\simeq }{\longrightarrow }\mathcal{E}^2 |_U
\end{equation}
Let $G$ denote the Galois group of $\mathcal{E}^2$, and let $P$ denote a descent of $\widetilde{P}$ to a $G$-principal bundle.  
As in the good reduction setting, we have a map of sheaves
\[
p:P \to \mathcal{U}
\]
given on $U$-sections by sending an isomorphism $g$ as in \eqref{eqn:Psections} to $\gr ^\bullet (g)\circ \int ^{\mathrm{BC}}(x_1 ,x_2 )\circ g^{-1}$. 

Let $\widehat{X}^2 _{S,x}$ denote the formal completion of $X^2 _S$ at $x$. Let
\[
\alpha :\widehat{X}^2 _S \to P
\]
denote the formal leaf of the foliation defined by $\mathcal{E}^2$ passing through the identity element at $x$.
\begin{lemma}\label{lemma:iso_is_GM}
\begin{enumerate}
\item With respect to the chart given in Lemma \ref{lemma:kedlaya}, $\alpha $ is given by 
\begin{equation}\label{eqn:GM_badred}
\widetilde{G}=\exp (\sum N_i \cdot \log (t_i ))\cdot G,
\end{equation} 
where $G$ is parallel transport on $U_{v,\widetilde{s}}$.
\item The map $\beta :=p\circ \alpha  $ extends to a locally analytic map
\[
\beta :U_{v,D}(K)\to \mathcal{U}(K)
\]
\end{enumerate}

\end{lemma}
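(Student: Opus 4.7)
For part (1), the plan is to exploit the separation of variables inherent in the Kedlaya chart of Lemma \ref{lemma:kedlaya} and to solve the parallel transport equation by constructing the fundamental matrix as a product. I would first apply Lemma \ref{lemma:BC} to write $\Theta = \sum_i N_i \, dt_i/t_i$ with pairwise commuting nilpotent $N_i$, so that $E(s) := \exp(\sum_i N_i \log(t_i/t_i(\tilde s)))$ is a well-defined polynomial in the $\log(t_i)$ solving $dE = E\Theta$ with $E(\tilde s) = I$. Flatness of $d - \Lambda - \Theta$ decomposes by bi-degree with respect to (fibre, base) differentials into the separate Maurer--Cartan equations $d\Lambda = \Lambda \wedge \Lambda$, $d\Theta = \Theta \wedge \Theta$, and the mixed compatibility $\Lambda \wedge \Theta + \Theta \wedge \Lambda = 0$. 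Writing $\Lambda = \sum_\mu \Lambda_\mu \, dy_\mu$ with constant matrix coefficients, the mixed identity reduces to $[N_i,\Lambda_\mu] = 0$ for all $i,\mu$, so $E$ commutes with both $\Lambda$ and $\Theta$. The ansatz $\tilde G(y) := E(s(y))\cdot G(y_1,y_2)$, where $G$ is the fibrewise fundamental solution of $dG = \Lambda G$ with $G(x) = I$, then satisfies $d\tilde G = (E\Theta)G + E(\Lambda G) = E(\Theta+\Lambda)G = (\Theta+\Lambda)\tilde G$, and by uniqueness is the formal horizontal leaf $\alpha$ through the identity at $x$.

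For part (2), the plan is to identify $\beta := p\circ \alpha$ with fibrewise Berkovich--Coleman integration and then to deduce analyticity on all of $U_{v,D}(K)$ using Proposition \ref{prop:BC_functorial} combined with Lemma \ref{lemma:locally_trivial}. Over a point $s' \in D(K)\setminus Z$, the same diagram chase as in Proposition \ref{prop:commutes} --- but with the uniqueness of unipotent horizontal trivialisations of $\mathcal{E}^2|_{s'}$ normalised on the diagonal replacing the $\phi$-equivariance characterisation --- yields the identity $\beta(y) = \int^{\mathrm{BC}}_{s'}(y_1,y_2)$. Applying Proposition \ref{prop:BC_functorial} to the trivialising isomorphism $(\gamma,\delta): U_{v,D} \simeq U_{v,s}\times D$ of Lemma \ref{lemma:locally_trivial}, the fibrewise Berkovich--Coleman integration on $U_{v,s'}^2$ is identified with Berkovich--Coleman integration on the fixed smooth affinoid $U_{v,s}^2$, evaluated at $(\gamma(y_1),\gamma(y_2))$. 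Since the latter is rigid analytic in its arguments and the trivialisation $(\gamma,\delta)$ extends across $D\cap Z$, this yields the required rigid analytic extension $\beta : U_{v,D}(K) \to \mathcal{U}(K)$.

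The main obstacle will be the cancellation underlying part (2): the multivalued factor $E(s(y))$ appears explicitly both in $\alpha(y)$ and in $\gr^\bullet \alpha(y)$, yet the combination $\gr^\bullet \alpha \circ \int^{\mathrm{BC}}(x) \circ \alpha^{-1}$ must be single-valued and extend analytically even across $Z$. The cleanest way I see to verify this is to characterise both sides as the unique unipotent trivialisation of $\mathcal{E}^2|_{s'}$ normalised at the diagonal basepoint, and then to propagate this characterisation from generic fibres to the full tube $U_{v,D}$ using the analyticity of the Kedlaya trivialisation and Proposition \ref{prop:BC_functorial}; the log-type singularities of the intermediate object $\alpha$ then cancel in the composite $\beta$ by functoriality, without needing to be eliminated matrix-entry by matrix-entry.
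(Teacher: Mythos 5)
Your argument is essentially the paper's, with more care supplied. The paper simply asserts that $\widetilde{G}=\exp(\sum N_i\log(t_i))\cdot G$ solves the parallel transport equation; you additionally decompose the flatness of $d-\Lambda-\Theta$ by bidegree to extract the Maurer--Cartan identities and the mixed compatibility $[N_i,\Lambda_\mu]=0$, which is exactly the commutativity needed to push $E$ past $\Lambda$. This is a correct and slightly more complete version of the same ansatz-and-verify proof; the paper leaves the commutativity implicit (it is reasserted as an assumption in the part (2) computation).

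\textbf{Part (2).} Here you take a genuinely different route, and I think it has a gap that undermines the argument. The paper proves analytic extension of $\beta=p\circ\alpha$ by brute force: it writes $G$ and $N_i$ in block-triangular form relative to the filtration, observes that the top-right blocks $g_i$ of $N_i$ vanish because the section $\sigma$ lands in $U_v$, and then directly sees that the $\log(t_i)$-dependent factor $E$ only populates the block $\prod_i J_i$, which is killed after projecting from $P$ to $\mathcal{U}$. The single-valued entry $f$ that survives is visibly rigid analytic. Your proposal instead wants to first identify $\beta|_{s'}$ with fibrewise Berkovich--Coleman integration $\int^{\mathrm{BC}}_{s'}$, and then deduce analyticity in $s'$ from Proposition~\ref{prop:BC_functorial} together with the Coleman--Iovita trivialisation of Lemma~\ref{lemma:locally_trivial}.

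The difficulty is in the first step. Before the block computation, $\beta$ is only defined on the formal disc $\widehat{X}^2_{S,x}$ (the domain of the formal leaf $\alpha$); to speak of $\beta|_{s'}$ for $s'\neq\widetilde{s}$ at all, you already need the extension (1) together with the cancellation of the $\log(t_i)$ factor — which is exactly what part (2) asserts. Your proposed characterisation — that $\beta|_{s'}$ is the unique unipotent horizontal section of $\mathcal{U}|_{X^2_{s'}}$ normalised on the diagonal — cannot be invoked until you have shown that the multivalued factor $E(s(y))$ cancels in the composite $\gr^\bullet\alpha\circ\int^{\mathrm{BC}}(x_1,x_2)\circ\alpha^{-1}$, because before that cancellation is established $\beta$ is not even well-defined across the residue disc, let alone a horizontal section on a fibre away from $\widetilde{s}$. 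Your final paragraph names precisely this cancellation as the ``main obstacle,'' and then proposes to resolve it by the very uniqueness statement that presupposes it. In the paper the logical order is the reverse of yours: Lemma~\ref{lemma:coleman_to_leaves} (your $\beta=\int^{\mathrm{BC}}$) is a \emph{consequence} of Lemma~\ref{lemma:iso_is_GM}(2) together with Proposition~\ref{prop:BC_functorial}, not an ingredient of it. To make your approach go through you would need an independent proof that $p\circ\alpha$ is single-valued on the residue disc — and the only argument available is the block computation, which is the paper's proof. So the reorganisation does not actually avoid the matrix manipulation; it conceals, rather than removes, the point where $g_i=0$ is used.
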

\begin{proof}
For part (1), \eqref{eqn:GM_badred} is a solution to the equation 
\[
d\widetilde{G}=\Lambda G+\Theta G.
\]
For part (2), we can write the matrices $G$ and $N_i$ in block form as 
\[
\left( \begin{array}{cc}1 & 0 \\ f & 1 \end{array}\right) , \left( \begin{array}{cc} 0 & 0 \\ g_i & J_i \end{array}\right)
\]
respectively. Since the section $s$ passes through $v$. $g_i$ can be taken to be zero. Since $(J_i-1)^2=0$, and we assume commutativity, we find that $\widetilde{G}$ can be written as
\[
\left( \begin{array}{cc} 1 & 0 \\ f & \prod _i J_i  \end{array} \right).
\]
Then the map $]x[(K)\to \mathcal{U}(K)$ is given by 
\[
\left( \begin{array}{cc} 1 & 0 \\ f & 1 \end{array}\right) .
\]
\end{proof}

\begin{lemma}\label{lemma:coleman_to_leaves}
For all $t$ as above, and $y_1 ,y_2 \in U_t (K)$ congruent to $x_1 $ and $x_2 $ modulo $\pi $,
\[
\int ^{\mathrm{BC}}(y_1 ,y_2 )=\beta (y_1 ,y_2 ;t).
\]
\end{lemma}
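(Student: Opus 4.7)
The plan is to verify the identity on each singular residue disk by reducing to the good-reduction situation on a fixed fibre, and accounting for the Gauss--Manin contribution coming from variation in $t$. Fix $t$ and $y_i \in U_{v,t}(K)$ reducing to $x_i$ as in the hypothesis. Both sides of the identity are unipotent isomorphisms $(y_1,y_2;t)^*\mathcal{E}^2 \stackrel{\sim}{\to} (y_1,y_2;t)^*\gr^{\bullet}\mathcal{E}^2$, so their difference lies in the abelian group $(y_1,y_2;t)^*\pi^{2*}\mathcal{V}$; they may therefore be compared block-by-block in the two-step filtration on $\mathcal{E}^2$.

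By definition $\beta = p\circ \alpha$, and Lemma \ref{lemma:iso_is_GM}(1) gives the explicit formula $\alpha = \widetilde{G} = \exp(\sum_i N_i \log t_i)\cdot G$, where $G$ is parallel transport along the fibre $U_{v,\widetilde{s}}$ through the local trivialisation of Lemma \ref{lemma:locally_trivial}. As in the proof of Lemma \ref{lemma:iso_is_GM}(2), the condition $g_i=0$, which holds because the distinguished section passes through the vertex $v$, forces each $N_i$ to act block-diagonally with respect to the extension structure of $\mathcal{E}^2$. Consequently the Gauss--Manin factor $\exp(\sum_i N_i \log t_i)$ leaves the off-diagonal $(2,1)$-block $f$ of $\widetilde{G}$ unchanged, and the projection $p$ reads off this block: $\beta(y_1,y_2;t) = f(y_1,y_2,t)$.

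It remains to identify $f$ with $\int^{\mathrm{BC}}(y_1,y_2)$. Besser's characterisation of Coleman integration as the unique $\phi$-invariant unipotent trivialisation of the universal connection, recorded in Corollary \ref{cor:bess2}, shows that on the good-reduction fibre $U_{v,t}$ the Berkovich--Coleman integral $\int^{\mathrm{BC}}(y_1,y_2)$ is precisely the off-diagonal block of parallel transport on that fibre. Proposition \ref{prop:BC_functorial}, applied to the rigid trivialisation of Lemma \ref{lemma:locally_trivial}, then transports this equality across the family and identifies $f$ with $\int^{\mathrm{BC}}(y_1,y_2)$. The main obstacle is matching the connection $\mathcal{E}^2$ restricted to the fibre $U_{v,t}$ with its pullback from $U_{v,\widetilde{s}}$ under the trivialisation; this is resolved by Kedlaya's normal form (Lemma \ref{lemma:kedlaya}), which expresses both connections in the explicit form $d - \Lambda - \Theta$ and reduces the comparison to the algebraic manipulation already performed in Lemma \ref{lemma:iso_is_GM}(2).
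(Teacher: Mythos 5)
Your proposal is correct and follows essentially the same route as the paper: identify $\beta$ with parallel transport on the fixed fibre via Lemma \ref{lemma:iso_is_GM}, then compare with the Berkovich--Coleman integral on the varying fibre using the trivialisation of Lemma \ref{lemma:locally_trivial} and functoriality (Proposition \ref{prop:BC_functorial}). Your extra block-matrix elaboration essentially re-derives Lemma \ref{lemma:iso_is_GM}(2), which the paper simply cites; the core argument is the same.
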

\begin{proof}
We maintain the notation from Lemma \ref{eqn:GM_badred}. By part (2) of Lemma \ref{eqn:GM_badred}, $\beta $ is simply given by parallel transport along the connection $d-\Lambda $.

Let $\gamma _t $ denote the isomorphism $U_t \simeq U_s$. Then this description of $\beta $ implies that, with respect to a chosen basis $\omega _i$ of $H^1 _{\dR}(X_s /K)$, is simply given by 
\[
\beta (y_1 ,y_2 ;t)=(\int ^{\gamma _t (y )}_{\gamma _t (y_2 ) }\omega _i ).
\]
while 
\[
\int ^{\mathrm{BC}}(y_1 ,y_2 )=\int ^{y_1 } _{y_2 } \gamma _t ^* \omega .
\]
Hence the Lemma follows from Proposition \ref{prop:BC_functorial}
\end{proof}

We define the Faltings--Zhang map to be the morphism
\[
\AJ^{\FZ}:X^n _{S_K -Z_K } \to \Jac (X/S-Z) ^{n-1}_{S_K -Z_K} 
\]
which fibrewise sends $((x_i )_{i=1}^n ;s)$ to $((\AJ (x_i -x_n ))_{i=1}^{n-1};s)$. We define the $p$-adic Faltings--Zhang logarithm $\log ^{\FZ}$ to be the composite of $\AJ ^{\FZ}$ with the $p$-adic logarithm
\[
\Jac (X/S-Z)^n _S (K )\to \Lie (\Jac(X/S-Z))^{n-1}_{S-Z} (K ).
\]
As in the good reduction case, this is a a priori just a map of sets defined fibrewise by $p$-adic integration. However, as in the good reduction it in fact has additional structure, enabling a description in terms of foliations. We now relate this construction to the connection $\mathcal{E}^2 $ on $X^2 _{S_K -Z_K}$. Let $\mathcal{E}^n $ denote the connection on $X^n _{S-Z} $ obtained by taking the direct sum of $(\pr _i ,\pr _n )^* \mathcal{E}^2 $ for $i=1,\ldots ,n$. 
Pick $x_i \in X(K)$ lying above $s\in S(K)$ for $i=1,\ldots ,n-1$, and let $x:=(x_1 ,\ldots ,x_n ;s)\in X^n _S (K)$. Let $\widetilde{P}_n ^{\FZ}$ denote the frame bundle whose $U$-sections are filtered isomorphisms 
\begin{equation}\label{eqn:Psections2}
g:x^* \mathcal{E}^n \otimes \mathcal{O}_U \stackrel{\simeq }{\longrightarrow }\mathcal{E}^n |_U
\end{equation}
Let $G_n ^\FZ$ denote the Galois group of $\mathcal{E}^n$. Explicitly, $G_n ^{\FZ}$ will be an extension of $G_0$, the Galois group of the Gauss--Manin connection on $S_K -Z_K$, by the vector group $(R^1 \pi_* \Omega _{X_K |S_K -Z_K }^* )^{\oplus (n-1)}$. Let $P_n ^{\FZ}$ denote a descent of $\widetilde{P}_n ^{\FZ}$ to a $G_n ^{\FZ}$-principal bundle. Let $\mathcal{U}$ denote the sheaf whose $U$-sections are unipotent isomorphisms
\[
\mathcal{E}^n |_U \stackrel{\simeq }{\longrightarrow }\gr ^\bullet {\mathcal{E}}^n |_U .
\]
Then $\mathcal{U}$ is an $((R^1 \pi _* \Omega _{X_K |S_K } ) ^* )^{\oplus (n-1)}$-torsor over $X^n _{S_K -Z_K}$. 
We have a map of sheaves
\[
P_n \to \mathcal{U}
\]
given on $U$-sections by sending an isomorphism $g$ as in \eqref{eqn:Psections2} to $\gr ^\bullet (g)\circ ( \int ^{\mathrm{BC}}(x_1 ,\ldots ,x_n)\otimes 1 )\circ g^{-1}$, where
\[
\int ^{\mathrm{BC}}(x_1 ,\ldots ,x_n ):=\prod _{i=1}^{n-1} \int ^{\mathrm{BC}}(x_i ,x_n ),
\]
viewed as an isomorphism from $\gr ^\bullet (x_1 ,\ldots ,x_n )^*\mathcal{E}^n $ to $(x_1 ,\ldots ,x_n )^*\mathcal{E}^n $ via the isomorphism
\[
(x_1 ,\ldots ,x_n )^* \mathcal{E}^n \simeq \prod _{i=1}^{n-1}(x_i ,x_n )^* \mathcal{E}^2 .
\]
Then, as in \eqref{eqn:whatisp}, we obtain a dominant morphism of $S$-schemes
\[
p :P_n ^{\FZ}\to \Lie (J/S-Z)^{n-1}_{S_K-Z_K} .
\]
Let $z\in x^* P_n ^{\FZ}(K)$ be a point corresponding (via Corollary \ref{cor:bess2}) to the Coleman integrals $\omega \mapsto \int ^{x_i }_{x_n }\omega $ for $i=1,\ldots ,n-1$. Let $D-Z:=D-Z_K ^{\an }\cap D$.
\begin{lemma}\label{lemma:GM_bad1}
Let $U_v \subset X_s $ be the affine open obtained from an irreducible component $X_v$ by removing the singular points. Let
\[
\alpha :(]U_v [ ^n)_{]s[}(K)\to P^{\FZ}_n |_{]s[}(K)
\]
be the leaf of the foliation through $z$.
Then the diagram
\[
\begin{tikzcd}
(U _v ^n  )_{D-Z} (K) \arrow[d, "\alpha"] \arrow[rd, "\log ^{\FZ}"] &               \\
P^{\FZ} _n | _{D-Z}(K) \arrow[r, "p"]                     & \Lie (J/S-Z)^{n-1} |_{D-Z}(K)
\end{tikzcd}
\]
commutes.
\end{lemma}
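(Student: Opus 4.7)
The plan is to reduce the statement to the two-point case established in Lemma \ref{lemma:coleman_to_leaves}, using the fact that the connection $\mathcal{E}^n$ was defined as a direct sum of pullbacks of $\mathcal{E}^2$ along coordinate projections, so everything splits coordinate-by-coordinate.

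First I would unpack the decomposition $\mathcal{E}^n = \bigoplus_{i=1}^{n-1} (\pr_i, \pr_n)^* \mathcal{E}^2$. This induces a compatible decomposition of the frame bundle $P^{\FZ}_n$ as a fiber product (over $X^n_{S-Z}$) of the bundles $P^{(i)}$ associated to each summand, and a corresponding decomposition of the torsor $\mathcal{U}$ as a product of the torsors $\mathcal{U}^{(i)}$ attached to the individual $\mathcal{E}^2$-summands. The flat connection on $\mathcal{E}^n$ is the direct sum of the connections on the summands, so the associated foliation on $P^{\FZ}_n$ is the product of the foliations on the $P^{(i)}$. Consequently the leaf $\alpha$ through the chosen point $z$ is, componentwise, the product of the leaves through the projections $z_i \in P^{(i)}$.

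Second, I would unravel the definition of the map $p\colon P^{\FZ}_n \to \Lie(J/S-Z)^{n-1}$. By construction, $p$ is the composite of the projection $P^{\FZ}_n \to \mathcal{U}$ (which on $U$-sections is built from the product of the maps $g \mapsto \gr^\bullet(g)\circ \int^{\mathrm{BC}}(x_i, x_n) \circ g^{-1}$) with the trivialisation of the pushout torsor $\mathcal{U}/(H^1(X_K/S_K,\mathcal{O}))^{n-1} \to \Lie(J/S-Z)^{n-1}$ induced by the choice of splitting at the diagonal section, exactly as in equation \eqref{eqn:trivialise} and the construction of $p_2$ in section \ref{subsec:variation}. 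Under this trivialisation, the $i$-th coordinate of $p$ restricted to the $i$-th factor $P^{(i)}$ agrees with the $p$-adic logarithm composed with the Abel--Jacobi map sending a pair $(y, y')$ to $\AJ(y - y')$.

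Third, pick $(y_1, \ldots, y_n; t) \in (U_v^n)_{D-Z}(K)$ in the residue disk of $x$. By the product decomposition, the $i$-th coordinate of $p \circ \alpha$ at this point is $p^{(i)} \circ \alpha^{(i)}(y_i, y_n; t)$, and by Lemma \ref{lemma:coleman_to_leaves} applied to the pair $(y_i, y_n)$ this equals $\int^{\mathrm{BC}}(y_i, y_n)$ (using that $\alpha^{(i)}$ is the relative leaf for $\mathcal{E}^2$ through the relevant component of $z$). Under the identification with $\Lie(J_t/K)$ afforded by our basis of $H^0(U_{v,s}, \Omega)$, this Berkovich--Coleman integral is by definition $\log_t \circ \AJ(y_i - y_n)$, which is the $i$-th coordinate of $\log^{\FZ}(y_1, \ldots, y_n; t)$.

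The only subtle step is verifying that the trivialisations used to define $p$ are the same as those implicit in the Abel--Jacobi identification of $\Lie(J/S-Z)$ with $(R^1\pi_* \Omega_{X_K|S_K})^*$; this was already arranged in the construction of Definition \ref{defn:lifting_univ_conn} and used in the good reduction analogue (Proposition \ref{prop:GM_2_log}), and the same argument goes through here because the discrepancy between Coleman and Berkovich--Coleman integration does not enter on a fibre away from $Z$ where both coincide. The rest is bookkeeping via the product decomposition.
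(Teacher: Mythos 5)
Your proof follows essentially the same route as the paper's: reduce to the two-point ($n=2$) case via the splitting $\mathcal{E}^n = \bigoplus_{i}(\pr_i,\pr_n)^*\mathcal{E}^2$, and then apply Lemma \ref{lemma:coleman_to_leaves}. The paper simply states ``it is enough to prove the case $n=2$'' and quotes Lemma \ref{lemma:locally_trivial} and Lemma \ref{lemma:coleman_to_leaves}; you make the product decomposition of $P^{\FZ}_n$ and of the foliation explicit, which is sound bookkeeping and consistent with the construction of $\mathcal{E}^n$.

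One small inaccuracy worth flagging in your final paragraph: the genuinely subtle identification here is not between Coleman and Berkovich--Coleman integration (these agree on the smooth affinoid $U_{v,t}$ almost by construction), but between the Berkovich--Coleman integral $\int^{\mathrm{BC}}(y_i,y_n)$ and the abelian logarithm $\log_t\circ\AJ(y_i-y_n)$ on a fibre $X_t$ which may have bad reduction. The reason this works is that $y_i$ and $y_n$ lie on the tube of the same smooth component $U_v$, so the tropical/residue discrepancy that separates the two notions (and which motivates the Stoll logarithm in part (2) of Proposition \ref{prop:thm1case2}) is absent. Saying ``by definition'' and framing the issue as Coleman versus Berkovich--Coleman slightly misplaces where the content lies; the paper is equally terse on this point, so this is a quibble rather than a gap.
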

\begin{proof}
It is enough to prove the case $n=2$. Let $t:=\pi (y_i )$. Let $(\gamma ,\delta )$ be as in Lemma \ref{lemma:locally_trivial}. Then, via $(\gamma ,\delta )$, the morphism
\[
P_2\to \mathcal{U}
\]
is given by projection onto the first coordinate. Hence it suffices to prove commutativity of 
\[
\begin{tikzcd}
(U _v ^2  )_{D-Z} (K) \arrow[d, "\alpha"] \arrow[rd, "\log ^{\FZ}"] &               \\
\mathcal{U} | _{D-Z}(K) \arrow[r, "p_2 "]                     & \Lie (J/S-Z) _{D-Z}(K),
\end{tikzcd}
\]
which follows from Lemma \ref{lemma:coleman_to_leaves}.
\end{proof}

We now complete the proof of part (1) of Proposition \ref{prop:thm1case2}. Let $P_{n}^{\FZ} \to X^n _S$ be the principal $G$-bundle defined above. Then the image of $X^n _S (K )_{\rk r}\cap ]U_v ^n [\cap (\pi ^n )^{-1}(S-Z)$ in $\Lie (J/S-Z)^{n-1}_{S_K -Z_K} $ under the map $\log ^{\FZ} $ is contained in $\mathbb{D}_r (\Lie (J/S-Z)_{S_K -Z_K}^{n-1})$. By Lemma \ref{lemma:GM_bad1}, we deduce that rank $\leq r$ points map into a codimension $\geq (n-1-r)(g-r)$ subspace of $P_n ^{\FZ}$. Case (1) of Proposition \ref{prop:thm1case2} now follows from Theorem \ref{thm:BCFN}.

\subsection{Proof of Proposition \ref{prop:thm1case2} part (2): integration on annuli and Stoll's trick}
Now let $x\in X_s (k)$ be a singular point of $X_s $. Recall that our goal is to prove Zariski non-density of rank $r$ points on the tube of $(x,\ldots ,x;s)$ inside $X^n _S$. The first problem we encounter is that the fibre product $X^n _S$ will have singularities (over $K$, not just $S$) when $n>1$. These singularities will only lie at the fibre product of singular points of the morphism $\pi $, i.e. at points $(x_1 ,\ldots ,x_n ;s)$ where $X_s$ is singular and at least two of the $x_i$ are singularities of $X_s$. In particular, the tube of $(x,\ldots ,x;s)$ contains singular points. Let $Z\subset S$ be the singular locus of $\pi $. We deal with the issue of singularities by constructing analytic functions on the tube of $(x,\ldots ,x)$ in the smooth $K$-scheme $X^n _K$, and showing that, when pulled back to $X^n _{S_K -Z_K }$ via the map $X^n _{S_K} \to X^n _K$, they are related to Coleman integration and parallel transport for the Gauss--Manin connection.

First, we review Stoll's trick for bounding Chabauty--Coleman sets near points of bad reduction \cite{stoll:JEMS} (as formulated for general curves by Katz, Rabinoff and Zureick-Brown \cite{KRZB}). Let $X$ be a strictly stable curve over $\mathcal{O}_{K}$ with smooth generic fibre. Let $x$ be a $k$ point of $X$. We define the \textit{residue} map
\[
\Res _x :H^0 (X_{K },\Omega )\to K
\]
as follows. If $x$ is a smooth point, the residue map is zero. If it is a singular point, we may identify $]x[$ with an annulus, with parameter $t$, and define $\Res _x (\omega )$ to be the $dt/t$ coefficient of its annular expansion. As in the case of Laurent series, this independent of the choice of $t$, up to sign. We define the Stoll logarithm, denoted $\log _x ^\Sto$ at $]x[$, relative to some fixed basepoint $b\in ]x[$, to be the map
\begin{equation}\label{eqn:stoll}
\log ^\Sto _{x}:]x[ \to \Ker (\Res _x )^*
\end{equation}
obtained from restricting the Berkovich--Coleman integration functional (in the sense of \cite{KRZB}) with basepoint $b$ to $\Ker (\Res _x )$. Whilst this depends on $b$, the map on $]x[\times ]x[$ sending $(x_1 ,x_2 )$ to $\log ^{\Sto }_x (x_1 ) -\log ^{\Sto }_x (x_2 )$ is independent of $b$.

The advantage of $\log _x ^{\Sto }$ is that it factors through the Abel--Jacobi morphism, by the following result of Katz, Rabinoff and Zureick--Brown.
\begin{proposition}[\cite{KRZB}, Proposition 3.16]\label{prop:KRZB}
Let $\log ^{\ab }$ denote Zarhin's abelian logarithm $]x[ \to H^0 (X,\Omega )^* $. Then the diagram
\[
\begin{tikzcd}
{]x[} \arrow[d, "\log ^{\ab }"] \arrow[rd, "\log _x ^{\Sto }"]     &                    \\
{H^0 (X,\Omega )^* } \arrow[r] & \Ker (\Res _x )^*
\end{tikzcd}
\]
commutes. 
\end{proposition}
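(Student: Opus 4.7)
The plan is to prove the comparison by an explicit local computation on the annulus $]x[$. First I would use the stability hypothesis at the node $x$ to identify the tube $]x[$ with a standard open annulus $A$ of parameter $t$, well-defined up to $t \mapsto t^{-1}$. Any global holomorphic differential $\omega \in H^0(X_K, \Omega)$ restricts to $A$ and admits a unique decomposition
\[
\omega|_A = \Res_x(\omega) \cdot \frac{dt}{t} + dF_\omega,
\]
where $F_\omega$ is a rigid analytic function on $A$: this is the annular analogue of the Laurent decomposition of a meromorphic form into its residue term plus an exact form.

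Next I would unpack each side at a point $y \in ]x[$. By the very definition of $\log^{\Sto}_x$, the Berkovich--Coleman integral from the chosen basepoint $b \in ]x[$ to $y$ is
\[
\int_b^y \omega = \Res_x(\omega) \cdot \log\!\left( t(y)/t(b) \right) + F_\omega(y) - F_\omega(b),
\]
using Coleman's $p$-adic logarithm, and for $\omega \in \Ker(\Res_x)$ this reduces to $F_\omega(y) - F_\omega(b)$.

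For the other side, the composition of Abel--Jacobi with Zarhin's abelian logarithm gives a rigid analytic map $]x[ \to H^0(X_K,\Omega)^*$ whose derivative at each $y$ is the tautological evaluation of forms. Hence for each fixed $\omega$, the function $y \mapsto \log^{\ab}(y)(\omega) - \log^{\ab}(b)(\omega)$ is a rigid analytic primitive of $\omega|_A$ on $A$ vanishing at $b$. When $\omega \in \Ker(\Res_x)$, the form $\omega|_A = dF_\omega$ is exact on $A$, so this primitive is uniquely determined and equals $F_\omega(y) - F_\omega(b)$. The two maps therefore coincide on $\Ker(\Res_x)$.

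The main obstacle is justifying that the derivative of $\log^{\ab}$, viewed on $]x[$, really is the evaluation-of-forms map when $J_K$ has bad reduction. This requires identifying $\Lie(J_K) = H^0(X_K,\Omega)^*$ via the canonical pairing coming from the Raynaud semi-abelian uniformization of $J_K$, and tracking how the toric contribution shifts the naive Berkovich--Coleman primitive by a multiple of $\Res_x(\omega)\cdot \log(t)$, which is precisely the term killed by restriction to $\Ker(\Res_x)$.
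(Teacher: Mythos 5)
This is a cited result in the paper ([KRZB], Proposition~3.16); the paper does not supply its own proof, so I am evaluating your sketch against what I understand the argument in [KRZB] to be.

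Your local set-up on the annulus and the decomposition $\omega|_A = \Res_x(\omega)\,\tfrac{dt}{t} + dF_\omega$ are the right starting point. But the middle step begs the question. You assert that the composite of Abel--Jacobi with Zarhin's logarithm is a \emph{rigid analytic} map on $]x[$, for an arbitrary $\omega$, and then invoke uniqueness of rigid primitives to conclude. If this assertion were true it would show $\log^{\ab}(\cdot)(\omega)$ agrees with the Berkovich--Coleman integral for \emph{every} $\omega$, with no need to restrict to $\Ker(\Res_x)$; the proposition would then be trivial, and in fact the assertion is false in the bad reduction case. On the tube of a node, $\log^{\ab}(\cdot)(\omega)$ is only locally analytic, not rigid analytic; the failure is precisely the $\Res_x(\omega)\cdot\log(t)$ term coming from the toric factor of Raynaud's semi-abelian uniformization. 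Put differently: the derivative of $\log^{\ab}$ \emph{is} evaluation of forms (this part is not the obstacle), but having the right derivative does not pin down the primitive on an annulus among locally analytic functions, because $\log(t)$ is a locally analytic function with $d\log(t) = \tfrac{dt}{t}$ and cannot be distinguished from $0$ by differentiating. Your final paragraph correctly names the semi-abelian uniformization as the tool and even states that the toric contribution is a multiple of $\Res_x(\omega)\log(t)$, but treating this as an afterthought inverts the logic: showing that the locally-analytic-but-not-rigid part of $\log^{\ab}$ is exactly this residue term, so that it is killed on $\Ker(\Res_x)$, \emph{is} the proposition, and it has to come before, not after, any appeal to uniqueness of rigid primitives. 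As written, the proof proves too much in the middle and then tries to patch the over-reach at the end.
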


We now return to the context of the stable family $X\to S$ from before, with $s\in S(k)$ and $x\in X_s (k)$ as above. We deduce that for each $\widetilde{s}\in ]s[$, rank $r$ $n$-tuples in $]x[_{X_{\widetilde{s}}}$ map into rank $\leq r$ subspaces of $(\Ker (\Res _x )^* )^{\oplus n}$, and the map from $]x[_{X_{\widetilde{s}}}$ to $\Ker (\Res _s )^*$ is analytic. To use this to complete the proof of Theorem \ref{thm:main1}, it remains to show that the map $\log ^\Sto _x$ behaves well in families. 

Let $D\subset ]x[$ denote a closed polydisk of radius $|\pi _K |$ containing all $K$-points of $]x[$. Let $\mathcal{E}(X)_D$ denote the canonical extension of the universal connection on $X_{D-Z} \to D$ relative to a fixed section $\sigma :D\to X_D$. The connection $\mathcal{E} (X)$ restricted to $]x[$ again satisfies the hypotheses of the Baldassarri--Chiarellotto theorem \cite[Corollary 4.9.1]{BC92}, and hence we may write it as 
\[
d-N_0 \cdot \frac{du}{u} -\sum _{i=1}^n N_i \cdot \frac{dt_i }{d_i }.
\]
It follows that $\Ker (\Res )^{*}$ is an analytic quotient of $\mathcal{V}|_{D}$, and that the map
\[
D(K)-Z(K)\to \Ker (\Res )^{*}
\]
is analytic.
\begin{lemma}\label{lemma:relation_to_Stoll}
There is a morphism of vector bundles
\[
\widetilde{\Res } :\mathcal{O}_D\hookrightarrow \mathcal{V}|_D
\]
such that
\begin{enumerate}
\item For all $t\in D(K)$, $t^* \widetilde{ \Res }$ is equal to $\Res _{\overline{s}}$ viewed as a functional on $H^1 _{\dR}(X_t /K)$.
\item For all $t\in D(K)$, the pullback of $\widetilde{\Res }$ to $\widehat{S}_t$ is a formal leaf of the foliation associated to the connection on $\mathcal{V}$.
\end{enumerate}
\end{lemma}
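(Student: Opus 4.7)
The plan is to construct $\widetilde{\Res}$ as a horizontal section of $\mathcal{V}|_D$ with respect to its connection, and then read off both properties from horizontality together with a fibrewise identification of the residue functional.

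First, I would invoke Lemma \ref{lemma:BC} to present $\mathcal{V}|_D$ in the canonical form $(\mathcal{O}_D^{\oplus 2g}, d - \sum_i N_i \cdot dt_i/t_i)$ with commuting nilpotent constant matrices $N_i$. In this presentation, horizontal global sections $\mathcal{O}_D \to \mathcal{V}|_D$ correspond bijectively to constant vectors $v \in K^{\oplus 2g}$ lying in the common kernel $\bigcap_i \ker N_i$. So my task reduces to exhibiting such a kernel element that, fibrewise, represents the residue functional at $x$.

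Next, I would identify the residue at $x$ with such a kernel element. The matrices $N_i$ are, up to sign, the logarithms of the local monodromies of the Gauss--Manin connection around the components of $Z$ meeting $s$; by Picard--Lefschetz these monodromies act on nearby cohomology as (products of) Dehn twists along vanishing cycles attached to the nodes of $X_s$. Since every Dehn twist fixes its own vanishing cycle, the vanishing cycle $\delta_x$ attached to our node $x$ lies in $\bigcap_i \ker N_i$. The class dual to $\delta_x$ pairs with $\omega \in H^0(X_t,\Omega)$ by integration along the cycle, which by the standard residue calculation specialises to $\Res_x(\omega)$. Via the Baldassarri--Chiarellotto trivialisation this produces a nonzero constant vector $v_x \in \bigcap_i \ker N_i$, and I would take $\widetilde{\Res}$ to be the horizontal inclusion $\mathcal{O}_D \hookrightarrow \mathcal{V}|_D$ determined by $v_x$; nonvanishing of $v_x$ ensures this is an inclusion of vector bundles.

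Property (1) then holds by construction: at each $t \in D(K)$, the specialisation of $\widetilde{\Res}$ is the class dual to $\delta_x$ in $H^1_{\dR}(X_t/K)^*$, which restricted to $H^0(X_t,\Omega) = F^1 H^1_{\dR}(X_t/K)$ recovers $\Res_x$ and is extended by zero on the complementary Hodge piece to give the claimed functional on all of $H^1_{\dR}(X_t/K)$. Property (2) is immediate from horizontality: a horizontal section of a flat connection is a global leaf of the horizontal foliation induced on the total space of $\mathcal{V}$, so its pullback to any formal neighbourhood $\widehat{S}_t$ is a formal leaf through the image of $t$.

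The main obstacle I anticipate is rigorously matching, in the $p$-adic rigid analytic setting, the constant vector $v_x$ produced by the Baldassarri--Chiarellotto trivialisation with the fibrewise residue functional --- that is, establishing that the identification of cohomology across fibres via parallel transport sends the residue at $x$ on the special fibre to the residue at $x$ on each nearby smooth fibre. This should follow from the compatibility of the canonical extension of $\mathcal{V}|_D$ with the nearby-cycles/weight-monodromy picture, together with the identification of abelian integrals on annuli with residue pairings that underlies Proposition \ref{prop:KRZB}, but the careful bookkeeping linking the algebraic description of the canonical extension to the analytic residue map is where the bulk of the work lies.
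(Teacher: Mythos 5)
Your proposal is correct in outline but takes a genuinely different route from the paper. You argue topologically: after putting $\mathcal{V}|_D$ in Baldassarri--Chiarellotto normal form $d-\sum_i N_i\,dt_i/t_i$ and observing that horizontal sections over $D$ are precisely constant vectors in $\bigcap_i\ker N_i$, you identify $\Res_x$ with the Poincar\'e dual of the vanishing cycle $\delta_x$ at the node and invoke Picard--Lefschetz, noting that each $N_i$ is a sum of transvections along pairwise disjoint vanishing cycles, so that $\delta_x\in\bigcap_i\ker N_i$. As you flag yourself, completing this argument rigorously requires transporting the complex Picard--Lefschetz picture to the rigid setting: matching the Baldassarri--Chiarellotto residue matrices with logarithms of complex local monodromies, and matching the rigid-analytic residue functional with the complex period along $\delta_x$. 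The paper avoids all of this by a short, internal computation. It puts the \emph{larger} bundle $\mathcal{E}(X)_D|_{]x[}$ --- the rank $2g+1$ relative universal connection, an extension of $\mathcal{O}$ by $\pi^*\mathcal{V}$ --- into Baldassarri--Chiarellotto form $d - N_0\,du/u - \sum_{i\geq 1} N_i\,dt_i/t_i$, where $u$ is the annulus parameter at the node, and observes two things. First, the only nonzero block of $N_0$ is its $\Hom(\mathcal{O},\mathcal{V})$-component, and this component is $\Res_x$ itself: fibrewise the universal connection has matrix a column of differentials representing a basis of $H^1_{\dR}$, so its residue along the annulus is exactly the column $(\Res_x\omega_j)_j$. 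Second, flatness forces $[N_0,N_i]=0$, and in block form this identity reads $N_i^{\mathcal{V}}\cdot\Res_x=0$ for all $i\geq 1$. That gives horizontality in two lines, with no reference to monodromy, vanishing cycles, or a complex model. Your approach buys a geometric interpretation of $\widetilde{\Res}$ as a vanishing-cycle class, at the cost of the $p$-adic/complex bookkeeping you acknowledged is the bulk of the work; the paper's commutativity argument is shorter and stays entirely inside the rigid-analytic framework already in play.
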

\begin{proof}
Write the connection on $\mathcal{V}|_D$ as $d-\sum N_i \cdot \frac{dt_i }{t_i }$ as above. By flatness, the $N_i$ must commute. With respect to this trivialisation, the residue map is equal to the bottom left block of $N_0$. This shows that the residue map is analytic, and that the residue nonzero elements lie in the kernel of $N_i$ for all $i>0$, and hence are horizontal sections of $\mathcal{V}|_D$.
\end{proof}

Let $\mathcal{U}$ denote the rigid analytic $\mathcal{V}$-torsor on $X_D$ whose $U$-sections are unipotent isomorphisms
\[
(\mathcal{E}(X)_D )|_U \stackrel{\simeq }{\longrightarrow } (\pi _D ^* \circ \sigma ^* \mathcal{E}(X)_D ).
\]
If $\mathcal{W}$ is a sub-bundle of $\mathcal{V}$, we define $\mathcal{U}/\mathcal{W}$ to be the $\mathcal{V}/\mathcal{W}$-torsor of unipotent isomorphisms
\[
(\mathcal{E}(X)_D  /\mathcal{W} )|_U \stackrel{\simeq }{\longrightarrow } \gr ^\bullet  \mathcal{E}(X)_D /\mathcal{W} .
\]
Recall that $t_1 ,\ldots ,t_n$ are local parameters for $Z$ at $]s[$. Suppose that the singularity at $x$ is locally given by $uv-t_1$.
We write $\mathcal{E}(X)_D |_{]x[}$ as 
\begin{equation}\label{eqn:chart}
d-N_0 \frac{du}{u}-\sum _{i=1}^n N_i \frac{dt_i }{t_i }.
\end{equation}
As in the previous subsection, we may construct a bundle $P_z$ of isomorphisms $z^* \mathcal{E}(X)_D \otimes \mathcal{O}_{X_D } \simeq \mathcal{E}(X)_D$, and a quotient map
\[
p:P_z \to \mathcal{U}
\]
via choosing a unipotent vector space isomorphism $z^* \mathcal{E}(X)_D \simeq \gr ^\bullet z^* \mathcal{E}(X)_D$. In a formal neighbourhood of a point $z$, parallel transport defines a section of $P_z$, and we may define a map
\[
\beta :\widehat{X}_z \to \widehat{\mathcal{U}}
\]
by composing the parallel transport isomorphism with $p$. As in the previous subsection, we may prove the following.
\begin{lemma}
\begin{enumerate}
\item Let $z$ be a point of $]x[$ not mapping to $Z$. Then, with respect to the trivialisation of the bundle $\mathcal{E}(X)_D$ in \eqref{eqn:chart}, the map
\[
\beta : \widehat{X}_z \to \widehat{\mathcal{U}}
\]
is given by $\exp (N_0 \log (u)).$
\item The composite of $\beta $ with projection to $\mathcal{U}/\mathcal{W}$ extends to analytic map $\beta _{\mathcal{W}}$ on the whole of $]x[_D$.
\end{enumerate}
\end{lemma}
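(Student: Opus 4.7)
The plan is to compute $\beta$ explicitly via parallel transport in the chart \eqref{eqn:chart} and then exploit the nilpotency and commutativity of the $N_i$ to pin down the source of the logarithmic singularities.

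For part (1), I restrict to the formal fiber $\widehat{X}_z$. Since $z \in \, ]x[$ does not map to $Z$, each $t_i$ is a nonzero constant on $\widehat{X}_z$ (pulled back from $D$), so only $u$ varies and the connection form reduces to $N_0\, du/u$. Parallel transport from $z$ is then the unique matrix solution $G(u)$ of $dG = N_0 G \, du/u$ with $G(u(z)) = 1$, which by nilpotency and constancy of $N_0$ equals $G(u) = \exp(N_0 \log(u/u(z)))$, a polynomial of bounded degree in the $p$-adic log. Post-composing with the linear isomorphism $p$, which is fixed by the chosen unipotent splitting $z^*\mathcal{E}(X)_D \simeq \gr^\bullet z^*\mathcal{E}(X)_D$, returns $\exp(N_0 \log u)$ up to the constant $\exp(-N_0 \log u(z))$ absorbed into the initial trivialization, as asserted.

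For part (2), the obstruction to extending $\beta$ across the locus $t_1 = 0$ inside $]x[_D$ is the $\log u$ factor: for $t_1 = 0$ the tube $]x[$ becomes the union of two discs meeting at the node, and $\log u$ has no rigid-analytic representative on the disc through $u = 0$. By Lemma \ref{lemma:relation_to_Stoll}, the residue map $\widetilde{\Res}$ cuts out precisely the image of $N_0$ in $\mathcal{V}$, so taking $\mathcal{W}$ to contain $\widetilde{\Res}(\mathcal{O}_D)$ kills the $N_0$ direction in the quotient. Using commutativity of the $N_i$ (from flatness) one splits
\[
\exp\!\Bigl(\sum_{i \ge 1} N_i \log t_i + N_0 \log u\Bigr) \;=\; \exp\!\Bigl(\sum_{i \ge 1} N_i \log t_i\Bigr) \cdot \exp(N_0 \log u),
\]
whose second factor collapses to the identity on $\mathcal{V}/\mathcal{W}$, while the first is polynomial in $\log t_1, \dots, \log t_n$ (each $N_i$ nilpotent), hence rigid-analytic on $D \setminus Z$ and, after combining with the canonical extension of $\mathcal{E}(X)_D$ recorded in Lemma \ref{lemma:BC}, extendable to all of $]x[_D$.

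The main technical obstacle is justifying the clean splitting of the exponential and verifying that the residue genuinely lands in $\mathcal{W}$: both rely on the block-nilpotent identification of the residue with $N_0$ supplied by the proof of Lemma \ref{lemma:relation_to_Stoll}, which in turn exploits commutativity of the $N_i$ to rule out any leakage of the $\log u$ singularity into the $\mathcal{V}/\mathcal{W}$ factor. Once this is in hand, the extension assertion is essentially formal, the remaining $\log t_i$ pieces living on the base and being handled by the same analytic constructions used in the good-reduction computations of Section \ref{subsec:variation}.
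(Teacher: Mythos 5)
Your proof proposal follows the same general strategy the paper intends (the paper itself says only ``As in the previous subsection, we may prove the following,'' pointing to the block-matrix analysis in Lemma~\ref{lemma:iso_is_GM}), but there are two concrete gaps.

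First, in part~(1) you claim that each $t_i$ is constant on $\widehat{X}_z$. This is not correct: $\widehat{X}_z$ is the formal completion of the family $X_D$ at the point $z$, which is $(1 + \dim D)$-dimensional and includes the base directions. What is true is that $t_i(z) \neq 0$ (so $\log t_i$ is formally defined near $z$), but $t_i$ varies. The parallel transport on $\widehat{X}_z$ is therefore $\exp\bigl(N_0 \log u + \sum N_i \log t_i\bigr)$, not just $\exp(N_0 \log u)$. What removes the $\log t_i$ factors is the map $p \colon P_z \to \mathcal{U}$, which is a \emph{quotient} by the monodromy/graded bundle direction -- it is not the linear isomorphism fixed by the Hodge splitting, as you describe it. Concretely, writing the $N_i$ in block-triangular form relative to the filtration $\pi^*\mathcal{V} \subset \mathcal{E}(X)_D$ (with the off-diagonal block $a_i$ vanishing because $\sigma$ passes through a smooth component, as in the proof of Lemma~\ref{lemma:iso_is_GM}, and the diagonal block of $N_0$ vanishing because the sub-connection on $\pi^*\mathcal{V}$ has no $du/u$ term), one sees that $\exp(\sum N_i \log t_i)$ is block-diagonal and hence killed by $p$, leaving $\exp(N_0 \log u)$. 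You skip the step that actually produces the claimed formula.

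Second, in part~(2) you reintroduce the factor $\exp(\sum_{i\geq 1} N_i \log t_i)$ and argue that it ``is rigid-analytic on $D \setminus Z$ and, after combining with the canonical extension, extendable to all of $]x[_D$.'' But $\log t_i$ has no rigid-analytic continuation across $t_i = 0$; the canonical extension of Lemma~\ref{lemma:BC} is an extension of the bundle and the connection (with logarithmic poles), and does not make the parallel transport itself analytic across $Z$. Moreover, by part~(1) this factor should already be absent from $\beta$. The correct argument for part~(2) is that the only singular contribution is $N_0 \log u$, that $N_0$ in the adapted basis has a single nonzero block $a_0$ equal to $\widetilde{\Res}$ (so its image is one-dimensional and equals $\mathcal{W}$ by Lemma~\ref{lemma:relation_to_Stoll}), and that quotienting by $\mathcal{W}$ kills precisely this direction. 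Your intuition that $\mathcal{W}$ absorbs the $N_0$-direction is right, but the mechanism you describe (splitting the exponential, then handling the $\log t_i$ piece separately) does not go through as stated.
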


We now explain how to prove part (2) of Proposition \ref{prop:thm1case2} by modifying the definition of the degeneracy locus to accomodate the Stoll logarithm. Note that, since $\mathcal{W}$ is an \textit{analytic} sub-bundle, if we take the naive definition of a degeneracy locus a priori this does not give a way to apply Theorem \ref{thm:BCFN} to prove Zariski non-density of low rank points, because the degeneracy locus will not be an algebraic subvariety.  Let $\mathcal{E}^2 (X)$ be the universal connection on $X/S-Z$, and let $P,P_0 ,G$ and $G_0$ be as in section \ref{subsec:variation}. Let $\widetilde{\mathcal{V}}$ denote the dual of $R^1 \pi _* \Omega _{X_K |S_K }(\log (Z _K ))$ 

By Lemma \ref{lemma:relation_to_Stoll}, we have an overconvergent sub-bundle $\mathcal{W}$ of $\mathcal{V}|_{D-Z}$, such that, for all $z\in (D-Z)(K)$, $z^* \mathcal{W}$ spans $(\Res _x )$, moreover, for all $z\in ]s[-Z (K)$, the pullback of $\mathcal{W}$ to the formal completion of $S$ at $z$ may be identified with the horizontal section extending $z^* \mathcal{W}$.

We may quotient the map $\beta _{\mathcal{W}}$ by $\mathcal{M}$, to form
\[
\log ^{\Sto }:]x[_X \to R^1 \pi _* \mathcal{O}_X |_{D}/\mathcal{W}.
\]
This map has the property that, for any $\widetilde{s}$ in $D-Z$ and $x_1 ,x_2 \in ]x[_{X_{\widetilde{s}}}$, 
\begin{equation}\label{eqn:stoll_equals_stoll}
\log ^{\Sto }(x_1 )-\log ^{\Sto }(x_2 )=\log ^{\Sto }_x (x_1 )-\log _x ^{\Sto }(x_2 ),
\end{equation}
where the righthand side is the Stoll logarithm from \eqref{eqn:stoll} (recall that this difference of Stoll logarithms is independent of the choice of $b$).

This implies the following lemma, which relates the Stoll logarithm defined in terms of Coleman integration with the one defined in terms of parallel transport.
\begin{lemma}\label{lemma:stollFZ}
Choose a vector bundle trivialisation
\[
\rho :R^1 \pi _* \mathcal{O}_X |_D /\mathcal{W}\simeq \mathcal{O}_D ^{g-1}
\]
Let
\[
\log ^{\Sto ,\FZ }_n : ](x,\ldots ,x;s)[_{X^n _{S} } \cap (\pi ^n )^{-1}(D-Z) \to (\Lie (J/S-Z)/\mathcal{W})^{n-1}_{S-Z}
\]
be the map sending $(x_1 ,\ldots ,x_n ;\widetilde{s})$ to $(\log _{\widetilde{s}} ^{\Sto }(x_i )-\log _{\widetilde{s}} ^{\Sto }(x_n ))_{i=1}^{n-1}$. Let $\delta $ be the map 
\[
(R^1 \pi _* \mathcal{O}_X |_D )^n \to (R^1 \pi _* \mathcal{O}_X |_D )^{n-1}
\]
sending $(v_1 ,\ldots ,v_n )$ to $(\rho ^{-1})^{n-1}\circ \delta _0 \circ \rho ^n $, where 
\[
\delta _0 :(\mathcal{O}_D ^{g-1})^n \to (\mathcal{O}_D ^{g-1})^{n-1}
\]
sends $(v_i )_{i=1}^n $ to $(v_i -v_n )_{i=1}^{n-1}$.

Then $\log ^{\Sto ,\FZ} _n $ is the restriction of the composite map
\[
\delta \circ(\log ^{\Sto } )^n :](x,\ldots ,x)[_{X^n _{\mathcal{O}}} \to (R^1 \pi _* \mathcal{O}_X |_{]s[}/\mathcal{W})^n .
\]
to the preimage of $S-Z$ diagonally embedded into $S^n _K$.

In particular, $\log ^{\Sto ,\FZ}_n$ is the restiction to the subvariety $X^n _{S-Z}$ of an analytic function on $](x,\ldots ,x)[_{X^n _{\mathcal{O}}}$
\end{lemma}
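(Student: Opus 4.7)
My plan is to check the identity pointwise on the smooth locus and then invoke the analytic extension of $\log^{\Sto}$ established just before the statement of the lemma. Concretely, for a point $(x_1,\ldots,x_n;\widetilde{s})$ in $](x,\ldots,x;s)[_{X^n_S} \cap (\pi^n)^{-1}(D-Z)$, each coordinate $x_i$ lies in $]x[_{X_{\widetilde{s}}}$, so $\log^{\Sto}(x_i)$ takes values in the fiber of $R^1\pi_*\mathcal{O}_X|_D/\mathcal{W}$ over $\widetilde{s}$. Unwinding the definition of $\delta$ through the trivialisation $\rho$, the image of this point under $\delta\circ(\log^{\Sto})^n$ is the tuple $(\log^{\Sto}(x_i)-\log^{\Sto}(x_n))_{i=1}^{n-1}$. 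Equation \eqref{eqn:stoll_equals_stoll}---the basepoint independence of differences of Stoll logarithms on a single fiber---then gives
\[
\log^{\Sto}(x_i)-\log^{\Sto}(x_n)=\log^{\Sto}_{\widetilde{s}}(x_i)-\log^{\Sto}_{\widetilde{s}}(x_n),
\]
and the right-hand side is the $i$-th coordinate of $\log^{\Sto,\FZ}_n(x_1,\ldots,x_n;\widetilde{s})$ by construction. This proves the first claim.

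For the ``in particular'' assertion I would combine this identity with part (2) of the preceding lemma, which extends $\beta_\mathcal{W}$---and hence $\log^{\Sto}$---to an analytic map on the whole tube $]x[_{X_D}$, not merely on its intersection with the preimage of $S-Z$. Since $\delta$ is an $\mathcal{O}_D$-linear bundle morphism, the composite $\delta\circ(\log^{\Sto})^n$ defines an analytic map on $](x,\ldots,x)[_{X^n_\mathcal{O}}$ whose restriction to $X^n_{S-Z}$ recovers $\log^{\Sto,\FZ}_n$ by the first part.

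The one point to keep straight---rather than a genuine obstacle---is that $\log^{\Sto}$, as built from the universal connection on $X_D/D$, depends on the choice of base section $\sigma$, whereas $\log^{\Sto,\FZ}_n$ is intrinsic. Both steps hinge on this dependence cancelling in differences, which is exactly the parenthetical remark following \eqref{eqn:stoll}. Once that is recorded, the lemma is essentially a compatibility statement repackaging the previous two lemmas, and I do not expect any further technical difficulty.
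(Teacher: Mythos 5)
Your proposal is correct and follows the paper's implicit argument: the paper itself offers no separate proof of this lemma, simply introducing it with ``This implies the following lemma'' after \eqref{eqn:stoll_equals_stoll} and the analyticity statement for $\beta_{\mathcal{W}}$, and your unwinding of $\delta$ through the trivialisation plus the appeal to \eqref{eqn:stoll_equals_stoll} and the linearity of $\delta$ is exactly that chain of implications made explicit. One small slip: the parenthetical after \eqref{eqn:stoll} concerns independence of the Coleman-integral Stoll logarithm $\log^{\Sto}_x$ from the basepoint $b$ on a single fibre, not the dependence of the parallel-transport $\log^{\Sto}$ on the section $\sigma$; the latter is a separate (and also true) cancellation, but it is not what that parenthetical records, so the sentence attributing it there is slightly misdirected even though the underlying point---that both dependences cancel in differences---is correct.
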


Let $z\in ](x;\ldots ;x,s)[(K)$ be a point mapping to $D-Z$. Let
\[
\overline{p} :P_n ^{\FZ}\to (\Lie (J/S-Z)_K /\mathcal{W})^{n-1}_S 
\]
be the composite of the map $p$ with the quotient by $\mathcal{W}$. Let $\theta \in z^* P_n ^{\FZ}(K)$ be the tautological section corresponding to the identity, projecting to $\theta _0 \in z^* P_0$. Let
\[
\alpha :](x,\ldots ,x;s)[ (K)\cap (\pi ^n ) ^{-1}(D-Z)(K))\to P_n ^{\FZ}(K)
\]
be the leaf of $P_n ^{\FZ}$ through $\theta $, and $\alpha _0 $ the leaf of $P_0$ through $\theta _0$. 

\begin{proposition}\label{prop:endgame}
There is an algebraic variety $\mathcal{S}_{W,\mathcal{M}}$ over $X^n _{S_K -Z_K}$, a map of algebraic varieties
\[
\overline{p}:(P_n ^{\FZ })_{]e[^n _{D-Z} }\to \mathcal{S}_{W,\mathcal{M}}
\]
and a closed immersion of rigid analytic spaces over $]e^n [_D$
\[
\iota : \Lie (J/S_K -Z_K )_D \to \mathcal{S}_{W,\mathcal{M},]e^n [_D }
\]
with the following properties.
\begin{enumerate}
\item Upon taking formal completions, the diagram
\begin{equation}\label{eqn:FZ_commutes}
\begin{tikzcd}
\widehat{X}^n _{S,z}
\arrow[d, "\alpha"] \arrow[r, "\log ^{\Sto ,\FZ} _n"] &       (\widehat{\Lie } (J/S-Z)_{\log (z)}/\widehat{\mathcal{W}})^{n-1} \arrow[d, "\iota "]     \\
\widehat{P}_{n,\theta } ^{\FZ} \arrow[r, "\overline{p}"]  & \widehat{\mathcal{S}}_{W,\mathcal{M}}                 
\end{tikzcd}
\end{equation}
commutes.
\item There is a codimension $(g-e-d)(n-d)$ subscheme $\mathbb{D}_r (P_n ^{\FZ })$ of $P_n ^{\FZ}$ a subspace $\mathbb{D}_r ((\Lie (J/S_K-Z_K )_D /\mathcal{W} ^{n-1}))$, and a subscheme $\mathbb{D}_r (\mathcal{S}_{W,\mathcal{M}})$ such that  
\begin{align*}
\log ^{\Sto ,\FZ }_n (X^n _S (K)_{\rk \leq r}\cap ]e[^n _D) & \subset \mathbb{D}_r (\Lie (J/S_K-Z_K )_D /\mathcal{W} ^{n-1}),  \\
\overline{p}^{-1}(\mathbb{D}_r (\mathcal{S}_{W,\mathcal{M}})) & \subset \mathbb{D}_r (P_n ^{\FZ }), \\
\iota ^{-1}(\mathbb{D}_r (\mathcal{S}_{W,\mathcal{M}})) & = \mathbb{D}_r (\Lie (J/S_K-Z_K )_D /\mathcal{W} ^{n-1}).
\end{align*}
\end{enumerate}
\end{proposition}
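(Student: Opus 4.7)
\medskip
\noindent\textbf{Proof proposal.} The plan is to construct $\mathcal{S}_{W,\mathcal{M}}$ as an algebraic avatar of the analytic quotient $(\Lie(J/S-Z)/\mathcal{W})^{n-1}$, exploiting the fact that, although $\mathcal{W}$ is only defined analytically on the tube of the bad reduction locus, it is formally horizontal for the Gauss--Manin connection by Lemma \ref{lemma:relation_to_Stoll}(2). Horizontality means that the monodromy group $G_0$ of Gauss--Manin preserves $\mathcal{W}$, so $\mathcal{W}$ descends to an algebraic sub-bundle of the canonical extension of $\mathcal{V}$ along $Z$, determined intrinsically by the kernels of the nilpotent residue operators $N_i$ appearing in the chart of Lemma \ref{lemma:BC}. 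Call this algebraic sub-bundle $\mathcal{W}_{\mathrm{alg}}$. I would then take $\mathcal{S}_{W,\mathcal{M}}$ to be the total space of the quotient vector bundle $(\Lie(J/S-Z)/\mathcal{W}_{\mathrm{alg}})^{n-1}$ over $X^n_{S_K-Z_K}$, with $\overline{p}$ obtained by post-composing the map $p$ of Section \ref{subsec:variation} with this quotient, and with $\iota$ given by the closed immersion obtained from the analytic identification $\mathcal{W}_{\mathrm{alg}}|_D = \mathcal{W}$.

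\medskip
\noindent For the commutativity of \eqref{eqn:FZ_commutes}, I would combine Lemma \ref{lemma:stollFZ}, which expresses $\log^{\Sto,\FZ}_n$ as the restriction of the analytic map $\delta \circ (\log^\Sto)^n$ on the entire tube $](x,\ldots,x;s)[$, with Lemma \ref{lemma:GM_bad1}, which asserts the analogous commutativity for $\log^\FZ$ and the full principal bundle $P_n^\FZ$. Post-composing the square of Lemma \ref{lemma:GM_bad1} with the quotient by $\mathcal{W}_{\mathrm{alg}}$ on the algebraic side and with the quotient by $\mathcal{W}$ on the analytic side, and invoking horizontality to ensure these two quotients intertwine with the leaf map $\alpha$, yields the required commutative diagram at the level of formal completions. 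The algebraicity of $\overline{p}$ follows from the construction of $P_n^\FZ$ as an algebraic principal bundle in Section \ref{subsec:variation} and from $\mathcal{W}_{\mathrm{alg}}$ being algebraic.

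\medskip
\noindent To handle the degeneracy loci in (2), I would define $\mathbb{D}_r(\mathcal{S}_{W,\mathcal{M}})$ as the vanishing locus of the $(r+1)$-fold wedge of the tautological morphism $\mathcal{O}_{\mathcal{S}_{W,\mathcal{M}}}^{\oplus (n-1)} \to \tau^*(\mathcal{V}/\mathcal{W}_{\mathrm{alg}})$, where $\tau$ is the structure map, and analogously for $\mathbb{D}_r(P_n^\FZ)$ (using $p$) and $\mathbb{D}_r((\Lie(J/S-Z)/\mathcal{W})^{n-1})$. The crucial input for the first inclusion is Proposition \ref{prop:KRZB}, which guarantees that the Stoll logarithm factors the abelian logarithm, so rank-degeneracy of Abel--Jacobi images in $\Lie(J_s)$ implies the same rank-degeneracy in the Stoll quotient. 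The second and third relations are then formal consequences of the functoriality of wedge-powers under the surjection $\overline{p}$ and the closed immersion $\iota$, combined with the rank-additivity bounds for determinantal varieties (cf.\ \cite[Proposition 12.2]{harris:GTM}) that give the stated codimension.

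\medskip
\noindent The main obstacle I anticipate is the construction and gluing of $\mathcal{W}_{\mathrm{alg}}$: the residue functional is defined intrinsically only near the bad reduction locus, so extending $\mathcal{W}$ algebraically over $X^n_{S_K-Z_K}$ requires carefully combining the canonical extension of Gauss--Manin along $Z$ with the $G_0$-equivariance coming from horizontality, and verifying that the resulting algebraic sub-bundle genuinely restricts analytically to $\mathcal{W}$ on the tube. Once this comparison is in place, verifying commutativity reduces to Lemmas \ref{lemma:stollFZ} and \ref{lemma:GM_bad1}, and the degeneracy-locus bookkeeping is routine, so that Theorem \ref{thm:BCFN} applied to $\mathcal{S}_{W,\mathcal{M}}$ yields part (2) of Proposition \ref{prop:thm1case2}.
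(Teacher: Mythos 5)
Your construction rests on the claim that $\mathcal{W}$ descends to an algebraic sub-bundle $\mathcal{W}_{\mathrm{alg}}$ of $\mathcal{V}$ over $S_K - Z_K$, and this is false. Lemma \ref{lemma:relation_to_Stoll}(2) only says that $\mathcal{W}$ is horizontal on the tube $D$, i.e.\ it lies in $\bigcap_i \ker N_i$ and is therefore fixed by the \emph{local} monodromy around $Z$; it is not fixed by the global monodromy group $G_0$, so the horizontal extension of $W = \langle \Res_x \rangle$ to $S_K - Z_K$ is a genuinely analytic (not algebraic) sub-bundle. The paper flags this exact point right after Proposition~\ref{prop:KRZB}: ``since $\mathcal{W}$ is an \textit{analytic} sub-bundle, if we take the naive definition of a degeneracy locus a priori this does not give a way to apply Theorem \ref{thm:BCFN}.'' Your proposed $\mathcal{S}_{W,\mathcal{M}} = (\Lie(J/S-Z)/\mathcal{W}_{\mathrm{alg}})^{n-1}$ is precisely that naive definition, and it does not exist as an algebraic object.

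The paper's construction is designed to circumvent this. Rather than trying to algebraize $\mathcal{W}$, it builds $\mathcal{S}_{W,\mathcal{M}}$ as a scheme over the $G_0$-principal bundle $P_0$, whose $U$-sections are pairs $(\rho_0, s)$: a $G_0$-equivariant trivialization $\rho_0 : z^*\widetilde{\mathcal{V}} \otimes \mathcal{O}_U \to \widetilde{\mathcal{V}}|_U$ together with a unipotent isomorphism of $\mathcal{E}^2(X)|_U / (\mathcal{M}|_U + \rho_0(W\otimes\mathcal{O}_U))$. In other words, instead of quotienting by a fixed sub-bundle, one lets the sub-bundle $\rho_0(W\otimes\mathcal{O}_U)$ vary over all choices of frame. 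The fixed finite-dimensional subspace $W \subset z^*\widetilde{\mathcal{V}}$ is algebraic, and because the construction records the frame $\rho_0$, the total space is algebraic even though no individual choice of $\rho_0(W)$ is. The degeneracy locus $\mathbb{D}_r(\mathcal{S}_{W,\mathcal{M},n})$ is then taken to be rank-$\leq r$ $n$-tuples modulo $\rho_0(W)$, which is algebraic. The analytic identification $\xi : z^*\mathcal{V}\otimes\mathcal{O}_D \simeq \mathcal{V}$ with $\xi(W\otimes\mathcal{O}_D) = \mathcal{W}$, coming from the leaf $\alpha_0$ of $P_0$, is what produces the closed immersion $\iota$ of rigid analytic spaces; this identification is only analytic, which is consistent with $\iota$ being a closed immersion of rigid spaces rather than of schemes. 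You should rework your construction along these lines: the frame bundle $P_0$ is not optional bookkeeping but the mechanism that makes $\mathcal{S}_{W,\mathcal{M}}$ algebraic at all.
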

This Proposition implies part (2) of Proposition \ref{prop:thm1case2}. Indeed, the Proposition implies that it is enough to show that preimage of $\mathbb{D}_r (\mathcal{S}_{W,\mathcal{M}})$ in $]e^n [_{X^n _S}(K)$ under the map $\iota \circ \log ^{\Sto ,\FZ}_n $ is not Zariski dense. By Lemma \ref{lemma:analytic2formal} and Lemma \ref{lemma:stollFZ}, it is enough to show this in a formal neighbourhood of a point of $]e^n [_{X^n _S }$. By the commutativity of \eqref{eqn:FZ_commutes}, it is enough to prove this for the pre-image of $\mathbb{D}_r (P_n ^{\FZ})$, where (by Proposition \ref{prop:endgame}) it is a consequence of Theorem \ref{thm:BCFN}.

The definition of $\mathbb{D}_r (\Lie (J/S_K -Z_K )_D /\mathcal{W})^n $ is simply the previous notion of degeneracy loci applied to the analytic vector bundle $\Lie (J/S_K -Z_K )_D /\mathcal{W}$. By \eqref{eqn:stoll_equals_stoll} and Proposition \ref{prop:KRZB}, $\log ^{\Sto ,\FZ}_n$ maps rank $\leq r$ $n$-tuples of points into rank $\leq r$ $n$-tuples of sections of $\Lie (J/S_K -Z_K)_D /\mathcal{W}$.

Let $P_0$ denote the $G_0$-principal bundle whose $U$-sections are isomorphisms
\[
\widetilde{\mathcal{V}} |_U \simeq z^* \widetilde{\mathcal{V}}\otimes \mathcal{O}_U .
\]
Fix a subspace $W$ of $z^* \widetilde{\mathcal{V}}$ of dimension $e$. Define $\mathcal{S}_W $ to be the sheaf whose $U$-sections are pairs $(\rho ,s)$, where $\rho $ is a $G_0$-equivariant isomorphism 
\[
z^* \widetilde{\mathcal{V}} \otimes \mathcal{O}_U \stackrel{\simeq }{\longrightarrow }\widetilde{\mathcal{V}}|_U ,
\]
and $s$ is a unipotent isomorphism 
\[
\mathcal{E}^2 (X)|_U /\rho (W\otimes \mathcal{O}_U ) \simeq \gr ^\bullet \mathcal{E}^2 (X)|_U /\rho (W\otimes \mathcal{O}_U ) .
\] 
Then $\mathcal{S}_W $ is represented by a scheme over $X$, which is naturally a quotient of $P$, and projects onto $P_0$. We similarly construct a sheaf $\mathcal{S}_{W,n}$ on $X^n _S$ as the $n$-fold fibre product over $P_0$ of the pullbacks along the $n$ coordinate projections of the sheaves $\mathcal{S}_W$
\[
\mathcal{S}_{W,n}:=(\pr _1 ,\pr _n )^* \mathcal{S}_W \times _{P_0 }\ldots \times _{P_0 }(\pr _{n-1},\pr_n )^* \mathcal{S}_W
\]Concretely, it is locally described as follows: let $U_0 \subset S$ and $U_i \subset \pr _i ^{-1} U_0$ for $i=1,\ldots ,n$. Then a section on $U_1 \times _{U_0 }\times \ldots \times _{U_0 }U_n$ is a tuple $(\rho, (s_i )_{i=1}^n )$ where $\rho $ is as above, and $s_i$ is a unipotent isomorphism 
\[
(\pr _i ,\pr _n )^* \mathcal{E}^2 (X)|_{U_i \times _{U_0 }U_n } /\rho (W\otimes \mathcal{O}_{U_i \times _{U_0 }U_n } ) \simeq \gr ^\bullet (\pr _i ,\pr _n )^* \mathcal{E}^2 (X)|_{U_i \times _{U_0 }U_n } /\rho (W\otimes \mathcal{O}_{U_i \times _{U_0}U_n } ) .
\]

Let $\widetilde{x}\in X(K )$ be a point in $]x[$ lying above $\widetilde{s}$ in $]s[-Z$. 
Now we set $W:= \langle \Res _x \rangle $, viewed as a functional on $H^1 _{\dR}(X_{\widetilde{s}}/K)$, and $\mathcal{M}:=\pi _* \Omega _{X|S}(\log (Z))^\perp |_D$. For each $\widetilde{s}\in ]s[-Z (K)$, given a point $b\in ]x[_{X_{\widetilde{s}}}$ we have a Stoll logarithm
\[
\log _{x} ^{\Sto}:]x[_{X_{\widetilde{s}} } \to \Ker (\Res _x )^*
\]
corresponding to integration from the basepoint $b$.

Define $\mathcal{S}_{W,\mathcal{M}}$ to be the scheme over $(\pi ^2 )^* P_0 $ representing unipotent isomorphisms 
\[
\mathcal{E}^2 (X)|_U /(\mathcal{M}|_U +\rho _0 (W\otimes \mathcal{O}_U ))\simeq \gr ^\bullet \mathcal{E}^2 (X)|_U /(\mathcal{M}|_U +\rho _0 (W\otimes \mathcal{O}_U )),
\]
As previously, we may identify such an isomorphism with a section of $\Lie (J/S_K -Z_K )|_U /\rho (W\otimes \mathcal{O}_U )$.
Define 
\[
\mathcal{S}_{W,\mathcal{M},n}:=(\pr _1 ,\pr _n )^* \mathcal{S}_{W,\mathcal{M}}\times _{P_0 }\ldots \times _{P_0 }(\pr _{n-1} ,\pr _n )^* \mathcal{S}_{W,\mathcal{M}}.
\]
We have a commutative diagram
\begin{equation}\label{eqn:a_square}
\begin{tikzcd}
P_n ^{\FZ } \arrow[rd] \arrow[r] & \mathcal{U}\times P_0 \arrow[r] \arrow[d] & \Lie (J/S_K -Z_K )^n _{S_K -Z_K }\times P_0  \arrow[d] \\
& \mathcal{S}_{W,n} \arrow[r]           &    \mathcal{S}_{W,\mathcal{M},n}
\end{tikzcd} 
\end{equation}

Define $\mathbb{D}_d (S_{W,\mathcal{M},n} )\subset \mathcal{S}_{W,\mathcal{M},n}\times P_0 $ to be the subscheme of rank $\leq d $ $n$-points modulo the image of $W$. That is, locally for $U\subset S$, it is given by $(v_1 ,\ldots ,v_n ,\rho _0 )$ such that $v_1 ,\ldots ,v_n $ have rank $\leq d$ in $(\Lie (J/S_K -Z_K )|_U /\rho _0 (W\otimes \mathcal{O}_U )^n $. Then $\mathbb{D} _d \mathcal{S}_W $ is a codimension $(g-m-d)(n-d)$ subscheme of $\mathcal{S}_{W,\mathcal{M}}$. Define $\mathbb{D}_d (P^{\FZ}_n )\subset P^{\FZ}_n$ to be $\overline{p}^{-1}\mathbb{D}(\mathcal{S}_{W,\mathcal{M}})$. Hence the preimage of $\mathbb{D}_d \mathcal{S}_W$ in $P_n ^{\FZ}$ is a codimension $(g-e-d)(n-d)$ subscheme of $\mathbb{D}_d (P^{\FZ}_n )$. 

Recall that, by the description of the residue map in terms of parallel transport from Lemma \ref{lemma:relation_to_Stoll}, the analytic leaf $\alpha _0 :D\to P_0$ defines an isomorphism
\[
\xi : z^* \mathcal{V} \otimes \mathcal{O}_D \simeq \mathcal{V}
\]
such that $\xi (W\otimes \mathcal{O}_D )=\mathcal{W}$. Hence $\xi $ induces a rigid analytic map
\[
\Lie (J/S_K -Z_K )_D ^n /\mathcal{W}\to \mathcal{S}_{W,\mathcal{M}}
\]
extending the leaf $\alpha _0 $, and we may form its $(n-1)$-fold product to get 
\[
\iota :(\Lie (J/S_K -Z_K )_D  /\mathcal{W})^{n-1}\to \mathcal{S}_{W,\mathcal{M},n}
\]
By construction $\iota ^{-1}(\mathbb{D}_r ( \mathcal{S}_{W,\mathcal{M},n})= \mathbb{D}_r ((\Lie (J/S_K -Z_K )_D  /\mathcal{W})^{n-1})$. Commutativity of \eqref{eqn:a_square} implies commutativity of \eqref{eqn:FZ_commutes}, completing the proof of Proposition \ref{prop:endgame}.

\subsection{Proof of Corollary \ref{cor:NFs} and Corollary \ref{cor:Mg}}
We recall some results from \cite{CHM} regarding stable reduction in families and Abel--Jacobi morphisms for families of stable curves. Let $g$ be a positive integer greater than $1$.
\begin{theorem}[\cite{CHM}, Corollary 5.1]\label{thm:chm}
Given any morphism
\[
X\to S
\]
of integral varieties whose generic fibre is a smooth curve of genus $g>1$, there exists a generically finite map $S'\to S$, a (possibly geometrically disconnected) family of stable curves $X'\to S'$ smooth outside a normal crossing divisor $Z\subset S$, and a birational isomorphism of $S'$ schemes
\[
X' \simeq X\times _S S' .
\]
\end{theorem}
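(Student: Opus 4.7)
The plan is to combine the stable reduction theorem of Deligne--Mumford, properness of the Deligne--Mumford moduli stack $\overline{\mathcal{M}}_g$, and Hironaka's embedded resolution of singularities. First I would consider the classifying map $\eta \to \mathcal{M}_g$ associated to the smooth generic fibre $X_\eta$ over $\eta = \Spec(K(S))$. Since $\mathcal{M}_g$ is only a Deligne--Mumford stack, I would pass to a level-$N$ cover (for some $N \geq 3$), which is representable by a scheme $M_g[N]$ with a smooth compactification $\overline{M}_g[N]$ whose boundary parametrises strictly stable curves. Adding a level structure to $X_\eta$ requires a finite extension $L/K(S)$; taking $S_1$ to be the normalisation of $S$ in $L$ yields a generically finite morphism $S_1 \to S$ together with a rational map $S_1 \dashrightarrow \overline{M}_g[N]$ determined by $X_\eta$ with its level structure.

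Next, using the properness of $\overline{M}_g[N]$, I would resolve the indeterminacy of this rational map: by a sequence of blow-ups along the indeterminacy locus (or by Raynaud--Gruson flattening), there is a proper birational morphism $S_2 \to S_1$ on which the map extends to a genuine morphism $f_2 : S_2 \to \overline{M}_g[N]$. Pulling back the universal stable curve along $f_2$ yields a family of stable curves $\pi_2 : X_2 \to S_2$, which by construction has the correct generic fibre. The locus $Z_2 \subset S_2$ where $\pi_2$ fails to be smooth is precisely $f_2^{-1}$ of the boundary $\overline{M}_g[N] \setminus M_g[N]$; it is closed but need not be a normal crossing divisor.

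To enforce the strict normal crossing condition, I would apply Hironaka's theorem on embedded resolution of singularities to the pair $(S_2, Z_2)$, producing a further proper birational morphism $S' \to S_2$ that is an isomorphism over the smooth locus of $Z_2$, and such that the total transform of $Z_2$ is a strict normal crossing divisor $Z \subset S'$. Setting $X' := X_2 \times_{S_2} S'$, the morphism $X' \to S'$ remains a family of stable curves (stability being preserved under base change) and is smooth outside $Z$. The birational isomorphism $X' \simeq X \times_S S'$ is then obtained by comparing generic fibres, since both are proper flat families over $S'$ whose generic fibres coincide (the level structure entering only as an auxiliary device, not affecting the underlying curve). Note that the geometric disconnectedness mentioned in the statement arises from the fact that $S_1 \to S$ need not preserve geometric connectedness.

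The main obstacle is the interplay between the moduli-theoretic construction, which naturally produces stable families only after an uncontrolled modification of the base with potentially singular discriminant locus, and the separate requirement that the discriminant divisor be strict normal crossings: these two goals cannot be accomplished in a single step, which is why stable reduction must be followed by a resolution-of-singularities step on the base. A subsidiary technical issue is the need to pass to a level cover to reduce from the stack $\overline{\mathcal{M}}_g$ to a representable scheme, which forces the initial generically finite base change $S_1 \to S$ and accounts for the possible geometric disconnection.
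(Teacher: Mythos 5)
The paper cites this result from \cite{CHM} without proof, so there is no internal proof to compare against; I will simply assess your argument. Your outline is the standard approach and is largely sound, but there is one genuine gap in the step where you pass to a level cover.

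You claim that, for $N\geq 3$, the moduli problem of stable curves with level-$N$ structure is representable by a scheme $\overline{M}_g[N]$ carrying a universal stable curve, with boundary parametrising stable curves. That is not correct for the usual abelian level-$N$ structure (i.e.\ a full symplectic level structure on the $N$-torsion of the generalized Jacobian). The result that the automorphism group of a smooth curve of genus $g\geq 2$ acts faithfully on the $N$-torsion of $\operatorname{Jac}$ for $N\geq 3$ does not extend to stable curves that are not of compact type: when the dual graph has $h^1>0$, the generalized Jacobian has a toric part, and nontrivial automorphisms (for instance of a totally degenerate curve, where $\operatorname{Jac}^0\cong \mathbb{G}_m^g$ and automorphisms permute or invert torus factors) can act trivially on the $N$-torsion and hence fix the level structure. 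Consequently the normalisation of $M_g[N]$ in $\overline{\mathcal{M}}_g$ is in general still a Deligne--Mumford stack with nontrivial stabilisers at the boundary, and one cannot naively pull back a universal family from it. The fix is well known: one must use a more refined level structure that does kill all boundary automorphisms, e.g.\ Looijenga's Prym level structures or the nonabelian level structures of Pikaart--de Jong (or, as an alternative route, replace the moduli space by the Hilbert scheme of tricanonically embedded stable curves, at the cost of a further generically finite base change to trivialise the $\mathrm{PGL}$-torsor). Once this substitution is made, the rest of your argument --- normalisation in a finite extension, resolution of indeterminacy of the classifying rational map into the now-projective fine moduli scheme, pullback of the universal stable curve, base change of a stable family, Hironaka embedded resolution of the discriminant, and comparison of generic fibres --- is correct and gives the statement. (Minor aside: Raynaud--Gruson flattening is not the right tool for resolving indeterminacy of a rational map to a projective variety; blow-ups via Hironaka, which you also invoke, are what is needed.)
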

\begin{lemma}\label{lemma:fin_extn}
There is a finite extension $L_w$ of $K_v$, and a Zariski open set $U\subset S$ such that 
\begin{itemize}
\item every $K_v $ point of $X\times _S U$ lifts to an $L_w$ point of $X'$, and every rank $r$ $K_v $-point of $X^n_S \times _S U$ lifts to a rank $r$ $L_w$-point of $(X')^n _{S'}$.
\item every irreducible component of $S' _{\overline{K}}$ is defined over $L_w$.
\end{itemize}
\end{lemma}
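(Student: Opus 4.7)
The plan is to leverage Theorem \ref{thm:chm}, which provides a generically finite map $S' \to S$ together with a family of stable curves $X' \to S'$ birationally $S'$-isomorphic to $X \times_S S'$. Since $S' \to S$ is generically finite, there is a nonempty Zariski open $U_1 \subset S$ over which the restriction $S' \times_S U_1 \to U_1$ is finite \'etale of some degree $d$. Since the $S'$-birational isomorphism $X' \simeq X \times_S S'$ is an honest isomorphism over some dense open of $S'$, and since we may restrict further to the smooth locus of $\pi$, there is a nonempty Zariski open $U_2 \subset S$ over which the induced morphism from $X' \times_{S'} (S' \times_S U_2)$ to $X \times_S (S' \times_S U_2)$ is an isomorphism of $S'$-schemes. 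I would set $U := U_1 \cap U_2$.

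The key point is that since $K_v$ is a $p$-adic field, there are only finitely many isomorphism classes of finite separable extensions of $K_v$ of degree $\leq d$. Choose $L_w$ to be a finite extension of $K_v$ containing (copies of) all of them, and enlarge $L_w$ further if necessary so that every irreducible component of $S'_{\overline{K_v}}$ becomes defined over $L_w$. This deals with the second bullet of the lemma directly, and ensures that for any $K_v$-point $s \in U(K_v)$ the fiber $S' \times_S \Spec(K_v)$, being a finite \'etale $K_v$-scheme of degree $d$, possesses at least one $L_w$-point $s' \in S'(L_w)$ above $s$.

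Given any $K_v$-point $(x_1, \ldots, x_n; s)$ of $X^n_S \times_S U$, I would first lift $s$ to $s' \in S'(L_w)$ as above. Since $U \subset U_2$, the identification $X' \times_{S'} (S' \times_S U) \simeq X \times_S (S' \times_S U)$ yields canonical lifts $x'_i \in X'_{s'}(L_w)$ of the $x_i$; fiberwise, $X'_{s'}$ is simply $X_s \times_{K_v} L_w$. Consequently the Jacobian $J'_{s'}$ is the base change of $J_s$ from $K_v$ to $L_w$, and the Albanese map commutes with this base change, so $\Alb_{s'}(x'_i)$ is the image of $\Alb_s(x_i)$ under the inclusion $J_s(K_v) \hookrightarrow J_s(L_w) = J'_{s'}(L_w)$. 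Since the rank of a finitely generated abelian group is an abstract invariant independent of any ambient embedding, the subgroup generated by the $\Alb_{s'}(x'_i)$ inside $J'_{s'}(L_w)$ has the same rank $r$ as the subgroup generated by the $\Alb_s(x_i)$ inside $J_s(K_v)$.

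The main obstacle (such as it is) is purely one of bookkeeping: ensuring that a \emph{single} extension $L_w$ suffices uniformly as $s$ ranges over all of $U(K_v)$. This is resolved by the finiteness of bounded-degree separable extensions of a $p$-adic field; nothing deeper is needed, since the smoothness of $\pi$ on $U_2$ already guarantees that the birational comparison of $X'$ with $X \times_S S'$ is an isomorphism on the relevant fibers, making the rank-preservation statement immediate from functoriality of the Albanese map.
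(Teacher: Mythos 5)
Your proof is correct and follows essentially the same route as the paper's: both rely on the finiteness of degree-$\leq d$ separable extensions of a $p$-adic field (the paper phrases this via topological finite generation of $\mathrm{Gal}(\overline{K}_v|K_v)$) to pick a single uniform $L_w$, and both observe that rank is preserved because the lift is obtained by base change, under which $J_s(K_v)\hookrightarrow J'_{s'}(L_w)$ is injective. You spell out the choice of the open $U$ (restricting to where $S'\to S$ is finite \'etale and where the birational isomorphism is an honest isomorphism) and the handling of the second bullet more explicitly than the paper does, but the underlying argument is the same.
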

\begin{proof}
For any finite cover, such an extension exists, since the Galois group of $K_v $ is topologically finitely generated. Indeed, if $ X'\to X$ has degree $d$, then we can take an extension $L_w |K_v $ containing all finite extensions of $K_v$ of degree $\leq d$. Then all $K_v $ points of $X$ lift to $L_w $-points of $X'$. Since the cover is obtained from extending the base, the property of being a rank $r$ $n$-tuple is preserved.
\end{proof}

\begin{proposition}
Suppose Theorem \ref{thm:main1} holds for all families $\pi :X\to S$ and all $p$-adic fields $K_v$, where $\pi $ is a stable curve. Then Theorem \ref{thm:main1} holds for all families $\pi $.
\end{proposition}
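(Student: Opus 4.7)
The plan is to reduce an arbitrary generically smooth family $\pi : X \to S$ to a stable one via the stable reduction theorem of Caporaso--Harris--Mazur (Theorem \ref{thm:chm}), handle the fact that stable reduction requires a generically finite base change and possibly a finite extension of the base field using Lemma \ref{lemma:fin_extn}, and use Noetherian induction on $\dim S$ to deal with the locus where the resulting birational isomorphism fails to be an actual isomorphism.

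First, I would apply Theorem \ref{thm:chm} to $\pi : X \to S$ to obtain a generically finite map $S' \to S$, a (possibly disconnected) family of stable curves $\pi' : X' \to S'$ smooth outside a normal crossing divisor $Z' \subset S'$, and a birational isomorphism $X' \simeq X \times_S S'$ of $S'$-schemes, arranging after a further blow-up if necessary for $Z'$ to be a strict normal crossing divisor. Then I would invoke Lemma \ref{lemma:fin_extn} to produce a finite extension $L_w / K_v$ and a nonempty Zariski open $U \subset S$ such that every rank $\leq r$ $K_v$-point of $X^n_S$ lying over $U$ lifts to a rank $\leq r$ $L_w$-point of $(X')^n_{S'}$; rank preservation follows because the birational map restricts to an isomorphism of smooth generic fibres $X'_{s'} \isoarrow X_{\pi(s')}$ over $U$, inducing an isomorphism of Jacobians and hence of the $\Gamma$-rank of $n$-tuples.

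Next, I would apply the hypothesis, namely part (2) of Theorem \ref{thm:main1} for stable families, to each connected component of $\pi' : X' \to S'$ over the field $L_w$. Since $\dim S' = \dim S$ and the generic genus is unchanged, the numerical bound on $n$ remains the same, and this produces a proper closed subvariety $V' \subset (X')^n_{S'}$ containing all rank $\leq r$ $L_w$-points. The canonical morphism $f : (X')^n_{S'} \to X^n_S$, obtained by composing the birational identification of $(X')^n_{S'}$ with $X^n_S \times_S S'$ and projecting to $X^n_S$, is generically finite, so the Zariski closure $\overline{f(V')}$ is a proper closed subvariety of $X^n_S$ containing all rank $\leq r$ $K_v$-points lying over $U$.

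To finish, the rank $\leq r$ $K_v$-points of $X^n_S$ lying over $S \setminus U$ would be dealt with by Noetherian induction on $\dim S$: the restricted family $X \times_S (S \setminus U) \to S \setminus U$ has strictly smaller base dimension, so by induction such points lie in a proper closed subvariety, the base case $\dim S = 0$ reducing to a single stable curve after a finite extension, which is directly covered by the hypothesis. Combining these pieces yields the desired proper closed subvariety of $X^n_S$. The main obstacle I anticipate is verifying carefully that the $\Gamma$-rank condition is genuinely preserved under the birational transformation $X' \sim X \times_S S'$ (including across passage between $K_v$- and $L_w$-rational points), and checking that the bound in part (2) of Theorem \ref{thm:main1} remains uniform across the generically finite base change and the inductive steps, so that the final closed subvariety is indeed proper.
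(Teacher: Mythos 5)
Your proposal follows essentially the same route as the paper's own (very terse) proof: apply the stable-reduction theorem of Caporaso--Harris--Mazur (Theorem~\ref{thm:chm}) to produce an alteration $S'\to S$ with a stable family $X'\to S'$, invoke Lemma~\ref{lemma:fin_extn} to get a finite extension $L_w|K_v$ and a Zariski open $U\subset S$ over which rank-$\leq r$ $K_v$-points of $X^n_S$ lift to rank-$\leq r$ $L_w$-points of $(X')^n_{S'}$, apply the stable case of Theorem~\ref{thm:main1} over $L_w$, and push the resulting proper closed subvariety forward along the generically finite map back to $X^n_S$. One small simplification you could make: the Noetherian induction on $\dim S$ to handle $S\setminus U$ is not needed, since $X^n_S\times_S(S\setminus U)$ already has dimension strictly less than $\dim X^n_S=n+s$ (the fibres of $\pi^n$ are $n$-dimensional by flatness and $\dim(S\setminus U)<s$), so one can simply adjoin this closed subvariety to the pushforward. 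You should also note, as you flag yourself, that one must arrange for the base $S'$ of the alteration to be smooth (and the boundary a strict normal crossing divisor) before invoking part (2) of Theorem~\ref{thm:main1}, which requires a further resolution; the paper leaves this implicit.
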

\begin{proof}
Given a general family $\pi :X\to S$ over a $p$-adic field $K_v$, by Lemma \ref{lemma:fin_extn}, we can find a finite extension $L_w |K_v$, an alteration $\pi' :X'\to S'$ of $\pi _{L_w }$, such that on a Zariski open of $S$, rank $\leq r$ points of $X^n _S (K_v )$ map to rank $\leq r$ points of $(X')^n _{S'}(L_w )$.
\end{proof}

To complete the proof of Corollary \ref{cor:NFs}, let $S'\to S$ and $\pi ' :X'\to S'$ be as in Theorem \ref{thm:chm}. Let $v$ be a prime of $K$ such that $X' ,S'$ and $Z'$ have good reduction at $v$ and $\pi ' :X'\to S'$ extends to a flat, proper morphism between integral models. Let $L_w |K_v$ be an extension as in Lemma \ref{lemma:fin_extn}. Then we may apply case (2) of Theorem \ref{thm:main1} to $\pi '$, over $L_w$. This completes the proof of Corollary \ref{cor:NFs}.

To prove Corollary \ref{cor:Mg}, we may use Lemma \ref{lemma:fin_extn} to a pass to a finite cover $\mathcal{M}_{g}[\ell ]\to \mathcal{M}_{g}$ where $\mathcal{M}_{g}[\ell ]$ is a scheme, and such that the cover extends to $\overline{\mathcal{M}}_{g}[\ell ]\to \overline{\mathcal{M}}_{g}$, with $\overline{\mathcal{M}}_g [\ell ]$ projective, and $\overline{\mathcal{M}}_{g}[\ell ]-\mathcal{M}_{g}[\ell ]$ a strict normal crossing divisor. As the notation suggests, we may take the finite cover to come from an auxiliary level $\ell$-structure for $\ell $ suitably large. Then the $n$-fold fibre product of the universal curve on $\mathcal{M}_g [\ell ]$, minus the diagonal subvarieties, dominates $\mathcal{M}_{g,n}$, and rank $\leq r$ points on $\mathcal{M}_{g,n}$ lift to rank $\leq r$ points. We deduce from Theorem \ref{thm:main1} that rank $\leq r$ points on the $n$-fold fibre product of the universal curve over $\mathcal{M}_g [\ell ]$ lie on a proper subvariety, and hence the projection of this proper subvariety to $\mathcal{M}_g$ defines a proper substack containing all rank $\leq r$ points on $\mathcal{M}_{g,n}$.
\section{Effectivity and examples}
In this section we give the proof of Corollary \ref{cor:its_effective}. 
To motivate the argument, suppose we are in any of the cases above where we prove $X^n _S (K)$ is not Zariski dense (for $K$ a number field), and suppose that we have a given rational point $y\in X^n _S (K)$. We would like to use Theorem \ref{thm:BCFN}, in its effective form, to deduce that we can effectively compute a proper subvariety $V$ of $X^n _S$ which contains the intersection of $X^n _S (K)_{\rk \leq r}$ with a suitably small $p$-adic neighbourhood of $y$. To do this, we need to make the principal bundle, its connection, the closed subvariety, and the $K$-point on the principal bundle, effective. We first show that the Galois group is effectively computable.

\begin{proposition}\label{prop:eff}
The Galois group $G$ of the Gauss--Manin connection associated to $X\to S$, and a descent of the frame bundle of the Gauss--Manin connection to a $G$-bundle, are effectively computable.
\end{proposition}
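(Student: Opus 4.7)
The plan is to first effectively compute the differential Galois group $G$ of the Gauss--Manin connection, and then to construct an explicit descent of the frame bundle to a principal $G$-bundle via Chevalley's theorem.

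First I would compute $G$. By Deligne's semi-simplicity theorem \cite[Theorem 4.3.6]{hodgeii}, $G$ is a reductive subgroup of $\GL (H^1 _{\dR}(X_s /K))$, and by the Riemann--Hilbert correspondence it coincides with the Zariski closure of the image of the topological monodromy representation $\rho :\pi _1 (S(\mathbb{C}),s)\to \GL (H^1 (X_s (\mathbb{C}),\Q ))$ for any embedding $K\hookrightarrow \mathbb{C}$. A finite presentation of $\pi _1 (S(\mathbb{C}),s)$ is effectively computable from defining equations of the quasi-projective variety $S$ (e.g.\ via a Whitney stratification), and the images of generators under $\rho $ can be produced as integer matrices by standard topological methods, or alternatively by numerical integration of the associated Picard--Fuchs equations to sufficient precision combined with an effective height bound on matrix entries. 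Given such a finite generating set, effectively computing the Zariski closure as a subgroup of $\GL _{2g}(\overline{\Q })$ is a classical effective problem, yielding explicit polynomial equations for $G\subset \GL _{2g}$.

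Next I would construct $P$. By Chevalley's theorem applied to the reductive subgroup $G\hookrightarrow \GL _{2g}$, there exist an effectively computable finite-dimensional representation $W$ of $\GL _{2g}$ and a line $L\subset W$ such that $G$ is the stabilizer of $L$; both $W$ and $L$ can be extracted by classical invariant theory once equations of $G$ are known. Let $(\mathcal{W},\nabla _W )$ denote the flat connection on $S_K $ obtained by twisting the frame bundle $P_0 $ of the Gauss--Manin connection by $W$. Since the monodromy is contained in $G$ and $G$ stabilizes $L$, the associated local system admits a horizontal sub-line-bundle $\mathcal{L}\subset \mathcal{W}$, whose fibre at each point $s$ corresponds to $L$ under any monodromy-equivariant local frame. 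The sub-line-bundle $\mathcal{L}$ is uniquely determined by $L$ together with a chosen trivialization of $P_0 $ at a basepoint $s_0 \in S(K)$, and descends to $K$ since $G$ and the Gauss--Manin connection do. I then define
\[
P:=\{ \phi \in P_0 \, : \, \phi _* (L)=\mathcal{L}_{\pi (\phi )}\} \subset P_0 ,
\]
a closed subscheme cut out by explicit polynomial equations involving a horizontal frame of $\mathcal{L}$. By construction $P\to S$ is a principal $G$-bundle that reduces $P_0 $ to structure group $G$.

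The hardest ingredient is the effective computation of Zariski closures of finitely generated subgroups of $\GL _n (\overline{\Q })$: known algorithms suffice for the purely existential effectivity claim we need, but they are far from elementary. In contrast, once polynomial equations for $G\subset \GL _{2g}$ are in hand, the construction of $W$, $L$, the horizontal sub-line-bundle $\mathcal{L}$, and the subscheme $P$ is a routine algebraic procedure in linear algebra and invariant theory.
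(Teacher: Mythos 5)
Your computation of $G$ via the Betti side---finite generation of $\pi_1(S(\mathbb{C}),s)$, matrices for the monodromy, and effective Zariski closure of a finitely generated matrix group---follows the paper almost verbatim (the paper cites Whitney for the CW structure and Derksen--Jeandel--Koiran for the Zariski closure). The divergence, and the gap, is in the second half: constructing the descent.

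The paper never tries to construct the horizontal reduction directly. Instead it enumerates gauge transformations $g\in \GL_n(K(S))$, computes for each one the smallest Lie algebra containing the coefficients of $g^{-1}dg - g^{-1}\Lambda g$, and halts once the resulting group is isomorphic over $\mathbb{C}$ to the group produced by the Betti computation. The Betti calculation serves only as a certificate of termination. Your proposal instead invokes Chevalley to realize $G$ as the stabilizer of a line $L\subset W$, and then asserts that the associated local system $\mathcal{W}$ has a horizontal sub-line-bundle $\mathcal{L}$ which ``is uniquely determined by $L$ together with a chosen trivialization of $P_0$ at a basepoint $s_0$, and descends to $K$.'' This step is where the argument breaks. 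The group you have computed from the monodromy is a subgroup of $\GL(H^1(X_{s_0}(\mathbb{C}),\Q))$, whereas the frame bundle $P_0$ is the frame bundle of $H^1_{\dR}(X/S)$. These are identified only after tensoring with $\mathbb{C}$, via the (transcendental) period pairing. The fibre of your $\mathcal{L}$ at $s_0$ is literally the image of $L$ under parallel transport for the Betti framing, i.e.\ it is cut out by period integrals; it is not visibly a $K$-rational line in $W\otimes H^1_{\dR}(X_{s_0}/K)$, and you give no argument for why it should be, nor any procedure for computing it if it is. Put differently, finding the $K(S)$-rational horizontal sub-line-bundle whose stabilizer is $G$ is \emph{exactly} the problem of finding the gauge transformation $g$; Chevalley's theorem repackages the problem but does not solve it, and ``routine linear algebra'' will not produce $\mathcal{L}$ without some search. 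Your proof would work if you instead enumerated candidate sub-line-bundles of $\mathcal{W}$ over $K(S)$ and tested horizontality with the Betti computation as a halting certificate, but as written the effectivity of $\mathcal{L}$ is asserted, not established.
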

\begin{proof}
For any $s_1 , s_2 \in S(\mathbb{C})$, and any path $\gamma $ in $S(\mathbb{C})$ from $s_1 $ to $s_2 $, we may effectively compute the induced isomorphism 
\[
H_1 (X_{s_1 },\mathbb{Z})\to H_1 (X_{s_2 },\mathbb{Z}).
\]
Indeed, for each point $s$ of $S$, we can find a suitable small neighbourhood $U$ and a CW decomposition over $U$. First, we can effecitvely compute a CW decomposition of $S$  \cite{whitney}. This gives an effectively computable generating set for $\pi _1 (S,s)$. For any given set of loops in $\pi _1 (S,s)$, we can compute the induced elements of $\GL (H_1 (X_s, \mathbb{Z}))$. By Derksen, Jeandel and Koiran \cite{DJK}, we can effectively compute the Zariski closure of a finite set of matrices over a number field.

By the Riemann--Hilbert correspondence, $G$ is isomorphic, over $\mathbb{C}$, to the base change to $\mathbb{C}$ of the Zariski closure of $\pi _1 (S,s)$ in $\GL (H_1 (X_s ,\Q ))$. We can order the elements of $\GL _n (K(S))$, and for each such element $g$, we can compute the Lie algebra containing $g^{-1}dg -g^{-1}\Lambda g$. In this way we can get upper bounds for $G$, and the process terminates when we recover a group isomorphic over $\mathbb{C}$ to the group obtained by the Betti computation, giving a descent of the frame bundle to $G$.
\end{proof}
The remaining subtlety in applying Theorem \ref{thm:BCFN} is making the $K$-point on the principal bundle effective. The reason for this is that the $K$-point on $P$ usually has coordinates given by $p$-adic integrals, and hence is not defined over $K$. To apply the effectivity statement of Theorem \ref{thm:BCFN}, we have to put ourselves in a situation where these $p$-adic integrals actually have rational values. We do this by making them vanish.

\begin{proof}[Proof of Corollary \ref{cor:its_effective}]
Recall that the proofs of Theorem \ref{thm:main1} and Theorem \ref{thm:main2} work by interpreting the sets $X^n _S (\mathcal{O}_{K_v })_{\Gamma -\rk \leq r}$ as the projections of intersections of leaves of a foliation on a principal bundle $P$ on $X^n _S$ with certain proper subvarieties of $P$, and then applying Theorem \ref{thm:BCFN}. Hence it will be enough to prove that the the principal bundle, its connection, and the closed subvariety being interesected with are effectively computable and defined over $K$. Effective computability of the principal bundle and its connections is established by Proposition \ref{prop:eff}. In all three cases, we start with a point $(x,\ldots ,x)$. In case one, we take as our vector bundle connection $\mathcal{E}$ to be either $\mathcal{E}^n (X)$, viewed as a vector bundle with connection on $X^{n+1}_S $ via the Faltings--Zhang map. In cases two and three, we do the same thing with $\mathcal{E}^n _N (X)$, for $N$ as in Lemma \ref{lemma:N_estimate1} and Lemma \ref{lemma:BKimplies} respectively. In all three cases, we then construct the relevant principal bundle $P^{n+1}$ as was done earlier in the paper, and at the point $x$, we have a lift $z\in P(K)$ of $(x,\ldots ,x)$ to $P^{n+1}(K)$, given by the identity isomorphism
\[
(x,\ldots ,x)^* \mathcal{E} \simeq (x,\ldots ,x)^* \mathcal{E}.
\]
Then the leaf of the foliation on $P$ through $z$ corresponds to the $n$-fold product of the unipotent Albanese maps (or $p$-adic logarithm maps) at the basepoint $x$, as required.
\end{proof}
\subsection{Gonality bounds}
We now discuss the problem of classifying the $\nabla $-special subvarieties arising in Theorem \ref{thm:main2}, considering first the projective case. Here, when we say a proper subvariety of $X^n$ is $\nabla $-special, we mean that it is the projection of a $\nabla $-special subvariety of the principal bundle with connection associated to the depth $m$ universal connection on $X^n$, for some $m>0$.

We say that a subvariety of $X^n$ is diagonal if it is an intersection of subvarieties of the form $X\times _{X\times X}X^n$, where the maps are $\Delta :X\hookrightarrow X\times X$ and $(\pr _i ,\pr _j ):X^n \to X^2 $ for some $i\neq j$. We say that a variety is generalised diagonal if it is an intersection of diagonal subvarieties and subvarieties of the form $\pi _i ^{-1}(x)$ for $x\in X(K)$. Clearly, generalised diagonal subvarieties will be $\nabla $-special.
\begin{lemma}\label{lemma:abvar}
A geometrically irreducible subvariety $Z$ of $X^n$ is $\nabla$-special if and only if there is a map $f:X^n \to A$ to an abelian variety such that $f(X^n )$ generates $A$ and $Z$ is contained in $f^{-1}(0)$, where $0\in A(K)$ denotes the identity. 
\end{lemma}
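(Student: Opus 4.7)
The plan is to reduce both directions to the depth-one (abelian) statement, and then to apply Hodge theory.

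For the $(\Leftarrow)$ direction: given $f:X^n \to A$ with $f(X^n)$ generating $A$ and $Z\subset f^{-1}(0)$, the pullback $f^* \mathcal{E}(A)$ of the universal $1$-unipotent connection on $A$ is an object of the Tannakian category of $m$-unipotent connections on $X^n$ for every $m\geq 1$. Since $f(X^n)$ generates $A$, Tannakian duality yields a surjection $U_m^{\dR}(X^n)\twoheadrightarrow \Lie(A)$, and since $f|_Z$ is the constant morphism at $0$, the restriction $f^* \mathcal{E}(A)|_Z$ is a trivial connection on $Z$. Consequently, the Galois group of $\mathcal{E}_m^{\dR}(X^n)|_Z$ is contained in the proper subgroup $\ker(U_m^{\dR}(X^n)\to \Lie(A))$, so $Z$ is $\nabla$-special.

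For $(\Rightarrow)$, suppose $Z$ is $\nabla$-special at depth $m$. The first step is a Lie-algebraic reduction to depth one: if $\mathfrak{h}\subset \mathfrak{u}$ is a Lie subalgebra of a nilpotent Lie algebra whose image in the abelianization $\mathfrak{u}/[\mathfrak{u},\mathfrak{u}]$ is surjective, then the identity $\mathfrak{u}=\mathfrak{h}+[\mathfrak{u},\mathfrak{u}]$ combined with $[\mathfrak{h},\mathfrak{h}]\subset \mathfrak{h}$ gives $\mathfrak{u}_2\subset \mathfrak{h}+\mathfrak{u}_3$, and by induction $\mathfrak{u}_k\subset \mathfrak{h}+\mathfrak{u}_{k+1}$ for every term of the lower central series, so by nilpotency $\mathfrak{h}=\mathfrak{u}$. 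Applied to the Lie algebra of the Galois group of $\mathcal{E}_m^{\dR}(X^n)|_Z$ inside $U_m^{\dR}(X^n)$, this shows that its image in $U_m^{\dR}(X^n)^{\ab}=H^1_{\dR}(X^n)^*$ must already be a proper subspace; equivalently, the restriction map $H^1_{\dR}(X^n)\to H^1_{\dR}(Z)$ has a nontrivial kernel $V$.

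To produce $f$, pick a smooth projective resolution $\tilde Z\to Z$ and note that $H^1_{\dR}(X^n)\to H^1_{\dR}(\tilde Z)$ has the same kernel $V$: since $H^1_{\dR}(X^n)$ is pure of weight one, the map $H^1_{\dR}(X^n)\to H^1_{\dR}(Z)$ factors through $\gr_1^W H^1_{\dR}(Z)$, which injects into $H^1_{\dR}(\tilde Z)$. Hence $V$ is the kernel of a morphism of pure weight-one Hodge structures, and so is a sub-Hodge structure of $H^1_{\dR}(J^n)=H^1_{\dR}(X^n)$ (using Albanese functoriality). By the standard dictionary between sub-Hodge structures of $H^1(J^n)$ and quotient abelian varieties of $J^n$ up to isogeny, $V$ corresponds to a nonzero quotient $J^n\twoheadrightarrow A$. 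Composing with the Albanese $\alpha:X^n\to J^n$, translated so that a chosen point of $Z$ maps to $0$, gives the required $f:X^n\to A$: the containment $Z\subset f^{-1}(0)$ holds because $f^*$ sends differentials on $A$ into $V$, which pulls back to zero on $Z$, and $f(X^n)$ generates $A$ because $\alpha(X^n)$ generates $J^n$ and $J^n\twoheadrightarrow A$ is surjective. The main obstacle I anticipate is the sub-Hodge structure step when $Z$ is singular, which requires carefully tracking the weight filtration through $H^1_{\dR}(X^n)\to H^1_{\dR}(Z)\to H^1_{\dR}(\tilde Z)$; the Lie-algebraic reduction, by contrast, is a routine Nakayama-style calculation.
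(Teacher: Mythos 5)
Your proof is correct and follows the same two-step strategy as the paper: first reduce to the question of whether the image of the monodromy in $H_1^{\dR}(X^n)$ is proper (the paper invokes that $\pi_1^{\un}(\widetilde{Z})$ is ``generated in weight $-1$''; your Nakayama argument for nilpotent Lie algebras is the elementary engine behind that fact), then resolve singularities of $Z$ to land in pure weight one and recover a quotient abelian variety --- the paper phrases this last step as ``the Albanese of $Z$ does not surject onto that of $X^n$'' while you dualize via sub-Hodge structures of $H^1(J^n)$, which is equivalent. One small slip worth flagging: in the $(\Leftarrow)$ direction the target of your surjection from $U_m^{\dR}(X^n)$ should be $H^1_{\dR}(A)^*$ (the abelianization of $U_m^{\dR}(A)$) rather than $\Lie(A)$, and the displayed induction $\mathfrak{u}_k\subset\mathfrak{h}+\mathfrak{u}_{k+1}$ as written needs a nested sub-induction to control $[\mathfrak{u}_2,\mathfrak{h}]$ --- neither affects the substance of the argument.
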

\begin{proof}
If $Z$ is contained in $f^{-1}(0)$ as in the statement of the lemma, then $H_1 (Z_{\mathbb{C}},\mathbb{Q})$ does not surject onto $H_1 (X^n _{\mathbb{C}},\mathbb{Q})$, and hence is special. Conversely, for any special $Z$, by removing the singular locus and resolving singularities, the image of $\pi _1 (Z_{\mathbb{C}})$ in $\pi _1 (X^n _{\mathbb{C}})$ factors through a map $\pi _1 (\widetilde{Z})\to \pi _1 (X^n _{\mathbb{C}})$, where $\widetilde{Z}/\mathbb{C}$ is smooth and projective. It follows that the unipotent fundamental group of $\widetilde{Z}$, thought of as a unipotent group in mixed Hodge structures, is generated in weight $-1$, and hence the unipotent fundamental group of $Z$ surjects onto the $n$-unipotent fundamental group of $X^n$ if and only if $H_1 (Z_{\mathbb{C}},\Q )$ surjects onto $H_1 (X^n _{\mathbb{C}},\Q )$. Finally, this morphism is non-surjective if and only if the Albanese of $Z$ does not surject onto that of $X^n$.
\end{proof}
If $n$ is greater than or equal to the gonality of $X$, then $X^n$ will have many special subvarieties which are not generalised diagonal. Indeed, given any degree $n$ function $f:X\to \mathbb{P}^1 $, we have a corresponding map $\mathbb{P}^1 \to \Sym ^n X$, and the pre-image of this curve in $X^n$ will map to a point under the natural map to $\Jac (X)$.

\begin{proposition}\label{prop:generalised_diagonal}
Let $X$ be a curve of genus $g>1$ and gonality $\gamma$ such that $\End (\Jac (X_{\overline{K}}))=\mathbb{Z}$. Then all positive-dimensional $\nabla$-special subvarieties of $X^n $ are contained in a generalised diagonal subvariety for $n<\gamma$.
\end{proposition}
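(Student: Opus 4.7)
The plan is to reduce the statement to a purely geometric claim about integer-linear relations on $X^n$, and then exploit the gonality via a Brill--Noether argument on a test curve inside $Z$.

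First, apply Lemma \ref{lemma:abvar}: a $\nabla$-special $Z \subset X^n$ is contained in a fibre of some $f : X^n \to A$ whose image generates $A$. By the universal property of the Albanese, $f$ factors as $X^n \to \mathrm{Alb}(X^n) = \Jac(X)^n \to A$, and the image of $Z$ in $\Jac(X)^n$ lies in a coset of the connected kernel $B \subsetneq \Jac(X)^n$ of the second map, which is a proper abelian subvariety. Since $\End(\Jac(X_{\overline K})) = \mathbb{Z}$ forces $\Jac(X)$ to be geometrically simple, the proper abelian subvarieties of $\Jac(X)^n$ are (geometrically) precisely intersections of kernels of integer-linear forms $(v_1,\ldots,v_n) \mapsto \sum a_i v_i$. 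Hence there is at least one nontrivial relation
\[
\sum_{i=1}^n a_i \,\mathrm{AJ}(z_i) = c_0 \quad \text{in } \Jac(X),
\]
with $(a_i) \in \mathbb{Z}^n \setminus \{0\}$, holding identically on $Z$.

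Next I reduce to the case where every $a_i$ is nonzero and $\gcd(a_i)=1$: if some $a_j = 0$, then since $Z$ is an irreducible component of the relation's vanishing locus, $Z = Z' \times X$ with $Z' \subset X^{n-1}$ satisfying the same relation in one fewer variable, and $Z$ is contained in a generalised diagonal iff $Z'$ is, so I may project onto the essential coordinates (still having $\le n < \gamma$ of them). Now suppose, for contradiction, that $Z$ is not contained in \emph{any} generalised diagonal: then each projection $\pi_i(Z) = X$ (else $Z \subset \pi_i^{-1}(\mathrm{pt})$) and $Z \not\subset \{z_i = z_j\}$ for every pair $i \ne j$.

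Choose a smooth irreducible curve $\phi : C \to Z$ with each $f_i := \pi_i \circ \phi$ non-constant. The relation gives $\sum a_i f_i = c_0$ in $\Jac(X)$ on $C$. Split the indices into $I^+ = \{i : a_i > 0\}$ and $I^- = \{i : a_i < 0\}$, and form the effective divisors $D_\pm(c) := \sum_{i \in I^\pm} |a_i| f_i(c)$ of degrees $d^\pm := \sum_{I^\pm} |a_i|$. The relation becomes $[D_+(c)] - [D_-(c)] = L$ for a fixed class $L \in \Pic^{d^+ - d^-}(X)$; so the image of $D_\pm : C \to \Sym^{d^\pm}(X)$ composed with Abel--Jacobi lies in the translated Brill--Noether locus $L + W_{d^-} \subset \Pic^{d^+}(X)$ (respectively in $W_{d^+} - L$). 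The main geometric step is to argue, using $n < \gamma$, that the existence of a positive-dimensional family forces a $g^1_d$ with $d < \gamma$ on $X$, which contradicts the gonality; in particular, at least one of $D_+$ or $D_-$ must vary non-trivially, the symmetric power is birational to $W_{d^\pm}$ when $d^\pm < \gamma$, and a careful intersection-theoretic count shows that positivity of the family forces $\max(d^+,d^-) \geq \gamma$, hence $\sum |a_i| \geq \gamma > n$, which, combined with the rigidity below, produces the contradiction.

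The hard part, I expect, is handling mixed-sign relations with large $|a_i|$. To rule out the possibility that non-trivial relations arise through automorphisms rather than diagonals, I use that $\End(\Jac(X)) = \mathbb{Z}$, combined with Torelli's theorem, forces $\Aut(X) = \{\mathrm{id}\}$ when $X$ is non-hyperelliptic (and the hyperelliptic case $\gamma = 2$ makes the statement vacuous for $n \geq 2$), while Hurwitz implies every non-constant morphism $X \to X$ is an automorphism for $g \geq 2$. Consequently, any apparent `shifted diagonal' relation $f_i = \sigma(f_j)$ on $C$ must have $\sigma = \mathrm{id}$, giving a genuine diagonal $z_i = z_j$ on $Z$ and contradicting the non-degeneracy assumption. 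Putting these pieces together yields a non-constant morphism $X \to \mathbb{P}^1$ of degree at most $n$, contradicting $n < \gamma$ and completing the proof.
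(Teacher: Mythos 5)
Your opening moves match the paper: you correctly invoke Lemma~\ref{lemma:abvar}, use $\End(\Jac(X_{\overline K}))=\mathbb{Z}$ to reduce $f$ to an integer-linear combination $\sum m_i\cdot a_i$ with values in $\Jac(X)$, and reduce to the case where all coefficients are nonzero and $Z$ surjects onto each factor with no diagonal containments. But the core of your argument is missing, and the route you chose makes it hard to fill in. By splitting into $D_\pm(c)=\sum_{I^\pm}|a_i|f_i(c)$ of degrees $d^\pm=\sum_{I^\pm}|a_i|$, you commit to working with divisors whose degree grows with $\sum|a_i|$, which is unbounded: the hypothesis $n<\gamma$ gives you no control over $d^\pm$, and the birationality of $\Sym^{d}X\to W_d$ is only available for $d<\gamma$, precisely what you cannot guarantee. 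You flag this (``the hard part, I expect, is handling mixed-sign relations with large $|a_i|$'') but then write ``the main geometric step is to argue\dots'' and ``a careful intersection-theoretic count shows\dots'' without supplying either, and the claimed conclusion $\max(d^+,d^-)\ge\gamma$ runs in the wrong direction to produce a $g^1_d$ with $d<\gamma$. That is a genuine gap, not a missing detail.

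The paper's proof sidesteps the coefficient-size problem by never forming $\sum a_i f_i$ as a divisor. It instead uses that $\sum m_i\,\pr_i^*\omega$ vanishes identically on $Z$ for all $\omega\in H^0(X,\Omega)$, picks $x=(x_1,\dots,x_n)\in Z(\overline K)$ with pairwise distinct coordinates and with each $\pr_i|_Z$ smooth at $x$ so that $x_i^*\Omega_X\hookrightarrow x^*\Omega_Z$ is injective, and deduces (using only $m_n\neq0$) that every $\omega$ vanishing at $x_1,\dots,x_{n-1}$ also vanishes at $x_n$. Riemann--Roch on the degree-$n$ divisor $x_1+\dots+x_n$ then gives $h^0(\mathcal{O}(x_1+\dots+x_n))\ge 2$, a $g^1_n$, contradicting $n<\gamma$. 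The key point is that the divisor in play has degree $n$, not $\sum|a_i|$; the magnitudes of the $m_i$ are irrelevant. If you want to salvage your approach, replace the Brill--Noether machinery on $D_\pm$ with the differential-form calculation at a generic point of $Z$; the automorphism/Torelli discussion you added is unnecessary once you do this.
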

\begin{proof}
Let $Z$ be a positive dimensional $\nabla $-special subvariety. By Lemma \ref{lemma:abvar}, there is a map $f:X^n \to A$ to an abelian variety such that $f(X^n )$ generates $A$ and $Z$ is contained in the pre-image of a point. By our assumptions on the endomorphism algebra of $X$, we may take $A$ to be $\Jac (X)$, and the map $f$ to be $\sum m_i \cdot a_i$, where $a_i$ denotes the map $X^n \to \Jac (X)$ obtained by projecting to the $i$th coordinate and taking the Abel--Jacobi embedding $X\to \Jac (X)$, and $m_i \in \mathbb{Z}$.

The composite map
\[
H^0 (\Jac (X),\Omega _{\Jac (X)|K})\stackrel{f^* }{\longrightarrow }H^0 (X^n ,\Omega _{X^n|K})\stackrel{\iota ^* }{\longrightarrow }H^0 (Z,\Omega _{Z|K} )
\]
is zero, where $\iota $ denotes the immersion of $Z$ into $X^n$. On the other hand, via the isomorphism $H^0 (X,\Omega )\simeq H^0 (\Jac (X),\Omega )$, $f^* $ is given by $\sum n_i \cdot \pr _i ^* $. Hence we deduce that $\sum m_i \cdot \pr _i ^* \omega $ vanishes on $Z$ for all $\omega \in H^0 (X,\Omega )$. Without loss of generality, we may assume $m_n \neq 0$. If $Z$ is not generalised diagonal, then we may choose $x=(x_1 ,\ldots ,x_n )\in Z(\overline{K})$ is not diagonal and $\pr _i |_Z$ is smooth for all $i$.Then the $x_i$ are pairwise distinct, and each of the composite maps
\[
x_i ^* \Omega _{X|K}\to  x ^*\Omega _{X^n |K}\to x^* \Omega _{Z|K}
\]
is injective. We claim that, for all $x=(x_1 ,\ldots ,x_n )\in Z(K)$, we have 
\[
H^0 (X,\Omega _{X|K} (-x_1 -\ldots -x_{n-1}))=H^0 (X,\Omega _{X|K} (-x_1 -\ldots -x_n )),
\]
Indeed, if $\omega $ lies in the left-hand side, then its pullback to $x^* \Omega _{Z|K}$ is equal to $x_n ^* \omega $, and hence $\omega $ must vanish at $x_n $. By Riemann--Roch, this implies $H^0 (X,\mathcal{O}(x_1 +\ldots +x_n ))\neq 0$, giving the desired gonality bound.
\end{proof}

\begin{proof}[Proof of Corollary \ref{cor:low_gonality}]
To prove Corollary \ref{cor:low_gonality}, it is enough to prove that there are only finitely many points in $X(K)^n _{\rk r}$ which are not contained in a diagonal subvariety. By Proposition \ref{prop:generalised_diagonal}, the Zariski closure of $X(K)^n _{\Gamma -\rk r}$ is contained in a finite union of generalised diagonal subvarieties. It only remains to show that the positive dimensional irreducible components of the Zariski closure are contained in diagonal (not merely generalised diagonal subvarieties).

We argue by induction on $n$. When $n=1$ there is nothing to prove. Suppose that $W$ is an irreducible component which is contained in the image of the inclusion $i_x :X^{n-1}\hookrightarrow X^n$ sending $(x_1 ,\ldots ,x_{n-1})$ to $(x_1 ,\ldots ,x_{n-1},x)$. Let $\Gamma '$ denote the group generated by $\Gamma $ and $\AJ (x)$. Then the pre-image of $W$ in $X^{n-1}$ will be contained in the Zariski closure of $X^{n-1}_{\Gamma ' -\rk r}$, and hence by our inductive hypothesis it must be diagonal.
\end{proof}
\subsection{$S$-unit examples}
We now discuss a more thorough classification of the Zariski closure of rank $r$ $n$-tuples of solutions to the $S$-unit equation. Given a field $F$, and $n$ elements $x_1 ,\ldots ,x_n \in F^\times -\{ 1\}$, we say that $(x_1 ,\ldots ,x_n )$ is a rank $\leq s$ $n$-tuple of solutions to the unit equation if $\langle x_1 ,\ldots ,x_n ,1-x_1 ,\ldots ,1-x_n \rangle $ is a rank $\leq s$ subgroup of $F^\times $. Let $V$ be an irreducible component of the Zariski closure of $\cup _{\rk S=s}X^n (S)$. We say that $V$ comes from rank $\leq s$ function field solutions if $\langle f_1 ,1-f_1 ,\ldots ,f_n ,1-f_n \rangle $ has rank $\leq s $ in $K(V)$, where 
\[
(f_1 ,\ldots ,f_n ):V\hookrightarrow X^n \subset (\mathbb{P}^1 )^n 
\]
is the embedding of $V$ into $X$.

\begin{corollary}
For any $s> 0$, all non-diagonal positive dimensional irreducible components of the Zariski closure of $\cup _{\rk S =s}X^n (S)$ come from rank $<n$ function field solutions to the $S$-unit equation whenever
\[
n>5\cdot (2s)^{\frac{\log (2s)+\log \log (2s)}{\log (2)}+3}\cdot \log (2s).
\]
\end{corollary}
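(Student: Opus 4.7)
The argument proceeds by induction on $n$, in the spirit of the proof of Corollary~\ref{cor:low_gonality}, but with the gonality-style classification of $\nabla$-special subvarieties replaced by a direct appeal to Ax's theorem for $\mathbb{G}_m^{2n}$. Let $V\subset X^n$ be a non-diagonal positive-dimensional irreducible component of the Zariski closure, with coordinate functions $f_1,\dots,f_n\in K(V)^{\times}$. If $V$ is contained in a generalised diagonal, then either some $f_i$ equals a constant $c$, in which case the projection of $V$ to $X^{n-1}$ is contained in a component of the Zariski closure of $\bigcup_{\mathrm{rk}\,S' = s+1}X(S')^{n-1}$ with $S' := \langle S,c,1-c\rangle$, or $f_i = f_j$ for some $i\ne j$ and projecting onto the remaining coordinates reduces the problem to $n-1$ coordinates with the same $s$. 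In both cases the inductive hypothesis, applied with the appropriate $(n-1,s)$ or $(n-1,s+1)$, yields the rank drop.

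Now suppose $V$ is not contained in any generalised diagonal, so that the $f_i$ are non-constant and pairwise distinct in $K(V)$. Rerunning the argument of Theorem~\ref{thm:main2}(1): choose the depth-$m$ universal connection $\mathcal{E}_m$ on $X^n$ with $m$ as in Lemma~\ref{lemma:N_estimate1}, form the associated principal bundle $P^n$, and observe that the image under the iterated Faltings--Zhang logarithm of any rank-$\leq s$ $n$-tuple lies in the appropriate filtered-scheme degeneracy locus from Lemma~\ref{lemma:non_density}. Theorem~\ref{thm:BCFN} then forces $V$ to be contained in a $\nabla$-special subvariety of $X^n$. Projecting to the depth-$1$ (abelian) quotient, which is the connection $(dz/z,dz/(z-1))$ on $\mathbb{G}_m^2$ for each factor, the $\nabla$-special condition, combined with Ax's classical theorem for $X^n\to\mathbb{G}_m^{2n}$, $(x_i)\mapsto(x_i,1-x_i)_i$, translates into a system of independent multiplicative relations $\prod_i f_i^{a_i}(1-f_i)^{b_i}\in K^{\times}$ in $K(V)^{\times}$.

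The main obstacle will be converting the \emph{single} relation guaranteed by a naive application of depth-$1$ Ax--Schanuel into the $n+1$ independent relations required to force $\mathrm{rank}\langle f_i,1-f_i\rangle\le 2n-(n+1)=n-1$ in $K(V)^{\times}$. I expect this to come out of the proof of Theorem~\ref{thm:BCFN} by reading off not merely the existence but the codimension of the $\nabla$-special locus produced: the bound $n>5\cdot(2s)^{(\log(2s)+\log\log(2s))/\log(2)+3}\log(2s)$ appears to be calibrated precisely so that the $J(d_1,\ldots,d_n;e_1,\ldots,e_n)$ estimate of Lemma~\ref{lemma:Jestimate}, applied at the appropriate depth $N$ of Lemma~\ref{lemma:N_estimate1}, delivers an effective codimension of at least $n+1$ in the abelian Lie algebra $\Lie(\mathbb{G}_m^{2n})$ at a generic point of $V$, rather than the codimension $1$ sufficient for mere Zariski non-density. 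Making this codimension bookkeeping precise, and verifying that the induction closes with the explicit constants in the corollary's threshold, is the technical core of the proof.
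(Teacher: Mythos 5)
Your proposal differs substantially from the paper's argument, and it leaves a gap that you yourself flag but do not close. The paper's proof is non-inductive and quite short: it records that every positive-dimensional irreducible component $V$ of the Zariski closure is $\nabla$-special (via Theorem \ref{thm:BCFN}, as already deployed in the proof of Theorem \ref{thm:main2}), and then, arguing as in Lemma \ref{lemma:abvar}, identifies $\nabla$-speciality of $V \subset (\mathbb{P}^1-\{0,1,\infty\})^n$ with the failure of the image of $V$ to generate $\Alb((\mathbb{P}^1-\{0,1,\infty\})^n) \simeq \mathbb{G}_m^{2n}$; the dimension of the subtorus actually generated is exactly $\rk\langle f_1, 1-f_1,\ldots,f_n,1-f_n\rangle$ in $K(V)^\times$. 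There is no separate appeal to Ax's theorem for $\mathbb{G}_m^{2n}$, no induction on $n$, and no case split on generalised diagonals.

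Two concrete problems with your route. First, your Case 1 passes from $(n,s)$ to $(n-1,s+1)$ or $(n-1,s+2)$, but the threshold function is strictly increasing in $s$, so the inductive hypothesis at the new pair is not automatically in force; the bookkeeping showing the bound survives the step is not done. Note also that this case is genuinely in scope: the corollary excludes only \emph{diagonal} components, not generalised-diagonal ones, so constant coordinates must be handled, not discarded. Second, and more seriously: you acknowledge that a naive application of the depth-$1$ $\nabla$-special condition yields only a single multiplicative relation, i.e.\ $\rk\langle f_i, 1-f_i\rangle < 2n$, whereas the corollary asserts $<n$, which requires at least $n+1$ independent relations. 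You say you ``expect'' this to fall out of the codimension produced in the proof of Theorem \ref{thm:BCFN}, but you do not carry it out -- and this is precisely where the content of the corollary lies. Until that codimension argument is made precise, the proposal establishes only the weaker bound $<2n$ and is therefore incomplete.
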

\begin{proof}
By Theorem \ref{thm:BCFN}, all positive dimensional components of the Zariski closure are $\nabla $-special subvarieties for the universal connection, so it is enough to show that $\nabla $-special subvarieties of $X^n$ which are not diagonal or constant give function field solutions of the $S$-unit equation of low rank.

Arguing as in Lemma \ref{lemma:abvar} shows that a subvariety of $(\mathbb{P}^1 -\{ 0,1,\infty \})^n$ is $\nabla $-special for the universal connection if and only if its Albanese variety does not dominate that of $(\mathbb{P}^1 -\{0,1,\infty \})^n$, or equivalently if the image of $V$ does not generate the Albanese variety of $(\mathbb{P}^1 -\{ 0,1,\infty \})^n$. If 
\[
i=(f_1 ,\ldots ,f_n):V\hookrightarrow (\mathbb{P}^1 -\{0,1,\infty \})^n
\]
is a subvariety, then the dimension of the sub-torus of $\Alb (\mathbb{P}^1 -\{ 0,1,\infty \})^n$ generated by the image of $V$ is equal to the rank of the subgroup $\langle f_1 ,\ldots ,f_n ,1-f_1 ,\ldots ,1-f_n \rangle $ of $K(V)^\times $.
\end{proof}

Of course, a non-generalised diagonal $\nabla $-special subvariety $V$ of rank $\leq r$ only gives infinitely many rank $\leq r$ $n$-tuples of solutions to the $S$-unit equation if it has infinitely many rational points. 

We note that there are well-known examples where this does happen. For example, the set 
\[
\cup _{\rk S=5}X(S)^5
\]
is not Zariski dense, but its Zariski closure contains the two dimensionsal subvariety 
\[
\{(x,y,\frac{1-x}{1-xy},1-xy,\frac{1-y}{1-xy}):(x,y)\in (\mathbb{P}^1 -\{0,1,\infty \})^2 -\{xy=1 \} \},
\]
\bibliography{references}
\bibliographystyle{alpha}

\end{document}